\newtheorem{theorem}{Theorem}[section]
\newtheorem{lemma}[theorem]{Lemma}
\newtheorem{proposition}[theorem]{Proposition}
\newtheorem{corollary}[theorem]{Corollary}
\newtheorem{exAux}[theorem]{Example}
\newenvironment{example}{\begin{exAux} \rm}{\end{exAux}}
\newtheorem{Def}[theorem]{Definition}
\newenvironment{definition}{\begin{Def} \rm}{\end{Def}}
\newtheorem{Note}[theorem]{Note}
\newenvironment{note}{\begin{Note} \rm}{\end{Note}}
\newtheorem{Problem}[theorem]{Problem}
\newtheorem{Rem}[theorem]{Remark}
\newtheorem{Not}[theorem]{Notation}
\newtheorem{Conj}[theorem]{Conjecture}
\newtheorem{Ass}[theorem]{Assumption}
\newenvironment{proof}{\medskip\noindent{\bf Proof.\ }}{\qed\medskip}
\newcommand{\qed}{\hfill\mbox{$\Box$\qquad\qquad}}
\newcommand{\F}{\mathbb{F}}
\newcommand{\Mat}{\text{\rm Mat}}
\renewcommand{\b}[1]{\langle #1 \rangle}
\newcommand{\vphi}{\varphi}
\renewcommand{\th}{\theta}
\newcommand{\R}{\mathcal R}
\newif\ifDRAFT
\begin{document}

\title{Totally bipartite tridiagonal pairs}

\author{Kazumasa Nomura and Paul Terwilliger}

\maketitle

\medskip

\begin{quote}
\small 
\begin{center}
\bf Abstract
\end{center}
There is a concept in linear algebra called a tridiagonal pair.
The concept was motivated by the theory of $Q$-polynomial
distance-regular graphs. 
We give a tutorial introduction to tridiagonal pairs, working with a 
special case as a concrete example. 
The special  case is called totally bipartite, or TB. 
Starting from first principles, we give an elementary but
comprehensive account of TB tridiagonal pairs. 
The following topics are discussed: 
(i) the notion of a TB tridiagonal system;
(ii) the eigenvalue array;
(iii) the standard basis and matrix representations;
(iv) the intersection numbers;
(v) the Askey-Wilson relations;
(vi) a recurrence involving the eigenvalue array;
(vii) the classification of TB tridiagonal systems;
(viii) self-dual TB tridiagonal pairs and systems;
(ix) the $\mathbb{Z}_3$-symmetric Askey-Wilson relations;
(x) some automorphisms and antiautomorphisms associated with a TB tridiagonal pair;
(xi) an action of the modular group ${\rm PSL}_2(\mathbb{Z})$ associated with a TB tridiagonal pair.
\end{quote}

\section{Introduction}
\label{sec:intro}

This paper is about a linear algebraic object called a tridiagonal pair \cite[Definition 1.1]{ITT}.
Before describing our purpose in detail,
we give the definition of a tridiagonal pair.
Let $\F$ denote a field and let $V$ denote a vector space over $\F$ with finite positive dimension.
A {\em tridiagonal pair} on $V$ is an ordered pair of  $\F$-linear maps
$A : V \to V$ and $A^* : V \to V$ that satisfy the following conditions {\rm (i)--(iv)}.
\begin{itemize}
\item[\rm (i)]
Each of $A$, $A^*$ is diagonalizable.
\item[\rm (ii)]
There exists an ordering $\{V_i\}_{i=0}^d$ of the eigenspaces of $A$
such that
\begin{align}
   A^* V_i  &\subseteq V_{i-1} + V_i + V_{i+1}  \qquad\qquad (0 \leq i \leq d),    \label{eq:AsVipre}
\end{align}
where $V_{-1}=0$ and $V_{d+1}=0$.
\item[\rm (iii)]
There exists an ordering $\{V^*_i\}_{i=0}^\delta$ of the eigenspaces of $A^*$
such that
\begin{align}
  A V^*_i  &\subseteq V^*_{i-1}  + V^*_i + V^*_{i+1}  \qquad\qquad (0 \leq i \leq \delta),   \label{eq:AVsipre}
\end{align}
where $V^*_{-1}=0$ and $V^*_{\delta+1}=0$.
\item[\rm (iv)]
There does not exist a subspace $W \subseteq V$ such that
$AW \subseteq W$, $A^* W \subseteq W$, $W \neq 0$, $W \neq V$.
\end{itemize}

We summarize the history behind the tridiagonal pair concept.
There is a type of finite undirected graph said to be distance-regular (see \cite{BCN}).
In \cite{Del} Delsarte investigated a class of distance-regular graphs said to be $Q$-polynomial.
Given a $Q$-polynomial distance-regular graph, 
Delsarte obtained two sequences of orthogonal polynomials 
that are related by what is now called Askey-Wilson duality. 
In \cite{Leonard} Leonard classified  the pairs of orthogonal polynomial
sequences that obey this duality. 
He found that all  the solutions come from the terminating branch of the Askey scheme,
which consists of the $q$-Racah polynomials and their limits (see \cite{AW}).
In \cite{BI} Bannai and Ito gave a thorough study of $Q$-polynomial 
distance-regular graphs, including a detailed version of Leonard's theorem (see \cite[Theorem 5.1]{BI}).
In  \cite{T:subconst1, T:subconst2, T:subconst3} the second author introduced an algebra
$T=T(x)$,  called  the {\em subconstituent algebra} (or {\em Terwilliger algebra}),
to analyze the subconstituents of a $Q$-polynomial distance-regular graph $\Gamma$ with respect to 
a fixed vertex $x$.
The algebra $T$ is generated by the adjacency matrix $A$ of $\Gamma$  and a certain
diagonal matrix $A^*$, called the dual adjacency matrix of $\Gamma$ with respect to $x$.
In \cite[Lemmas 3.9, 3.12]{T:subconst1} it was observed that
each of $A, A^*$ act on the eigenspaces of the other one in a block-tridiagonal fashion. 
This observation motivated the definition of a tridiagonal pair (see \cite[Definition 1.1]{ITT}).

This paper is meant for graduate students and researchers
who seek an introduction to tridiagonal pairs.
The theory of tridiagonal pairs is extensive, and some proofs are rather intricate.
So for the beginner it is best to start with a special case.
In this paper, we consider a special case,
said to be totally bipartite (TB).
A pair of $\F$-linear maps $A: V \to V$ and $A^*: V \to V$ is called a TB tridiagonal pair
whenever it satisfies the above conditions (i)--(iv),
with \eqref{eq:AsVipre} replaced by
\begin{align}
   A^* V_i  &\subseteq V_{i-1} + V_{i+1}  \qquad\qquad (0 \leq i \leq d)    \label{eq:AsVipre2}
\end{align}
and \eqref{eq:AVsipre} replaced by
\begin{align}
  A V^*_i  &\subseteq V^*_{i-1} + V^*_{i+1}  \qquad\qquad (0 \leq i \leq \delta).  \label{eq:AVsipre2}
\end{align}

We are not the first authors to consider the TB tridiagonal pairs.
A number of previous articles are effectively about this topic,
although they may not use the term.
We now summarize these articles.
In \cite{Brown1} Brown classified up to isomorphism a certain class of TB tridiagonal pairs,
said to have Bannai/Ito type.
Later in \cite{HouWangGao2} Gao, Hou, and Wang classified up to isomorphism
all the TB tridiagonal pairs.
This classification separates the TB tridiagonal pairs into three infinite families,
called  Krawtchouk, Bannai/Ito, and $q$-Racah.
It has been shown that a TB tridiagonal pair can be extended to a Leonard triple
in the sense of Curtin \cite{Cur}.
This result is due to
Balmaceda and Maralit for Krawtchouk type (see \cite{BalMa}),
Brown for Bannai/Ito type (see \cite{Brown1}),
and Gao, Hou, Wang  for $q$-Racah type (see \cite{HouWangGao2}).
In \cite{T:intro} the second author showed that a 
finite-dimensional irreducible module for the Lie algebra $\mathfrak{sl}_2$
gives a TB tridiagonal pair of Krawtchouk type.
In \cite{AC5} Alnajjar and Curtin obtained a similar result
using the equitable basis for $\mathfrak{sl}_2$.
The anticommutator spin algebra was introduced by Arik and Kayserilioglu \cite{Arik}.
In \cite{Brown1} Brown showed that certain irreducible modules for
this algebra give TB tridiagonal pairs of Bannai/Ito type.
In \cite{havlicek2, Huang2} Havli\v cek, Po\v sta and Huang 
showed that certain irreducible $U_q(\mathfrak{so}_3)$-modules give TB tridiagonal pairs
of $q$-Racah type.
We remark that in all the above articles except \cite{T:intro, AC5}
the field $\F$ is assumed to be algebraically closed.

We just summarized the previous articles that are effectively about TB tridiagonal pairs.
Among these,
the articles \cite{Brown1, BalMa, HouWangGao2, HouWangGao, Cur, Huang2, AC5}
explicitly refer to the concept of a tridiagonal pair.
What these cited papers have in common is that they invoke results about general 
tridiagonal pairs, and then specialize to the TB case.
In our view, this makes the theory more complicated than necessary.
In our approach,
we examine TB tridiagonal pairs from first principles, and
do not invoke any results from the literature about general tridiagonal pairs.
For most of our results we do not assume that $\F$ is algebraically closed.

In this paper we discuss the following topics:
(i) the notion of a TB tridiagonal system;
(ii) the eigenvalue array;
(iii) the standard basis and matrix representations;
(iv) the intersection numbers;
(v) the Askey-Wilson relations;
(vi) a recurrence involving the eigenvalue array;
(vii) the classification of TB tridiagonal systems;
(viii) self-dual TB tridiagonal pairs and systems;
(ix) the $\mathbb{Z}_3$-symmetric Askey-Wilson relations;
(x) some automorphisms and antiautomorphisms associated with a TB tridiagonal pair;
(xi) an action of the modular group ${\rm PSL}_2(\mathbb{Z})$ associated with a TB tridiagonal pair.

We now summarize our results in detail.
Let $V$ denote a vector space over $\F$ with finite positive dimension.
Let $\text{\rm End}(V)$ denote the $\F$-algebra consisting of the $\F$-linear maps from $V$ to $V$.
Let $A,A^*$ denote a TB tridiagonal pair on $V$.
Fix an ordering $\{V_i\}_{i=0}^d$ (resp.\ $\{V^*_i\}_{i=0}^\delta$) of the eigenspaces of $A$
(resp.\ $A^*$) that satisfies \eqref{eq:AsVipre2} (resp.\ \eqref{eq:AVsipre2}).
For $0 \leq i \leq d$ let $E_i \in \text{\rm End}(V)$ denote the projection onto $V_i$.
The elements $\{E_i\}_{i=0}^d$ are called the primitive idempotents of $A$.
The primitive idempotents $\{E^*_i\}_{i=0}^\delta$ of $A^*$ are similarly defined.
We call the sequence $\Phi = (A; \{E_i\}_{i=0}^d; A^*; \{E^*_i\}_{i=0}^\delta)$ a TB tridiagonal system.
For $0 \leq i \leq d$ let $\th_i$ denote the eigenvalue of $A$ for the eigenspace $E_i V$,
and for $0 \leq i \leq \delta$ let $\th^*_i$ denote the eigenvalue of $A^*$ for the eigenspace $E^*_i V$.
We call the sequence $(\{\th_i\}_{i=0}^d; \{\th^*_i\}_{i=0}^\delta)$ the eigenvalue array of $\Phi$.
We show that $d=\delta$ (see Corollary \ref{cor:coincide}).
We show that the eigenspaces $E_i V$, $E^*_i V$ have dimension $1$ for $0 \leq i \leq d$
(see Corollary \ref{cor:coincide}).
Fix a nonzero $v \in E_0 V$ and define $v_i = E^*_i v$ $(0 \leq i \leq d)$.
We show that $\{v_i\}_{i=0}^d$ form a basis for $V$ (see Lemma \ref{lem:vibasis}).
With respect to this basis, the matrix representing $A^*$ is diagonal and the
matrix representing $A$ is tridiagonal with diagonal entries all zero.
Let $\{c_i\}_{i=1}^{d}$ (resp. $\{b_i\}_{i=0}^{d-1}$) denote the subdiagonal entries
(resp.\ superdiagonal entries) of this tridiagonal matrix.
The scalars $\{c_i\}_{i=1}^d$ and $\{b_i\}_{i=0}^{d-1}$ are called the intersection numbers of $\Phi$.
We represent these intersection numbers in terms of the eigenvalue array
(see Lemma \ref{lem:cibi}).
Using this we show that a TB tridiagonal system is uniquely determined up to isomorphism
by its eigenvalue array (see Corollary \ref{cor:unique}).
We show that $A,A^*$ satisfy  a pair of equations called the Askey-Wilson relations
(see lines \eqref{eq:AW1}, \eqref{eq:AW2}).
We show that  there exists $\beta \in \F$ such that
\[
 \th_{i-1} - \beta \th_i + \th_{i+1}=0,  \qquad \qquad
 \th^*_{i-1} - \beta \th^*_i + \th^*_{i+1}=0
\]
for $1 \leq i \leq d-1$ (see Proposition \ref{prop:beta}).
We classify the TB tridiagonal systems up to isomorphism in terms of the eigenvalue array
(see Theorem \ref{thm:main}).
We represent the eigenvalue array in closed form  (see Examples \ref{ex:1}--\ref{ex:4}).
The TB tridiagonal pair $A,A^*$ is said to be self-dual whenever it is isomorphic to
$A^*,A$.
We show that there exists $0 \neq \zeta \in \F$ such that $\zeta A,A^*$ is self-dual
(see Lemma \ref{lem:selfdualpair2}).
In Theorem \ref{thm:selfdual} we show that if $A,A^*$ is self-dual then the
following TB tridiagonal pairs are mutually isomorphic:
\begin{equation}
\begin{array}{llll}
   A,A^*,  \quad & A, -A^*,  \quad &  -A, A^*,  \quad & -A,-A^*,
\\
  A^*,A,    & A^*,-A,  & -A^*,A,  & -A^*, -A.                            
\end{array}                                                       \label{eq:TBpairs}
\end{equation}
The explicit isomorphisms are given in Section \ref{sec:selfdual}.
We put the Askey-Wilson relations in a form said to be $\mathbb{Z}_3$-symmetric
(see Propositions \ref{prop:Z3AW1pre}--\ref{prop:Z3AW3pre}).
For our remaining results we assume that $\F$ is algebraically closed,
and that $A,B$ is self-dual, where $B=A^*$.
We show that there exists $C \in \text{\rm End}(V)$
such that $A,B,C$ is a modular Leonard triple in the sense of Curtin (see \cite{Cur}).
We display some automorphisms and anti-automorphisms of $\text{\rm End}(V)$ that act on
$A$, $B$, $C$ in an attractive manner.
Recall that the modular group  ${\rm PSL}_2(\mathbb{Z})$ has a presentation by
generators $r$, $s$ and relations $r^3=1$, $s^2=1$.
We show that ${\rm PSL}_2(\mathbb{Z})$ acts on $\text{\rm End}(V)$ 
as a group of automorphisms such that
$r$ sends $A \mapsto B \mapsto C \mapsto A$ and
$s$ sends $A \leftrightarrow B$
(see Corollary \ref{cor:mu} and Proposition \ref{prop:ddagger}).
In the last part of the paper,
we describe general tridiagonal pairs, using the TB case as a guide.

The paper is organized as follows.
In Section \ref{sec:pre} we recall some materials from linear algebra.
In Section \ref{sec:TBTDpair} we define a TB tridiagonal system and its eigenvalue array.
In Section \ref{sec:RL} we discuss the diameter and  eigenspace dimensions of a TB tridiagonal system. 
In Section \ref{sec:intersection} we discuss the intersection numbers of a TB tridiagonal system.
In Sections \ref{sec:trid} and \ref{sec:rec} we obtain some linear algebra facts that will be needed later. 
In Section \ref{sec:AW} we discuss the Askey-Wilson relations. 
In Sections \ref{sec:AWseq} and \ref{sec:sym} we investigate a recurrence satisfied by 
the eigenvalue array of a TB tridiagonal system. 
In Section  \ref{sec:classify} we classify the TB tridiagonal systems up to isomorphism. 
In Sections \ref{sec:array} and \ref{sec:closedform} we examine in detail the eigenvalue array of a TB tridiagonal system. 
In Section \ref{sec:selfdual} we show that in the self-dual case the TB tridiagonal pairs 
\eqref{eq:TBpairs} are mutually isomorphic. 
In Section \ref{sec:Z3} we put the Askey-Wilson relations in $\mathbb{Z}_3$-symmetric form. 
In Sections \ref{sec:W} and \ref{sec:anti}  we obtain some automorphisms and antiautomorphisms
of $\text{\rm End}(V)$ that act on a given TB tridiagonal pair in an attractive manner.
In Section \ref{sec:auto} we discuss an action of ${\rm PSL}_2(\mathbb{Z})$.
In Section \ref{sec:conc} we describe the general tridiagonal pairs.

\section{Preliminaries}
\label{sec:pre}

In this section we recall some materials from linear algebra.
Throughout the paper we use the following notation.
Let $\F$ denote a field and let $\overline{\F}$ denote the algebraic closure of $\F$.
For an integer $n \geq 0$,
let $\Mat_{n+1}(\F)$ denote the $\F$-algebra consisting of the $n+1$ by $n+1$ matrices that have all entries in $\F$.
We index the rows and columns by $0,1,\ldots,n$.
Let $\F^{n+1}$ denote the vector space over $\F$ consisting of the column vectors of length $n+1$
that have all entries in $\F$.
We index the rows by $0,1,\ldots,n$.
Let $V$ denote a vector space over $\F$ with finite positive dimension.
Let $\text{End}(V)$ denote the $\F$-algebra consisting 
of the $\F$-linear maps from $V$ to $V$.
For the $\F$-algebras $\Mat_{n+1}(\F)$ and $\text{End}(V)$ the identity element is denoted by $I$.
For $X \in \Mat_{n+1}(\F)$ let $X^{\rm t}$ denote the transpose of $X$.

Let $A$ denote an element of $\text{End}(V)$.
For $\th \in \F$ define 
$V(\th) = \{ v \in V \,|\, A v = \th v\}$.
Observe that $V(\th)$ is a subspace of $V$.
The scalar $\th$ is called an {\em eigenvalue} of $A$ whenever $V(\th) \neq 0$.
In this case, $V(\th)$ is called the {\em eigenspace} of $A$ corresponding to $\th$.
We say that
$A$ is {\em diagonalizable}  whenever $V$ is spanned by the eigenspaces of $A$.
Assume that $A$ is diagonalizable.
Let $\{V_i\}_{i=0}^d$ denote an ordering of the eigenspaces of $A$.
So $V = \sum_{i=0}^d V_i$ (direct sum).
For $0 \leq i \leq d$ let $\th_i$ denote the eigenvalue of $A$ corresponding to $V_i$.
For $0 \leq i \leq d$ define $E_i \in \text{End}(V)$ such that
$(E_i - I)V_i = 0$ and $E_i V_j = 0$ if $j \neq i$ $(0   \leq j \leq d)$.
Thus $E_i$ is the projection onto $V_i$.
Observe that
(i) $V_i = E_i V$ $(0 \leq i \leq d)$;
(ii) $E_i E_j = \delta_{i,j} E_i$ $(0 \leq i,j \leq d)$;
(iii) $I = \sum_{i=0}^d E_i$;
(iv) $A = \sum_{i=0}^d \th_i E_i$.
Also 
\begin{align}
   E_i &= \prod_{\stackrel{0 \leq j \leq d}{j \neq i}}
           \frac{A-\th_j I}{\th_i - \th_j}    
   \qquad\qquad  (0 \leq i \leq d).                               \label{eq:Ei}
\end{align}
We call $E_i$ the {\em primitive idempotent of $A$ for $\th_i$} $(0 \leq i \leq d)$.
Let $M$ denote the subalgebra of $\text{End}(V)$ generated by $A$.
Observe that $\{A^i\}_{i=0}^d$ is a basis for the $\F$-vector space $M$,
and $\prod_{i=0}^d (A-\th_i I)=0$.
Also observe that $\{E_i\}_{i=0}^d$ is a basis for the $\F$-vector space $M$.

Let $\{v_i\}_{i=0}^n$ denote a basis for $V$.
For $A \in \text{End}(V)$ and $X \in \Mat_{n+1}(\F)$,
we say that  {\em $X$ represents $A$ with respect to $\{v_i\}_{i=0}^n$} whenever
$A v_j = \sum_{i=0}^n X_{i,j} v_i$ for $0 \leq j \leq n$.
For $A \in \text{End}(V)$ let $A^\natural$ denote the matrix in $\Mat_{n+1}(\F)$
that represents $A$ with respect to $\{v_i\}_{i=0}^n$.
Then the map $\natural : \text{End}(V) \to \Mat_{n+1}(\F)$,
$A \mapsto A^\natural$ is an isomorphism of $\F$-algebras.

For an $\F$-algebra $\cal A$,
by an {\em automorphism of $\cal A$} we mean an isomorphism of $\F$-algebras
from $\cal A$ to $\cal A$.
For an invertible $T \in {\cal A}$,
the map $X \mapsto T^{-1} X T$ is an automorphism of $\cal A$,
said to be {\em inner}.
By the Skolem-Noether theorem \cite[Corollary 7.125]{Rot},
every automorphism of $\text{\rm End}(V)$ is inner.

At several places in the paper we will discuss polynomials.
Let $x$ denote an indeterminate.
Let $\F[x]$ denote the $\F$-algebra consisting of the
polynomials in $x$ that have all coefficients in $\F$.

\section{Totally bipartite tridiagonal pairs}
\label{sec:TBTDpair}

In this section we define a totally bipartite tridiagonal pair,
and obtain some basic facts about this object.

\begin{definition}   {\rm (See \cite[Definition 1.2]{Brown1}.)}  \label{def:TBTDpair}    \samepage
\ifDRAFT {\rm def:TBTDpair}. \fi
Let $V$ denote a vector space over $\F$ with finite positive dimension.
A {\em totally bipartite tridiagonal pair} (or {\em TB tridiagonal pair}) on $V$ is an ordered pair $A,A^*$ of
elements in $\text{End}(V)$ that satisfy the following {\rm (i)--(iv)}.
\begin{itemize}
\item[\rm (i)]
Each of $A$, $A^*$ is diagonalizable.
\item[\rm (ii)]
There exists an ordering $\{V_i\}_{i=0}^d$ of the eigenspaces of $A$
such that
\begin{align}
   A^* V_i  &\subseteq V_{i-1} + V_{i+1}  \qquad\qquad (0 \leq i \leq d),      \label{eq:AsVi}
\end{align}
where $V_{-1}=0$ and $V_{d+1}=0$.
\item[\rm (iii)]
There exists an ordering $\{V^*_i\}_{i=0}^\delta$ of the eigenspaces of $A^*$
such that
\begin{align}
  A V^*_i  &\subseteq V^*_{i-1}  + V^*_{i+1}  \qquad\qquad (0 \leq i \leq \delta),     \label{eq:AVsi}
\end{align}
where $V^*_{-1}=0$ and $V^*_{\delta+1}=0$.
\item[\rm (iv)]
There does not exist a subspace $W \subseteq V$ such that
$AW \subseteq W$, $A^* W \subseteq W$, $W \neq 0$, $W \neq V$.
\end{itemize}
We say that $A,A^*$ is {\em over} $\F$.
We call $V$ the {\em underlying vector space}.
\end{definition}

\begin{note}
According to a common notational convention,
$A^*$ denotes the conjugate-transpose of $A$.
We are not using this convention.
In a TB tridiagonal pair the elements $A$ and $A^*$ are arbitrary subject to (i)--(iv) above.
\end{note}

\begin{note}
If $A,A^*$ is a TB tridiagonal pair on $V$, then so is $A^*,A$.
\end{note}

\begin{note}
Let $A,A^*$ denote a TB tridiagonal pair on $V$.
Pick nonzero $\zeta$, $\zeta^* \in \F$.
Then $\zeta A$, $\zeta^* A^*$ is a TB tridiagonal pair on $V$.
\end{note}

We mention a special case of a TB tridiagonal pair.

\begin{example}   \label{ex:deftrivial}    \samepage
\ifDRAFT {\rm ex:deftrivial}. \fi
Referring to Definition \ref{def:TBTDpair},
assume that $\dim V=1$
and $A=0$, $A^*=0$.
Then $A,A^*$ is a TB tridiagonal pair on $V$,
said to be {\em trivial}.
\end{example}

\begin{lemma}    \label{lem:trivial}    \samepage
\ifDRAFT {\rm lem:trivial}. \fi
With reference to Definition \ref{def:TBTDpair},
assume that $A,A^*$ is a TB tridiagonal pair on $V$.
Then the following {\rm (i)--(vi)} are equivalent:
{\rm (i)} $A,A^*$ is trivial;
{\rm (ii)} $d=0$;
{\rm (iii)} $\delta=0$;
{\rm (iv)} $A = 0$;
{\rm (v)} $A^* = 0$;
{\rm (vi)} $\dim V = 1$.
\end{lemma}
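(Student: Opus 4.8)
The plan is to establish the six equivalences by proving a cycle of implications together with a few direct links, exploiting the symmetry between $A$ and $A^*$ (which by the second Note lets us transfer any statement about $A$ to the corresponding statement about $A^*$). The most natural skeleton is to prove the chain (vi) $\Rightarrow$ (iv) $\Rightarrow$ (i) $\Rightarrow$ (vi), and separately (ii) $\Leftrightarrow$ (iv) and (iii) $\Leftrightarrow$ (v), then tie everything together. The symmetry $A \leftrightarrow A^*$ will immediately give (iii) $\Leftrightarrow$ (v) once (ii) $\Leftrightarrow$ (iv) is in hand, so roughly half the work is free.

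First I would handle the dimension case. If $\dim V = 1$, then every element of $\text{End}(V)$ is a scalar, so $A$ and $A^*$ act as scalars $\theta_0$ and $\theta^*_0$. Condition (ii) forces $A^* V_0 \subseteq V_{-1} + V_1 = 0$ since there is only one eigenspace $V_0 = V$, whence $A^* = 0$; symmetrically $A = 0$. This gives (vi) $\Rightarrow$ (iv) and (vi) $\Rightarrow$ (i) at once, and indeed shows (vi) implies all of (i), (iv), (v). Next I would connect the diameters to the eigenvalues. Claim: $d = 0$ forces $A^* = 0$. Indeed, if $d = 0$ then $A$ has a single eigenspace $V_0 = V$, so $A = \theta_0 I$ is a scalar. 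Then $\{W : AW \subseteq W\}$ is every subspace, so condition (iv) reduces to demanding that $A^*$ have no proper nonzero invariant subspace; but $A^*$ is diagonalizable, so if $\dim V > 1$ each of its eigenspaces (or a sum of a proper subset of them) would be such a forbidden $W$. Hence $\dim V = 1$, giving (ii) $\Rightarrow$ (vi). Conversely (vi) $\Rightarrow$ (ii) is immediate since a one-dimensional space has a single eigenspace. By the $A \leftrightarrow A^*$ symmetry, (iii) $\Leftrightarrow$ (vi) follows from (ii) $\Leftrightarrow$ (vi).

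For the remaining links, I would show (iv) $\Rightarrow$ (vi), or more directly (iv) $\Rightarrow$ (i). Suppose $A \neq 0$, i.e. (iv) fails — wait, rather: the cleanest route is to show that if $\dim V > 1$ then $A,A^*$ cannot be trivial and in fact each of $A,A^*$ is forced to be nonzero, by the same invariant-subspace argument as above applied contrapositively. Concretely, if $A = 0$ (condition (iv) of the lemma statement, item (iv)) then $A$ is scalar, so every subspace is $A$-invariant, and irreducibility (Definition \ref{def:TBTDpair}(iv)) forces $A^*$ to have no proper nonzero invariant subspace, whence $\dim V = 1$ by diagonalizability of $A^*$; this is (iv) $\Rightarrow$ (vi), and symmetrically (v) $\Rightarrow$ (vi). Finally (i) $\Rightarrow$ (iv) is trivial from the definition of trivial, and (vi) $\Rightarrow$ (i) follows because in dimension one we already showed $A = A^* = 0$, which is exactly triviality. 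Assembling: (i) $\Rightarrow$ (iv) $\Rightarrow$ (vi) $\Rightarrow$ (i) closes a cycle through three conditions, while (vi) $\Leftrightarrow$ (ii), (vi) $\Leftrightarrow$ (iii), (iv) $\Leftrightarrow$ (vi) via the displayed implications, and (v) $\Leftrightarrow$ (vi) by symmetry, pin down all six.

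The one genuine obstacle, recurring in several implications, is the argument that a scalar-acting $A$ together with irreducibility forces $\dim V = 1$. The key input is that $A^*$ is diagonalizable: this is what guarantees that if $A^*$ has two or more distinct eigenvalues then a single eigenspace is a proper nonzero $A^*$-invariant subspace, and it is automatically $A$-invariant because $A$ is scalar, contradicting Definition \ref{def:TBTDpair}(iv). Everything else is bookkeeping, and the $A \leftrightarrow A^*$ symmetry keeps the case analysis short.
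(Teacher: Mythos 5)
The paper offers no argument for this lemma at all --- its proof is the single word ``Routine'' --- so there is no official route to compare against; your proposal simply supplies the routine details, and the overall logic (tying everything to condition (vi) via the invariant-subspace argument, and using the $A\leftrightarrow A^*$ symmetry to halve the work) is sound and assembles correctly into all six equivalences.

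One small patch is needed in your recurring key step. You argue that if $A$ is scalar and $\dim V>1$, then an eigenspace of $A^*$ (or a sum of a proper subset of the eigenspaces) serves as the forbidden invariant subspace $W$. This works only when $A^*$ has at least two distinct eigenvalues; if $\delta=0$ as well, the unique eigenspace of $A^*$ is all of $V$, and every proper sub-sum is $0$ or $V$, so neither is an admissible $W$. The fix is immediate: in that case both $A$ and $A^*$ act as scalars, so \emph{any} one-dimensional subspace of $V$ is invariant under both and violates Definition \ref{def:TBTDpair}(iv) when $\dim V>1$. (Equivalently: a diagonalizable operator on a space of dimension at least $2$ always admits a proper nonzero invariant subspace --- take a line inside any eigenspace --- and that is the statement you actually need.) With that one-sentence amendment every implication in your cycle goes through.
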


\begin{proof}
Routine.
\end{proof}

For the rest of this section, let $V$ denote a vector space over $\F$
with finite positive dimension.

\smallskip

Let $A,A^*$ denote a TB tridiagonal pair on $V$.
Let $V'$ denote a vector space over $\F$ with finite positive dimension,
and let $A',A^{*\prime}$ denote a TB tridiagonal pair on $V'$.
By an {\em isomorphism of TB tridiagonal pairs} from $A,A^*$ to $A',A^{* \prime}$
we mean an $\F$-linear bijection $\psi : V \to V'$
such that $\psi A = A' \psi$ and $\psi A^* = A^{*\prime} \psi$.
We say that $A,A^*$ and $A',A^{* \prime}$ are {\em isomorphic}
whenever there exists an isomorphism of TB tridiagonal pairs from $A,A^*$ to $A',A^{* \prime}$.

Let $A,A^*$ denote a TB tridiagonal pair on $V$.
An ordering $\{V_i\}_{i=0}^d$ 
of the eigenspaces of $A$ is said to be {\em standard}
whenever it satisfies \eqref{eq:AsVi}. 
Let $\{V_i\}_{i=0}^d$ denote a standard ordering of 
the eigenspaces of $A$.
Then the ordering $\{V_{d-i}\}_{i=0}^d$ is 
also standard and no further ordering is standard.
An ordering of the eigenvalues or primitive idempotents for $A$
is said to be {\em standard} whenever  the corresponding ordering of the
eigenspaces is standard.
Similar comments apply to $A^*$.

\begin{definition}    \label{def:TBTDsystem}   \samepage
\ifDRAFT {\rm def:TBTDsystem}. \fi
By a {\em TB tridiagonal system} on $V$  we mean a sequence
\begin{equation}
  \Phi = (A; \{E_i\}_{i=0}^d; A^*; \{E^*_i\}_{i=0}^\delta)     \label{eq:Phi}
\end{equation}
of elements in $\text{\rm End}(V)$
that satisfy the following (i)--(iii):
\begin{itemize}
\item[\rm (i)]
$A,A^*$ is a TB tridiagonal pair on $V$;
\item[\rm (ii)]
$\{E_i\}_{i=0}^d$ is a standard ordering of the primitive idempotents of $A$;
\item[\rm (iii)]
$\{E^*_i\}_{i=0}^\delta$ is a standard ordering of the primitive idempotents of $A^*$.
\end{itemize}
We say that $\Phi$ is {\em over} $\F$.
We call $V$ the {\em underlying vector space}.
\end{definition}

\begin{definition}     \label{def:convenience}    \samepage
\ifDRAFT {\rm def:convenience}. \fi
Referring to Definition \ref{def:TBTDsystem},
for notational convenience define $E_{-1}=0$, $E_{d+1}=0$,
$E^*_{-1}=0$, $E^*_{\delta+1}=0$.
\end{definition}

\begin{definition}     \label{def:associated}    \samepage
\ifDRAFT {\rm def:associated}. \fi
Referring to Definition \ref{def:TBTDsystem},
we say that the TB tridiagonal pair $A,A^*$ and the TB tridiagonal system $\Phi$ are
{\em associated}.
\end{definition}

\begin{definition}    \label{def:trivial}    \samepage
\ifDRAFT {\rm def:trivial}. \fi
A TB tridiagonal system is said to be {\em trivial}
whenever the associated TB tridiagonal pair is trivial.
\end{definition}

\begin{definition}   \label{def:eigenseq}   \samepage
\ifDRAFT {\rm def:eigenseq}. \fi
Consider the TB tridiagonal system $\Phi$ from \eqref{eq:Phi}.
For $0 \leq i \leq d$ let $\th_i$ denote the eigenvalue of $A$ corresponding to $E_i$,
and for $0 \leq i \leq \delta$ let $\th^*_i$ denote the eigenvalue of $A^*$
corresponding to $E^*_i$.
We call $\{\th_i\}_{i=0}^d$ (resp.\ $\{\th^*_i\}_{i=0}^\delta$) the {\em eigenvalue sequence}
(resp. {\em dual eigenvalue sequence}) of $\Phi$.
We call $(\{\th_i\}_{i=0}^d; \{\th^*_i\}_{i=0}^\delta)$ the
{\em eigenvalue array} of $\Phi$.
\end{definition}

\begin{note}    \label{note:distinct}    \samepage
\ifDRAFT {\rm note:distinct}. \fi
Referring to Definition \ref{def:eigenseq},
the scalars $\{\th_i\}_{i=0}^d$ are mutually distinct and contained in $\F$.
Moreover,
the scalars $\{\th^*_i\}_{i=0}^\delta$ are mutually distinct and contained in $\F$.
\end{note}

\begin{note}    \label{note:trivial}    \samepage
\ifDRAFT {\rm note:trivial}. \fi
Referring to Definition \ref{def:eigenseq},
assume that $\Phi$ is trivial.
Then $\th_0 = 0$ and $\th^*_0 = 0$.
\end{note}

\begin{lemma}   \label{lem:hhs}    \samepage
\ifDRAFT {\rm lem:hhs}. \fi
Pick nonzero $\zeta$, $\zeta^* \in \F$.
Let $(A; \{E_i\}_{i=0}^d; A^*; \{E^*_i\}_{i=0}^\delta)$ denote a TB tridiagonal system on $V$,
with eigenvalue array $(\{\th_i\}_{i=0}^d; \{\th^*_i\}_{i=0}^\delta)$.
Then 
\[
(\zeta A; \{E_i\}_{i=0}^d; \zeta^* A^*; \{E^*_i\}_{i=0}^\delta)
\]
is a TB tridiagonal system on $V$,
with eigenvalue array $(\{ \zeta \th_i\}_{i=0}^d; \{\zeta^* \th^*_i\}_{i=0}^\delta)$.
\end{lemma}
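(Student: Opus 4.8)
The plan is to verify conditions (i)--(iii) of Definition \ref{def:TBTDsystem} for the scaled sequence, and then to read off the eigenvalue array. The guiding observation throughout is that multiplying a diagonalizable map by a nonzero scalar preserves its eigenspaces while scaling its eigenvalues, together with the trivial fact that a subspace is unchanged under multiplication by a nonzero scalar.

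First I would dispose of condition (i). By the third Note following Definition \ref{def:TBTDpair}, the pair $\zeta A, \zeta^* A^*$ is a TB tridiagonal pair on $V$, so condition (i) holds immediately. Next I would identify the primitive idempotents. For $0 \leq i \leq d$ write $V_i = E_i V$, the eigenspace of $A$ for $\th_i$. Since $\zeta \neq 0$, the relation $A v = \th_i v$ is equivalent to $(\zeta A) v = \zeta \th_i v$, so $V_i$ is the eigenspace of $\zeta A$ for $\zeta \th_i$; moreover $\th_i \mapsto \zeta \th_i$ is injective on $\F$, so the scaled eigenvalues remain mutually distinct as required by Note \ref{note:distinct}. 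Hence $\zeta A$ and $A$ have exactly the same eigenspaces, and therefore the same projections $\{E_i\}_{i=0}^d$. The same argument applied to $\zeta^* A^*$ yields the primitive idempotents $\{E^*_i\}_{i=0}^\delta$.

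Then I would check that the orderings remain standard, which is conditions (ii), (iii). For $0 \leq i \leq d$ we have $(\zeta^* A^*) V_i = \zeta^*(A^* V_i) \subseteq \zeta^*(V_{i-1}+V_{i+1}) = V_{i-1}+V_{i+1}$, using \eqref{eq:AsVi} and the fact that a subspace is fixed under scaling by the nonzero scalar $\zeta^*$. Thus $\{V_i\}_{i=0}^d$ is a standard ordering of the eigenspaces of $\zeta A$, so $\{E_i\}_{i=0}^d$ is a standard ordering of the primitive idempotents of $\zeta A$; condition (ii) holds. Condition (iii) follows symmetrically from \eqref{eq:AVsi}. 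This establishes that the scaled sequence is a TB tridiagonal system on $V$.

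Finally, by the eigenspace computation above, the eigenvalue of $\zeta A$ for $E_i V$ is $\zeta \th_i$ and the eigenvalue of $\zeta^* A^*$ for $E^*_i V$ is $\zeta^* \th^*_i$, which is exactly the claimed eigenvalue array $(\{\zeta \th_i\}_{i=0}^d; \{\zeta^* \th^*_i\}_{i=0}^\delta)$. No step presents a genuine obstacle; the only point requiring a moment's care is confirming that scaling by a nonzero scalar leaves the eigenspaces, and hence the idempotents and the standardness of the ordering, unchanged.
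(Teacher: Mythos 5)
Your proposal is correct and follows exactly the route the paper intends: the paper's own proof is simply ``By Definitions \ref{def:TBTDsystem}, \ref{def:eigenseq} and linear algebra,'' and your argument supplies precisely the routine details (invariance of eigenspaces and idempotents under nonzero scaling, preservation of the standard orderings, and the resulting scaled eigenvalues) that the authors leave implicit.
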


\begin{proof}
By Definitions \ref{def:TBTDsystem}, \ref{def:eigenseq} and linear algebra.
\end{proof}

Consider the TB tridiagonal system $\Phi$ from \eqref{eq:Phi}.
Let $V'$ denote a vector space over $\F$ with finite positive dimension,
and let
\[
   \Phi'= (A'; \{E'_i\}_{i=0}^d; A^{* \prime}; \{E^{* \prime}_i\}_{i=0}^\delta)
\]
denote a TB tridiagonal system on $V'$.
By an {\em isomorphism of TB tridiagonal systems} from $\Phi$ to $\Phi'$
we mean an $\F$-linear bijection $\psi : V \to V'$ 
such that $\psi A = A' \psi$, $\psi A^* = A^{*\prime} \psi$, 
$\psi E_i = E'_i \psi$ $(0 \leq i \leq d)$,  $\psi E^*_i  = E^{* \prime}_i \psi$ 
$(0 \leq i \leq \delta)$.
We say that $\Phi$ and $\Phi'$ are {\em isomorphic}
whenever there exists an isomorphism of TB tridiagonal systems from $\Phi$ to $\Phi'$.
Note that two isomorphic TB tridiagonal systems have the same eigenvalue array.

Given a TB tridiagonal system $\Phi = (A; \{E_i\}_{i=0}^d; A^*; \{E^*_i\}_{i=0}^\delta)$
on $V$,
each of the following is a TB tridiagonal system on $V$:
\begin{align*}
  \Phi^* &= (A^*; \{E^*_i\}_{i=0}^\delta; A; \{E_i\}_{i=0}^d),
\\
  \Phi^{\downarrow} &= (A; \{E_i\}_{i=0}^d; A^*; \{E^*_{\delta-i}\}_{i=0}^\delta),
\\
  \Phi^{\Downarrow} &= (A; \{E_{d-i}\}_{i=0}^d; A^*; \{E^*_i\}_{i=0}^\delta).
\end{align*}
Viewing $*$, $\downarrow$, $\Downarrow$ as permutations on the set of
all TB tridiagonal systems,
\begin{align} 
        *^2 \,=\, \downarrow^2 \,&=\, \Downarrow^2 = 1,      \label{eq:rel1}
\\
  \Downarrow * \,=\, * \downarrow, \qquad
  \downarrow * &\,=\, * \Downarrow, \qquad
  \downarrow \Downarrow \,=\, \Downarrow \downarrow.    \label{eq:rel2}
\end{align}
The group generated by the symbols $*$, $\downarrow$,  $\Downarrow$ 
subject to the relations \eqref{eq:rel1}, \eqref{eq:rel2} is the dihedral group
$D_4$.
Recall that $D_4$ is the group of symmetries of a square, and has $8$ elements.
The elements $*$, $\downarrow$, $\Downarrow$ induce an action of $D_4$ on the
set of all TB tridiagonal systems over $\F$.
Two TB tridiagonal systems will be called {\em relatives} whenever they are in the same
orbit of this $D_4$ action.

\begin{definition}   \label{def:fg}    \samepage
\ifDRAFT {\rm def:fg}. \fi
Let $\Phi$ denote a TB tridiagonal system, and let $g \in D_4$.
For any object $f$ attached to $\Phi$,
let $f^g$ denote the corresponding object attached to $\Phi^{g^{-1}}$.
\end{definition}

\begin{lemma}    \label{lem:isorelative}    \samepage
\ifDRAFT {\rm lem:isorelative}. \fi
Let $\Phi$ and $\Phi'$ denote TB tridiagonal systems over $\F$.
Assume that $\Phi$ and $\Phi'$ are isomorphic,
and let $\psi$ denote an isomorphism of TB tridiagonal systems
from $\Phi$ to $\Phi'$.
Then for $g \in D_4$ the map $\psi$ is an isomorphism of TB tridiagonal systems
from $\Phi^g$ to ${\Phi'}^g$.
\end{lemma}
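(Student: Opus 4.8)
The plan is to reduce to the three generators of $D_4$ and then verify the claim directly for each one. Let $S$ denote the set of those $g \in D_4$ with the following property: for every pair $\Psi, \Psi'$ of isomorphic TB tridiagonal systems over $\F$ and every isomorphism $\chi$ from $\Psi$ to $\Psi'$, the map $\chi$ is also an isomorphism from $\Psi^g$ to ${\Psi'}^g$. I first observe that $S$ is a subgroup of $D_4$. It contains the identity trivially. For closure, suppose $g,h \in S$ and let $\chi$ be an isomorphism from $\Psi$ to $\Psi'$; applying $g \in S$ to this pair shows $\chi$ is an isomorphism from $\Psi^g$ to ${\Psi'}^g$, and then applying $h \in S$ to the transformed pair $\Psi^g, {\Psi'}^g$ shows $\chi$ is an isomorphism from $(\Psi^g)^h = \Psi^{gh}$ to $({\Psi'}^g)^h = {\Psi'}^{gh}$; hence $gh \in S$. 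Since $D_4$ is finite, a nonempty subset closed under multiplication is a subgroup, so $S$ is a subgroup. By \eqref{eq:rel1} and \eqref{eq:rel2} the group $D_4$ is generated by $*$, $\downarrow$, $\Downarrow$, so it suffices to show that these three generators lie in $S$; applying this to the specific $\Phi, \Phi', \psi$ of the statement then finishes the proof.

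Next I would verify membership for each generator by unwinding the definition of an isomorphism of TB tridiagonal systems. Write $\Phi = (A; \{E_i\}_{i=0}^d; A^*; \{E^*_i\}_{i=0}^\delta)$ and ${\Phi'} = (A'; \{E'_i\}_{i=0}^d; A^{*\prime}; \{E^{*\prime}_i\}_{i=0}^\delta)$, and let $\psi$ be an isomorphism from $\Phi$ to $\Phi'$, so that $\psi A = A' \psi$, $\psi A^* = A^{*\prime}\psi$, $\psi E_i = E'_i \psi$ for $0 \leq i \leq d$, and $\psi E^*_i = E^{*\prime}_i \psi$ for $0 \leq i \leq \delta$. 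For $g = *$ the defining equations of an isomorphism from $\Phi^*$ to ${\Phi'}^*$ are exactly these same four families, merely listed with the two halves $A^*, A$ interchanged; hence $\psi$ qualifies and $* \in S$. For $g = \Downarrow$ the only change is the reindexing $E_i \mapsto E_{d-i}$ of the idempotents of $A$; since $i \mapsto d-i$ is a bijection of $\{0,1,\ldots,d\}$, the family $\{\psi E_{d-i} = E'_{d-i}\psi\}_{i=0}^d$ is the same collection of equations as $\{\psi E_i = E'_i\psi\}_{i=0}^d$, so $\psi$ is an isomorphism from $\Phi^\Downarrow$ to ${\Phi'}^\Downarrow$ and $\Downarrow \in S$. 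The case $g = \downarrow$ is identical with the roles of the two idempotent families interchanged, using the bijection $i \mapsto \delta-i$ of $\{0,1,\ldots,\delta\}$, giving $\downarrow \in S$.

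The verifications themselves are routine, since each $g \in \{*, \downarrow, \Downarrow\}$ only permutes or reindexes the data of a system without altering the underlying maps $A, A^*$. The one point requiring care is the subgroup step, where the generator-level statement must be applied a second time to the already-transformed pair $\Psi^g, {\Psi'}^g$ rather than to the original $\Psi, \Psi'$. This is legitimate precisely because the defining property of $S$ is quantified over \emph{all} isomorphic pairs and all isomorphisms between them, and it is exactly this universality that lets the three generator checks propagate to all eight elements of $D_4$.
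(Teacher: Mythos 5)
Your proof is correct and follows essentially the same route as the paper, whose entire proof is the single line ``By the construction'': one simply unwinds the definitions of $\Phi^*$, $\Phi^\downarrow$, $\Phi^\Downarrow$ and observes that each merely permutes or reindexes the data, so the intertwining equations for $\psi$ are unchanged as a set. Your additional subgroup/generator reduction is a harmless organizational wrapper around that same verification.
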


\begin{proof}
By the construction.
\end{proof}

\begin{lemma}    \label{lem:associated}    \samepage
\ifDRAFT {\rm lem:associated}. \fi
Let $A,A^*$ denote a TB tridiagonal pair over $\F$,
and let $\Phi$ denote an associated TB tridiagonal system.
Then the TB tridiagonal systems associated with $A,A^*$ are
$\Phi$, $\Phi^\downarrow$, $\Phi^\Downarrow$, $\Phi^{\downarrow\Downarrow}$.
\end{lemma}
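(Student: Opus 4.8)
The plan is to read off, from Definitions \ref{def:TBTDsystem} and \ref{def:associated}, exactly what it means for a TB tridiagonal system to be associated with the given pair $A,A^*$: it is precisely a sequence $(A; \{F_i\}_{i=0}^d; A^*; \{F^*_i\}_{i=0}^\delta)$ whose first and third entries are the given $A$ and $A^*$, where $\{F_i\}_{i=0}^d$ is a standard ordering of the primitive idempotents of $A$ and $\{F^*_i\}_{i=0}^\delta$ is a standard ordering of the primitive idempotents of $A^*$. So enumerating the associated systems reduces to counting the standard orderings of the primitive idempotents of $A$, and independently those of $A^*$, and then forming all pairings.

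First I would invoke the fact recorded just before Definition \ref{def:TBTDsystem}: if $\{V_i\}_{i=0}^d$ is a standard ordering of the eigenspaces of $A$, then $\{V_{d-i}\}_{i=0}^d$ is also standard and no further ordering is standard. Since an ordering of the primitive idempotents is standard exactly when the corresponding ordering of the eigenspaces is standard, it follows that the primitive idempotents of $A$ admit exactly two standard orderings, namely $\{E_i\}_{i=0}^d$ and its reversal $\{E_{d-i}\}_{i=0}^d$. The identical argument applied to $A^*$ shows that its primitive idempotents admit exactly the two standard orderings $\{E^*_i\}_{i=0}^\delta$ and $\{E^*_{\delta-i}\}_{i=0}^\delta$. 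Combining the two independent binary choices yields exactly $2 \times 2 = 4$ TB tridiagonal systems associated with $A,A^*$.

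Finally I would match these four combinations against the definitions of $\downarrow$ and $\Downarrow$. Each of $\Phi$, $\Phi^\downarrow$, $\Phi^\Downarrow$, $\Phi^{\downarrow\Downarrow}$ keeps $A$ and $A^*$ in its first and third slots, so all four are indeed associated with $A,A^*$; and inspecting the definitions shows that they use, respectively, $(\{E_i\},\{E^*_i\})$, $(\{E_i\},\{E^*_{\delta-i}\})$, $(\{E_{d-i}\},\{E^*_i\})$, and $(\{E_{d-i}\},\{E^*_{\delta-i}\})$, thereby realizing all four pairings of standard orderings. Hence these four systems exhaust the associated systems, proving the claim. There is no substantive obstacle here beyond bookkeeping; the only point that warrants care is the translation between standard orderings of eigenspaces and of primitive idempotents, which is immediate from the definitions, together with checking that $\Phi^\downarrow$, $\Phi^\Downarrow$, $\Phi^{\downarrow\Downarrow}$ retain $A$ and $A^*$ in the correct positions.
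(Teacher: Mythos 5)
Your proposal is correct and follows essentially the same route as the paper, whose entire proof is the citation ``By the comments above Definition \ref{def:TBTDsystem}'' --- that is, exactly the fact you invoke, that each of $A$, $A^*$ admits precisely two standard orderings (an ordering and its reversal), giving the four pairings realized by $\Phi$, $\Phi^\downarrow$, $\Phi^\Downarrow$, $\Phi^{\downarrow\Downarrow}$. You have simply written out in full the bookkeeping the paper leaves implicit.
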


\begin{proof}
By the comments above Definition \ref{def:TBTDsystem}.
\end{proof}

\begin{definition}    \label{def:eigenseq2}    \samepage
\ifDRAFT {\rm def:eigenseq2}. \fi
Let $A,A^*$ denote a TB tridiagonal pair over $\F$.
By an {\em eigenvalue sequence} (resp.\ {\em dual eigenvalue sequence})
(resp.\ {\em eigenvalue array})
 {\em of  $A,A^*$}
we mean the eigenvalue sequence
 (resp.\ dual eigenvalue sequence) (resp.\ eigenvalue array) 
of a TB tridiagonal system associated with $A,A^*$.
\end{definition}

\begin{lemma}   \label{lem:relative}    \samepage
\ifDRAFT {\rm lem:relative}. \fi
Let $\Phi$ denote a TB tridiagonal system over $\F$
with eigenvalue array $(\{\th_i\}_{i=0}^d; \{\th^*_i\}_{i=0}^\delta)$.
Then for $g \in \{*, \downarrow, \Downarrow\}$ the eigenvalue array
of $\Phi^g$ is as follows:
\[
\begin{array}{c|c}
g    &  \text{\rm Eigenvalue array of $\Phi^g$}
\\ \hline
* &  ( \{\th^*_i\}_{i=0}^\delta; \{\th_i\}_{i=0}^d)    \rule{0mm}{2.5ex}
\\
\downarrow &  (\{\th_i\}_{i=0}^d; \{\th^*_{\delta-i}\}_{i=0}^\delta)    \rule{0mm}{2.5ex}
\\
\Downarrow &  (\{\th_{d-i}\}_{i=0}^d; \{\th^*_i\}_{i=0}^\delta)    \rule{0mm}{2.5ex}
\end{array}
\]
\end{lemma}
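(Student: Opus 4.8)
The plan is to unwind the definitions of the three operations $*$, $\downarrow$, $\Downarrow$ given just above Definition~\ref{def:fg} and read off how each permutes the eigenvalues. The key observation is that the eigenvalue array of a TB tridiagonal system is determined by which primitive idempotent sits in which position: for $0 \leq i \leq d$ the scalar $\th_i$ is the eigenvalue of $A$ attached to $E_i$ (Definition~\ref{def:eigenseq}), and similarly $\th^*_i$ is attached to $E^*_i$. So for each $g$ I would simply compare the sequence $\Phi^g$ with the template \eqref{eq:Phi} and record the resulting array.

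\textbf{The three cases.} First, for $g = {*}$, recall $\Phi^* = (A^*; \{E^*_i\}_{i=0}^\delta; A; \{E_i\}_{i=0}^d)$. Here the roles of $A$ and $A^*$ are interchanged, so the element playing the role of ``$A$'' is $A^*$ with idempotents $E^*_i$ and eigenvalues $\th^*_i$, while the element playing the role of ``$A^*$'' is $A$ with idempotents $E_i$ and eigenvalues $\th_i$. Thus the eigenvalue array of $\Phi^*$ is $(\{\th^*_i\}_{i=0}^\delta; \{\th_i\}_{i=0}^d)$, as claimed. Second, for $g = {\downarrow}$, recall $\Phi^{\downarrow} = (A; \{E_i\}_{i=0}^d; A^*; \{E^*_{\delta-i}\}_{i=0}^\delta)$. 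The $A$-part is unchanged, so its eigenvalue sequence is still $\{\th_i\}_{i=0}^d$; on the dual side the idempotent in position $i$ is now $E^*_{\delta-i}$, whose eigenvalue is by definition $\th^*_{\delta-i}$, giving dual sequence $\{\th^*_{\delta-i}\}_{i=0}^\delta$. Third, for $g = {\Downarrow}$, the situation is the mirror image: the $A^*$-part is unchanged while the $A$-idempotent in position $i$ becomes $E_{d-i}$ with eigenvalue $\th_{d-i}$, yielding array $(\{\th_{d-i}\}_{i=0}^d; \{\th^*_i\}_{i=0}^\delta)$.

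There is essentially no obstacle here: the lemma is bookkeeping, and the only thing to be careful about is the direction of the index in Definition~\ref{def:fg}, which defines $f^g$ as the object attached to $\Phi^{g^{-1}}$ rather than to $\Phi^g$. However, since each of $*$, $\downarrow$, $\Downarrow$ is an involution by \eqref{eq:rel1}, we have $g^{-1} = g$ for $g \in \{*, \downarrow, \Downarrow\}$, so the distinction evaporates and I may read the array of $\Phi^g$ directly off the displayed definition of $\Phi^g$. I would note this involution remark explicitly to justify that Definition~\ref{def:fg} causes no sign or index reversal in these three cases, and then conclude by citing Definition~\ref{def:eigenseq} for the identification of positions with eigenvalues. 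The whole argument is routine and can be dispatched in a couple of lines once the involutivity is flagged.
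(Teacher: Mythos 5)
Your proof is correct and matches the paper's approach: the paper's entire proof is ``By the construction,'' i.e.\ exactly the bookkeeping you carry out by reading the eigenvalue positions off the displayed definitions of $\Phi^*$, $\Phi^\downarrow$, $\Phi^\Downarrow$. Your extra remark that the involutivity in \eqref{eq:rel1} neutralizes the $g^{-1}$ in Definition~\ref{def:fg} is a sensible precaution, though the paper does not bother to mention it.
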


\begin{proof}
By the construction.
\end{proof}

\begin{lemma}    \label{lem:AEsi}   \samepage
\ifDRAFT {\rm lem:AEsi}. \fi
For the TB tridiagonal system in \eqref{eq:Phi} we have
\begin{align*}
A E^*_i V &\subseteq E^*_{i-1} V + E^*_{i+1} V \qquad\qquad (0 \leq i \leq \delta),
\\
A^* E_i V &\subseteq E_{i-1} V + E_{i+1} V \qquad\qquad (0 \leq i \leq d).
\end{align*}
\end{lemma}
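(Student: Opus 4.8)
The plan is to translate the defining containments for the TB tridiagonal pair, stated in terms of the eigenspaces $V_i$ and $V^*_i$, into the language of primitive idempotents, since $E^*_i V = V^*_i$ and $E_i V = V_i$ by the observation (i) recorded in Section~\ref{sec:pre}. The first claim, $A E^*_i V \subseteq E^*_{i-1} V + E^*_{i+1} V$, is then \emph{literally} the containment \eqref{eq:AVsi} from Definition~\ref{def:TBTDpair}(iii), once we recall that the ordering $\{E^*_i\}_{i=0}^\delta$ is standard and hence the corresponding eigenspace ordering $\{V^*_i\}_{i=0}^\delta$ satisfies \eqref{eq:AVsi}. Symmetrically, the second claim $A^* E_i V \subseteq E_{i-1} V + E_{i+1} V$ is the containment \eqref{eq:AsVi} from Definition~\ref{def:TBTDpair}(ii), using that $\{E_i\}_{i=0}^d$ is a standard ordering of the primitive idempotents of $A$.

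Concretely, first I would invoke Definition~\ref{def:TBTDsystem}(ii),(iii) to record that $\{E_i\}_{i=0}^d$ and $\{E^*_i\}_{i=0}^\delta$ are standard orderings, which by the definition of ``standard'' (given in the paragraph preceding Definition~\ref{def:TBTDsystem}) means the eigenspace orderings $\{E_i V\}_{i=0}^d$ and $\{E^*_i V\}_{i=0}^\delta$ satisfy \eqref{eq:AsVi} and \eqref{eq:AVsi} respectively. Then I would rewrite those two displayed containments with $V^*_i$ replaced by $E^*_i V$ and $V_i$ replaced by $E_i V$ throughout, which yields exactly the two assertions of the lemma. The notational conventions $E_{-1}=0$, $E_{d+1}=0$, $E^*_{-1}=0$, $E^*_{\delta+1}=0$ from Definition~\ref{def:convenience} match the conventions $V_{-1}=V_{d+1}=0$ and $V^*_{-1}=V^*_{\delta+1}=0$ in Definition~\ref{def:TBTDpair}, so the boundary cases $i=0$ and $i=\delta$ (resp.\ $i=d$) require no separate treatment.

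There is essentially no obstacle here: the lemma is a restatement of the defining conditions in idempotent notation, and the only thing to verify is the identification $V_i = E_i V$, $V^*_i = E^*_i V$, which is already recorded as item~(i) of the list of observations in Section~\ref{sec:pre}. The proof amounts to a single sentence citing Definitions~\ref{def:TBTDpair} and~\ref{def:TBTDsystem} together with that identification, so I expect to write \emph{``Immediate from \eqref{eq:AsVi}, \eqref{eq:AVsi} and the fact that $E_i V$, $E^*_i V$ are the eigenspaces of $A$, $A^*$ in standard order''} and stop.
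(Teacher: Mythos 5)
Your proposal is correct and matches the paper's proof, which simply cites Definition~\ref{def:TBTDpair}(ii),(iii); the only content is the identification $E_i V = V_i$, $E^*_i V = V^*_i$ together with the standardness of the orderings, exactly as you describe. Nothing further is needed.
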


\begin{proof}
By Definition \ref{def:TBTDpair}(ii), (iii).
\end{proof}

\begin{lemma}    \label{lem:EsiAEsj}    \samepage
\ifDRAFT {\rm lem:EsiAEsj}. \fi
For the TB tridiagonal system in \eqref{eq:Phi} we have
\begin{align}
   E^*_i A E^*_j &=
     \begin{cases}
        0  &  \text{ if $\;\;|i-j| \neq 1$},
     \\
       \neq 0 & \text{ if $\;\;|i-j|=1$}
    \end{cases}     
            \qquad\qquad (0 \leq i,j \leq \delta),           \label{eq:EsiAEsj}
\\
   E_i A^* E_j &=
     \begin{cases}
        0  &  \text{ if $\;\;|i-j| \neq 1$},
     \\
       \neq 0 & \text{ if $\;\;|i-j|=1$}
    \end{cases}  
             \qquad\qquad (0 \leq i ,j \leq d).               \label{eq:EiAsEj}
\end{align}
\end{lemma}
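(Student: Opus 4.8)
The plan is to handle the two displayed equations \eqref{eq:EsiAEsj}, \eqref{eq:EiAsEj} by symmetry: the second follows from the first applied to the dual system $\Phi^*=(A^*;\{E^*_i\}_{i=0}^\delta;A;\{E_i\}_{i=0}^d)$, in which the roles of $A,A^*$ and of $\{E_i\},\{E^*_i\}$ are interchanged. So I concentrate on \eqref{eq:EsiAEsj}. Note also that if $\delta=0$ there is no pair $i,j$ with $|i-j|=1$, so the nonvanishing assertion is vacuous; hence I may assume $\delta\geq1$.

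For the vanishing case $|i-j|\neq1$ I argue straight from Lemma \ref{lem:AEsi}. For any $v\in V$ we have $E^*_jv\in E^*_jV$, so $AE^*_jv\in AE^*_jV\subseteq E^*_{j-1}V+E^*_{j+1}V$. Applying $E^*_i$ and using $E^*_iE^*_k=0$ for $i\neq k$, the result is $0$ unless $i=j-1$ or $i=j+1$; hence $E^*_iAE^*_j=0$ whenever $|i-j|\neq1$.

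The substantive part is the nonvanishing case $|i-j|=1$, and the only tool that can force it is the irreducibility condition Definition \ref{def:TBTDpair}(iv); I argue by contradiction. Suppose first that $E^*_{i+1}AE^*_i=0$ for some $0\leq i\leq\delta-1$, and set $W=\sum_{k=0}^iE^*_kV$. Since each $E^*_kV$ is an eigenspace of $A^*$ we have $A^*W\subseteq W$. To check $AW\subseteq W$, use Lemma \ref{lem:AEsi}: for $k<i$ both $E^*_{k-1}V$ and $E^*_{k+1}V$ lie in $W$, while for $k=i$ the only component of $AE^*_iV$ that could leave $W$ is its $E^*_{i+1}V$-component $E^*_{i+1}AE^*_iV$, which vanishes by hypothesis, so $AE^*_iV\subseteq E^*_{i-1}V\subseteq W$. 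Thus $W$ is invariant under both $A$ and $A^*$, with $W\neq0$ (it contains $E^*_0V$) and $W\neq V$ (because the sum $\sum_kE^*_kV$ is direct and $E^*_\delta V\not\subseteq W$), contradicting (iv). A mirror-image argument, taking $W=\sum_{k=i}^\delta E^*_kV$, shows that $E^*_{i-1}AE^*_i=0$ for some $1\leq i\leq\delta$ likewise produces a proper nonzero invariant subspace, again contradicting (iv). Every pair with $|i-j|=1$ has the form $j=i-1$ (a ``raising'' entry, ruled out by the first argument) or $j=i+1$ (a ``lowering'' entry, ruled out by the second), so all such entries are nonzero.

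I expect the nonvanishing claim to be the main obstacle, since it is the only place where irreducibility enters and where one must exhibit exactly the right invariant subspace; the care lies in the boundary indices $i=0,\delta$ and in invoking the directness of $\sum_kE^*_kV$ to guarantee simultaneously $W\neq0$ and $W\neq V$. Once \eqref{eq:EsiAEsj} holds, \eqref{eq:EiAsEj} is immediate by applying it to $\Phi^*$.
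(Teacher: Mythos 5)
Your proposal is correct and follows essentially the same route as the paper: the vanishing part from Lemma \ref{lem:AEsi}, the nonvanishing part by contradiction via an $(A,A^*)$-invariant proper nonzero subspace built from a tail of the $E^*_kV$'s (you treat the ``raising'' case with the lower tail where the paper treats the ``lowering'' case with the upper tail, but these are mirror images and both proofs handle both), and \eqref{eq:EiAsEj} by applying \eqref{eq:EsiAEsj} to $\Phi^*$.
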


\begin{proof}
We first show that
\begin{align}
   E^*_i A E^*_j &= 0  \quad \text{ if $\;\;|i-j| \neq 1$} \qquad\qquad (0 \leq i,j \leq \delta).
                         \label{eq:EsiAEsjaux2}
\end{align}
By Lemma \ref{lem:AEsi},
\begin{equation}
A E^*_j V \subseteq E^*_{j-1} V + E^*_{j+1} V.     \label{eq:EsiAEsjaux3}
\end{equation}
In the above line, apply $E^*_i$ to each side.
The right-hand side becomes $0$ since $i  \neq j-1$ and $i \neq j+1$.
Thus $E^*_i A E^*_j V= 0$ and so \eqref{eq:EsiAEsjaux2} holds.
Next we show that 
\begin{align}
      E^*_i A E^*_j &\neq 0  \quad \text{ if $\;\;|i-j|=1$}  \qquad\qquad  (1 \leq i,j \leq \delta).    \label{eq:EsiAEsjaux1}
\end{align}
Consider the case $i=j-1$.
Assume by way of contradiction that $E^*_i A E^*_j = 0$.
For $v \in A E^*_j V$ we have $E^*_{j-1} v=0$.
By this and \eqref{eq:EsiAEsjaux3} we find $v \in E^*_{j+1} V$.
By these comments 
\begin{equation}
  A E^*_j V \subseteq E^*_{j+1} V.                  \label{eq:EsiAEsjaux4}
\end{equation}
Define
$W = E^*_j V + \cdots + E^*_\delta V$.
We have $A W \subseteq W$ by Lemma \ref{lem:AEsi} and \eqref{eq:EsiAEsjaux4}.
By construction $A^* W \subseteq W$, $W \neq 0$, $W \neq V$.
This contradicts Definition \ref{def:TBTDpair}(iv),
and therefore \eqref{eq:EsiAEsjaux1} holds for $i=j-1$.
A similar argument shows that \eqref{eq:EsiAEsjaux1} holds for $j=i-1$.
We have shown \eqref{eq:EsiAEsj}.
To get \eqref{eq:EiAsEj},
apply \eqref{eq:EsiAEsj} to $\Phi^*$.
\end{proof}

\section{The raising and lowering maps}
\label{sec:RL}

Throughout this section,  
let $V$ denote a vector space over $\F$ with finite positive dimension,
and let 
\[
 \Phi = (A; \{E_i\}_{i=0}^d; A^*; \{E^*_i\}_{i=0}^\delta)
\]
denote a TB tridiagonal system on $V$.
Let $(\{\th_i\}_{i=0}^d; \{\th^*_i\}_{i=0}^\delta)$ denote the eigenvalue array of $\Phi$.

\begin{definition}    \label{def:RL}    \samepage
\ifDRAFT {\rm def:RL}. \fi
Define $R$, $L \in \text{\rm End}(V)$ by
\begin{align*}
 R &= \sum_{i=1}^\delta E^*_i A E^*_{i-1}, \qquad\qquad\qquad
 L = \sum_{i=1}^\delta E^*_{i-1} A E^*_i.
\end{align*}
We call $R$ (resp.\ $L$) the {\em raising map} (resp.\ {\em lowering map}) for $\Phi$.
\end{definition}

\begin{lemma}    \label{lem:ARL}    \samepage
\ifDRAFT {\rm lem:ARL}. \fi
We have $A = R + L$.
\end{lemma}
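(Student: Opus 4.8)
The plan is to recover $A$ by sandwiching it between two copies of the resolution of the identity $I = \sum_{i=0}^\delta E^*_i$ coming from the primitive idempotents of $A^*$, and then to discard the terms that vanish by Lemma \ref{lem:EsiAEsj}. Concretely, I would first write
\[
  A = IAI = \Bigl(\sum_{i=0}^\delta E^*_i\Bigr) A \Bigl(\sum_{j=0}^\delta E^*_j\Bigr)
        = \sum_{i=0}^\delta \sum_{j=0}^\delta E^*_i A E^*_j,
\]
using $I = \sum_{i=0}^\delta E^*_i$ from the Preliminaries together with bilinearity.

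Next I would invoke Lemma \ref{lem:EsiAEsj}: the factor $E^*_i A E^*_j$ is zero whenever $|i-j| \neq 1$, so in the double sum only the terms with $j = i-1$ and $j = i+1$ survive. This leaves
\[
  A = \sum_{i=1}^\delta E^*_i A E^*_{i-1} \;+\; \sum_{i=0}^{\delta-1} E^*_i A E^*_{i+1},
\]
where the index ranges are dictated by requiring $i-1 \geq 0$ in the first sum and $i+1 \leq \delta$ in the second.

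To finish, I would match these two sums against Definition \ref{def:RL}. The first sum is literally $R$. For the second, I would reindex via the substitution $i \mapsto i-1$, which turns $\sum_{i=0}^{\delta-1} E^*_i A E^*_{i+1}$ into $\sum_{i=1}^\delta E^*_{i-1} A E^*_i = L$. Combining the two yields $A = R + L$, as desired.

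I do not expect any genuine obstacle here; the argument is a one-line spectral expansion followed by an application of the already-established Lemma \ref{lem:EsiAEsj}. The only point demanding a little care is the bookkeeping of the index ranges when splitting the double sum into its $j = i-1$ and $j = i+1$ parts, and the reindexing of the latter so that it coincides exactly with the definition of $L$ rather than being off by one.
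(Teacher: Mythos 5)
Your proposal is correct and follows exactly the paper's own argument: expand $A = IAI$ using $I=\sum_{i=0}^\delta E^*_i$, kill the terms with $|i-j|\neq 1$ via Lemma \ref{lem:EsiAEsj}, and identify the surviving sums with $R$ and $L$ from Definition \ref{def:RL}. The index bookkeeping you spell out is the only detail the paper leaves implicit.
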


\begin{proof}
Evaluate $A = I A I$ using $I = \sum_{i=0}^\delta E^*_i$
to get $A =  \sum_{i=0}^\delta \sum_{j=0}^\delta E^*_i A E^*_j$.
In this equation evaluate the right-hand side
using Lemma \ref{lem:EsiAEsj} and Definition \ref{def:RL}.
\end{proof}

\begin{lemma}    \label{lem:EsiR}    \samepage
\ifDRAFT {\rm lem:EsiR}. \fi
The following {\rm (i)--(iv)} hold:
\begin{itemize}
\item[\rm (i)]
$E^*_i R = E^*_i A E^*_{i-1} = R E^*_{i-1}  \qquad (1 \leq i \leq \delta)$;
\item[\rm (ii)]
$E^*_0 R = 0$,  $\;\; R E^*_\delta = 0$;
\item[\rm (iii)]
$E^*_{i-1} L = E^*_{i-1} A E^*_i = L E^*_i  \qquad (1 \leq i \leq \delta)$;
\item[\rm (iv)]
$E^*_\delta L = 0$, $\;\; L E^*_0 = 0$.
\end{itemize}
\end{lemma}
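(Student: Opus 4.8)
The plan is to prove all four parts by direct computation, using only the orthogonality relations $E^*_i E^*_j = \delta_{i,j} E^*_i$ for the primitive idempotents of $A^*$ recorded in Section~\ref{sec:pre}. These relations let us collapse the defining sums for $R$ and $L$ in Definition~\ref{def:RL} down to a single surviving term, or to zero, depending on the index.

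For part (i), first I would compute $E^*_i R$ by multiplying the sum $R = \sum_{j=1}^\delta E^*_j A E^*_{j-1}$ on the left by $E^*_i$. Since $E^*_i E^*_j = \delta_{i,j} E^*_i$, every term with $j \neq i$ vanishes, and the term $j=i$ (present because $1 \leq i \leq \delta$) reduces to $E^*_i A E^*_{i-1}$. For $R E^*_{i-1}$ I would instead multiply on the right and use $E^*_{j-1} E^*_{i-1} = \delta_{j,i} E^*_{i-1}$ to isolate the $j=i$ term, again obtaining $E^*_i A E^*_{i-1}$. This yields the chain of equalities in (i).

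For part (ii), the same bookkeeping shows that no term survives: in $E^*_0 R$ the factor $E^*_0 E^*_j$ vanishes for all $j \geq 1$, and in $R E^*_\delta$ the factor $E^*_{j-1} E^*_\delta$ vanishes for all $j \leq \delta$, since $j-1$ never equals $\delta$. Parts (iii) and (iv) are handled identically, working with the sum $L = \sum_{j=1}^\delta E^*_{j-1} A E^*_j$ and multiplying by $E^*_{i-1}$ on the left, $E^*_i$ on the right, $E^*_\delta$ on the left, or $E^*_0$ on the right; alternatively, one may observe that the raising map of the relative $\Phi^\downarrow$ coincides with $L$, so that (iii) and (iv) follow from (i) and (ii) applied to $\Phi^\downarrow$.

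The computation is routine, so there is no substantial obstacle; the only thing demanding care is the index bookkeeping at the boundaries $i=0$ and $i=\delta$, where one must confirm that the relevant idempotent product straddles an index outside the range $0,\ldots,\delta$ and hence vanishes. Keeping straight which orthogonality relation (left versus right multiplication) applies to which factor is the sole point where an index slip could occur.
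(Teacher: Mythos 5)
Your proposal is correct and follows exactly the route the paper takes: the paper's proof simply invokes $E^*_r E^*_s = \delta_{r,s}E^*_r$ together with Definition \ref{def:RL} and declares the computation routine, and your expansion of the defining sums with the orthogonality relations killing all but one term (or all terms, at the boundaries) is precisely that routine computation written out. The index bookkeeping you describe is accurate, so nothing further is needed.
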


\begin{proof}
Recall that $E^*_r E^*_s = \delta_{r,s} E^*_r$ for $0 \leq r,s \leq \delta$.
Using this and  Definition \ref{def:RL}
we routinely obtain the results.
\end{proof}

\begin{lemma}    \label{lem:REsiV}    \samepage
\ifDRAFT {\rm lem:REsiV}. \fi
The following hold:
\begin{itemize}
\item[\rm (i)]
$R E^*_i V \subseteq E^*_{i+1} V \qquad (0 \leq i \leq \delta)$; 
\item[\rm (ii)]
$L E^*_i V \subseteq E^*_{i-1} V \qquad (0 \leq i \leq \delta)$.
\end{itemize}
\end{lemma}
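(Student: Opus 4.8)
The plan is to read off each inclusion directly from the definition of the raising and lowering maps in Definition \ref{def:RL}, using the orthogonality relation $E^*_r E^*_s = \delta_{r,s} E^*_r$. For part (i), I would start from $R = \sum_{j=1}^\delta E^*_j A E^*_{j-1}$ and apply it to a vector in $E^*_i V$. Since $E^*_{j-1}$ annihilates $E^*_i V$ unless $j-1 = i$, only the single term $E^*_{i+1} A E^*_i$ survives (when $1 \le i+1 \le \delta$, i.e. $0 \le i \le \delta-1$). That term visibly has image in $E^*_{i+1} V$, giving $R E^*_i V \subseteq E^*_{i+1} V$. For the boundary case $i = \delta$, every term has $j - 1 = \delta$ impossible, so $R E^*_\delta V = 0 \subseteq E^*_{\delta+1} V$ (recall $E^*_{\delta+1} = 0$ by Definition \ref{def:convenience}), and the inclusion holds trivially.

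Part (ii) is entirely analogous: starting from $L = \sum_{j=1}^\delta E^*_{j-1} A E^*_j$ and applying it to a vector in $E^*_i V$, the factor $E^*_j$ kills $E^*_i V$ unless $j = i$, leaving only $E^*_{i-1} A E^*_i$ (present when $1 \le i \le \delta$), whose image lies in $E^*_{i-1} V$. For $i = 0$ the sum contributes nothing, so $L E^*_0 V = 0 \subseteq E^*_{-1} V$, again using the convention $E^*_{-1} = 0$.

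A cleaner packaging, which I would actually prefer to write, is to invoke Lemma \ref{lem:EsiR} rather than re-expand the sums. From Lemma \ref{lem:EsiR}(i) we have $R E^*_{i-1} = E^*_i R$ for $1 \le i \le \delta$, equivalently $R E^*_i = E^*_{i+1} R$ for $0 \le i \le \delta - 1$; hence $R E^*_i V = E^*_{i+1} R V \subseteq E^*_{i+1} V$ in that range, while Lemma \ref{lem:EsiR}(ii) gives $R E^*_\delta = 0$, covering $i = \delta$. Symmetrically, Lemma \ref{lem:EsiR}(iii) gives $L E^*_i = E^*_{i-1} L$ for $1 \le i \le \delta$, so $L E^*_i V \subseteq E^*_{i-1} V$, and Lemma \ref{lem:EsiR}(iv) gives $L E^*_0 = 0$ for the remaining case.

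I do not anticipate any genuine obstacle here; the statement is essentially a bookkeeping consequence of the definitions and the preceding lemma, and the only points requiring care are the two boundary indices $i = \delta$ (for $R$) and $i = 0$ (for $L$), where one must fall back on Lemma \ref{lem:EsiR}(ii),(iv) or on the conventions $E^*_{-1} = E^*_{\delta+1} = 0$ from Definition \ref{def:convenience}. The whole argument is of the ``routine'' flavor flagged repeatedly in the surrounding proofs.
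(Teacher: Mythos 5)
Your proof is correct, and your preferred ``cleaner packaging'' via Lemma \ref{lem:EsiR} is exactly the paper's own proof, which reads simply ``Use Lemma \ref{lem:EsiR}.'' The direct expansion of the sums in Definition \ref{def:RL} is a valid alternative but amounts to reproving Lemma \ref{lem:EsiR} inline.
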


\begin{proof}
Use Lemma \ref{lem:EsiR}.
\end{proof}

\begin{definition}   \label{def:vi}   \samepage
\ifDRAFT {\rm def:vi}. \fi
Fix $0 \neq v \in E_0 V$.
For $0 \leq i \leq \delta$ define
$v_i = E^*_i v$.
For notational convenience, define $v_{-1}=0$ and $v_{\delta+1}=0$.
\end{definition}

\begin{lemma}   \label{lem:vi}   \samepage
\ifDRAFT {\rm lem:vi}. \fi
With reference to Definition \ref{def:vi}, the following hold:
\begin{itemize}
\item[\rm (i)]
$v_i \in E^*_i V \qquad (0 \leq i \leq \delta)$; 
\item[\rm (ii)]
$v = \sum_{i=0}^\delta v_i$.
\end{itemize} 
\end{lemma}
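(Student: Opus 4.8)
The plan is to derive both parts directly from the standard facts about primitive idempotents recalled in Section \ref{sec:pre}, since nothing here requires the tridiagonal pair axioms beyond the fact that $\{E^*_i\}_{i=0}^\delta$ are the primitive idempotents of the diagonalizable map $A^*$. In particular I would invoke that each $E^*_i$ is the projection onto $E^*_i V$ (property (i) in the list $V_i = E_i V$, $E_i E_j = \delta_{i,j} E_i$, $I = \sum_i E_i$, $A = \sum_i \th_i E_i$), and that these idempotents resolve the identity.

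For part (i), I would argue that since $E^*_i$ is the projection onto $E^*_i V$, its image is contained in $E^*_i V$. Therefore $v_i = E^*_i v \in E^*_i V$ for $0 \leq i \leq \delta$. This is immediate and needs no computation.

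For part (ii), I would compute
\[
 \sum_{i=0}^\delta v_i = \sum_{i=0}^\delta E^*_i v = \Bigl( \sum_{i=0}^\delta E^*_i \Bigr) v = I v = v,
\]
using Definition \ref{def:vi} in the first equality and the resolution of the identity $I = \sum_{i=0}^\delta E^*_i$ in the third.

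Honestly, there is no real obstacle here: the lemma is a bookkeeping statement that records the elementary spectral decomposition of the chosen vector $v$ relative to $A^*$, and its entire content is that applying the primitive idempotents of $A^*$ to $v$ both lands in the correct eigenspaces and reassembles $v$. The only thing to be careful about is citing the right two properties (projection image and partition of unity) rather than reproving them, since they are already established in the preliminaries. I expect the published proof to read essentially as ``By Definition \ref{def:vi} and the properties of the primitive idempotents,'' and I would write mine the same way.
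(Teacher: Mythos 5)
Your proof is correct and matches the paper's: part (i) follows directly from Definition \ref{def:vi} together with the fact that $E^*_i$ projects onto $E^*_i V$, and part (ii) is exactly the computation with the resolution of the identity $I = \sum_{i=0}^\delta E^*_i$. The published proof is just a terser version of what you wrote.
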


\begin{proof}
(i)
By Definition \ref{def:vi}.

(ii)
Use $I = \sum_{i=0}^\delta E^*_i$.
\end{proof}

We now describe the action of $R$ and $L$ on $\{v_i\}_{i=0}^\delta$.
Note by Lemma \ref{lem:REsiV} that $R v_\delta = 0$ and $L v_0 = 0$.

\begin{lemma}   \label{lem:RL}   \samepage
\ifDRAFT {\rm lem:RL}. \fi
With reference to Definition \ref{def:vi},
\begin{align*}
 \th_0 v_i &= R v_{i-1} + L v_{i+1}  \qquad\qquad (0 \leq i \leq \delta).
\end{align*}
\end{lemma}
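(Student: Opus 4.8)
The plan is to apply the dual primitive idempotent $E^*_i$ to the eigenvalue equation for $v$, and then to propagate $E^*_i$ past $R$ and $L$ using the commutation relations already recorded in Lemma \ref{lem:EsiR}.

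First I would observe that, since $0 \neq v \in E_0 V$ in Definition \ref{def:vi}, and $\th_0$ is the eigenvalue of $A$ associated with $E_0$, we have $A v = \th_0 v$. Applying $E^*_i$ to each side and using $E^*_i v = v_i$ (Definition \ref{def:vi}), I obtain $E^*_i A v = \th_0 v_i$. The entire task is now to show that the left-hand side equals $R v_{i-1} + L v_{i+1}$.

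For this I would rewrite $A = R + L$ by Lemma \ref{lem:ARL}, giving $E^*_i A v = E^*_i R v + E^*_i L v$, and then move $E^*_i$ to the right of $R$ and of $L$. By Lemma \ref{lem:EsiR}(i) we have $E^*_i R = R E^*_{i-1}$ for $1 \leq i \leq \delta$, while Lemma \ref{lem:EsiR}(ii) gives $E^*_0 R = 0$; hence $E^*_i R v = R E^*_{i-1} v = R v_{i-1}$ for all $0 \leq i \leq \delta$, where the convention $v_{-1} = 0$ covers the case $i = 0$. Symmetrically, replacing $i$ by $i+1$ in Lemma \ref{lem:EsiR}(iii) yields $E^*_i L = L E^*_{i+1}$ for $0 \leq i \leq \delta - 1$, while Lemma \ref{lem:EsiR}(iv) gives $E^*_\delta L = 0$; hence $E^*_i L v = L E^*_{i+1} v = L v_{i+1}$ for all $0 \leq i \leq \delta$, where the convention $v_{\delta+1} = 0$ covers the case $i = \delta$.

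Combining these, $\th_0 v_i = E^*_i A v = E^*_i R v + E^*_i L v = R v_{i-1} + L v_{i+1}$, which is the assertion. I do not anticipate any genuine obstacle here; the computation is a direct application of Lemmas \ref{lem:ARL} and \ref{lem:EsiR}, and the only point requiring a little care is the index bookkeeping at the two boundaries $i = 0$ and $i = \delta$, which is handled precisely by the conventions $v_{-1} = 0$ and $v_{\delta+1} = 0$ fixed in Definition \ref{def:vi} together with parts (ii) and (iv) of Lemma \ref{lem:EsiR}.
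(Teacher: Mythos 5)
Your argument is correct and is essentially identical to the paper's proof: both apply $E^*_i$ to $Av=\th_0 v$, decompose $A=R+L$ via Lemma \ref{lem:ARL}, and push $E^*_i$ past $R$ and $L$ using Lemma \ref{lem:EsiR}, with the boundary cases absorbed by the conventions $v_{-1}=0$, $v_{\delta+1}=0$. No issues.
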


\begin{proof}
We have $v \in E_0 V$ so $A v = \th_0 v$.
Using Lemmas \ref{lem:ARL} and \ref{lem:EsiR},
\begin{align*}
  0 &= E^*_i (A - \th_0 I) v  \\
   &= E^*_i (R + L - \th_0 I) v \\
   &= (R E^*_{i-1} + L E^*_{i+1}  - \th_0 E^*_i) v  \\
   &= R v_{i-1} + L v_{i+1} - \th_0 v_i.
\end{align*}
The result follows.
\end{proof}

\begin{lemma}  \label{lem:RL2}   \samepage
\ifDRAFT {\rm lem:RL2}. \fi
Assume that $\Phi$ is nontrivial.
Then with reference to Definition \ref{def:vi},
the following {\rm (i)--(iii)} hold:
\begin{itemize}
\item[\rm (i)]
$\th_1 \th^*_0 v_0 = \th^*_1 L v_1$;
\item[\rm (ii)]
$\th_1 \th^*_i v_i = \th^*_{i-1} R v_{i-1} + \th^*_{i+1} L v_{i+1} \qquad (1 \leq i \leq \delta-1)$;
\item[\rm (iii)]
$\th_1 \th^*_\delta v_\delta = \th^*_{\delta-1} R v_{\delta-1}$.
\end{itemize}
\end{lemma}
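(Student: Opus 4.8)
The plan is to reduce all three parts to the single identity $A A^* v = \th_1 A^* v$, and then read off (i)--(iii) by comparing components in the direct sum $V = \sum_{i=0}^\delta E^*_i V$.

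First I would locate $A^* v$. Since $v \in E_0 V$ and $\Phi$ is nontrivial (so $d \geq 1$ by Lemma \ref{lem:trivial}, whence $E_1$ and $\th_1$ exist), the second inclusion of Lemma \ref{lem:AEsi} with $i = 0$ gives $A^* E_0 V \subseteq E_{-1} V + E_1 V = E_1 V$, because $E_{-1} = 0$. Hence $A^* v \in E_1 V$, so $A^* v$ is an eigenvector of $A$ with eigenvalue $\th_1$; that is, $A(A^* v) = \th_1 A^* v$. This is the one genuinely new input, and I expect it to be the main (indeed only) point of the argument; everything after it is bookkeeping with $R$ and $L$.

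Next I would expand both sides in terms of $\{v_i\}_{i=0}^\delta$. Using Lemma \ref{lem:vi}(ii) together with $A^* v_i = \th^*_i v_i$, I get $A^* v = \sum_{i=0}^\delta \th^*_i v_i$, so the right-hand side is $\th_1 A^* v = \sum_{i=0}^\delta \th_1 \th^*_i v_i$. For the left-hand side, I write $A = R + L$ by Lemma \ref{lem:ARL}, so $A A^* v = \sum_{j=0}^\delta \th^*_j (R v_j + L v_j)$. Applying $E^*_i$ and using Lemma \ref{lem:REsiV} (which places $R v_j \in E^*_{j+1} V$ and $L v_j \in E^*_{j-1} V$), together with the convention $v_{-1} = v_{\delta+1} = 0$ from Definition \ref{def:vi}, only the terms with $j = i-1$ and $j = i+1$ survive, yielding $E^*_i A A^* v = \th^*_{i-1} R v_{i-1} + \th^*_{i+1} L v_{i+1}$.

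Finally I would apply $E^*_i$ to $A A^* v = \th_1 A^* v$ and equate the two computations: the left side is the expression just obtained and the right side is $\th_1 \th^*_i v_i$, giving $\th_1 \th^*_i v_i = \th^*_{i-1} R v_{i-1} + \th^*_{i+1} L v_{i+1}$ for all $0 \leq i \leq \delta$. Reading this at $i = 0$ (where $R v_{-1} = 0$) gives (i), at $i = \delta$ (where $L v_{\delta+1} = 0$) gives (iii), and at $1 \leq i \leq \delta - 1$ gives (ii); the case split is only to avoid referencing the undefined symbols $\th^*_{-1}$ and $\th^*_{\delta+1}$. The only things to watch are the boundary indexing and the role of nontriviality in ensuring that $\th_1$ is defined.
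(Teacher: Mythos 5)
Your proposal is correct and follows essentially the same route as the paper: the key input in both is that $A^* v \in E_1 V$ (so $A A^* v = \th_1 A^* v$), followed by writing $A = R + L$ and extracting the $E^*_i$-component; your vector-level bookkeeping via Lemma \ref{lem:REsiV} is just a restatement of the paper's operator identities from Lemma \ref{lem:EsiR}.
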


\begin{proof}
(ii)
We have $v \in E_0 V$ and $A^* E_0 V \subseteq E_1 V$, so $A^* v \in E_1 V$.
Therefore $A A^* v = \th_1 A^* v$.
Using Lemmas \ref{lem:ARL}, \ref{lem:EsiR} and
$E^*_r A^* = \th^*_r E^*_r$ $(0 \leq r \leq d)$,
\begin{align*}
  0 &= E^*_i (A-\th_1 I) A^* v \\
   &= E^*_i (R + L - \th_1 I) A^* v \\
  &= (R E^*_{i-1} + L E^*_{i+1} - \th_1 E^*_i) A^* v  \\
  &= \th^*_{i-1} R v_{i-1} + \th^*_{i+1} L v_{i+1} - \th_1 \th^*_i v_i.
\end{align*}
The result follows.

(i), (iii)
Similar to the proof of (ii) above.
\end{proof}

\begin{lemma}   \label{lem:RL3}   \samepage
\ifDRAFT {\rm lem:RL3}. \fi
Assume that $\Phi$ is nontrivial.
Then with reference to Definition \ref{def:vi},
\begin{align*}
R v_{i-1} &= \frac{\th_1 \th^*_i - \th_0 \th^*_{i+1}}{\th^*_{i-1}-\th^*_{i+1}} \, v_i 
           \qquad (1 \leq i \leq \delta-1),
&  R v_{\delta-1} &= \th_0 v_{\delta},
\\
L v_{i+1} &= \frac{\th_1 \th^*_i - \th_0 \th^*_{i-1}}{\th^*_{i+1}-\th^*_{i-1}} \, v_i
          \qquad (1 \leq i \leq \delta -1),
&  L v_1 &= \th_0 v_0.
\end{align*}
Also
\begin{align*}
  (\th_0 \th^*_1 - \th_1 \th^*_0) v_0  &= 0,
  \qquad\qquad (\th_0 \th^*_{\delta-1} - \th_1 \th^*_\delta) v_{\delta} = 0.
\end{align*}
\end{lemma}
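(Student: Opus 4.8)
The plan is to treat, for each interior index $i$, the two relations coming from Lemmas \ref{lem:RL} and \ref{lem:RL2} as a pair of simultaneous linear equations in the two unknown vectors $R v_{i-1}$ and $L v_{i+1}$, and then solve. Concretely, for $1 \le i \le \delta-1$ Lemma \ref{lem:RL} gives $R v_{i-1} + L v_{i+1} = \th_0 v_i$, while Lemma \ref{lem:RL2}(ii) gives $\th^*_{i-1} R v_{i-1} + \th^*_{i+1} L v_{i+1} = \th_1 \th^*_i v_i$. This is the system
\[
 \begin{pmatrix} 1 & 1 \\ \th^*_{i-1} & \th^*_{i+1} \end{pmatrix}
 \begin{pmatrix} R v_{i-1} \\ L v_{i+1} \end{pmatrix}
 = \begin{pmatrix} \th_0 v_i \\ \th_1 \th^*_i v_i \end{pmatrix},
\]
whose coefficient determinant is $\th^*_{i+1} - \th^*_{i-1}$.

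The key point---and the only place where the hypotheses do real work---is that this determinant is nonzero. Since $i+1 \neq i-1$, the scalars $\th^*_{i-1}$ and $\th^*_{i+1}$ are distinct by Note \ref{note:distinct}, so $\th^*_{i+1} - \th^*_{i-1} \neq 0$. I would then solve by Cramer's rule, obtaining $R v_{i-1} = (\th_0 \th^*_{i+1} - \th_1 \th^*_i) v_i / (\th^*_{i+1} - \th^*_{i-1})$ and $L v_{i+1} = (\th_1 \th^*_i - \th_0 \th^*_{i-1}) v_i / (\th^*_{i+1} - \th^*_{i-1})$; flipping the sign of numerator and denominator in the first of these produces exactly the stated closed form for $R v_{i-1}$, and the expression for $L v_{i+1}$ already matches.

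For the two boundary evaluations I would return to Lemma \ref{lem:RL} and invoke the conventions $v_{-1} = 0$, $v_{\delta+1} = 0$ from Definition \ref{def:vi}. Taking $i = \delta$ yields $\th_0 v_\delta = R v_{\delta-1} + L v_{\delta+1} = R v_{\delta-1}$, and taking $i = 0$ yields $\th_0 v_0 = R v_{-1} + L v_1 = L v_1$; these are the advertised formulas $R v_{\delta-1} = \th_0 v_\delta$ and $L v_1 = \th_0 v_0$.

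Finally, the two scalar relations follow by feeding these boundary values back into the remaining parts of Lemma \ref{lem:RL2}. Substituting $L v_1 = \th_0 v_0$ into Lemma \ref{lem:RL2}(i) gives $\th_1 \th^*_0 v_0 = \th^*_1 \th_0 v_0$, i.e.\ $(\th_0 \th^*_1 - \th_1 \th^*_0) v_0 = 0$; substituting $R v_{\delta-1} = \th_0 v_\delta$ into Lemma \ref{lem:RL2}(iii) gives $\th_1 \th^*_\delta v_\delta = \th^*_{\delta-1} \th_0 v_\delta$, i.e.\ $(\th_0 \th^*_{\delta-1} - \th_1 \th^*_\delta) v_\delta = 0$. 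No step is genuinely difficult: the whole argument is a $2 \times 2$ solve together with two boundary specializations, and the only subtlety worth flagging is confirming invertibility of the coefficient matrix through the distinctness of the dual eigenvalues.
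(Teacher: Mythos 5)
Your proposal is correct and is exactly the argument the paper intends: its proof reads "Solve the linear equations in Lemmas \ref{lem:RL} and \ref{lem:RL2}," and your $2\times 2$ Cramer's-rule solve (with invertibility from the distinctness of the $\th^*_j$) together with the boundary specializations at $i=0$ and $i=\delta$ is precisely that. No gaps.
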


\begin{proof}
Solve the linear equations in Lemmas \ref{lem:RL} and \ref{lem:RL2}.
\end{proof}

\begin{lemma}  \label{lem:vibasis}   \samepage
\ifDRAFT {\rm lem:vibasis}. \fi
With reference to Definition \ref{def:vi},
the following hold:
\begin{itemize}
\item[\rm (i)]
$v_i$ is a basis for $E^*_i V  \;\; (0 \leq i \leq \delta)$;
\item[\rm (ii)]
$\{v_i\}_{i=0}^\delta$ is a basis for $V$.
\end{itemize}
\end{lemma}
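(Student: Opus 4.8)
The plan is to identify the span of $\{v_i\}_{i=0}^\delta$ with all of $V$ by invoking the irreducibility condition Definition~\ref{def:TBTDpair}(iv), and then to read off both assertions from the eigenspace decomposition of $A^*$. First I would dispose of the trivial case: if $\Phi$ is trivial then $\delta=0$ and $\dim V=1$ by Lemma~\ref{lem:trivial}, so $v_0=E^*_0 v=v\neq 0$ is a basis for $V=E^*_0 V$ and both claims hold immediately. Thus I assume $\Phi$ is nontrivial, which is precisely the hypothesis under which the explicit action formulas of Lemma~\ref{lem:RL3} are available.

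Let $W$ denote the subspace of $V$ spanned by $\{v_i\}_{i=0}^\delta$. The central step is to show that $W$ is invariant under both $A$ and $A^*$. Since $v_i\in E^*_i V$ by Lemma~\ref{lem:vi}(i), we have $A^* v_i=\th^*_i v_i$, so $A^* W\subseteq W$. For $A$, recall $A=R+L$ from Lemma~\ref{lem:ARL}. Lemma~\ref{lem:RL3}, together with $R v_\delta=0$ and $L v_0=0$ from Lemma~\ref{lem:REsiV}, shows that $R v_j$ is a scalar multiple of $v_{j+1}$ and $L v_j$ is a scalar multiple of $v_{j-1}$ for each $j$; hence $A v_j=R v_j+L v_j\in W$, giving $A W\subseteq W$. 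Since $v=\sum_{i=0}^\delta v_i\neq 0$ by Lemma~\ref{lem:vi}(ii), we have $W\neq 0$, so Definition~\ref{def:TBTDpair}(iv) forces $W=V$.

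With $W=V$ in hand, both parts follow from linear algebra. Applying $E^*_j$ to $V=W$ and using $E^*_j v_i=\delta_{i,j}v_i$ yields $E^*_j V=\langle v_j\rangle$ for $0\leq j\leq\delta$. Because an eigenspace is nonzero, this forces $v_j\neq 0$ and $\dim E^*_j V=1$ with $v_j$ a basis, proving (i). For (ii), the $v_j$ are now nonzero vectors lying in the pairwise distinct eigenspaces $E^*_j V$ of $A^*$ (distinct by Note~\ref{note:distinct}), hence are linearly independent; combined with $W=V$ they form a basis for $V$.

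I expect the only genuine work to be the $A$-invariance of $W$, which rests entirely on the raising and lowering action already computed in Lemma~\ref{lem:RL3}; once $W=V$ is established, the dimension-one conclusion drops out of the projection identity $E^*_j V=\langle v_j\rangle$ essentially for free, so there is no serious obstacle beyond correctly applying the irreducibility hypothesis.
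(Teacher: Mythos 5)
Your proof is correct and follows essentially the same route as the paper: both establish $A^*W\subseteq W$ directly and $AW\subseteq W$ via $A=R+L$ and Lemma \ref{lem:RL3}, then invoke Definition \ref{def:TBTDpair}(iv) to get $W=V$. The only cosmetic difference is at the end, where the paper appeals to the directness of $V=\sum_{i=0}^{\delta}E^*_iV$ while you apply $E^*_j$ to $W=V$; these amount to the same observation.
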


\begin{proof}
Assume that $\Phi$ is nontrivial; otherwise the result holds by Lemma \ref{lem:trivial}.
The sum $V = \sum_{i=0}^\delta E^*_i V$ is direct.
For $0 \leq i \leq \delta$
the subspace $E^*_i V$ is nonzero and contains $v_i$.
So it suffices to show that the vectors $\{v_i\}_{i=0}^\delta$ span $V$.
Let $W$ denote the subspace of $V$ spanned by $\{v_i\}_{i=0}^\delta$.
Note that $W \neq 0$ since $0 \neq v \in W$.
We have $A^* W \subseteq W$ by construction.
Using Lemma \ref{lem:RL3}
we find that  $RW \subseteq W$ and $LW \subseteq W$.
So $A W \subseteq W$ in view of Lemma \ref{lem:ARL}.
Now $W = V$ by Definition \ref{def:TBTDpair}(iv).
Therefore $\{v_i\}_{i=0}^\delta$ span $V$.
The result follows.
\end{proof}

\begin{corollary}   \label{cor:coincide}  \samepage
\ifDRAFT {\rm cor:coincide}. \fi
We have $d = \delta$ and
\begin{align*}
  \dim E_i V &=1,  \qquad\qquad  \dim E^*_i V = 1 && (0 \leq i \leq d).
\end{align*}
Moreover $\dim V = d+1$.
\end{corollary}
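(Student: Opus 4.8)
The plan is to extract everything from Lemma \ref{lem:vibasis}, together with the observation (recorded above Definition \ref{def:TBTDsystem}) that $\Phi^*$ is again a TB tridiagonal system on $V$, so that the lemma may be applied to $\Phi^*$ as well as to $\Phi$.

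First I would read off the statements about the starred idempotents directly from Lemma \ref{lem:vibasis} applied to $\Phi$. Part (i) asserts that the single vector $v_i$ is a basis for $E^*_i V$, whence $\dim E^*_i V = 1$ for $0 \leq i \leq \delta$. Part (ii) asserts that $\{v_i\}_{i=0}^\delta$ is a basis for $V$; since this list has $\delta + 1$ members, $\dim V = \delta + 1$.

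Next I would apply the same lemma to the relative $\Phi^* = (A^*; \{E^*_i\}_{i=0}^\delta; A; \{E_i\}_{i=0}^d)$. Under the $*$ operation the role of the second (dual) diagonalizable map of the system is now played by $A$, whose primitive idempotents are $\{E_i\}_{i=0}^d$ and whose diameter is $d$. Hence Lemma \ref{lem:vibasis}(i) now yields $\dim E_i V = 1$ for $0 \leq i \leq d$, while Lemma \ref{lem:vibasis}(ii) produces a basis of $V$ with $d + 1$ members, so $\dim V = d + 1$.

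Finally I would compare the two expressions $\dim V = \delta + 1$ and $\dim V = d + 1$ to conclude $d = \delta$; the displayed dimension equalities and the formula $\dim V = d + 1$ then follow immediately. There is no serious obstacle here, since all the real work is buried in Lemma \ref{lem:vibasis} (and ultimately in the irreducibility hypothesis Definition \ref{def:TBTDpair}(iv) used there). The one point demanding care — and the closest thing to a difficulty — is the bookkeeping in the second step: one must correctly identify which family of idempotents and which diameter are being fed into Lemma \ref{lem:vibasis} when it is invoked for $\Phi^*$, since the lemma's conclusion always concerns the second diagonalizable map of whichever system it is applied to.
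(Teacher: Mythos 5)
Your proposal matches the paper's proof exactly: both apply Lemma \ref{lem:vibasis} first to $\Phi$ to get $\dim E^*_i V = 1$ and $\dim V = \delta+1$, then to $\Phi^*$ to get $\dim E_i V = 1$ and $\dim V = d+1$, and compare to conclude $d=\delta$. The bookkeeping point you flag about which idempotents play the "dual" role under $*$ is handled implicitly in the paper but is the correct thing to watch.
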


\begin{proof}
By Lemma \ref{lem:vibasis}, $\dim E^*_i V = 1$ $(0 \leq i \leq \delta)$ and $\dim V = \delta + 1$.
Applying Lemma \ref{lem:vibasis} to $\Phi^*$,
we obtain $\dim E_i V = 1$ $(0 \leq i \leq d)$ and $\dim V = d+1$.
The result follows.
\end{proof}

\begin{note}
A tridiagonal pair $A,A^*$  is often called a Leonard pair
if all the eigenspaces of $A$ and $A^*$ have dimension one.
\end{note}

\begin{definition}    \label{def:diameter}   \samepage
\ifDRAFT {\rm def:diameter}. \fi
Recall from Corollary \ref{cor:coincide} that $d=\delta$.
We call this common value the {\em diameter} of $\Phi$.
\end{definition}

\begin{definition}   \label{def:Phistandard}    \samepage
\ifDRAFT {\rm def:Phistandard}. \fi
A basis $\{v_i\}_{i=0}^d$ for $V$ is said to be {\em $\Phi$-standard} 
whenever there exists a nonzero $v \in E_0 V$
such that $v_i = E^*_i v$ for $0 \leq i \leq d$.
\end{definition}

A $\Phi$-standard basis is characterized as follows.

\begin{lemma}   \label{lem:standard}     \samepage
\ifDRAFT {\rm lem:standard}. \fi
Given vectors $\{v_i\}_{i=0}^d$ in $V$, not all zero.
Then the following {\rm (i), (ii)} are equivalent:
\begin{itemize}
\item[\rm (i)]
$v_i \in E^*_i V$ for $0 \leq i \leq d$, and $\sum_{i=0}^d v_i  \in E_0 V$;
\item[\rm (ii)]
$\{v_i\}_{i=0}^d$ is a $\Phi$-standard basis for $V$.
\end{itemize}
\end{lemma}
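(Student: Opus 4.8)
The plan is to prove the two implications separately, leaning on Corollary \ref{cor:coincide} (so that $d=\delta$ and the relevant eigenspaces are one-dimensional) and on the fact that the sum $V = \sum_{i=0}^d E^*_i V$ is direct. The statement is essentially a matter of unwinding Definitions \ref{def:vi} and \ref{def:Phistandard} against Lemmas \ref{lem:vi} and \ref{lem:vibasis}, so I expect both directions to be short.

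For (ii) $\Rightarrow$ (i), I would start from the hypothesis that $\{v_i\}_{i=0}^d$ is a $\Phi$-standard basis. By Definition \ref{def:Phistandard} there is a nonzero $v \in E_0 V$ with $v_i = E^*_i v$ for $0 \leq i \leq d$. These vectors then coincide with those of Definition \ref{def:vi}, so Lemma \ref{lem:vi} applies directly: part (i) yields $v_i \in E^*_i V$, and part (ii) yields $\sum_{i=0}^d v_i = v \in E_0 V$. This is exactly (i).

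For (i) $\Rightarrow$ (ii), I would set $v = \sum_{i=0}^d v_i$ and record that $v \in E_0 V$ by hypothesis. The one point requiring care is that $v \neq 0$: since the sum $V = \sum_{i=0}^d E^*_i V$ is direct and each $v_i \in E^*_i V$, the equality $v=0$ would force every $v_i$ to vanish, contradicting the assumption that the $v_i$ are not all zero. Once $v$ is known to be a nonzero element of $E_0 V$, I would recover the relation $v_i = E^*_i v$ by applying $E^*_i$ to $v = \sum_{j=0}^d v_j$ and using $E^*_i v_j = \delta_{i,j} v_j$ (valid because $v_j \in E^*_j V$). Thus the $v_i$ agree with Definition \ref{def:vi} for this $v$, whence Lemma \ref{lem:vibasis} shows $\{v_i\}_{i=0}^d$ is a basis for $V$, and Definition \ref{def:Phistandard} shows it is $\Phi$-standard.

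The only genuinely substantive step is the nonvanishing of $v$ in the second implication, which is where the directness of the eigenspace decomposition and the ``not all zero'' hypothesis are used; everything else is an immediate citation of the preceding results.
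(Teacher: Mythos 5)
Your proposal is correct and follows essentially the same route as the paper: both directions reduce to unwinding Definition \ref{def:Phistandard} against Lemma \ref{lem:vi} and Lemma \ref{lem:vibasis}, with the forward implication hinging on $v=\sum_i v_i$ being a nonzero element of $E_0V$. Your explicit justification that $v\neq 0$ via the directness of $V=\sum_i E^*_iV$ is a small elaboration of a step the paper states without comment, but the argument is the same.
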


\begin{proof}
${\rm (i)} \Rightarrow {\rm (ii)}$
Define $v = \sum_{j=0}^d v_j$.
By construction $v \in E_0 V$.
For $0 \leq i \leq d$, 
in the equation $v = \sum_{j=0}^d v_j$
apply $E^*_i$ to each side to obtain $E^*_i v = v_i$.
Note that $v \neq 0$ since the vectors $\{v_i\}_{i=0}^d$ are not all zero.
The vectors $\{v_i\}_{i=0}^d$ form a basis for $V$ by Lemma \ref{lem:vibasis}.
This basis is  $\Phi$-standard by Definition \ref{def:Phistandard}.

(ii) $\Rightarrow$ (i)
By Definition \ref{def:Phistandard} 
there exists $0 \neq v \in E_0 V$ such that $v_i = E^*_i v$ for $0 \leq i \leq d$.
By construction $v_i \in E^*_i V$ for $0 \leq i \leq d$.
Also $\sum_{i=0}^d v_i = v \in E_0 V$.
\end{proof}

We mention a lemma for later use.

\begin{lemma}    \label{lem:th0thd}    \samepage
\ifDRAFT {\rm lem:th0thd}. \fi
Assume that $\Phi$ is nontrivial. Then the following {\rm (i)--(iii)} hold.
\begin{itemize}
\item[\rm (i)]
Each of $\th_0$, $\th_d$, $\th^*_0$, $\th^*_d$ is nonzero.
\item[\rm (ii)]
$\displaystyle
  \th_1 /\th_0 = \th_{d-1} /\th_d = \th^*_1 / \th^*_0 = \th^*_{d-1} / \th^*_d$.
\item[\rm (iii)]
$\th_0 \th_{d-1} = \th_1 \th_d\;$ and $\; \th^*_0 \th^*_{d-1} = \th^*_1 \th^*_d$. 
\end{itemize}
\end{lemma}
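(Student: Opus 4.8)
The plan is to prove part (i) first, since once $\th_0,\th_d,\th^*_0,\th^*_d$ are known to be nonzero, parts (ii) and (iii) fall out by elementary manipulation of the scalar relations already contained in Lemma \ref{lem:RL3}. The entire difficulty is concentrated in the single claim $\th_0 \neq 0$; the remaining three cases of (i) will then follow by applying this claim to suitable relatives of $\Phi$ via Lemma \ref{lem:relative}. The decisive and slightly non-obvious point, which I expect to be the main obstacle, is that the assumption $\th_0 = 0$ forces $R v_{\delta-1}=0$, and this is exactly what allows one to build a proper nonzero subspace invariant under both $A$ and $A^*$, contradicting the irreducibility condition in Definition \ref{def:TBTDpair}(iv).

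To carry this out, fix $0 \neq v \in E_0 V$ and form $v_i = E^*_i v$ as in Definition \ref{def:vi}. By Lemma \ref{lem:vibasis} the vectors $\{v_i\}_{i=0}^\delta$ form a basis of $V$, so each $v_i$ is nonzero, and by Corollary \ref{cor:coincide} together with nontriviality we have $\delta = d \geq 1$. Suppose toward a contradiction that $\th_0 = 0$. Then Lemma \ref{lem:RL3} gives $R v_{\delta-1} = \th_0 v_\delta = 0$. Consider $W = \mathrm{span}\{v_0,\ldots,v_{\delta-1}\}$. Since $A^* v_i = \th^*_i v_i$, we have $A^* W \subseteq W$. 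By Lemma \ref{lem:REsiV}(i) the map $R$ sends $v_i$ into $E^*_{i+1}V=\mathrm{span}\{v_{i+1}\}\subseteq W$ for $0 \leq i \leq \delta-2$, while $R v_{\delta-1}=0$, so $R W \subseteq W$; by Lemma \ref{lem:REsiV}(ii) the map $L$ sends $v_i$ into $E^*_{i-1}V \subseteq W$, so $L W \subseteq W$. Hence $A W \subseteq W$ by Lemma \ref{lem:ARL}. But $W \neq 0$ (it contains $v_0$) and $W \neq V$ (it omits the basis vector $v_\delta$, using $\delta \geq 1$), contradicting Definition \ref{def:TBTDpair}(iv). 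Therefore $\th_0 \neq 0$. Applying this claim to the relatives $\Phi^\Downarrow$, $\Phi^*$, and $(\Phi^\downarrow)^*$, which are again nontrivial and whose leading eigenvalues are $\th_d$, $\th^*_0$, and $\th^*_d$ by Lemma \ref{lem:relative}, completes the proof of (i).

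For (ii) and (iii), I would extract from Lemma \ref{lem:RL3} the scalar identities $\th_0\th^*_1 = \th_1\th^*_0$ and $\th_0\th^*_{d-1} = \th_1\th^*_d$, which hold because $v_0$ and $v_\delta$ are nonzero; reading Lemma \ref{lem:RL3} for the relative $\Phi^*$ (where the roles of $\th$ and $\th^*$ are interchanged) yields the companion identity $\th^*_0\th_{d-1} = \th^*_1\th_d$. Dividing through, which is legitimate by part (i), the first identity gives $\th_1/\th_0 = \th^*_1/\th^*_0$, the third gives $\th_{d-1}/\th_d = \th^*_1/\th^*_0$, and the second gives $\th^*_{d-1}/\th^*_d = \th_1/\th_0$; chaining these equalities establishes (ii). Finally (iii) follows from (ii) by cross-multiplying $\th_1/\th_0 = \th_{d-1}/\th_d$ and $\th^*_1/\th^*_0 = \th^*_{d-1}/\th^*_d$. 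Beyond the invariant-subspace argument in part (i), everything here is routine algebra combined with the $D_4$-symmetry recorded in Lemma \ref{lem:relative}.
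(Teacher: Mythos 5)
Your proof is correct, and for parts (ii) and (iii) it coincides exactly with the paper's: both extract $\th_0\th^*_1=\th_1\th^*_0$ and $\th_0\th^*_{d-1}=\th_1\th^*_d$ from the last assertion of Lemma \ref{lem:RL3}, dualize to get $\th_{d-1}\th^*_0=\th_d\th^*_1$, and then divide and chain. The difference is in part (i). The paper stays entirely at the level of these three scalar identities: assuming $\th_0=0$, distinctness forces $\th_1\neq 0$, whence the identities give $\th^*_0=0$ and $\th^*_d=0$, contradicting the distinctness of the dual eigenvalues since $d\geq 1$. You instead rerun an invariant-subspace argument: $\th_0=0$ forces $Rv_{\delta-1}=0$, so $W=\mathrm{span}\{v_0,\dots,v_{\delta-1}\}$ is a proper nonzero subspace invariant under $A^*$, $R$, $L$, hence under $A=R+L$, contradicting Definition \ref{def:TBTDpair}(iv). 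Your argument is sound, but note it essentially re-proves the case $|i-j|=1$, $i=\delta$, $j=\delta-1$ of Lemma \ref{lem:EsiAEsj}: since $v_{\delta-1}$ spans $E^*_{\delta-1}V$, you could have concluded $Rv_{\delta-1}=E^*_\delta A E^*_{\delta-1}v_{\delta-1}\neq 0$ in one line from that lemma, short-circuiting the contradiction. The paper's route for (i) is more economical because it reuses the identities already needed for (ii)--(iii) and only invokes the distinctness of the eigenvalues; yours buys a more structural explanation of \emph{why} $\th_0$ cannot vanish (it would break irreducibility), at the cost of repeating work already encapsulated in Lemma \ref{lem:EsiAEsj}. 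Either way the result stands.
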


\begin{proof}
By the last assertion of Lemma \ref{lem:RL3} and since 
$d=\delta$, $v_0 \neq 0$, $v_d \neq 0$,
\begin{align}
   \th_0 \th^*_1 &=  \th_1 \th^*_0,                   \label{eq:th0thdaux1}
\\
  \th_0 \th^*_{d-1} &= \th_1 \th^*_d.     \label{eq:th0thdaux2}
\end{align}
Applying  \eqref{eq:th0thdaux2} to $\Phi^*$,
\begin{equation}
   \th_{d-1} \th^*_0  = \th_d \th^*_1.            \label{eq:th0thdaux3}
\end{equation}
Suppose that $\th_0=0$.
Then $\th_1 \neq 0$ since $\{\th_i\}_{i=0}^d$ are mutually distinct.
Now $\th^*_0 =0$ by \eqref{eq:th0thdaux1} and
$\th^*_d = 0$ by \eqref{eq:th0thdaux2}.
This is a contradiction since $\{\th^*_i\}_{i=0}^d$ are mutually distinct
and $d \geq 1$.
Therefore $\th_0 \neq 0$.
Applying this to $\Phi^*$, $\Phi^{\downarrow}$, $\Phi^{\Downarrow}$,
we obtain $\th_d \neq 0$, $\th^*_0 \neq 0$, $\th^*_d \neq 0$.
We have obtained assertion (i).
Assertion (ii) follows in view of \eqref{eq:th0thdaux1}--\eqref{eq:th0thdaux3}.
Assertion (iii) follows from assertion (ii).
\end{proof}

\section{The intersection numbers}
\label{sec:intersection}

In this section we discuss the intersection numbers of a TB tridiagonal system.
Let $V$ denote a vector space over $\F$ with finite positive dimension,
and let
\[
   \Phi = (A; \{E_i\}_{i=0}^d; A^*; \{E^*_i\}_{i=0}^d)
\]
denote a TB tridiagonal system on $V$.
Let $(\{\th_i\}_{i=0}^d; \{\th^*_i\}_{i=0}^d)$ denote the eigenvalue array of $\Phi$.
Let $\{v_i\}_{i=0}^d$ denote a $\Phi$-standard basis for $V$.
For $X \in \text{End}(V)$
let $X^\natural$ denote the matrix  in $\Mat_{d+1}(\F)$ that represents $X$
with respect to the basis $\{v_i\}_{i=0}^d$.
By construction,
\begin{align}
  (E^*_i)^\natural &= \text{diag} (0,\, \ldots, 0,\, \stackrel{i}{1},\, 0,\, \ldots,\, 0)   
                        \qquad\qquad  (0 \leq i \leq d),                           \label{eq:Esinat}
\\
  (A^*)^\natural &= \text{diag} (\th^*_0,\, \th^*_1, \,\ldots, \,\th^*_d).   \label{eq:Asnat}
\end{align}
Also by construction there exist scalars  $\{c_i\}_{i=1}^d$, $\{b_i\}_{i=0}^{d-1}$ in $\F$ such that
\begin{align}
 A^\natural &=
  \begin{pmatrix}
    0 & b_0 &    & & & \text{\bf 0}                  \\
    c_1 & 0 & b_1   \\
         & c_2  & \cdot & \cdot  \\
         &      & \cdot & \cdot & \cdot \\
         &       &         & \cdot & \cdot & b_{d-1} \\
     \text{\bf 0}   &        &          &         & c_d & 0   \\
  \end{pmatrix}.                                                                          \label{eq:Anat}
\end{align}
By Definition \ref{def:RL},
\begin{align}
R^\natural &=
  \begin{pmatrix}
    0 &  &    & & & \text{\bf 0}                  \\
    c_1 & 0 &    \\
         & c_2  & \cdot &   \\
         &      & \cdot & \cdot &  \\
         &       &         & \cdot & \cdot &  \\
     \text{\bf 0}   &        &          &         & c_d & 0   \\
  \end{pmatrix},
&
 L^\natural &=
  \begin{pmatrix}
    0 & b_0 &    & & & \text{\bf 0}                  \\
       & 0 & b_1   \\
         &    & \cdot & \cdot  \\
         &      &    & \cdot & \cdot \\
         &       &         &    & \cdot & b_{d-1} \\
     \text{\bf 0}   &        &          &         &  & 0   \\
  \end{pmatrix}.                                                                          \label{eq:RLnat}
\end{align}
The scalars $\{c_i\}_{i=1}^d$, $\{b_i\}_{i=0}^{d-1}$ are called the
{\em intersection numbers of $\Phi$}.
The intersection numbers $\{c^*_i\}_{i=1}^d$, $\{b^*_i\}_{i=0}^{d-1}$ of $\Phi^*$
are called the {\em dual intersection numbers of $\Phi$}. 

\begin{lemma}    \label{lem:Avi}    \samepage
\ifDRAFT {\rm lem:Avi}. \fi
Assume that $\Phi$ is nontrivial. Then
\begin{align*}
 A v_0 &= c_1 v_1, \\
 A v_i &= b_{i-1} v_{i-1} + c_{i+1} v_{i+1}  \qquad\qquad (1 \leq i \leq d-1), \\
 A v_d &= b_{d-1} v_{d-1}.
\end{align*}
\end{lemma}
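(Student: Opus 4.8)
The plan is to read the three identities directly off the matrix $A^\natural$ displayed in \eqref{eq:Anat}, using the definition of matrix representation recalled in Section \ref{sec:pre}. Recall that $A^\natural$ represents $A$ with respect to the $\Phi$-standard basis $\{v_i\}_{i=0}^d$, which by definition means $A v_j = \sum_{i=0}^d (A^\natural)_{i,j} v_i$ for $0 \leq j \leq d$. Since $\Phi$ is nontrivial we have $d \geq 1$ by Lemma \ref{lem:trivial}, so the intersection numbers $\{c_i\}_{i=1}^d$ and $\{b_i\}_{i=0}^{d-1}$ are defined and the basis has at least two vectors; this is what makes the right-hand sides meaningful.

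First I would extract the column entries of \eqref{eq:Anat}. The scalars $\{c_i\}_{i=1}^d$ are the subdiagonal entries, so $(A^\natural)_{j+1,j} = c_{j+1}$ for $0 \leq j \leq d-1$, and the scalars $\{b_i\}_{i=0}^{d-1}$ are the superdiagonal entries, so $(A^\natural)_{j-1,j} = b_{j-1}$ for $1 \leq j \leq d$; all diagonal entries and all remaining entries are zero. Thus column $j$ of $A^\natural$ has at most the two nonzero entries just named. Substituting into $A v_j = \sum_i (A^\natural)_{i,j} v_i$ column by column then yields the result, treating the interior and the two boundary columns separately: for $1 \leq j \leq d-1$ both off-diagonal entries are present, giving $A v_j = b_{j-1} v_{j-1} + c_{j+1} v_{j+1}$; for $j=0$ the superdiagonal entry is absent, leaving $A v_0 = c_1 v_1$; and for $j=d$ the subdiagonal entry is absent, leaving $A v_d = b_{d-1} v_{d-1}$.

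An equivalent route, should one prefer to avoid matrix bookkeeping, is to use $A = R + L$ from Lemma \ref{lem:ARL} together with \eqref{eq:RLnat}, which records $R v_j = c_{j+1} v_{j+1}$ (with $R v_d = 0$) and $L v_j = b_{j-1} v_{j-1}$ (with $L v_0 = 0$); adding these reproduces the three cases above. In either approach there is no real obstacle, since the statement is essentially an unwinding of the definition of the intersection numbers as matrix entries. The only point to watch is the index bookkeeping at the endpoints $j=0$ and $j=d$, where one of the two off-diagonal positions $(-1,0)$ or $(d+1,d)$ lies outside the matrix and the corresponding term drops out.
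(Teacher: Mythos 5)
Your proof is correct and matches the paper's, which simply cites \eqref{eq:Anat}: the lemma is an unwinding of the definition of the intersection numbers as the off-diagonal entries of $A^\natural$. Your careful handling of the boundary columns $j=0$ and $j=d$, and the alternative via $A=R+L$ and \eqref{eq:RLnat}, are both consistent with the paper's intent.
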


\begin{proof}
By \eqref{eq:Anat}.
\end{proof}

\begin{lemma}   \label{lem:RLvi}   \samepage
\ifDRAFT {\rm lem:RLvi}. \fi
We have
\begin{align*}
\hspace{1cm} && R v_i &= c_{i+1} v_{i+1} \qquad (0 \leq i \leq d-1), &
R v_d &= 0, & \hspace{1cm}
\\
\hspace{1cm} && L v_i &= b_{i-1} v_{i-1} \qquad (1 \leq i \leq d), &
L v_0 &= 0.  &   \hspace{1cm}
\end{align*}
\end{lemma}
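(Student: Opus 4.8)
The plan is to read the result directly off the matrix representations already in place. Recall from \eqref{eq:RLnat} that $R^\natural$ and $L^\natural$ are the matrices representing $R$ and $L$ with respect to the $\Phi$-standard basis $\{v_i\}_{i=0}^d$, where $R^\natural$ carries the subdiagonal entries $\{c_i\}_{i=1}^d$ and $L^\natural$ carries the superdiagonal entries $\{b_i\}_{i=0}^{d-1}$, all other entries being zero. By the meaning of ``represents'' recalled in Section \ref{sec:pre}, for each $j$ we have $R v_j = \sum_{i} (R^\natural)_{i,j}\, v_i$, and likewise for $L$. So the lemma amounts to inspecting the columns of these two matrices.

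First I would treat $R$. For $0 \le j \le d-1$ the only nonzero entry in column $j$ of $R^\natural$ is the subdiagonal entry $(R^\natural)_{j+1,j} = c_{j+1}$, which gives $R v_j = c_{j+1} v_{j+1}$; and column $d$ of $R^\natural$ is identically zero, giving $R v_d = 0$. Then I would treat $L$ symmetrically: for $1 \le j \le d$ the only nonzero entry in column $j$ of $L^\natural$ is the superdiagonal entry $(L^\natural)_{j-1,j} = b_{j-1}$, giving $L v_j = b_{j-1} v_{j-1}$, while column $0$ is zero, giving $L v_0 = 0$. This disposes of the trivial case automatically, since then $d=0$ and both matrices reduce to the single zero entry.

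As an independent check (and an alternative that does not quote \eqref{eq:RLnat} as given), I would instead combine Lemmas \ref{lem:ARL}, \ref{lem:Avi}, and \ref{lem:REsiV}. Since $v_i \in E^*_i V$ by Lemma \ref{lem:vi}(i), the containments $R E^*_i V \subseteq E^*_{i+1} V$ and $L E^*_i V \subseteq E^*_{i-1} V$ force $R v_i$ to be a scalar multiple of $v_{i+1}$ and $L v_i$ to be a scalar multiple of $v_{i-1}$, because each $E^*_k V$ is one-dimensional with basis $v_k$ by Lemma \ref{lem:vibasis}. Writing $A v_i = R v_i + L v_i$ via Lemma \ref{lem:ARL} and comparing with the expansion of $A v_i$ from Lemma \ref{lem:Avi} in the basis $\{v_i\}_{i=0}^d$, the $v_{i+1}$-coefficient matches $R v_i$ and the $v_{i-1}$-coefficient matches $L v_i$, yielding the same formulas.

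There is essentially no serious obstacle here: the content is a bookkeeping read-off. The only points requiring a moment of care are the two boundary columns, $R v_d = 0$ and $L v_0 = 0$, which come from the vanishing in Lemma \ref{lem:EsiR}(ii),(iv) (equivalently, the empty columns of $R^\natural$, $L^\natural$), and the appeal to one-dimensionality of the $E^*_i V$ (Corollary \ref{cor:coincide} and Lemma \ref{lem:vibasis}) in the second approach, which is what makes the scalar coefficients well defined and identifiable with the intersection numbers $c_i$, $b_i$.
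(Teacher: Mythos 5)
Your proof is correct and takes essentially the same route as the paper, whose entire proof is ``By \eqref{eq:RLnat}'' --- that is, exactly your column-by-column read-off of $R^\natural$ and $L^\natural$ with respect to the $\Phi$-standard basis. Your alternative argument via Lemmas \ref{lem:ARL}, \ref{lem:Avi}, and \ref{lem:REsiV} is also sound, but it is only a cross-check and not needed beyond the matrix inspection.
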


\begin{proof}
By \eqref{eq:RLnat}.
\end{proof} 

\begin{lemma}    \label{lem:cibinonzero}    \samepage
\ifDRAFT {\rm lem:cibinonzero}. \fi
The scalars $\{b_i\}_{i=0}^{d-1}$, $\{c_i \}_{i=1}^d$ are all nonzero.
\end{lemma}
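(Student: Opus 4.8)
The plan is to deduce the nonvanishing of each off-diagonal entry directly from the nonvanishing relations in Lemma~\ref{lem:EsiAEsj}, exploiting that each eigenspace $E^*_i V$ is one-dimensional. First, if $\Phi$ is trivial then $d=0$ by Lemma~\ref{lem:trivial}, so the index sets for $\{c_i\}_{i=1}^d$ and $\{b_i\}_{i=0}^{d-1}$ are empty and the claim holds vacuously. So I would assume $\Phi$ is nontrivial, whence $d \geq 1$. Fix a $\Phi$-standard basis $\{v_i\}_{i=0}^d$, and recall from Corollary~\ref{cor:coincide} that $v_i$ spans the one-dimensional space $E^*_i V$ for $0 \leq i \leq d$.

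To handle $\{c_i\}_{i=1}^d$, fix $1 \leq i \leq d$. Using $v_{i-1} = E^*_{i-1} v_{i-1}$ together with Lemma~\ref{lem:Avi}, I would extract the $E^*_i V$-component of $A v_{i-1}$ to obtain $c_i v_i = E^*_i A E^*_{i-1} v_{i-1}$. By Lemma~\ref{lem:EsiAEsj} we have $E^*_i A E^*_{i-1} \neq 0$; since this map annihilates $E^*_j V$ for every $j \neq i-1$ and $E^*_{i-1} V$ is spanned by $v_{i-1}$, it cannot annihilate $v_{i-1}$. Hence $c_i v_i \neq 0$, and as $v_i \neq 0$ we conclude $c_i \neq 0$.

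The argument for $\{b_i\}_{i=0}^{d-1}$ is symmetric: fixing $0 \leq i \leq d-1$ and extracting the $E^*_i V$-component of $A v_{i+1}$ via Lemma~\ref{lem:Avi} gives $b_i v_i = E^*_i A E^*_{i+1} v_{i+1}$, and the nonvanishing $E^*_i A E^*_{i+1} \neq 0$ supplied by Lemma~\ref{lem:EsiAEsj} forces $b_i \neq 0$ by the same one-dimensionality reasoning.

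The only substantive point is the passage from ``the operator $E^*_i A E^*_{i-1}$ is nonzero'' to ``it is nonzero on the specific spanning vector $v_{i-1}$'', and this is precisely where the one-dimensionality of the eigenspaces from Corollary~\ref{cor:coincide} is essential, since $E^*_i A E^*_{i-1}$ already vanishes on all the other $E^*_j V$. Everything else is routine bookkeeping with Lemma~\ref{lem:Avi}, and the edge cases $A v_0 = c_1 v_1$ and $A v_d = b_{d-1} v_{d-1}$ fall under the same extraction, so no separate treatment is needed.
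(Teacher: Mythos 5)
Your proof is correct and is essentially the paper's argument: the paper also deduces $c_i\neq 0$ (and symmetrically $b_i\neq 0$) from $E^*_i A E^*_{i-1}\neq 0$ in Lemma \ref{lem:EsiAEsj}, observing that with respect to a $\Phi$-standard basis the matrix $(E^*_i A E^*_{i-1})^\natural$ has $(i,i-1)$-entry $c_i$ and all other entries $0$. Your vector-level identity $c_i v_i = E^*_i A E^*_{i-1} v_{i-1}$ together with the one-dimensionality of $E^*_{i-1}V$ is just the basis-free rendering of that same matrix computation, so there is no substantive difference.
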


\begin{proof}
We first show that $c_i \neq 0$ for $1 \leq i \leq d$.
Let $i$ be given.
We have $E^*_i A E^*_{i-1} \neq 0$ by Lemma \ref{lem:EsiAEsj}.
By \eqref{eq:Esinat} and \eqref{eq:Anat},
the matrix $(E^*_i  A E^*_{i-1})^\natural$ has 
$(i,i-1)$-entry $c_i$ and all other entries $0$.
By these comments $c_i \neq 0$.
One similarly shows that $b_i \neq 0$ for $0 \leq i \leq d-1$.
\end{proof}

\begin{lemma}   \label{lem:cibi}   \samepage
\ifDRAFT {\rm lem:cibi}. \fi
Assume that $\Phi$ is nontrivial. Then  the following hold.
\begin{itemize}
\item[\rm (i)]
The intersection numbers of $\Phi$ satisfy
\begin{align}
c_i &= \frac{\th_1 \th^*_i - \th_0 \th^*_{i+1}}{\th^*_{i-1} - \th^*_{i+1}}  \qquad (1 \leq i \leq d-1),
  &  c_d &= \th_0,                   \label{eq:ci}
\\
b_i &= \frac{\th_1 \th^*_i - \th_0 \th^*_{i-1}}{\th^*_{i+1} - \th^*_{i-1}} \qquad (1 \leq i \leq d-1),
  &  b_0 &= \th_0.                  \label{eq:bi}
\end{align}
\item[\rm (ii)]
The dual intersection numbers of $\Phi$ satisfy
\begin{align}
c^*_i &= \frac{\th^*_1 \th_i - \th^*_0 \th_{i+1}}{\th_{i-1} - \th_{i+1}}  \qquad (1 \leq i \leq d-1),
  &  c^*_d &= \th^*_0,                     \label{eq:csi}
\\
b^*_i &= \frac{\th^*_1 \th_i - \th^*_0 \th_{i-1}}{\th_{i+1} - \th_{i-1}} \qquad (1 \leq i \leq d-1),
  &  b^*_0 &= \th^*_0.                  \label{eq:bsi}
\end{align}
\end{itemize}
\end{lemma}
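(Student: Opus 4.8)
The plan is to prove part (i) by directly comparing two descriptions of how the raising map $R$ and the lowering map $L$ act on a $\Phi$-standard basis $\{v_i\}_{i=0}^d$, and then to deduce part (ii) by applying part (i) to the relative $\Phi^*$. All the substantive computation has already been packaged into Lemmas \ref{lem:RLvi} and \ref{lem:RL3}; what remains is bookkeeping.

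First I would recall that Lemma \ref{lem:RLvi} expresses the action of $R$ and $L$ in terms of the intersection numbers: reindexing $R v_i = c_{i+1} v_{i+1}$ gives $R v_{i-1} = c_i v_i$ for $1 \leq i \leq d$, and reindexing $L v_i = b_{i-1} v_{i-1}$ gives $L v_{i+1} = b_i v_i$ for $0 \leq i \leq d-1$. On the other hand, Lemma \ref{lem:RL3} expresses the same vectors $R v_{i-1}$ and $L v_{i+1}$ as explicit scalar multiples of $v_i$ in terms of the eigenvalue array. Since $\{v_i\}_{i=0}^d$ is a basis for $V$ by Lemma \ref{lem:vibasis} (so in particular each $v_i \neq 0$), I can equate the coefficients of $v_i$ in the two expressions. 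For $1 \leq i \leq d-1$ this yields
\[
 c_i = \frac{\th_1 \th^*_i - \th_0 \th^*_{i+1}}{\th^*_{i-1}-\th^*_{i+1}}, \qquad\qquad
 b_i = \frac{\th_1 \th^*_i - \th_0 \th^*_{i-1}}{\th^*_{i+1}-\th^*_{i-1}}.
\]

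Next I would handle the two boundary cases separately, since Lemma \ref{lem:RL3} records the extreme values $R v_{d-1} = \th_0 v_d$ and $L v_1 = \th_0 v_0$ as special formulas rather than instances of the generic expression. Matching $R v_{d-1} = c_d v_d$ against $R v_{d-1} = \th_0 v_d$ gives $c_d = \th_0$, and matching $L v_1 = b_0 v_0$ against $L v_1 = \th_0 v_0$ gives $b_0 = \th_0$. This completes part (i).

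For part (ii), I would invoke the definition of the dual intersection numbers $\{c^*_i\}_{i=1}^d$, $\{b^*_i\}_{i=0}^{d-1}$ as the intersection numbers of $\Phi^*$, together with Lemma \ref{lem:relative}, which tells us that the eigenvalue array of $\Phi^*$ is $(\{\th^*_i\}_{i=0}^d; \{\th_i\}_{i=0}^d)$. Applying the formulas of part (i) to $\Phi^*$ and interchanging the roles of $\th_i$ and $\th^*_i$ throughout yields the stated expressions for $c^*_i$, $b^*_i$. I do not anticipate a genuine obstacle here; the only points requiring care are the index shifts in the comparison step and the separate treatment of the boundary values $c_d$ and $b_0$, both of which are routine once Lemmas \ref{lem:RLvi} and \ref{lem:RL3} are in hand.
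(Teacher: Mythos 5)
Your proposal is correct and follows exactly the paper's argument: the paper proves (i) by citing Lemmas \ref{lem:RL3} and \ref{lem:RLvi} (you have simply written out the coefficient comparison in detail), and proves (ii) by applying (i) to $\Phi^*$, just as you do.
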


\begin{proof}
(i)
By Lemmas \ref{lem:RL3} and \ref{lem:RLvi}.

(ii)
Apply (i) above to $\Phi^*$.
\end{proof}

\begin{corollary}   \label{cor:unique}  \samepage
\ifDRAFT {\rm cor:unique}. \fi
A TB tridiagonal system is uniquely determined up to isomorphism by its
eigenvalue array.
\end{corollary}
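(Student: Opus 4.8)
The plan is to reconstruct a TB tridiagonal system from its eigenvalue array, up to isomorphism, and thereby show that two systems sharing an eigenvalue array must be isomorphic. The key observation is that \emph{all} the structural data of $\Phi$ can be read off from the eigenvalue array via the explicit matrix representations already established. First I would dispose of the trivial case using Lemma~\ref{lem:trivial} and Note~\ref{note:trivial}: if the array is $(\{0\};\{0\})$ then the system is trivial and $\dim V=1$, so any two such systems are isomorphic. Henceforth assume $\Phi$ is nontrivial, so Corollary~\ref{cor:coincide} gives $d=\delta$ and all eigenspaces one-dimensional.

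The main step is to exhibit a canonical matrix model determined entirely by the array. Let $\{v_i\}_{i=0}^d$ be a $\Phi$-standard basis (Definition~\ref{def:Phistandard}), which exists by Lemma~\ref{lem:vibasis}. With respect to this basis, equations~\eqref{eq:Esinat}, \eqref{eq:Asnat}, \eqref{eq:Anat} show that $(A^*)^\natural$ is the diagonal matrix $\mathrm{diag}(\th^*_0,\ldots,\th^*_d)$, each $(E^*_i)^\natural$ is the $(i,i)$ matrix unit, and $A^\natural$ is tridiagonal with zero diagonal, subdiagonal entries $\{c_i\}_{i=1}^d$, and superdiagonal entries $\{b_i\}_{i=0}^{d-1}$. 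Crucially, Lemma~\ref{lem:cibi}(i) expresses every $c_i$ and $b_i$ as an explicit rational function of the eigenvalue array alone. Hence the matrices $A^\natural$ and $(A^*)^\natural$, together with the $(E^*_i)^\natural$, are \emph{completely determined by the eigenvalue array}, independent of which nontrivial system realizing that array we started with.

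Now suppose $\Phi$ and $\Phi'$ are two TB tridiagonal systems, on $V$ and $V'$ respectively, with the same eigenvalue array. Choose a $\Phi$-standard basis $\{v_i\}$ of $V$ and a $\Phi'$-standard basis $\{v'_i\}$ of $V'$. By the previous paragraph, the matrices representing $A,A^*,\{E^*_i\}$ with respect to $\{v_i\}$ coincide with those representing $A',A^{*\prime},\{E^{*\prime}_i\}$ with respect to $\{v'_i\}$. Define the $\F$-linear bijection $\psi:V\to V'$ by $\psi(v_i)=v'_i$ for $0\le i\le d$. Because $\psi$ carries one basis to the other and the operators are represented by identical matrices, $\psi$ intertwines $A$ with $A'$ and $A^*$ with $A^{*\prime}$, and carries $E^*_i$ to $E^{*\prime}_i$. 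To finish I would also check $\psi E_i=E'_i\psi$; this follows because each $E_i$ is a polynomial in $A$ (formula~\eqref{eq:Ei}) determined by the eigenvalue sequence, and $\psi A=A'\psi$ already gives $\psi p(A)=p(A')\psi$ for every polynomial $p$. Thus $\psi$ is an isomorphism of TB tridiagonal systems.

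The only subtle point, and the step I expect to require the most care, is verifying that the matrix data genuinely depends \emph{only} on the eigenvalue array and not on any hidden choice. The potential gap is the normalization of the $\Phi$-standard basis: Definition~\ref{def:vi} fixes a nonzero $v\in E_0V$, but $v$ is determined only up to scalar. Replacing $v$ by $\lambda v$ scales every $v_i$ by the same $\lambda$, which leaves all the matrices $A^\natural,(A^*)^\natural,(E^*_i)^\natural$ unchanged, so the representing matrices are canonical. With this observation the argument is clean, and the corollary follows.
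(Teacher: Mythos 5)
Your proposal is correct and follows the same route as the paper, which simply observes that by \eqref{eq:Asnat}, \eqref{eq:Anat} and Lemma \ref{lem:cibi}(i) the entries of $A^\natural$ and $(A^*)^\natural$ with respect to a standard basis are determined by the eigenvalue array. You have merely spelled out the details the paper leaves implicit (the construction of $\psi$, the treatment of the $E_i$ via \eqref{eq:Ei}, and the harmlessness of the scalar ambiguity in the standard basis), all of which are handled correctly.
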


\begin{proof}
By \eqref{eq:Asnat}, \eqref{eq:Anat} and Lemma \ref{lem:cibi}(i),
the entries of $A^\natural$ and $(A^*)^\natural$ are determined by 
the eigenvalue array.
\end{proof}

\begin{lemma}    \label{lem:hAhsAs}    \samepage
\ifDRAFT {\rm lem:hAhsAs}. \fi
For nonzero $\zeta$, $\zeta^* \in \F$, consider the TB tridiagonal system
\[
    \check{\Phi} =  ( \zeta A; \{E_i\}_{i=0}^d; \zeta^* A^*; \{E^*_i\}_{i=0}^d).
\]
Then the (dual) intersection numbers of $\check{\Phi}$ are
\begin{align*}
  \check{c}_i &= \zeta c_i,   \qquad\qquad  \check{c}^{\,*}_i = \zeta^* c^*_i   && (1 \leq i \leq d), 
\\
 \check{b}_i & = \zeta b_i,   \qquad\qquad     \check{b}^{\, *}_i = \zeta^* b^*_i    && (0 \leq i \leq d-1).
\end{align*}
\end{lemma}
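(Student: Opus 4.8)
The plan is to exploit the fact that scaling a diagonalizable map by a nonzero scalar does not change its eigenspaces, only its eigenvalues. First I would note, as guaranteed by Lemma \ref{lem:hhs}, that $\check{\Phi}$ is a TB tridiagonal system with eigenvalue array $(\{\zeta\th_i\}_{i=0}^d; \{\zeta^*\th^*_i\}_{i=0}^d)$. Since $\zeta A$ and $A$ have the same eigenspaces, and likewise $\zeta^* A^*$ and $A^*$, the primitive idempotents of $\check{\Phi}$ coincide with those of $\Phi$: the ordered sequences $\{E_i\}_{i=0}^d$ and $\{E^*_i\}_{i=0}^d$ are literally the same for both systems. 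The key consequence, via Definition \ref{def:Phistandard}, is that any $\Phi$-standard basis $\{v_i\}_{i=0}^d$ is also a $\check{\Phi}$-standard basis.

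Next I would compute the intersection numbers of $\check{\Phi}$ from this shared basis. Writing $X^\natural$ for the matrix representing $X$ with respect to $\{v_i\}_{i=0}^d$, we have $(\zeta A)^\natural = \zeta A^\natural$. Since the intersection numbers $\check{c}_i$, $\check{b}_i$ of $\check{\Phi}$ are by definition the subdiagonal and superdiagonal entries of $(\zeta A)^\natural$ (compare \eqref{eq:Anat}), and these entries are $\zeta$ times the corresponding entries of $A^\natural$, it follows at once that $\check{c}_i = \zeta c_i$ for $1 \le i \le d$ and $\check{b}_i = \zeta b_i$ for $0 \le i \le d-1$. Note this argument requires no nontriviality hypothesis: in the trivial case $d=0$ the relevant index ranges are empty and there is nothing to prove.

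For the dual intersection numbers I would invoke the $*$ symmetry rather than repeating the computation. Observe that $\check{\Phi}^* = (\zeta^* A^*; \{E^*_i\}_{i=0}^d; \zeta A; \{E_i\}_{i=0}^d)$ is exactly $\Phi^*$ with its two maps scaled by $\zeta^*$ and $\zeta$, respectively. The dual intersection numbers $\check{c}^{\,*}_i$, $\check{b}^{\,*}_i$ of $\check{\Phi}$ are the intersection numbers of $\check{\Phi}^*$, while $c^*_i$, $b^*_i$ are the intersection numbers of $\Phi^*$. Applying the conclusion of the previous paragraph with $\Phi^*$ in place of $\Phi$, so that $\zeta^*$ now plays the role of the scalar multiplying the first map, gives $\check{c}^{\,*}_i = \zeta^* c^*_i$ and $\check{b}^{\,*}_i = \zeta^* b^*_i$.

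There is essentially no obstacle here; the whole content is the observation that scaling preserves eigenspaces and hence the standard basis. For completeness I would remark that, in the nontrivial case, the same conclusion can be read off directly from the closed forms in Lemma \ref{lem:cibi}(i): substituting $\th_i \mapsto \zeta\th_i$ and $\th^*_i \mapsto \zeta^*\th^*_i$ into the formula for $c_i$ produces a factor $\zeta\zeta^*$ in the numerator and $\zeta^*$ in the denominator, which cancel to leave $\zeta c_i$, and similarly for $b_i$, $c_d$, $b_0$.
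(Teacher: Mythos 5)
Your proof is correct, but your main argument takes a genuinely different route from the paper's, whose entire proof is the one line ``Use Lemmas \ref{lem:hhs} and \ref{lem:cibi}'': that is, $\check{\Phi}$ has eigenvalue array $(\{\zeta \th_i\}_{i=0}^d; \{\zeta^* \th^*_i\}_{i=0}^d)$ by Lemma \ref{lem:hhs}, and substituting $\th_i \mapsto \zeta\th_i$, $\th^*_i \mapsto \zeta^*\th^*_i$ into the closed formulas of Lemma \ref{lem:cibi} produces the factors $\zeta$, $\zeta^*$ --- precisely the computation you relegate to your closing remark. Your primary argument instead works from the definition of the intersection numbers as matrix entries: since $\zeta A$ and $A$ (likewise $\zeta^* A^*$ and $A^*$) have the same eigenspaces and hence the same primitive idempotents, a $\Phi$-standard basis is a $\check{\Phi}$-standard basis, so $(\zeta A)^\natural = \zeta A^\natural$ scales the sub- and superdiagonal entries by $\zeta$, and the dual statement follows by applying the same reasoning to $\Phi^*$. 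Your route is slightly more robust: Lemma \ref{lem:cibi} carries the hypothesis that $\Phi$ is nontrivial, so the paper's substitution argument strictly covers only that case (harmlessly, since for $d=0$ both index ranges are empty, as you observe), whereas your matrix argument needs no case distinction; it also makes transparent \emph{why} the scaling is linear in the intersection numbers. What the paper's route buys is brevity and the fact that it stays entirely at the level of the eigenvalue array, never touching a basis. Both arguments are sound, and your appeal to the $*$ symmetry for the dual numbers matches how the paper itself derives Lemma \ref{lem:cibi}(ii) from \ref{lem:cibi}(i).
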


\begin{proof}
Use Lemmas \ref{lem:hhs} and \ref{lem:cibi}.
\end{proof}

\section{Tridiagonal matrices}
\label{sec:trid}

In this section we collect some results about tridiagonal matrices
that will be used later in the paper.
Consider a tridiagonal matrix in $\Mat_{d+1}(\F)$:
\begin{align}                                \label{eq:defA}
 A &=
  \begin{pmatrix}
    a_0 & b_0 &    & & & \text{\bf 0}                  \\
    c_1 & a_1 & b_1   \\
         & c_2  & \cdot & \cdot  \\
         &      & \cdot & \cdot & \cdot \\
         &       &         & \cdot & \cdot & b_{d-1} \\
     \text{\bf 0}   &        &          &         & c_d & a_d   \\
  \end{pmatrix}.
\end{align}
We say that $A$ is {\em irreducible} whenever $c_i b_{i-1} \neq 0$ for $1 \leq i \leq d$.
For the rest of this section, assume that $A$ is irreducible.

We have some remarks about the minimal polynomial of $A$.
For $0 \leq r \leq d$ consider the matrix $A^r$.
For $0 \leq i,j \leq d$ the $(i,j)$-entry of $A^r$ satisfies
\begin{align}                            \label{eq:Arij}
  (A^r)_{i,j} &=
  \begin{cases}
     0  & \text{ if  $\;\;|i-j| > r$},
  \\
  \neq 0 & \text{ if $\;\;|i-j|=r$}.
  \end{cases}
\end{align}
Consequently the matrices $\{A^r\}_{r=0}^d$ are linearly independent.
Therefore the minimal polynomial of $A$ coincides with the characteristic polynomial of $A$.
So each eigenspace of $A$ has dimension one.
We remark that $A$ might not be diagonalizable.

For $0 \leq i \leq d$ define $E^*_i \in \Mat_{d+1}(\F)$ by
\begin{equation}
   E^*_i = \text{diag} ( 0,\, \ldots, \, 0, \, \stackrel{i}{1},\, 0, \, \ldots, \, 0).  \label{eq:defEsi}
\end{equation}
Observe that 
$E^*_i E^*_j = \delta_{i,j} E^*_i$ $(0 \leq i,j \leq d)$ and
$I = \sum_{i=0}^d E^*_i$.

\begin{lemma}    \label{lem:EsiArEsj}    \samepage
\ifDRAFT {\rm lem:EsiArEsj}. \fi
For $0 \leq i,j,r \leq d$,
\begin{align}                                 \label{eq:EsiArEsj}
  E^*_i A^r E^*_j &=
    \begin{cases}
      0 & \text{ if $\;\;|i-j|>r$}, 
    \\
     \neq 0 & \text{ if $\;\;|i-j|=r$}.
   \end{cases}
\end{align}
\end{lemma}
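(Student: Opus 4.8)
The plan is to reduce the statement to the already-recorded entrywise description \eqref{eq:Arij} of the powers $A^r$, by way of the elementary observation that sandwiching between two coordinate idempotents extracts a single matrix entry. First I would record that for any $X \in \Mat_{d+1}(\F)$ and any $0 \leq i,j \leq d$, the product $E^*_i X E^*_j$ is the matrix all of whose entries are $0$ except possibly the $(i,j)$-entry, which equals $X_{i,j}$. This is immediate from \eqref{eq:defEsi}: left multiplication by $E^*_i$ annihilates every row except row $i$, and right multiplication by $E^*_j$ annihilates every column except column $j$. In particular $E^*_i X E^*_j = 0$ if and only if $X_{i,j}=0$.

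Taking $X = A^r$, the lemma becomes equivalent to the assertion that $(A^r)_{i,j}=0$ when $|i-j|>r$ and $(A^r)_{i,j}\neq 0$ when $|i-j|=r$, which is exactly \eqref{eq:Arij}. Since \eqref{eq:Arij} has already been stated for the irreducible matrix $A$, the result follows at once.

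The real content therefore lies in \eqref{eq:Arij}, and if one wishes the lemma to be self-contained I would establish it by induction on $r$. The base case $r=0$ is trivial since $A^0=I$, so $(A^0)_{i,j}=\delta_{i,j}$. For the inductive step I would expand $(A^{r+1})_{i,j}=\sum_k A_{i,k}(A^r)_{k,j}$ and use that $A$ is tridiagonal (so $A_{i,k}=0$ unless $|i-k|\leq 1$) together with the inductive hypothesis (so $(A^r)_{k,j}=0$ unless $|k-j|\leq r$). The triangle inequality $|i-j|\leq |i-k|+|k-j|$ then forces the whole sum to vanish when $|i-j|>r+1$, giving the first case.

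The delicate point, and the main obstacle, is the claim that the sum is nonzero when $|i-j|=r+1$. Here the triangle inequality is tight, so exactly one index $k$ contributes a nonzero term: namely $k=i-1$ when $i>j$, or $k=i+1$ when $i<j$. That term is the product of an off-diagonal entry of $A$, nonzero by the irreducibility hypothesis $c_i b_{i-1}\neq 0$, with an extremal entry of $A^r$, nonzero by the inductive hypothesis. Because there is a unique surviving summand, no cancellation can occur and the extremal entry of $A^{r+1}$ is nonzero. This non-cancellation is precisely where irreducibility is needed, and it is the only subtle step in the argument.
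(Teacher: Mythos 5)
Your proposal is correct and matches the paper's proof, which simply observes that the lemma is a reformulation of \eqref{eq:Arij} via the fact that $E^*_i X E^*_j$ extracts the $(i,j)$-entry of $X$. Your additional induction on $r$ proving \eqref{eq:Arij} itself (with the correct identification of the unique surviving summand in the extremal case $|i-j|=r+1$, where irreducibility is used) fills in a step the paper states without proof, and is accurate.
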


\begin{proof}
This is a reformulation of \eqref{eq:Arij}.
\end{proof}

\begin{lemma}   \label{lem:ArEs0As}    \samepage
\ifDRAFT {\rm lem:ArEs0As}. \fi
The elements 
\begin{equation}                             \label{eq:ArEs0As}
\{A^r E^*_0 A^s \,|\, 0 \leq r,s \leq d\}
\end{equation}
form a basis for the $\F$-vector space $\Mat_{d+1}(\F)$.
\end{lemma}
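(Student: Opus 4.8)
We must show that the $(d+1)^2$ matrices $\{A^r E^*_0 A^s \mid 0 \leq r,s \leq d\}$ form a basis for $\Mat_{d+1}(\F)$. Since $\dim \Mat_{d+1}(\F) = (d+1)^2$, which equals the number of elements in the proposed set, it suffices to prove that these matrices are \emph{linearly independent}. The plan is to track the support of each $A^r E^*_0 A^s$ via the entry-pattern results already established, and exploit the fact that $E^*_0$ is the projection onto a single coordinate.

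**The key structural observation.**
First I would pin down the location of the nonzero entries of $A^r E^*_0 A^s$. Since $E^*_0 = \text{diag}(1,0,\ldots,0)$ picks out the $0$th row and column, the matrix $A^r E^*_0 A^s$ has $(i,j)$-entry equal to $(A^r)_{i,0}(A^s)_{0,j}$. By \eqref{eq:Arij} (equivalently Lemma \ref{lem:EsiArEsj} with the relevant index equal to $0$), the factor $(A^r)_{i,0}$ is nonzero precisely when $i = r$ and zero when $i > r$; similarly $(A^s)_{0,j}$ is nonzero precisely when $j = s$ and zero when $j > s$. The crucial point, using that $0$ is an extreme index so $|i-0| = i$ can never exceed the row index beyond the diagonal case, is that $(A^r)_{i,0} \neq 0$ forces $i = r$ exactly: from \eqref{eq:Arij}, $(A^r)_{i,0} = 0$ whenever $i > r$, and the entry is nonzero when $i = r$, while for $i < r$ the entry may or may not vanish. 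This asymmetry is what I would leverage.

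**Extracting the leading term.**
Consider a vanishing linear combination $\sum_{r,s} \alpha_{r,s} A^r E^*_0 A^s = 0$ with $\alpha_{r,s} \in \F$. Its $(i,j)$-entry is $\sum_{r,s} \alpha_{r,s}(A^r)_{i,0}(A^s)_{0,j}$. I would argue by extracting coefficients at the ``corner'' entries. Fix a pair $(r_0, s_0)$ and look at the $(r_0, s_0)$-entry of the combination: because $(A^r)_{r_0,0} = 0$ for $r > r_0$ and $(A^s)_{0,s_0} = 0$ for $s > s_0$, only terms with $r \leq r_0$ and $s \leq s_0$ contribute. This gives a triangular system. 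Starting from $(i,j) = (d,d)$ — where only $(r,s) = (d,d)$ survives, forcing $\alpha_{d,d}(A^d)_{d,0}(A^d)_{0,d} = 0$ and hence $\alpha_{d,d} = 0$ since both factors are nonzero by \eqref{eq:Arij} — I would induct downward on $r+s$ (or on the lexicographic pair $(r,s)$), at each stage isolating one coefficient $\alpha_{r_0,s_0}$ by examining the $(r_0,s_0)$-entry after the larger coefficients have been shown to vanish. The nonvanishing of $(A^{r_0})_{r_0,0}$ and $(A^{s_0})_{0,s_0}$ then yields $\alpha_{r_0,s_0} = 0$.

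**The main obstacle.**
The delicate point is the triangularity: one must be sure that when reading off the $(r_0,s_0)$-entry, every \emph{already-eliminated} coefficient with a strictly larger index genuinely drops out, and that the surviving diagonal contribution is nonzero. This rests entirely on the sharp form of \eqref{eq:Arij} — the ``$=0$ if $|i-j|>r$'' half kills the higher-order terms, and the ``$\neq 0$ if $|i-j|=r$'' half guarantees the pivot does not vanish. With the correct induction order (descending in $r_0$ and $s_0$), the elimination is clean; the risk lies only in choosing a consistent ordering so that no partial contributions from indices smaller than $(r_0,s_0)$ contaminate the pivot entry, which is handled because those involve $(i,j)=(r_0,s_0)$ with $r<r_0$ giving $(A^r)_{r_0,0}=0$ again by \eqref{eq:Arij}. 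Once independence is established, the dimension count finishes the proof.
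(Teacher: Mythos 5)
Your proof is correct and is essentially the paper's own argument: both compute $(A^r E^*_0 A^s)_{i,j}=(A^r)_{i,0}(A^s)_{0,j}$, read off from \eqref{eq:Arij} that this vanishes when $i>r$ or $j>s$ and is nonzero when $i=r$ and $j=s$, and conclude linear independence from the resulting triangular pattern of zero/nonzero entries together with the count $(d+1)^2=\dim \Mat_{d+1}(\F)$. One sentence of yours states the triangularity backwards (at the $(r_0,s_0)$-entry the surviving terms are those with $r\geq r_0$ and $s\geq s_0$, not $r\leq r_0$ and $s\leq s_0$), but your base case at $(d,d)$, your descending elimination order, and your closing paragraph all use the correct direction, so the argument goes through as you actually execute it.
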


\begin{proof}
For $0 \leq r,s \leq d$ consider the matrix $A^r E^*_0 A^s$.
For $0 \leq i,j \leq d$ we compute its $(i,j)$-entry using \eqref{eq:defEsi},
and evaluate the result using \eqref{eq:Arij}:
\[
  (A^r E^*_0 A^s)_{i,j} = (A^r)_{i,0} (A^s)_{0,j} 
 = \begin{cases}
      0 & \text{ if $\;\; i>r\;$ or $\;j>s$},
   \\
     \neq 0 & \text{ if $\;\;i=r\;$ and $\;j=s$}.
  \end{cases}
\]
From the pattern of zero/nonzero entries, we see that
the elements \eqref{eq:ArEs0As} are linearly independent.
The set \eqref{eq:ArEs0As} contains $(d+1)^2$ elements,
and this is the dimension of $\Mat_{d+1}(\F)$.
The result follows.
\end{proof}

\begin{corollary}   \label{cor:AEs0}   \samepage
\ifDRAFT {\rm cor:AEs0}. \fi
The elements  $A$, $E^*_0$ generate the $\F$-algebra $\Mat_{d+1}(\F)$.
\end{corollary}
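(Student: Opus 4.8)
The plan is to deduce this directly from Lemma \ref{lem:ArEs0As}. Let $\mathcal{A}$ denote the $\F$-subalgebra of $\Mat_{d+1}(\F)$ generated by $A$ and $E^*_0$. By definition $\mathcal{A}$ is closed under multiplication and addition and contains both generators, so it contains every product formed from $A$ and $E^*_0$. In particular, for all integers $r,s$ with $0 \leq r,s \leq d$, the element $A^r E^*_0 A^s$ lies in $\mathcal{A}$.

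Next I would invoke Lemma \ref{lem:ArEs0As}, which asserts that the $(d+1)^2$ elements $\{A^r E^*_0 A^s \mid 0 \leq r,s \leq d\}$ form a basis for the $\F$-vector space $\Mat_{d+1}(\F)$. Since $\mathcal{A}$ is, in particular, an $\F$-subspace of $\Mat_{d+1}(\F)$ that contains this basis, it must contain the $\F$-span of the basis, namely all of $\Mat_{d+1}(\F)$. Hence $\mathcal{A} = \Mat_{d+1}(\F)$, which is exactly the assertion that $A$ and $E^*_0$ generate $\Mat_{d+1}(\F)$.

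There is essentially no obstacle here: the corollary is a formal consequence of the preceding lemma. The only conceptual point worth recording is that an $\F$-subalgebra is automatically an $\F$-subspace, so that once it is known to contain a spanning set for $\Mat_{d+1}(\F)$ it is forced to be the entire algebra; all of the genuine content (the linear independence and count of the monomials $A^r E^*_0 A^s$, which in turn rests on the irreducibility of $A$ via \eqref{eq:Arij}) has already been established in Lemma \ref{lem:ArEs0As}.
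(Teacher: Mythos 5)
Your proposal is correct and matches the paper's argument exactly: the paper's proof is the one-line citation ``By Lemma \ref{lem:ArEs0As}'', and you have simply written out the routine details (the subalgebra generated by $A$ and $E^*_0$ contains the basis $\{A^r E^*_0 A^s\}$, hence, being an $\F$-subspace, equals $\Mat_{d+1}(\F)$). No gaps.
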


\begin{proof}
By Lemma \ref{lem:ArEs0As}.
\end{proof}

\begin{definition}            \label{def:As}
Let $\{\th^*_i\}_{i=0}^d$ denote scalars in $\F$.
Define $A^* \in \Mat_{d+1}(\F)$ by
\begin{equation}                    \label{eq:defAs}
    A^* = \text{diag}(\th^*_0,\, \th^*_1,\, \ldots, \, \th^*_d).
\end{equation}
\end{definition}

Observe that
\begin{equation}            \label{eq:As}
 A^* = \sum_{i=0}^d \th^*_i E^*_i.
\end{equation}

\begin{lemma}   \label{lem:As}   \samepage
\ifDRAFT {\rm lem:As}. \fi
Assume that $\th^*_0 \neq \th^*_i$ for $1 \leq i \leq d$.
Then the following {\rm (i)--(iii)} hold.
\begin{itemize}
\item[\rm (i)]
The element $E^*_0$ is a polynomial in $A^*$:
\[
   E^*_0 = \prod_{i=1}^d \frac{A^* - \th^*_i I}{\th^*_0 - \th^*_i}.
\]
\item[\rm (ii)]
The elements $A,A^*$ generate the $\F$-algebra $\Mat_{d+1}(\F)$.
\item[\rm (iii)]
There does not exist a subspace $W \subseteq \F^{d+1}$ such that
$W \neq 0$, $W\neq \F^{d+1}$, $AW \subseteq W$, $A^* W \subseteq W$.
\end{itemize}
\end{lemma}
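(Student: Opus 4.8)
The plan is to prove the three parts in the stated order, since (i) feeds directly into (ii), and (ii) feeds into (iii). For part (i), the matrix $A^*$ is diagonal with diagonal entries $\th^*_0, \th^*_1, \ldots, \th^*_d$, and the hypothesis $\th^*_0 \neq \th^*_i$ for $1 \leq i \leq d$ guarantees that the product on the right-hand side is well defined (no zero denominators). I would evaluate the claimed polynomial in $A^*$ by checking its action on each standard basis vector, or equivalently by substituting the diagonal form. Using \eqref{eq:As}, the $j$-th diagonal entry of $\prod_{i=1}^d (A^* - \th^*_i I)/(\th^*_0 - \th^*_i)$ is $\prod_{i=1}^d (\th^*_j - \th^*_i)/(\th^*_0 - \th^*_i)$, which equals $1$ when $j=0$ and equals $0$ when $1 \leq j \leq d$ (since the factor with $i=j$ vanishes). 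This matches $E^*_0$ from \eqref{eq:defEsi}, so (i) follows.

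\textbf{Parts (ii) and (iii).} For part (ii), I would combine part (i) with Corollary \ref{cor:AEs0}. By part (i), the element $E^*_0$ is a polynomial in $A^*$, hence $E^*_0$ lies in the $\F$-algebra generated by $A$ and $A^*$. Therefore the subalgebra generated by $A$ and $A^*$ contains both $A$ and $E^*_0$, and by Corollary \ref{cor:AEs0} the elements $A, E^*_0$ already generate all of $\Mat_{d+1}(\F)$. Thus $A, A^*$ generate $\Mat_{d+1}(\F)$, giving (ii). For part (iii), suppose toward a contradiction that such a subspace $W$ exists, with $W \neq 0$, $W \neq \F^{d+1}$, $AW \subseteq W$, and $A^* W \subseteq W$. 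Then $W$ is invariant under the $\F$-algebra generated by $A$ and $A^*$, which by (ii) is all of $\Mat_{d+1}(\F)$. But $\Mat_{d+1}(\F)$ acts irreducibly on $\F^{d+1}$ (its only invariant subspaces are $0$ and $\F^{d+1}$), so $W$ must be $0$ or $\F^{d+1}$, contradicting the choice of $W$. This establishes (iii).

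\textbf{The main obstacle.} The only part requiring genuine care is part (i); parts (ii) and (iii) are short deductions once (i) is in hand. The potential subtlety in (i) is ensuring that the Lagrange-interpolation-style argument is valid even though $A^*$ may have repeated eigenvalues among $\th^*_1, \ldots, \th^*_d$ — the hypothesis only asserts that $\th^*_0$ is distinct from each $\th^*_i$ with $i \geq 1$, not that the $\th^*_i$ are mutually distinct. I would address this by arguing entrywise on the diagonal rather than invoking a formula for the full primitive idempotents: for each index $j$, I compute the $j$-th diagonal entry directly, and the vanishing for $j \geq 1$ comes from the single factor indexed by $i=j$, while the normalization at $j=0$ is exactly the product of the chosen denominators. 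This sidesteps any need for the $\th^*_i$ $(i \geq 1)$ to be distinct, which is the one place a careless appeal to \eqref{eq:Ei} would go wrong.
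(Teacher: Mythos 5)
Your proof is correct and follows the same route as the paper: part (i) is verified by direct matrix multiplication on the diagonal entries, part (ii) combines (i) with Corollary \ref{cor:AEs0}, and part (iii) follows from (ii) together with the irreducibility of $\F^{d+1}$ as a $\Mat_{d+1}(\F)$-module. Your added remark that the hypothesis does not require the $\th^*_i$ $(i\geq 1)$ to be mutually distinct, and that the entrywise computation handles this, is a worthwhile clarification of what the paper leaves implicit.
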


\begin{proof}
(i)
By matrix multiplication.

(ii)
By (i) and Corollary \ref{cor:AEs0}.

(iii)
By (ii) above and since $\F^{d+1}$ is an irreducible $\Mat_{d+1}(\F)$-module.
\end{proof}

\begin{definition}       \label{def:ki}
\ifDRAFT {\rm def:ki}. \fi
Define scalars $\{k_i\}_{i=0}^d$ by
\begin{align}                          \label{eq:defki}
   k_i &= \frac{b_0 b_1 \cdots b_{i-1}}{c_1 c_2 \cdots c_i}   \qquad\qquad (0 \leq i \leq d).
\end{align}
\end{definition}

Observe that $k_0=1$,  and $k_i \neq 0$ for $0 \leq i \leq d$.
Moreover
\begin{align}
   k_{i-1} b_{i-1} &= k_i c_i  \qquad\qquad (1 \leq i \leq d).      \label{eq:kici}
\end{align}

\begin{definition}    \label{def:K}    \samepage
\ifDRAFT {\rm def:K}. \fi
Define $K \in \Mat_{d+1}(\F)$ by
\begin{equation}                          \label{eq:defK}
   K = \text{diag}(k_0,\, k_1, \, \ldots \, , \, k_d).
\end{equation}
\end{definition}

\begin{lemma}   \label{lem:AtK}   \samepage
\ifDRAFT {\rm lem:AtK}. \fi
We have $A^{\rm t} K = K A$.
\end{lemma}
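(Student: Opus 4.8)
The plan is to establish the matrix identity $A^{\rm t} K = K A$ by comparing entries. Since $K = \text{diag}(k_0,\dots,k_d)$ is diagonal, the two products are easy to write down entrywise. First I would record that for $0 \leq i,j \leq d$ we have $(KA)_{i,j} = k_i A_{i,j}$, while using $(A^{\rm t})_{i,j} = A_{j,i}$ gives $(A^{\rm t} K)_{i,j} = A_{j,i}\, k_j$. Therefore the assertion $A^{\rm t} K = K A$ is equivalent to the scalar identity $A_{j,i}\, k_j = k_i\, A_{i,j}$ holding for all $0 \leq i,j \leq d$.

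Next I would exploit the tridiagonal shape \eqref{eq:defA} of $A$ to reduce the verification to the band $|i-j| \leq 1$. Indeed, if $|i-j| \geq 2$ then both $A_{i,j}$ and $A_{j,i}$ are zero, so both sides of the target identity vanish. The diagonal case $i=j$ is immediate, since it reads $A_{i,i}\, k_i = k_i\, A_{i,i}$. This leaves only the two cases $j = i+1$ and $j = i-1$, which I would treat using \eqref{eq:kici}.

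For $j = i-1$ the identity reads $b_{i-1}\, k_{i-1} = k_i\, c_i$ (recalling $A_{i-1,i} = b_{i-1}$ and $A_{i,i-1} = c_i$), which is exactly the relation $k_{i-1} b_{i-1} = k_i c_i$ of \eqref{eq:kici}. For $j = i+1$ the identity reads $c_{i+1}\, k_{i+1} = k_i\, b_i$, which is \eqref{eq:kici} with the index shifted by one. Since both off-diagonal cases reduce to \eqref{eq:kici}, the identity $A_{j,i}\, k_j = k_i\, A_{i,j}$ holds in every case, and hence $A^{\rm t} K = K A$.

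There is no genuine obstacle here; the argument is a direct entrywise computation. The only point requiring care is bookkeeping the distinction between the superdiagonal entries $b_i$ and the subdiagonal entries $c_i$ when passing between the $(i,j)$-entry and the $(j,i)$-entry, and correctly matching each off-diagonal case to the appropriate instance of \eqref{eq:kici}.
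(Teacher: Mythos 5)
Your proof is correct and is exactly the argument the paper intends: its proof of Lemma \ref{lem:AtK} is the one-line instruction ``Use \eqref{eq:defA} and \eqref{eq:kici},'' and your entrywise computation, reducing to the band $|i-j|\leq 1$ and invoking $k_{i-1}b_{i-1}=k_ic_i$ in both off-diagonal cases, is precisely how that instruction is meant to be carried out.
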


\begin{proof}
Use \eqref{eq:defA} and \eqref{eq:kici}.
\end{proof}

For an $\F$-algebra $\cal A$,
by an {\em antiautomorphism} of $\cal A$ we mean an $\F$-linear
bijection $\xi : {\cal A} \to {\cal A}$ such that
$(xy)^\xi = y^\xi x^\xi$ for $x,y \in {\cal A}$.

\begin{definition}    \label{def:dagger}   \samepage
\ifDRAFT {\rm def:dagger}. \fi
Define the map
\[
 \dagger : \Mat_{d+1}(\F) \to \Mat_{d+1}(\F), \qquad X \mapsto K^{-1} X^{\rm t} K.
\]
\end{definition}

\begin{lemma}    \label{lem:dagger}   \samepage
\ifDRAFT {\rm lem:dagger}. \fi
The map $\dagger$ is an antiautomorphism of $\Mat_{d+1}(\F)$.
Moreover $(X^\dagger)^\dagger = X$ for all $X \in \Mat_{d+1}(\F)$.
\end{lemma}

\begin{proof}
The map $\dagger$ is $\F$-linear.
For $X \in \Mat_{d+1}(\F)$ we have $(X^\dagger)^\dagger=X$, since
\[
 (X^\dagger)^\dagger = (K^{-1} X^\text{t} K)^\dagger
 = K^{-1} (K^{-1} X^\text{t} K)^\text{t} K = K^{-1} K X K^{-1} K = X.
\]
Consequently the map $\dagger$ is bijective.
For $X$, $Y \in \Mat_{d+1}(\F)$ we have
\[
 (XY)^\dagger = K^{-1} (XY)^\text{\rm t} K
  = K^{-1} Y^\text{t} X^\text{t} K
  = K^{-1} Y^\text{t} K K^{-1} X^\text{t} K
 = Y^\dagger X^\dagger.
\]
Thus $\dagger$ is an antiautomorphism.
\end{proof}

The map $\dagger$ is characterized as follows.

\begin{lemma}    \label{lem:daggerunique}   \samepage
\ifDRAFT {\rm lem:daggerunique}. \fi
The map $\dagger$ is the unique antiautomorphism of $\Mat_{d+1}(\F)$
that fixes each of $A$, $E^*_0$, $E^*_1$, \ldots, $E^*_d$.
\end{lemma}

\begin{proof}
Using Lemma \ref{lem:AtK} and Definition \ref{def:dagger},
\[
   A^\dagger = K^{-1} A^\text{t} K = K^{-1} K A = A,
\]
so $\dagger$ fixes $A$.
For $0 \leq i \leq d$ the map $\dagger$ fixes $E^*_i$ by Definition \ref{def:dagger}
and since $E^*_i$, $K$ are diagonal.
Concerning the the uniqueness,
let $\xi$ denote an antiautomorphism of $\Mat_{d+1}(\F)$ that 
fixes each of  $A$, $E^*_0$, $E^*_1$, \ldots, $E^*_d$.
We show that $\xi = \dagger$.
The composition $\xi \dagger$ is an automorphism of $\Mat_{d+1}(\F)$
that fixes $A$, $E^*_0$, $E^*_1$, \ldots, $E^*_d$.
Consequently $\xi \dagger = 1$ in view of Corollary \ref{cor:AEs0}.
So $\xi = \dagger$.
\end{proof}

\section{Recurrent sequences}
\label{sec:rec}

Let $(\{\th_i\}_{i=0}^d; \{\th^*_i\}_{i=0}^d)$ denote the eigenvalue array of 
a TB tridiagonal system over $\F$.
Later in the paper we will show that there exists $\beta \in \F$ such that
\begin{align*}
 \th_{i-1} - \beta \th_i + \th_{i+1} &=0, &
 \th^*_{i-1} - \beta \th^*_i + \th^*_{i+1} &= 0
 &&  (1 \leq i \leq d-1).
\end{align*}
In this section we have some comments about the above recurrence.
For the rest of this section, fix an integer $d \geq 0$ and
let $\{\sigma_i\}_{i=0}^d$ denote a sequence 
of scalars taken from $\F$.

\begin{definition}     \label{def:recurrent}    \samepage
\ifDRAFT {\rm def:recurrent}. \fi
For $\beta \in \F$, the sequence $\{\sigma_i\}_{i=0}^d$ is said to be {\em $\beta$-recurrent}
whenever 
\begin{align}                \label{eq:rec}  
  \sigma_{i-1} - \beta \sigma_i + \sigma_{i+1} &= 0  \qquad\qquad  (1 \leq i \leq d-1).
\end{align}
We say that $\{\sigma_i\}_{i=0}^d$ is {\em recurrent} whenever it is $\beta$-recurrent
for some $\beta \in \F$.
\end{definition}

\begin{lemma}   \label{lem:rec1}   \samepage
\ifDRAFT {\rm lem:rec1}. \fi
For $\beta \in \F$ the following hold.
\begin{itemize}
\item[\rm (i)]
Assume that $\{\sigma_i\}_{i=0}^d$ is $\beta$-recurrent.
Then there exists $\varrho \in \F$ such that
\begin{align}
 \varrho &= \sigma_{i-1}^2 - \beta \sigma_{i-1} \sigma_i + \sigma_i^2
                            \qquad\qquad (1 \leq i \leq d).           \label{eq:rho2}
\end{align}
\item[\rm (ii)]
Assume that there exists $\varrho \in \F$ that satisfies \eqref{eq:rho2}.
Also assume that $\sigma_{i-1} \neq \sigma_{i+1}$ for $1 \leq i \leq d-1$.
Then $\{\sigma_i\}_{i=0}^d$ is $\beta$-recurrent.
\end{itemize}
\end{lemma}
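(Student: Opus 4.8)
The plan is to attach to the sequence, for each $1 \le i \le d$, the scalar
\[
\varrho_i = \sigma_{i-1}^2 - \beta \sigma_{i-1} \sigma_i + \sigma_i^2,
\]
so that both parts of the lemma reduce to a single question: are the $\varrho_i$ all equal to one another? The whole argument then rests on one algebraic identity linking consecutive differences of the $\varrho_i$ to the recurrence expression. Specifically, for $1 \le i \le d-1$ I would establish
\[
\varrho_{i+1} - \varrho_i = (\sigma_{i+1} - \sigma_{i-1})\,(\sigma_{i-1} - \beta \sigma_i + \sigma_{i+1}).
\]
This is verified by expanding the left-hand side as $(\sigma_{i+1}^2 - \sigma_{i-1}^2) - \beta \sigma_i(\sigma_{i+1} - \sigma_{i-1})$ and factoring out $\sigma_{i+1} - \sigma_{i-1}$. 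The crucial observation is that the second factor is \emph{exactly} the left-hand side of the recurrence \eqref{eq:rec}, so this one identity ties together the quadratic invariant \eqref{eq:rho2} and the linear recurrence.

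For part (i), assume $\{\sigma_i\}_{i=0}^d$ is $\beta$-recurrent. Then for each $i$ in the range the second factor in the identity vanishes, so $\varrho_{i+1} - \varrho_i = 0$ throughout. Hence all the $\varrho_i$ coincide (the cases $d \le 1$ being vacuous or trivial), and I set $\varrho$ equal to this common value; by construction it satisfies \eqref{eq:rho2}.

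For part (ii), assume some $\varrho$ satisfies \eqref{eq:rho2}, which says precisely that $\varrho_i = \varrho$ for all $1 \le i \le d$, whence $\varrho_{i+1} - \varrho_i = 0$ for $1 \le i \le d-1$. The identity then forces the product $(\sigma_{i+1} - \sigma_{i-1})(\sigma_{i-1} - \beta \sigma_i + \sigma_{i+1})$ to be zero. Here the hypothesis $\sigma_{i-1} \neq \sigma_{i+1}$ makes the first factor nonzero, so the second factor must vanish, and that is exactly \eqref{eq:rec}. Thus the sequence is $\beta$-recurrent.

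The main obstacle is essentially just spotting the factorization; once that identity is in hand, both directions follow immediately with no further computation. The only point deserving care is the non-vanishing hypothesis $\sigma_{i-1} \neq \sigma_{i+1}$ in part (ii): it is precisely what permits cancelling the factor $\sigma_{i+1} - \sigma_{i-1}$ to pass from the quadratic invariant back to the linear recurrence, and part (i) shows that no such hypothesis is needed in the forward direction.
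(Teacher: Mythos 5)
Your proposal is correct and follows essentially the same route as the paper: the paper's proof introduces $S_i = \sigma_{i-1}^2 - \beta\sigma_{i-1}\sigma_i + \sigma_i^2$ and uses the identity $S_i - S_{i+1} = (\sigma_{i-1}-\sigma_{i+1})(\sigma_{i-1}-\beta\sigma_i+\sigma_{i+1})$, which is exactly your factorization up to sign. Both directions are then read off from this identity just as you describe.
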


\begin{proof}
For $1 \leq i \leq d$ define
\[
  S_i = \sigma_{i-1}^2 - \beta \sigma_{i-1} \sigma_i + \sigma_i^2. 
\]
Observe that for $1 \leq i \leq d-1$,
\begin{align}
  S_i - S_{i+1} &= (\sigma_{i-1}- \sigma_{i+1})(\sigma_{i-1} - \beta \sigma_i + \sigma_{i+1}).
                      \label{eq:rec1aux}
\end{align}

(i)
In \eqref{eq:rec1aux} the right-hand side is zero by \eqref{eq:rec}.
So $S_i - S_{i+1}=0$.
Thus $S_i$ is independent of $i$ for $1 \leq i \leq d$.

(ii)
In \eqref{eq:rec1aux} the left-hand side is zero.
In the right-hand side, the first factor is nonzero so
the second factor is zero.
Consequently $\{\sigma_i\}_{i=0}^d$ is $\beta$-recurrent.
\end{proof}

\section{The Askey-Wilson relations}
\label{sec:AW}

Let $A,A^*$ denote a TB tridiagonal pair over $\F$.
In this section we show that there exist scalars $\beta$, $\varrho$, $\varrho^*$ in $\F$ such that
both
\begin{align}
 A^2 A^* - \beta A A^* A + A^* A^2 &= \varrho A^*,        \label{eq:AW1}
\\
 {A^*}^2 A - \beta A^* A A^* + A {A^*}^2 &= \varrho^* A.   \label{eq:AW2}
\end{align}
The equations \eqref{eq:AW1}, \eqref{eq:AW2} are  special cases of the
Askey-Wilson relations \cite{TV, Z}.

For the rest of this section, fix an integer $d \geq 0$,
let $V$ denote a vector space over $\F$
with dimension $d+1$,
and let $\Phi = (A; \{E_i\}_{i=0}^d; A^*; \{E^*_i\}_{i=0}^d)$
denote a TB tridiagonal system on $V$ with eigenvalue array
$(\{\th_i\}_{i=0}^d; \{\th^*_i\}_{i=0}^d)$.

First we explain how \eqref{eq:AW1}, \eqref{eq:AW2} are related to the
recurrence discussed in Lemma \ref{lem:rec1}.

\begin{lemma}   \label{lem:rec4}    \samepage
\ifDRAFT {\rm lem:rec4}. \fi
For $\beta$, $\varrho \in \F$ the following {\rm (i), (ii)} are equivalent:
\begin{itemize}
\item[\rm (i)]
$\th_{i-1}^2 - \beta \th_{i-1} \th_i + \th_i^2 = \varrho$ \quad $(1 \leq i \leq d)$;
\item[\rm (ii)]
$A^2 A^* - \beta A A^* A + A^* A^2 = \varrho A^*$.
\end{itemize}
\end{lemma}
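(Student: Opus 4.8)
The plan is to expand the operator $A^2 A^* - \beta A A^* A + A^* A^2 - \varrho A^*$ using the spectral decomposition $A = \sum_{i=0}^d \th_i E_i$ together with the resolution of the identity $I = \sum_{i=0}^d E_i$. Writing $A^* = \sum_{r,s} E_r A^* E_s$ and using $A E_r = \th_r E_r = E_r A$, each of the four terms becomes a sum over $r,s$ of a scalar multiple of $E_r A^* E_s$. Collecting them gives
\[
 A^2 A^* - \beta A A^* A + A^* A^2 - \varrho A^* = \sum_{r,s=0}^d \bigl(\th_r^2 - \beta \th_r \th_s + \th_s^2 - \varrho\bigr)\, E_r A^* E_s.
\]
This is the key identity from which everything follows.

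Next I would invoke \eqref{eq:EiAsEj} of Lemma \ref{lem:EsiAEsj}, which says that $E_r A^* E_s = 0$ unless $|r-s|=1$ and is nonzero when $|r-s|=1$. This collapses the double sum to the terms with $|r-s|=1$. The crucial structural fact is that the operators $\{E_r A^* E_s : |r-s|=1\}$ are linearly independent: given a vanishing linear combination $\sum c_{r,s} E_r A^* E_s = 0$, multiplying on the left by $E_{r_0}$ and on the right by $E_{s_0}$ isolates the single term $c_{r_0,s_0} E_{r_0} A^* E_{s_0}$, which forces $c_{r_0,s_0}=0$ since that operator is nonzero. Hence the displayed operator is zero if and only if the coefficient $\th_r^2 - \beta \th_r \th_s + \th_s^2 - \varrho$ vanishes for every pair with $|r-s|=1$.

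To finish, I would observe that this coefficient is symmetric in $r$ and $s$, so the pair $(r,s)=(i-1,i)$ and the pair $(i,i-1)$ impose the same condition; the full system of conditions therefore reduces precisely to $\th_{i-1}^2 - \beta \th_{i-1}\th_i + \th_i^2 = \varrho$ for $1 \le i \le d$, which is statement (i). This yields the equivalence (i) $\Leftrightarrow$ (ii) in both directions simultaneously. The main obstacle is the linear-independence step; everything else is a direct expansion and bookkeeping. The only points requiring mild care are checking that the range $|r-s|=1$ matches the index range $1 \le i \le d$ in (i), and using the symmetry of the quadratic form to avoid double-counting conditions.
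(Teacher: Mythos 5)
Your proof is correct and follows essentially the same route as the paper: the paper writes $D = \sum_{i,j} E_i D E_j$ and computes $E_i D E_j = E_i A^* E_j(\th_i^2 - \beta\th_i\th_j + \th_j^2 - \varrho)$, which is exactly your expansion plus your linear-independence observation (both reduce to multiplying by idempotents on either side and citing Lemma \ref{lem:EsiAEsj}). No gaps.
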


\begin{proof}
Define $D = A^2 A^* - \beta A A^* A + A^* A^2 - \varrho A^*$.
Using $I= \sum_{i=0}^d E_i$ we obtain
\[
  D = I D I = \sum_{0 \leq i,j \leq d} E_i D E_j.
\]
Thus $D=0$ if and only if $E_i D E_j=0$  $(0 \leq i,j \leq d)$.
Pick integers $i$, $j$ such that $0 \leq i,j \leq d$.
Using $E_i A = \th_i E_i$ and $A E_j = \th_j E_j$ we find that
\[
  E_i D E_j = E_i A^* E_j (\th_i^2 - \beta \th_i \th_j + \th_j^2 - \varrho).
\]
By \eqref{eq:EiAsEj}, $E_i A^* E_j \neq 0$ if and only if $|i-j| =1$.
The result follows from these comments.
\end{proof}

Our next goal is to prove \eqref{eq:AW1}, \eqref{eq:AW2};
this is accomplished in Proposition \ref{prop:AW}.

\begin{lemma}   \label{lem:comments}   \samepage
\ifDRAFT {\rm lem:comments}. \fi
The following hold.
\begin{itemize}
\item[\rm (i)]
The elements $A$, $A^*$ generate  $\text{\rm End}(V)$.
\item[\rm (ii)]
For $0 \leq i,j,r \leq d$,
\[
   E^*_i A^r E^*_j =
    \begin{cases}
      0 & \text{ if $\;\;|i-j|>r$},
   \\
     \neq 0 & \text{ if $\;\;|i-j|=r$}.
   \end{cases}
\]
\end{itemize}
\end{lemma}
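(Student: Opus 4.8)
Lemma \ref{lem:comments} has two parts. Let me sketch both.

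Part (ii) is essentially already proved. The statement is precisely that of Lemma \ref{lem:EsiArEsj}, but stated for the operator $A$ on $V$ rather than for an abstract irreducible tridiagonal matrix. The plan is to transfer via the matrix representation. Fix a $\Phi$-standard basis $\{v_i\}_{i=0}^d$ (which exists by Lemma \ref{lem:vibasis}), and let $\natural$ be the representation isomorphism with respect to this basis. By \eqref{eq:Anat} the matrix $A^\natural$ is tridiagonal, and by Lemma \ref{lem:cibinonzero} its off-diagonal entries are all nonzero, so $A^\natural$ is irreducible in the sense of Section \ref{sec:trid}. Now apply Lemma \ref{lem:EsiArEsj} to $A^\natural$, using that $(E^*_i)^\natural$ is the diagonal idempotent from \eqref{eq:Esinat}=\eqref{eq:defEsi}. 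Since $\natural$ is an algebra isomorphism, $E^*_i A^r E^*_j = 0$ iff $(E^*_i A^r E^*_j)^\natural = 0$, and the conclusion follows directly. The trivial case $d=0$ is vacuous and handled separately.

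Part (i) is the substantive one. The plan is again to pass to $\Mat_{d+1}(\F)$ through $\natural$: it suffices to show $A^\natural$ and $(A^*)^\natural$ generate the whole matrix algebra. By \eqref{eq:Asnat} we have $(A^*)^\natural = \mathrm{diag}(\th^*_0,\dots,\th^*_d)$, which matches \eqref{eq:defAs}, and $A^\natural$ is irreducible tridiagonal as noted above. This is exactly the hypothesis of Lemma \ref{lem:As}(ii), \emph{provided} we know $\th^*_0 \neq \th^*_i$ for $1 \leq i \leq d$. But the $\th^*_i$ are mutually distinct by Note \ref{note:distinct}, so that hypothesis holds. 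Hence Lemma \ref{lem:As}(ii) gives that $A^\natural, (A^*)^\natural$ generate $\Mat_{d+1}(\F)$, and transporting back through $\natural^{-1}$ shows $A, A^*$ generate $\mathrm{End}(V)$.

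The main obstacle is conceptual rather than computational: recognizing that essentially all the required work has been done in Section \ref{sec:trid}, and that the only thing to check is that $A^\natural$ genuinely satisfies the irreducibility hypothesis used there. That verification reduces to Lemma \ref{lem:cibinonzero} (nonvanishing of the $b_i, c_i$) together with the distinctness of the dual eigenvalues from Note \ref{note:distinct}. I would also note that part (i) could alternatively be derived from Lemma \ref{lem:As}(i): since $\th^*_0 \neq \th^*_i$, the idempotent $E^*_0$ is a polynomial in $A^*$, and then $A$ together with $E^*_0$ generate everything by Corollary \ref{cor:AEs0}; this is really the same argument repackaged, and either route suffices.
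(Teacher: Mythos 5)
Your proposal is correct and follows the same route as the paper: pass to the matrix representation with respect to a $\Phi$-standard basis and invoke Lemma \ref{lem:As}(ii) for part (i) and Lemma \ref{lem:EsiArEsj} for part (ii). You merely make explicit the hypotheses the paper leaves implicit (irreducibility of $A^\natural$ via Lemma \ref{lem:cibinonzero}, distinctness of the $\th^*_i$ via Note \ref{note:distinct}), which is a faithful filling-in of the same argument.
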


\begin{proof}
Let $\{v_i\}_{i=0}^d$ denote a $\Phi$-standard basis for $V$.
For $X \in \text{End}(V)$ let $X^\natural$  denote the matrix in $\Mat_{d+1}(\F)$
that represents $X$ with respect to $\{v_i\}_{i=0}^d$.
The matrices $(E^*_i)^\natural$,
$(A^*)^\natural$, $A^\natural$ are given in \eqref{eq:Esinat}--\eqref{eq:Anat}.
Now (i) follows from
Lemma  \ref{lem:As}(ii)
and
(ii) follows from Lemma \ref{lem:EsiArEsj}.
\end{proof}

\begin{lemma}   \label{lem:EsAEsAEs}    \samepage
\ifDRAFT {\rm lem:EsAEsAEs}. \fi
For $0 \leq i,j,k,r,s \leq d$ consider the expression
\begin{equation}
 E^*_i A^r E^*_k A^s E^*_j.           \label{eq:EsiArEskAsEsj}
\end{equation}
\begin{itemize}
\item[\rm (i)]
Assume that $|i-j| > r+s$.
Then \eqref{eq:EsiArEskAsEsj} is zero.
\item[\rm (ii)]
Assume that $i-j = r+s$.
Then \eqref{eq:EsiArEskAsEsj} is nonzero if and only if $k = j+s$.
\item[\rm (iii)]
Assume that $j-i = r+s$.
Then \eqref{eq:EsiArEskAsEsj} is nonzero if and only if $k = i+r$.
\end{itemize}
\end{lemma}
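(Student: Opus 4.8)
The plan is to reduce everything to the single-factor result already established in Lemma~\ref{lem:comments}(ii), namely that $E^*_a A^t E^*_b$ is zero when $|a-b|>t$ and nonzero when $|a-b|=t$. The expression \eqref{eq:EsiArEskAsEsj} factors as $(E^*_i A^r E^*_k)(E^*_k A^s E^*_j)$, since the middle $E^*_k$ is idempotent and can be duplicated. So the whole product is nonzero only if \emph{both} factors are nonzero, and I would analyze each factor through the lens of Lemma~\ref{lem:comments}(ii). For the first factor to be nonzero we need $|i-k|\le r$, and for the second we need $|k-j|\le s$; moreover, at the boundaries $|i-k|=r$ and $|k-j|=s$ each factor is guaranteed nonzero, while strict inequality on either side leaves the nonvanishing undetermined at this level of resolution. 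The key combinatorial point is the triangle-type inequality $|i-j|\le |i-k|+|k-j|$.

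For part~(i), I would argue that if $|i-j|>r+s$ then for every choice of intermediate index $k$ at least one of $|i-k|>r$ or $|k-j|>s$ must hold (otherwise $|i-j|\le|i-k|+|k-j|\le r+s$, a contradiction). Hence one of the two factors vanishes by Lemma~\ref{lem:comments}(ii), and the product is zero. Here there is no summation over $k$ because $k$ is a fixed index in the statement, so a single vanishing factor suffices.

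For part~(ii), assume $i-j=r+s$. The triangle inequality forces $r+s=i-j\le|i-k|+|k-j|$, and combined with the constraints $|i-k|\le r$, $|k-j|\le s$ required for nonvanishing, both inequalities must be equalities \emph{and} both differences must have the correct sign, giving $i-k=r$ and $k-j=s$, i.e.\ $k=j+s$ (equivalently $k=i-r$). Conversely, when $k=j+s$ one checks $i-k=r$ and $k-j=s$, so both factors are nonzero by Lemma~\ref{lem:comments}(ii); to conclude the product itself is nonzero I would pass to the matrix representation with respect to a $\Phi$-standard basis, where by \eqref{eq:Arij}-style reasoning $E^*_i A^r E^*_k$ has a single nonzero entry in position $(i,k)$ and $E^*_k A^s E^*_j$ a single nonzero entry in position $(k,j)$, so their product has a nonzero $(i,j)$-entry. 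Part~(iii) is the mirror image of part~(ii) under $i\leftrightarrow j$, $r\leftrightarrow s$, yielding $k=i+r$, and follows by the identical argument (or by applying part~(ii) after transposing, using Lemma~\ref{lem:EsiArEsj}).

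The main obstacle I anticipate is the converse direction in parts~(ii) and~(iii): knowing each factor is individually nonzero does not immediately imply the product is nonzero in an arbitrary algebra, so I must invoke the concrete matrix model to see that the unique nonzero entries line up and do not cancel. Once that single-entry structure from \eqref{eq:Arij} is in hand the argument is routine bookkeeping with indices.
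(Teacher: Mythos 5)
Your proposal is correct and follows exactly the route the paper intends: the paper's proof is simply ``Routine verification using Lemma \ref{lem:comments}(ii),'' and your argument is the fully written-out version of that verification, including the right resolution of the only subtle point (that the two nonzero factors multiply to something nonzero, seen via the single-nonzero-entry structure of $E^*_a A^t E^*_b$ in a $\Phi$-standard basis, as in \eqref{eq:Arij}).
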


\begin{proof}
Routine verification using Lemma \ref{lem:comments}(ii).
\end{proof}

\begin{lemma}   \label{lem:EsAAsAEs}    \samepage
\ifDRAFT {\rm lem:EsAAsAEs}. \fi
For $0 \leq i,j,r,s \leq d$,
\begin{equation}                   \label{eq:EsAAsAEs}
E^*_i A^r A^* A^s E^*_j =
 \begin{cases}
  0 & \text{ if $\;\;|i-j| > r+s$},
  \\
  \th^*_{j+s} E^*_i A^{r+s} E^*_j & \text{ if $\;\;i-j=r+s$},  
 \\
  \th^*_{i+r} E^*_i A^{r+s} E^*_j & \text{ if $\;\;j-i=r+s$}.
 \end{cases}
\end{equation}
\end{lemma}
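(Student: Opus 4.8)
The plan is to insert $A^* = \sum_{\ell=0}^d \th^*_\ell E^*_\ell$ into the middle of the product $E^*_i A^r A^* A^s E^*_j$ and exploit the sharp support conditions from Lemma~\ref{lem:EsAEsAEs}. Using $I = \sum_{\ell=0}^d E^*_\ell$ together with $A^* = \sum_{\ell} \th^*_\ell E^*_\ell$, I would write
\[
 E^*_i A^r A^* A^s E^*_j
 = \sum_{\ell=0}^d \th^*_\ell \, E^*_i A^r E^*_\ell A^s E^*_j.
\]
This reduces the problem to controlling, for each fixed $\ell$, the expression $E^*_i A^r E^*_\ell A^s E^*_j$, which is exactly the object analyzed in Lemma~\ref{lem:EsAEsAEs} with $k=\ell$.

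\medskip

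\noindent\textbf{The three cases.} For the first case $|i-j|>r+s$, Lemma~\ref{lem:EsAEsAEs}(i) gives that every summand $E^*_i A^r E^*_\ell A^s E^*_j$ vanishes (that lemma's hypothesis depends only on $i,j,r,s$, not on the inner index $\ell$), so the whole sum is $0$. For the second case $i-j=r+s$, Lemma~\ref{lem:EsAEsAEs}(ii) tells me that $E^*_i A^r E^*_\ell A^s E^*_j$ is nonzero only when $\ell = j+s$; hence the sum over $\ell$ collapses to the single surviving term
\[
 E^*_i A^r A^* A^s E^*_j = \th^*_{j+s}\, E^*_i A^r E^*_{j+s} A^s E^*_j.
\]
It then remains to recognize $E^*_i A^r E^*_{j+s} A^s E^*_j = E^*_i A^{r+s} E^*_j$. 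The third case $j-i=r+s$ is entirely symmetric: Lemma~\ref{lem:EsAEsAEs}(iii) forces the surviving index to be $\ell = i+r$, giving the factor $\th^*_{i+r}$ and reducing to $E^*_i A^r E^*_{i+r} A^s E^*_j = E^*_i A^{r+s} E^*_j$.

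\medskip

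\noindent\textbf{The identity $E^*_i A^r E^*_k A^s E^*_j = E^*_i A^{r+s} E^*_j$ on the boundary.} This collapsing step is where the real content lies, and I expect it to be the main obstacle. One way to handle it uniformly is to insert $I = \sum_m E^*_m$ between $A^r$ and $A^s$ in the product $E^*_i A^{r+s} E^*_j = E^*_i A^r A^s E^*_j = \sum_m E^*_i A^r E^*_m A^s E^*_j$, and argue that in the extremal case $i-j=r+s$ (resp.\ $j-i=r+s$) exactly one summand is nonzero, namely $m=j+s$ (resp.\ $m=i+r$), again by Lemma~\ref{lem:EsAEsAEs}(ii),(iii). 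Indeed, a nonzero summand requires $E^*_i A^r E^*_m \neq 0$ and $E^*_m A^s E^*_j \neq 0$, which by Lemma~\ref{lem:comments}(ii) force $|i-m|\le r$ and $|m-j|\le s$; when $i-j=r+s$ the triangle inequality $|i-m|+|m-j|\ge |i-j|=r+s$ is saturated, pinning down $|i-m|=r$ and $|m-j|=s$ with $m$ between, i.e.\ $m=j+s$. Thus $E^*_i A^{r+s} E^*_j = E^*_i A^r E^*_{j+s} A^s E^*_j$, which is precisely the single term produced above. Substituting this back yields the claimed formula in the second case, and the symmetric computation handles the third.

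\medskip

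\noindent Altogether the argument is a bookkeeping exercise in the support pattern \eqref{eq:Arij}, repackaged through Lemmas~\ref{lem:comments} and~\ref{lem:EsAEsAEs}; the only subtlety is the saturated-triangle-inequality observation that makes the intermediate projector unique on the boundary $|i-j|=r+s$.
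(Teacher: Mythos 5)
Your proposal is correct and follows essentially the same route as the paper: expand $A^*=\sum_k \th^*_k E^*_k$ inside the product, expand $I=\sum_k E^*_k$ inside $E^*_i A^{r+s}E^*_j$, and let Lemma \ref{lem:EsAEsAEs} collapse both sums to the same single surviving term. The saturated-triangle-inequality observation you spell out is exactly the content of Lemma \ref{lem:EsAEsAEs}(ii),(iii), which the paper simply cites without re-deriving.
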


\begin{proof}
Using  $A^* = \sum_{k=0}^d \th^*_k E^*_k$,
\[
 E^*_i A^r A^* A^s E^*_j   = \sum_{k=0}^d \th^*_k E^*_i A^r E^*_k A^s E^*_j.
\]
Using $I = \sum_{k=0}^d E^*_k$,
\[
  E^*_i A^{r+s} E^*_j = E^*_i A^r I A^s E^*_j
   = \sum_{k=0}^d E^*_i A^r E^*_k A^s E^*_j.
\]
By these comments and Lemma \ref{lem:EsAEsAEs} we obtain the result.
\end{proof}

\begin{definition}    \label{def:M}   \samepage
\ifDRAFT {\rm def:M}. \fi
Let $M$ (resp.\ $M^*$) denote the subalgebra of $\text{End}(V)$
generated by $A$ (resp.\ $A^*$).
Recall that each of $\{A^i\}_{i=0}^d$ and $\{E_i\}_{i=0}^d$ 
(resp.\ $\{{A^*}^i\}_{i=0}^d$ and $\{E^*_i\}_{i=0}^d$)
is a basis for the $\F$-vector space $M$ (resp.\ $M^*$).
\end{definition}

\begin{lemma}    \label{lem:comments2}    \samepage
\ifDRAFT {\rm lem:comments.2}. \fi
There exists a unique antiautomorphism $\dagger$ of $\text{\rm End}(V)$ that
fixes each of $A$ and $A^*$.
The map $\dagger$ fixes each element of $M$ and each element of $M^*$.
In particular, $\dagger$ fixes $E_i$ and $E^*_i$ for $0 \leq i \leq d$.
Moreover $(X^\dagger)^\dagger = X$ for all $X \in \text{\rm End}(V)$.
\end{lemma}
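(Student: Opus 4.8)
The plan is to transport the antiautomorphism $\dagger$ of $\Mat_{d+1}(\F)$ constructed in Section~\ref{sec:trid} to $\text{\rm End}(V)$ along the algebra isomorphism $\natural$, and then read off the remaining assertions. Fix a $\Phi$-standard basis $\{v_i\}_{i=0}^d$ for $V$ (which exists by Lemma~\ref{lem:vibasis}), and recall from \eqref{eq:Esinat}--\eqref{eq:Anat} that with respect to this basis the matrix $A^\natural$ is tridiagonal with subdiagonal entries $\{c_i\}_{i=1}^d$, superdiagonal entries $\{b_i\}_{i=0}^{d-1}$, and zero diagonal, while $(A^*)^\natural$ is diagonal. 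By Lemma~\ref{lem:cibinonzero} the scalars $c_i$ and $b_{i-1}$ are nonzero for $1 \leq i \leq d$, so $A^\natural$ is irreducible in the sense of Section~\ref{sec:trid}; hence the entire theory of that section, in particular Definition~\ref{def:dagger} and Lemmas~\ref{lem:dagger} and~\ref{lem:daggerunique}, applies to $A^\natural$ and to the diagonal matrices $(E^*_i)^\natural$. I would then define $\dagger$ on $\text{\rm End}(V)$ by pulling back the matrix antiautomorphism along $\natural$, that is, $X^\dagger$ is the element of $\text{\rm End}(V)$ whose matrix is $(X^\natural)^\dagger$. Since $\natural$ is an isomorphism of $\F$-algebras, conjugating an antiautomorphism by it again yields an antiautomorphism, and the involution identity $(X^\dagger)^\dagger = X$ on $\text{\rm End}(V)$ is inherited directly from Lemma~\ref{lem:dagger}.

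Next I would check that this $\dagger$ fixes $A$ and $A^*$. By Lemma~\ref{lem:daggerunique} the matrix antiautomorphism fixes $A^\natural$ and each $(E^*_i)^\natural$, so after pull-back $\dagger$ fixes $A$ and each $E^*_i$. Because $A^* = \sum_{i=0}^d \th^*_i E^*_i$ by \eqref{eq:As} and $\dagger$ is $\F$-linear, it follows at once that $\dagger$ fixes $A^*$ as well. This is really the only place where care is needed: the map from Section~\ref{sec:trid} is engineered to fix $A^\natural$ and the idempotents, not $A^*$ a priori, and the point is that $A^*$ lies in the span of the fixed idempotents. For the statements about $M$ and $M^*$, I would argue that since $\dagger$ is an antiautomorphism fixing $A$, it fixes every power $A^i$ (reversing a product of equal commuting factors changes nothing), hence by linearity every element of $M = \langle A^i \rangle_{i=0}^d$; the same reasoning with $A^*$ gives that $\dagger$ fixes every element of $M^*$. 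As $E_i$ is a polynomial in $A$ by \eqref{eq:Ei} we have $E_i \in M$, and likewise $E^*_i \in M^*$, so $\dagger$ fixes each $E_i$ and $E^*_i$.

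Finally, for uniqueness I would mimic the proof of Lemma~\ref{lem:daggerunique}. Suppose $\xi$ is another antiautomorphism of $\text{\rm End}(V)$ fixing $A$ and $A^*$. Then the composite $\xi\dagger$ is an automorphism of $\text{\rm End}(V)$ that fixes $A$ and $A^*$. Since $A$ and $A^*$ generate $\text{\rm End}(V)$ by Lemma~\ref{lem:comments}(i), an automorphism fixing these generators must be the identity, so $\xi\dagger = 1$. As $\dagger$ is an involution this gives $\xi = \dagger$. I expect the main obstacle to be purely bookkeeping rather than conceptual: one must confirm irreducibility of $A^\natural$ so that Section~\ref{sec:trid} is available, and confirm that the pulled-back map fixes $A^*$ via the linearity argument above; the generation fact of Lemma~\ref{lem:comments}(i) then makes both existence (the construction does not secretly depend on the chosen standard basis, since uniqueness pins it down) and uniqueness fall out cleanly.
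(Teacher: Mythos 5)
Your proposal is correct and follows essentially the same route as the paper, whose proof simply cites Lemmas \ref{lem:dagger} and \ref{lem:daggerunique}: you pull back the matrix antiautomorphism along $\natural$ for a $\Phi$-standard basis, using the irreducibility of $A^\natural$ from Lemma \ref{lem:cibinonzero} to make Section \ref{sec:trid} applicable, and your fixing and uniqueness arguments are exactly the intended ones. The details you supply (that $A^*$ is fixed because it lies in the span of the fixed idempotents, and that uniqueness follows since $A$, $A^*$ generate $\text{\rm End}(V)$) are precisely the bookkeeping the paper leaves implicit.
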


\begin{proof}
Follows from Lemmas \ref{lem:dagger} and \ref{lem:daggerunique}.
\end{proof}

\begin{definition}    \label{def:MAsM}    \samepage
\ifDRAFT {\rm def:MAsM}. \fi
Define a subspace $M A^* M$ of $\text{End}(V)$ by
\[
  M A^* M = \text{Span} \{ X A^* Y \,|\, X,Y \in M \}.
\]
\end{definition}

\begin{lemma}   \label{lem:MAsM}   \samepage
\ifDRAFT {\rm lem:MAsM}. \fi
The following elements form a basis for the $\F$-vector space $M A^* M$:
\begin{equation}
  \{ E_{i-1}A^* E_i, \; E_i A^* E_{i-1} \,|\,  1 \leq i \leq d\}.            \label{eq:basisEiAsEj}
\end{equation}
Moreover $\;\dim (M A^* M ) = 2d$.
\end{lemma}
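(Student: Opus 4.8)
The plan is to show that the set in \eqref{eq:basisEiAsEj} both spans $M A^* M$ and is linearly independent; the count $2d$ then follows immediately from counting these elements ($d$ of each type). First I would establish spanning. By Definition \ref{def:M}, $\{E_i\}_{i=0}^d$ is a basis for $M$, so $M A^* M$ is spanned by the products $E_i A^* E_j$ over $0 \leq i,j \leq d$. By \eqref{eq:EiAsEj} in Lemma \ref{lem:EsiAEsj}, the product $E_i A^* E_j$ vanishes unless $|i-j|=1$, and the surviving products are precisely $E_{i-1} A^* E_i$ and $E_i A^* E_{i-1}$ for $1 \leq i \leq d$. Thus the elements \eqref{eq:basisEiAsEj} span $M A^* M$.

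The main work is linear independence, and this is where I expect the real obstacle to lie. The cleanest route is to exploit the pattern of zero/nonzero entries when everything is represented with respect to a $\Phi^*$-standard basis, i.e. a basis in which $A^*$ is diagonal and the $\{E_i\}_{i=0}^d$ are the diagonal idempotents while $A$ is tridiagonal. In such a representation each $E_i A^* E_j$ is a matrix supported on the single entry $(i,j)$, and the two families $E_{i-1}A^* E_i$ (superdiagonal positions) and $E_i A^* E_{i-1}$ (subdiagonal positions) occupy $2d$ distinct off-diagonal matrix positions. Since distinct matrix positions give linearly independent matrices, independence follows at once. To make this rigorous I would apply Lemma \ref{lem:EsiAEsj} after transporting to the dual system $\Phi^*$: the roles of $A$ and $A^*$, and of $E_i$ and $E^*_i$, are interchanged, so that the matrix-unit structure of $E_i A^* E_j$ is exactly the statement that $E_i A^* E_j$ is nonzero only when $|i-j|=1$ together with the one-dimensionality of each eigenspace from Corollary \ref{cor:coincide}.

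Alternatively, and perhaps more self-containedly, I would argue independence directly by multiplying a purported linear dependence on the left by $E_i$ and on the right by $E_j$ for suitable $i,j$: applying $E_i(\,\cdot\,)E_j$ to a relation $\sum_i \lambda_i E_{i-1}A^*E_i + \sum_i \mu_i E_i A^* E_{i-1}=0$ isolates a single coefficient times a nonzero element, forcing that coefficient to vanish. The orthogonality relations $E_r E_s = \delta_{r,s} E_r$ make every cross term collapse, and the nonvanishing guaranteed by \eqref{eq:EiAsEj} then gives $\lambda_i=0$ and $\mu_i=0$ for all $i$. Either route yields that \eqref{eq:basisEiAsEj} is a basis, and since it consists of $2d$ elements we conclude $\dim(M A^* M)=2d$. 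The only subtlety to watch is the boundary: the index $i$ runs over $1 \leq i \leq d$, so there are exactly $d$ superdiagonal and $d$ subdiagonal elements and no diagonal contributions, which is consistent with $A^*$ having zero diagonal in the standard representation.
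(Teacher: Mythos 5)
Your proposal is correct, and your second argument for linear independence (sandwiching a purported dependence between $E_{i-1}$ and $E_i$ to isolate one coefficient times the nonzero element $E_{i-1}A^*E_i$) is exactly the paper's proof; the spanning argument also matches. One small caution about your first route: in the basis that diagonalizes the $E_i$ it is $A$ that becomes diagonal and $A^*$ that becomes irreducible tridiagonal (not the other way around, as written), though the matrix-unit picture of $E_iA^*E_j$ being supported on the single $(i,j)$ entry is then correct and gives an equally valid independence argument.
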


\begin{proof}
We assume $d \geq 1$; otherwise the result follows from Lemma \ref{lem:trivial}.
The $\F$-vector space $M A^* M$ is spanned by
$\{ E_i A^* E_j \,|\, 0 \leq i,j \leq d\}$.
By \eqref{eq:EiAsEj} we have $E_i A^* E_j = 0$ if $|i-j| \neq 1$  $(0 \leq i,j \leq d)$.
By these comments
the elements \eqref{eq:basisEiAsEj} span $M A^* M$.
We show that the elements \eqref{eq:basisEiAsEj} are linearly independent.
Suppose that there exist scalars  $\{\alpha_i\}_{i=1}^d$, $\{\beta_i\}_{i=1}^d$ in $\F$
such that
\begin{equation}     \label{eq:MAsMaux1}
 0 = \sum_{i=1}^d \alpha_i E_{i-1} A^* E_i + \sum_{i=1}^d \beta_i E_i A^* E_{i-1}.
\end{equation}
Pick any integer $j$ $(1 \leq j \leq d)$.
We show that $\alpha_j = 0$.
In \eqref{eq:MAsMaux1}, multiply each side on the left by $E_{j-1}$ and right by $E_j$.
This gives $\alpha_j E_{j-1} A^* E_j = 0$.
This forces $\alpha_j = 0$ since $E_{j-1} A^* E_j \neq 0$ by \eqref{eq:EiAsEj}.
A similar argument shows that $\beta_j=0$.
We have shown that the elements \eqref{eq:basisEiAsEj} are linearly independent.
By the above comments the elements \eqref{eq:basisEiAsEj} form a basis for 
the $\F$-vector space $M A^* M$.
The set \eqref{eq:basisEiAsEj} has cardinality $2d$ so $\dim (MA^* M) = 2d$.
\end{proof}

\begin{lemma}   \label{lem:EiAs}    \samepage
\ifDRAFT {\rm lem:EiAs}. \fi
For $0 \leq i \leq d$,
\begin{equation}
 E_i A^* = E_i A^* E_{i-1} + E_i A^* E_{i+1}.          \label{eq:EiAs}
\end{equation}
\end{lemma}

\begin{proof}
Using $I = \sum_{j=0}^d E_j$,
\[
  E_i A^* = E_i A^* I = \sum_{j=0}^d E_i A^* E_j.
\]
By \eqref{eq:EiAsEj}, $E_i A^* E_j = 0$ if $|i-j| \neq 1$ $(0 \leq j \leq d)$.
The result follows.
\end{proof}

\begin{lemma}    \label{lem:AsEi}    \samepage
\ifDRAFT {\rm lem:AsEi}. \fi
For $0 \leq i \leq d$,
\[
 A^* E_i = E_{i-1} A^* E_i + E_{i+1} A^* E_i.
\]
\end{lemma}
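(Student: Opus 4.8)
The plan is to mimic the proof of Lemma~\ref{lem:EiAs}, which established the ``dual'' identity $E_i A^* = E_i A^* E_{i-1} + E_i A^* E_{i+1}$. Here I want to expand $A^* E_i$ by resolving the identity on the \emph{left} rather than on the right.

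First I would insert $I = \sum_{j=0}^d E_j$ on the left of $A^* E_i$, obtaining
\[
  A^* E_i = I A^* E_i = \sum_{j=0}^d E_j A^* E_i.
\]
Then I would invoke \eqref{eq:EiAsEj} from Lemma~\ref{lem:EsiAEsj}, which tells us that $E_j A^* E_i = 0$ whenever $|i-j| \neq 1$. Consequently the only surviving terms in the sum are those with $j = i-1$ and $j = i+1$, yielding
\[
  A^* E_i = E_{i-1} A^* E_i + E_{i+1} A^* E_i,
\]
which is exactly the claimed identity. (The convention $E_{-1}=0$, $E_{d+1}=0$ from Definition~\ref{def:convenience} handles the boundary cases $i=0$ and $i=d$ automatically.)

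There is no real obstacle here: the whole argument is a two-line computation parallel to Lemma~\ref{lem:EiAs}, the only difference being whether one inserts the resolution of the identity to the left or to the right of $A^*$. Alternatively, one could derive the statement in a single stroke by applying the antiautomorphism $\dagger$ of Lemma~\ref{lem:comments2} to the identity in Lemma~\ref{lem:EiAs}: since $\dagger$ fixes $A^*$ and each $E_i$ and reverses products, applying it to $E_i A^* = E_i A^* E_{i-1} + E_i A^* E_{i+1}$ gives $A^* E_i = E_{i-1} A^* E_i + E_{i+1} A^* E_i$ at once. Either route is routine, so I would simply present the direct expansion and cite \eqref{eq:EiAsEj}.
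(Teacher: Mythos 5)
Your proposal is correct. The paper's own proof is the one-line argument you list as an ``alternative'': it applies the antiautomorphism $\dagger$ of Lemma \ref{lem:comments2} to the identity of Lemma \ref{lem:EiAs}, using that $\dagger$ fixes $A^*$ and each $E_i$ and reverses products. Your primary route --- inserting $I=\sum_{j=0}^d E_j$ on the left of $A^*E_i$ and killing all terms with $|i-j|\neq 1$ via \eqref{eq:EiAsEj} --- is equally valid and is the exact mirror of the paper's proof of Lemma \ref{lem:EiAs}. The direct expansion is marginally more self-contained, since it depends only on Lemma \ref{lem:EsiAEsj} rather than on the existence and properties of $\dagger$; the paper's choice buys brevity and emphasizes the symmetry that $\dagger$ encodes, which is used repeatedly later (e.g.\ in Lemmas \ref{lem:MAsM2} and \ref{lem:sym}). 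Your handling of the boundary cases via the convention $E_{-1}=0$, $E_{d+1}=0$ is also fine. No gaps.
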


\begin{proof}
Apply the antiautomorphism $\dagger$ to each side of \eqref{eq:EiAs}.
\end{proof}

\begin{lemma}   \label{lem:Ei-1AsEi}    \samepage
\ifDRAFT {\rm lem:Ei-1AsEi}. \fi
For $1 \leq i \leq d$,
\begin{align*}
 E_{i-1} A^* E_i &= E_{i-1} A^* - A^* E_{i-2} + E_{i-3} A^* - A^* E_{i-4} + \cdots
\\
 E_i A^* E_{i-1} &= A^* E_{i-1} - E_{i-2} A^* + A^* E_{i-3} - E_{i-4} A^* + \cdots
\end{align*}
Moreover
\begin{align}
 \sum_{ \stackrel{0 \leq i \leq d}{\text{\rm $i$ even}}} E_i A^*
 &= \sum_{ \stackrel{0 \leq i \leq d}{\text{\rm $i$ odd}}}    A^* E_i,
&
 \sum_{  \stackrel{0 \leq i \leq d}{\text{\rm $i$ odd}}}   E_i A^*
 &=  \sum_{  \stackrel{0 \leq i \leq d}{\text{\rm $i$ even}}}   A^* E_i.
                                                        \label{eq:EiAsAsEipre}
\end{align}
\end{lemma}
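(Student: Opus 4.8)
The plan is to establish the two displayed expansions first, and then deduce the summation identities \eqref{eq:EiAsAsEipre} by a parity argument. For the first expansion I would set up a two-step downward recursion. Write $f(i) = E_{i-1} A^* E_i$, with the understanding that $f$ vanishes as soon as a subscript is negative (Definition \ref{def:convenience}). Applying Lemma \ref{lem:EiAs} with index $i-1$ gives $E_{i-1}A^*E_i = E_{i-1}A^* - E_{i-1}A^*E_{i-2}$, and applying Lemma \ref{lem:AsEi} with index $i-2$ gives $E_{i-1}A^*E_{i-2} = A^*E_{i-2} - E_{i-3}A^*E_{i-2}$. Combining these yields $f(i) = E_{i-1}A^* - A^*E_{i-2} + f(i-2)$, where the remainder $f(i-2) = E_{i-3}A^*E_{i-2}$ has the same structural form as $f(i)$. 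Iterating this recursion downward in steps of two, and using that $f$ vanishes once the index drops below $0$, produces exactly the alternating sum $E_{i-1}A^* - A^*E_{i-2} + E_{i-3}A^* - A^*E_{i-4} + \cdots$. This is a clean finite induction on $i$, with the base cases $i=0,1$ handled by Lemma \ref{lem:EiAs} at index $0$.

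The second expansion I would not prove directly; instead I would apply the antiautomorphism $\dagger$ of Lemma \ref{lem:comments2}, which fixes $A^*$ and every $E_j$ and reverses the order of products. Since $(XY)^\dagger = Y^\dagger X^\dagger$, applying $\dagger$ to the first expansion sends the left side $E_{i-1}A^*E_i$ to $E_iA^*E_{i-1}$, and sends each right-hand term $E_{i-1}A^*$, $A^*E_{i-2}$, $E_{i-3}A^*, \ldots$ to $A^*E_{i-1}$, $E_{i-2}A^*$, $A^*E_{i-3}, \ldots$ respectively. This is precisely the claimed expansion of $E_iA^*E_{i-1}$, so nothing further is needed.

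For the summation identities I would regroup rather than telescope. Summing Lemma \ref{lem:EiAs} over even $i$ writes $\sum_{i\text{ even}} E_iA^*$ as the sum of all products $E_aA^*E_b$ with $a$ even, $b$ odd, and $|a-b|=1$; the off-diagonal vanishing in \eqref{eq:EiAsEj} kills everything else, and the conventions $E_{-1}=0$, $E_{d+1}=0$ take care of the boundary indices $i=0$ and $i=d$. Summing Lemma \ref{lem:AsEi} over odd $j$ writes $\sum_{j\text{ odd}} A^*E_j$ as the sum of exactly the same family of terms $E_aA^*E_b$ ($a$ even, $b$ odd, $|a-b|=1$). Hence the two sums coincide, which is the first equation in \eqref{eq:EiAsAsEipre}; interchanging the roles of \emph{even} and \emph{odd} gives the second.

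The only delicate points are bookkeeping. I expect the main obstacle to be controlling the alternating signs and the termination of the downward recursion in the first expansion, and then verifying that the even/odd index families produced by summing Lemmas \ref{lem:EiAs} and \ref{lem:AsEi} match exactly, including the boundary contributions at $i=0$ and $i=d$. Both are routine once the parity bookkeeping is set up carefully.
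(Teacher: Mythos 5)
Your proposal is correct and follows essentially the same route as the paper, whose proof reads simply ``Solve the equations in Lemmas \ref{lem:EiAs} and \ref{lem:AsEi}''; your two-step back-substitution $E_{i-1}A^*E_i = E_{i-1}A^* - A^*E_{i-2} + E_{i-3}A^*E_{i-2}$ is exactly that solving process made explicit, and using $\dagger$ for the second expansion matches how the paper itself derives Lemma \ref{lem:AsEi} from Lemma \ref{lem:EiAs}. The regrouping argument for \eqref{eq:EiAsAsEipre} is likewise just a reorganization of the same two lemmas, so no genuinely different ideas are involved.
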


\begin{proof}
Solve the equations in Lemmas \ref{lem:EiAs} and \ref{lem:AsEi}.
\end{proof}

\begin{lemma}    \label{lem:EiAsAsEipre1}    \samepage
\ifDRAFT {\rm lem:EiAsAsEipre1}. \fi
We have
 $\;\; \sum_{i=0}^d (-1)^i (E_i A^* + A^* E_i) = 0$.
\end{lemma}

\begin{proof}
Follows from \eqref{eq:EiAsAsEipre}.
\end{proof}

\begin{lemma}    \label{lem:EiAsAsEipre2}     \samepage
\ifDRAFT {\rm lem:EiAsAsEipre2}. \fi
The element $A^*$ is equal to each of the following sums:
\begin{align*}
& \sum_{ \stackrel{0 \leq i \leq d}{\text{\rm $i$ even}}}  (E_i A^* + A^* E_i),
\qquad\qquad
  \sum_{ \stackrel{0 \leq i \leq d}{\text{\rm $i$ odd}}}  (E_i A^* + A^* E_i).
\end{align*}
\end{lemma}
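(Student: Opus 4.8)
The plan is to derive both identities from two elementary relations: an unsigned sum identity and a signed sum identity, which together pin down the even-indexed and odd-indexed partial sums. First I would use $I = \sum_{i=0}^d E_i$ to write $A^* = I A^* = \sum_{i=0}^d E_i A^*$ and, symmetrically, $A^* = A^* I = \sum_{i=0}^d A^* E_i$. Adding these two expressions gives
\[
  2 A^* = \sum_{i=0}^d \bigl( E_i A^* + A^* E_i \bigr).
\]
Writing $S_{\mathrm{even}}$ and $S_{\mathrm{odd}}$ for the two sums in the statement (over even and odd $i$ respectively), the display above is precisely the relation $S_{\mathrm{even}} + S_{\mathrm{odd}} = 2 A^*$.

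Next I would invoke Lemma \ref{lem:EiAsAsEipre1}, which asserts that $\sum_{i=0}^d (-1)^i (E_i A^* + A^* E_i) = 0$. Since $(-1)^i = +1$ for even $i$ and $(-1)^i = -1$ for odd $i$, splitting this signed sum by the parity of $i$ gives exactly $S_{\mathrm{even}} - S_{\mathrm{odd}} = 0$, that is, $S_{\mathrm{even}} = S_{\mathrm{odd}}$.

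Combining the two relations, I add and subtract: from $S_{\mathrm{even}} + S_{\mathrm{odd}} = 2 A^*$ together with $S_{\mathrm{even}} - S_{\mathrm{odd}} = 0$, I conclude $S_{\mathrm{even}} = S_{\mathrm{odd}} = A^*$, which is the assertion. I do not expect any genuine obstacle here; the lemma is a purely formal consequence of the completeness relation $I = \sum_i E_i$ and the already-established signed identity of Lemma \ref{lem:EiAsAsEipre1}. The only point requiring a moment's care is the bookkeeping that the parity decompositions of $\sum_i (E_i A^* + A^* E_i)$ and of $\sum_i (-1)^i (E_i A^* + A^* E_i)$ correspond to $S_{\mathrm{even}} + S_{\mathrm{odd}}$ and $S_{\mathrm{even}} - S_{\mathrm{odd}}$ respectively, which is immediate from the sign pattern of $(-1)^i$.
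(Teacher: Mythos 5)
Your argument has one step that is not justified at the point where this lemma sits in the paper: passing from $2S_{\mathrm{even}} = 2A^*$ to $S_{\mathrm{even}} = A^*$ requires $\text{\rm Char}(\F) \neq 2$, and that fact is only established later, in Lemma \ref{lem:sym5}(i) (and only for $d \geq 1$). The two relations you derive, $S_{\mathrm{even}} + S_{\mathrm{odd}} = 2A^*$ and $S_{\mathrm{even}} - S_{\mathrm{odd}} = 0$, determine the two partial sums only up to $2$-torsion, so in characteristic $2$ they yield nothing. The root of the problem is that you work from the signed identity of Lemma \ref{lem:EiAsAsEipre1}, which is strictly weaker than the pair of identities \eqref{eq:EiAsAsEipre} from which it is derived (it is, in effect, their difference); recovering the individual partial sums from the signed and unsigned totals forces the division by $2$.

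The paper's own proof avoids any division: it writes $A^* = \sum_{i=0}^d E_i A^*$, splits the sum by parity, and uses the second identity in \eqref{eq:EiAsAsEipre} to replace $\sum_{i\ \text{\rm odd}} E_i A^*$ by $\sum_{i\ \text{\rm even}} A^* E_i$, giving the even-indexed sum directly; the first identity in \eqref{eq:EiAsAsEipre} gives the odd-indexed sum the same way, in any characteristic. Your proof is easily repaired by quoting \eqref{eq:EiAsAsEipre} instead of Lemma \ref{lem:EiAsAsEipre1}; alternatively one could dispose of characteristic $2$ separately (it forces the trivial case $d=0$, where everything vanishes), but that fact is not yet available here without reordering the section.
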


\begin{proof}
We have $A^* = \sum_{i=0}^d E_i A^*$.
Evaluate this sum using \eqref{eq:EiAsAsEipre}.
\end{proof}

\begin{lemma}    \label{lem:EiAsAsEi}    \samepage
\ifDRAFT {\rm lem:EiAsAsEi}. \fi
The following elements form a basis for the $\F$-vector space $MA^*M$:
\begin{equation}
  \{ E_i A^*, \, A^* E_i \,|\, 0 \leq i \leq d-1\}.         \label{eq:EiAsAsEi}
\end{equation}
\end{lemma}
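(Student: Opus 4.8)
The plan is to combine a dimension count with the explicit expansions already recorded in Lemma \ref{lem:Ei-1AsEi}. By Lemma \ref{lem:MAsM} the space $MA^*M$ has dimension $2d$, and the elements $\{E_{i-1}A^*E_i,\, E_iA^*E_{i-1} \mid 1 \leq i \leq d\}$ form a basis for it. The proposed set $\{E_iA^*,\, A^*E_i \mid 0 \leq i \leq d-1\}$ has cardinality at most $2d$, so it will suffice to prove that it spans $MA^*M$: a spanning set whose cardinality does not exceed the dimension must in fact have exactly that many (distinct) elements and be a basis.

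First I would show that each element of the known basis lies in the span of the proposed set, which is precisely what Lemma \ref{lem:Ei-1AsEi} delivers. For $1 \leq i \leq d$ that lemma writes $E_{i-1}A^*E_i$ as the alternating sum $E_{i-1}A^* - A^*E_{i-2} + E_{i-3}A^* - \cdots$ and $E_iA^*E_{i-1}$ as $A^*E_{i-1} - E_{i-2}A^* + A^*E_{i-3} - \cdots$, each term being of the form $E_jA^*$ or $A^*E_j$. The point to verify is that every index $j$ occurring here satisfies $0 \leq j \leq d-1$: the indices descend from $i-1 \leq d-1$ and the expansion terminates before reaching $-1$ (since $E_{-1}=0$), so each summand indeed belongs to the proposed set. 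Hence both families $E_{i-1}A^*E_i$ and $E_iA^*E_{i-1}$ lie in its span.

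Granting this, the span of the proposed set contains the basis $\{E_{i-1}A^*E_i,\, E_iA^*E_{i-1}\}$ of Lemma \ref{lem:MAsM}, and therefore equals $MA^*M$. Together with the cardinality bound $2d = \dim(MA^*M)$, this forces the proposed set to be a basis. The only real work is the index bookkeeping of the second paragraph: confirming that the formulas of Lemma \ref{lem:Ei-1AsEi} never invoke an index outside $\{0,1,\ldots,d-1\}$, so that we stay inside the proposed set and do not need the $i=d$ terms $E_dA^*$, $A^*E_d$ (which are governed by the relations in Lemma \ref{lem:EiAsAsEipre1}). Everything else is a standard linear-algebra consequence.
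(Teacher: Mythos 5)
Your proposal is correct and follows essentially the same route as the paper: both arguments use Lemma \ref{lem:MAsM} for the dimension count $\dim(MA^*M)=2d$ and Lemma \ref{lem:Ei-1AsEi} to show that the proposed elements span $MA^*M$, then conclude by comparing cardinality with dimension. The index bookkeeping you flag is exactly the content the paper delegates to Lemma \ref{lem:Ei-1AsEi}, so no gap remains.
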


\begin{proof}
Let $\cal X$ denote the subspace of $\text{End}(V)$ spanned by \eqref{eq:EiAsAsEi}.
By construction ${\cal X} \subseteq MA^*M$.
By Lemmas \ref{lem:MAsM} and \ref{lem:Ei-1AsEi},
$M A^* M \subseteq {\cal X}$.
So $M A^* M = {\cal X}$.
By Lemma \ref{lem:MAsM} the dimension of $MA^*M$ is $2d$.
The set \eqref{eq:EiAsAsEi} has cardinality $2d$.
Therefore the elements \eqref{eq:EiAsAsEi} form a basis for 
the $\F$-vector space $MA^*M$.
\end{proof}

\begin{corollary}   \label{cor:MAsM}  \samepage
\ifDRAFT {\rm cor:MAsM}. \fi
We have 
$M A^* M = M A^* + A^* M$.
\end{corollary}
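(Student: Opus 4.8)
The plan is to obtain both inclusions directly, drawing on the basis of $MA^*M$ furnished by Lemma \ref{lem:EiAsAsEi} together with the fact (Definition \ref{def:M}) that $\{E_i\}_{i=0}^d$ is a basis for $M$. The statement is a coordinate-free repackaging of that basis, so I do not expect any genuine obstacle; the real work was already done in Lemmas \ref{lem:Ei-1AsEi} and \ref{lem:EiAsAsEi}.

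First I would record the easy inclusion $MA^* + A^*M \subseteq MA^*M$. Taking $Y=I$ in Definition \ref{def:MAsM} gives $MA^* \subseteq MA^*M$, and taking $X=I$ gives $A^*M \subseteq MA^*M$; since $MA^*M$ is a subspace, the sum $MA^* + A^*M$ is contained in it.

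For the reverse inclusion I would invoke Lemma \ref{lem:EiAsAsEi}, which asserts that $\{E_i A^*,\, A^* E_i \mid 0 \leq i \leq d-1\}$ is a basis for $MA^*M$. Because $\{E_i\}_{i=0}^d$ spans $M$, each $E_i A^*$ lies in $MA^*$ and each $A^* E_i$ lies in $A^*M$, so every one of these basis vectors belongs to $MA^* + A^*M$. Hence $MA^*M \subseteq MA^* + A^*M$, and combining the two inclusions yields the claimed equality $MA^*M = MA^* + A^*M$.
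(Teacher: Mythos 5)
Your proof is correct and follows exactly the paper's route: the inclusion $MA^* + A^*M \subseteq MA^*M$ is immediate from the definition, and the reverse inclusion comes from the basis $\{E_i A^*,\, A^* E_i \mid 0 \leq i \leq d-1\}$ of $MA^*M$ supplied by Lemma \ref{lem:EiAsAsEi}. Your write-up just spells out the two easy steps in slightly more detail than the paper does.
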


\begin{proof}
By construction $M A^* + A^* M \subseteq M A^* M$.
By Lemma \ref{lem:EiAsAsEi}, $M A^* M \subseteq M A^* + A^* M$.
\end{proof}

\begin{definition}    \label{def:sym}    \samepage
\ifDRAFT {\rm def:sym}. \fi
Define a subspace $(MA^*M)^\text{sym}$ of $\text{End}(V)$ by
\[
(M A^* M)^{\text{\rm sym}} = \{ Z \in M A^* M \,|\, Z^\dagger = Z\}.
\]
We call $(M A^* M)^{\text{\rm sym}}$ the {\em symmetric part of $M A^* M$}.
\end{definition}

\begin{lemma}   \label{lem:MAsM2}    \samepage
\ifDRAFT {\rm lem:MAsM2}. \fi
The subspace  $(MA^*M)^\text{\rm sym}$ contains $X A^* X$ for all $X$ in $M$.
Moreover $(MA^*M)^\text{\rm sym}$ contains $X A^* Y + Y A^* X$ for all $X$, $Y$ in $M$.
\end{lemma}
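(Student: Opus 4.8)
The plan is to verify directly that each of the two claimed elements lies in $MA^*M$ and is fixed by the antiautomorphism $\dagger$, so that it belongs to the symmetric part $(MA^*M)^{\text{\rm sym}}$ by Definition \ref{def:sym}. The only ingredients needed are that $\dagger$ is an antiautomorphism of $\text{\rm End}(V)$ which fixes $A^*$ and fixes every element of $M$ (Lemma \ref{lem:comments2}), together with the fact that $MA^*M$ is a subspace containing every product $XA^*Y$ with $X,Y \in M$ (Definition \ref{def:MAsM}).

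First I would treat the element $XA^*X$ for $X \in M$. It lies in $MA^*M$ by Definition \ref{def:MAsM}. Since $\dagger$ reverses products, $(XA^*X)^\dagger = X^\dagger (A^*)^\dagger X^\dagger$. By Lemma \ref{lem:comments2} we have $X^\dagger = X$ and $(A^*)^\dagger = A^*$, whence $(XA^*X)^\dagger = XA^*X$. Thus $XA^*X$ is $\dagger$-fixed and so lies in $(MA^*M)^{\text{\rm sym}}$.

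Next I would handle $XA^*Y + YA^*X$ for $X,Y \in M$. Each summand lies in $MA^*M$, hence so does their sum since $MA^*M$ is a subspace. Applying $\dagger$ and again using that it reverses products and fixes $X$, $Y$, $A^*$, I compute $(XA^*Y)^\dagger = Y^\dagger (A^*)^\dagger X^\dagger = YA^*X$ and similarly $(YA^*X)^\dagger = XA^*Y$. Summing, $(XA^*Y + YA^*X)^\dagger = YA^*X + XA^*Y$, which equals the original element. Hence it too is $\dagger$-fixed and lies in $(MA^*M)^{\text{\rm sym}}$.

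There is no genuine obstacle here; the argument is a direct unwinding of the antiautomorphism property. The single point worth keeping in view is the order reversal $(XA^*Y)^\dagger = YA^*X$, which is exactly what produces the symmetrization by swapping $X$ and $Y$, and which explains why $XA^*Y + YA^*X$ rather than $XA^*Y$ alone is the $\dagger$-invariant combination. I note also that the first assertion is not merely the $Y=X$ specialization of the second divided by two (which would be problematic in characteristic $2$): the computation $(XA^*X)^\dagger = XA^*X$ stands on its own and holds over any field, so both parts are established by the same mechanism without any division.
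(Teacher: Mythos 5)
Your proof is correct and is exactly the paper's argument, just spelled out in more detail: the paper's entire proof is the single observation that $(XA^*Y)^\dagger = YA^*X$ for $X,Y \in M$, which is the key identity you isolate. Nothing further is needed.
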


\begin{proof}
Observe that $(X A^* Y)^\dagger = Y A^* X$ for all $X$, $Y \in M$.
\end{proof}

\begin{lemma}   \label{lem:sym}   \samepage
\ifDRAFT {\rm lem:sym}. \fi
The following elements form a basis for the $\F$-vector space $(M A^* M)^{\text{\rm sym}}$:
\begin{equation}
   \{ E_{i-1} A^* E_i + E_i A^* E_{i-1} \,|\, 1 \leq i \leq d\}.    \label{eq:sym1}
\end{equation}
Moreover, $\;\; \dim (M A^* M)^{\text{\rm sym}} =d$.
\end{lemma}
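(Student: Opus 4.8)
The plan is to exploit the basis for $M A^* M$ furnished by Lemma \ref{lem:MAsM} together with the way the antiautomorphism $\dagger$ permutes that basis. First I would record how $\dagger$ acts on the two families of basis vectors. Since $\dagger$ fixes $A^*$ and each $E_i$ by Lemma \ref{lem:comments2}, and reverses products, for $1 \le i \le d$ we have
\[
 (E_{i-1} A^* E_i)^\dagger = E_i^\dagger (A^*)^\dagger E_{i-1}^\dagger = E_i A^* E_{i-1},
\]
and symmetrically $(E_i A^* E_{i-1})^\dagger = E_{i-1} A^* E_i$. Thus $\dagger$ simply interchanges the two members of each pair, so each element $E_{i-1} A^* E_i + E_i A^* E_{i-1}$ is fixed by $\dagger$ and therefore lies in $(M A^* M)^{\text{sym}}$ by Definition \ref{def:sym}. (This membership also follows from Lemma \ref{lem:MAsM2}.)

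Next I would prove the spanning property. Taking an arbitrary $Z \in (M A^* M)^{\text{sym}}$, I would expand it in the basis of Lemma \ref{lem:MAsM},
\[
 Z = \sum_{i=1}^d \alpha_i E_{i-1} A^* E_i + \sum_{i=1}^d \beta_i E_i A^* E_{i-1},
\]
with unique $\alpha_i, \beta_i \in \F$. Applying $\dagger$ and using the swap just established yields
\[
 Z^\dagger = \sum_{i=1}^d \alpha_i E_i A^* E_{i-1} + \sum_{i=1}^d \beta_i E_{i-1} A^* E_i.
\]
Imposing $Z = Z^\dagger$ and comparing coefficients against the linearly independent basis of Lemma \ref{lem:MAsM} forces $\alpha_i = \beta_i$ for $1 \le i \le d$. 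Hence $Z = \sum_{i=1}^d \alpha_i (E_{i-1} A^* E_i + E_i A^* E_{i-1})$, so the elements in \eqref{eq:sym1} span $(M A^* M)^{\text{sym}}$.

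Finally, linear independence of \eqref{eq:sym1} is immediate: the $d$ elements are sums over pairwise disjoint two-element subsets of the basis of Lemma \ref{lem:MAsM}, so any vanishing linear combination forces every coefficient to vanish. Therefore \eqref{eq:sym1} is a basis and $\dim (M A^* M)^{\text{sym}} = d$. I do not expect a genuine obstacle here; the only point that warrants care is the coefficient-matching step, where one must track the $\dagger$-swap of basis vectors accurately so that the symmetry condition reduces cleanly to $\alpha_i = \beta_i$.
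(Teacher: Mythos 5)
Your proposal is correct and follows essentially the same route as the paper: membership via Lemma \ref{lem:MAsM2} (or the explicit $\dagger$-swap), spanning by expanding $Z$ in the basis of Lemma \ref{lem:MAsM} and matching coefficients to get $\alpha_i=\beta_i$, and linear independence from the disjointness of the pairs in that basis. No gaps.
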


\begin{proof}
Let $\cal Z$ denote the subspace of $\text{End}(V)$ spanned by \eqref{eq:sym1}.
We show that ${\cal Z} = (M A^* M)^\text{sym}$.
By Lemma \ref{lem:MAsM2} we have ${\cal Z} \subseteq (MA^*M)^\text{sym}$.
To show the reverse inclusion, 
pick any $Z \in (M A^* M)^\text{sym}$.
By construction $Z^\dagger = Z$.
By Lemma \ref{lem:MAsM} there exist  scalars $\{\alpha_i\}_{i=1}^d$, $\{\beta_i\}_{i=1}^d$ in $\F$
such that
\[
   Z = \sum_{i=1}^d \alpha_i E_{i-1} A^* E_i + \sum_{i=1}^d \beta_i E_i A^* E_{i-1}.
\]
We have
\[
  Z^\dagger = \sum_{i=1}^d \alpha_i E_i A^* E_{i-1} + \sum_{i=1}^d \beta_i E_{i-1} A^* E_i.
\]
By $Z^\dagger = Z$ and the above two equations, we find that $\alpha_i = \beta_i$ for $1 \leq i \leq d$.
Thus
\[
   Z = \sum_{i=1}^d \alpha_i (E_{i-1}A^* E_i + E_i A^* E_{i-1}),
\]
and so $Z \in {\cal Z}$.
We have shown that ${\cal Z} = (M A^* M)^\text{sym}$.
Therefore the elements \eqref{eq:sym1} span $(MA^*M)^\text{sym}$.
Using Lemma \ref{lem:MAsM} one checks that
the elements \eqref{eq:sym1} are linearly independent.
The result follows.
\end{proof}

\begin{lemma}   \label{lem:sym2}    \samepage
\ifDRAFT {\rm lem:sym2}. \fi
The following elements form a basis for the $\F$-vector space $(M A^* M)^{\text{\rm sym}}$:
\begin{equation}
  \{ E_i A^* + A^* E_i \,|\, 0 \leq i \leq d-1\}.               \label{eq:sym2}
\end{equation}
\end{lemma}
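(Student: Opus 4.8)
The plan is to show that the $d$ listed elements each lie in $(MA^*M)^{\text{\rm sym}}$ and are linearly independent; since $\dim (MA^*M)^{\text{\rm sym}} = d$ by Lemma \ref{lem:sym}, this forces them to constitute a basis.

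First I would verify membership. For $0 \leq i \leq d-1$, applying Lemma \ref{lem:MAsM2} with $X = E_i$ and $Y = I$ yields $E_i A^* I + I A^* E_i = E_i A^* + A^* E_i \in (MA^*M)^{\text{\rm sym}}$. Equivalently, one checks directly that $(E_i A^*)^\dagger = A^* E_i$, using that $\dagger$ fixes $A^*$ and each $E_i$ by Lemma \ref{lem:comments2}; hence each sum $E_i A^* + A^* E_i$ is $\dagger$-invariant and belongs to $M A^* M$, so it lies in the symmetric part.

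Next I would establish linear independence by exploiting the basis for $M A^* M$ provided by Lemma \ref{lem:EiAsAsEi}, namely $\{E_i A^*,\, A^* E_i \mid 0 \leq i \leq d-1\}$. Suppose $\sum_{i=0}^{d-1} \gamma_i (E_i A^* + A^* E_i) = 0$ for scalars $\gamma_i \in \F$. Rewriting this as $\sum_{i=0}^{d-1} \gamma_i E_i A^* + \sum_{i=0}^{d-1} \gamma_i A^* E_i = 0$ and invoking the linear independence of the elements in Lemma \ref{lem:EiAsAsEi}, every coefficient $\gamma_i$ must vanish. Thus the $d$ elements in \eqref{eq:sym2} are linearly independent.

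Finally, since these $d$ linearly independent vectors lie in the $d$-dimensional space $(MA^*M)^{\text{\rm sym}}$ (Lemma \ref{lem:sym}), they form a basis, completing the proof. I do not expect any genuine obstacle: the whole argument rides on the fact that each proposed element is a sum of two \emph{distinct} vectors of the already-identified basis of the ambient space $M A^* M$, which makes the independence immediate. The only step requiring a moment's attention is confirming $\dagger$-symmetry, and that reduces at once to $\dagger$ fixing $A^*$ and every $E_i$.
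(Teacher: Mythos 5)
Your proof is correct and follows essentially the same route as the paper: membership via $\dagger$-invariance (the paper notes each element is fixed by $\dagger$), linear independence via the basis of $M A^* M$ from Lemma \ref{lem:EiAsAsEi}, and the dimension count from Lemma \ref{lem:sym}. Your version merely spells out the details that the paper leaves implicit.
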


\begin{proof}
Each of the elements in \eqref{eq:sym2} is fixed by $\dagger$, and therefore contained in $(MA^*M)^\text{\rm sym}$.
The elements \eqref{eq:sym2} are linearly independent by Lemma \ref{lem:EiAsAsEi}.
The set \eqref{eq:sym2} has cardinality $d$.
The subspace $(MA^*M)^\text{\rm sym}$ has dimension $d$ by Lemma \ref{lem:sym}.
The result follows from these comments.
\end{proof}

\begin{lemma}   \label{lem:sym3}   \samepage
\ifDRAFT {\rm lem:sym3}. \fi
We have
\begin{equation}
(M A^* M)^{\text{\rm sym}} =  \{ XA^* + A^* X \,|\, X \in M\}.  \label{eq:sym3}
\end{equation}
\end{lemma}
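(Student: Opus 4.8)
The plan is to prove the two inclusions separately, exploiting that the right-hand side $\{XA^* + A^*X \mid X \in M\}$ is the image of the $\F$-linear map $M \to \text{\rm End}(V)$, $X \mapsto XA^* + A^*X$, and is therefore a subspace of $\text{\rm End}(V)$. This observation is what lets me match it against the subspace $(MA^*M)^{\text{\rm sym}}$ from Definition \ref{def:sym}.

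First I would establish the inclusion $\{XA^* + A^*X \mid X \in M\} \subseteq (MA^*M)^{\text{\rm sym}}$. Pick any $X \in M$. Since $I \in M$, both $XA^* = XA^*I$ and $A^*X = IA^*X$ lie in $MA^*M$ by Definition \ref{def:MAsM}, so their sum does too. It remains to check that $XA^* + A^*X$ is fixed by $\dagger$. By Lemma \ref{lem:comments2} the antiautomorphism $\dagger$ fixes $A^*$ and fixes each element of $M$, hence $(XA^*)^\dagger = A^{*\dagger} X^\dagger = A^*X$ and similarly $(A^*X)^\dagger = XA^*$. Adding these gives $(XA^* + A^*X)^\dagger = XA^* + A^*X$, so the element is symmetric and thus lies in $(MA^*M)^{\text{\rm sym}}$.

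For the reverse inclusion $(MA^*M)^{\text{\rm sym}} \subseteq \{XA^* + A^*X \mid X \in M\}$, the key point is that the work has essentially been done in Lemma \ref{lem:sym2}: that lemma furnishes the basis $\{E_i A^* + A^* E_i \mid 0 \leq i \leq d-1\}$ for $(MA^*M)^{\text{\rm sym}}$, and each basis vector already has the desired shape. Indeed, for $0 \leq i \leq d-1$ we have $E_i \in M$ (as $\{E_i\}_{i=0}^d$ is a basis for $M$ by Definition \ref{def:M}), so $E_i A^* + A^* E_i$ is of the form $XA^* + A^*X$ with $X = E_i$, and therefore lies in the right-hand side. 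Since the right-hand side is a subspace containing each of these basis vectors, it contains their span, which is all of $(MA^*M)^{\text{\rm sym}}$.

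Combining the two inclusions yields the claimed equality \eqref{eq:sym3}. I do not anticipate a genuine obstacle here; the only thing to be careful about is invoking the linearity of $X \mapsto XA^* + A^*X$ to know the right-hand side is a subspace (so that spanning the basis suffices), and correctly applying the antiautomorphism property of $\dagger$ from Lemma \ref{lem:comments2} when verifying symmetry. The real content was already extracted in Lemma \ref{lem:sym2}, and this proof simply reorganizes that basis into the symmetric-bracket form.
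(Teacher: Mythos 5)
Your proof is correct and follows essentially the same route as the paper: the forward inclusion via the basis $\{E_iA^*+A^*E_i\}$ from Lemma \ref{lem:sym2}, and the reverse inclusion by the symmetry argument that is the content of Lemma \ref{lem:MAsM2} (specialized to $Y=I$). You have merely written out the details that the paper leaves implicit.
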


\begin{proof}
The inclusion $\subseteq$ follows from Lemma \ref{lem:sym2}.
The inclusion $\supseteq$ follows from Lemma \ref{lem:MAsM2}.
\end{proof}

\begin{lemma}    \label{lem:sym4}    \samepage
\ifDRAFT {\rm lem:sym4}. \fi
The $\F$-vector space $(M A^* M)^{\text{\rm sym}}$ is spanned by
\begin{equation}
 \{A^*, \; AA^*+A^*A, \; A^2 A^* + A^* A^2, \; 
          \ldots, \; A^d A^* + A^* A^d\}.                            \label{eq:symspan}
\end{equation}
\end{lemma}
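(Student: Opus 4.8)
The plan is to read off the result directly from Lemma \ref{lem:sym3}, which asserts $(M A^* M)^{\text{\rm sym}} = \{X A^* + A^* X \mid X \in M\}$, combined with the fact recorded in Definition \ref{def:M} that $\{A^i\}_{i=0}^d$ is a basis for $M$. The point is that the image of a linear map is spanned by the images of any spanning set of the domain.

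First I would note that the map $X \mapsto X A^* + A^* X$ is $\F$-linear on $M$, and that by Lemma \ref{lem:sym3} its image is precisely $(M A^* M)^{\text{\rm sym}}$. Since $\{A^i\}_{i=0}^d$ spans $M$, the image is spanned by the values of this map on the $A^i$, namely by the elements $A^i A^* + A^* A^i$ for $0 \leq i \leq d$. Hence $(M A^* M)^{\text{\rm sym}} = \text{Span}\{A^i A^* + A^* A^i \mid 0 \leq i \leq d\}$.

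It remains to reconcile this with the stated list \eqref{eq:symspan}, whose first entry is $A^*$ rather than the $i=0$ value $A^0 A^* + A^* A^0 = 2 A^*$. I would argue by two inclusions, which avoids dividing by $2$ and so stays valid in every characteristic. For one direction, each member of \eqref{eq:symspan} lies in $(M A^* M)^{\text{\rm sym}}$: the terms $A^i A^* + A^* A^i$ with $i \geq 1$ are of the form $X A^* + A^* X$ with $X = A^i \in M$, while $A^*$ itself lies in $M A^* M$ (take $X = Y = I$ in Definition \ref{def:MAsM}) and satisfies $(A^*)^\dagger = A^*$ by Lemma \ref{lem:comments2}, so $A^* \in (M A^* M)^{\text{\rm sym}}$. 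For the reverse direction, the spanning set $\{A^i A^* + A^* A^i\}_{i=0}^d$ found above is contained in $\text{Span}\eqref{eq:symspan}$, because the $i \geq 1$ terms appear verbatim in \eqref{eq:symspan} and the remaining term $2 A^*$ is a scalar multiple of $A^*$. Combining the two inclusions gives $(M A^* M)^{\text{\rm sym}} = \text{Span}\eqref{eq:symspan}$, as desired.

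I do not expect a genuine obstacle here; the argument is a routine transport of the basis $\{A^i\}$ through the linear map of Lemma \ref{lem:sym3}. The only subtlety worth flagging is the mismatch between $A^*$ and the computed $i=0$ term $2 A^*$, which is handled cleanly by the inclusion argument above rather than by any cancellation of the scalar $2$.
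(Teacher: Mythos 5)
Your proposal is correct and follows essentially the same route as the paper: both deduce from Lemma \ref{lem:sym3} that $(MA^*M)^{\text{\rm sym}}$ equals the span of $\{A^iA^*+A^*A^i\}_{i=0}^d$ and then establish equality with the span of \eqref{eq:symspan} by two inclusions, avoiding any division by $2$. Your direct verification that $A^*\in(MA^*M)^{\text{\rm sym}}$ is just the $X=I$ instance of Lemma \ref{lem:MAsM2}, which is what the paper cites.
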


\begin{proof}
Let $\cal Z$ denote the subspace of $\text{\rm End}(V)$ spanned by \eqref{eq:symspan}.
We show that ${\cal Z} = (M A^* M)^\text{\rm sym}$.
We have ${\cal Z} \subseteq (M A^* M)^\text{\rm sym}$ by Lemma \ref{lem:MAsM2}.
By Lemma \ref{lem:sym3},
\[
   (M A^* M)^\text{\rm sym} = \text{\rm Span} \, \{ A^i A^* + A^* A^i \,|\, 0 \leq i \leq d\}  \subseteq {\cal Z}.
\]
The result follows.
\end{proof}

By Lemma \ref{lem:sym}
the $\F$-vector space $(M A^* M)^{\text{\rm sym}}$ has dimension $d$.
The set \eqref{eq:symspan} contains $d+1$ elements.
So the vectors \eqref{eq:symspan} are linearly dependent.
We now find the dependency.
To avoid trivialities we assume that $d \geq 1$.

\begin{lemma}    \label{lem:sym5}   \samepage
\ifDRAFT {\rm lem:sym5}. \fi
Assume that $d \geq 1$.
Then the following {\rm (i)--(iii)} hold.
\begin{itemize}
\item[\rm (i)]
$\text{\rm Char} (\F) \neq 2$.
\item[\rm (ii)]
There exists a unique integer $n$ $(1 \leq n \leq d)$ such that $\th^*_n = - \th^*_0$.
\item[\rm (iii)]
The element $A^n A^* + A^* A^n$ is contained in the span of
\begin{equation}    \label{eq:sym5aux}
 \{A^*, \;
 A A^* + A^* A, \;
 A^2 A^* + A^* A^2, \; \ldots, \;
 A^{n-1} A^* + A^* A^{n-1}\}.
\end{equation}
\end{itemize}
\end{lemma}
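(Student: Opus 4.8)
The engine of the proof is the $\F$-linear map $\Psi\colon M\to (MA^{*}M)^{\text{\rm sym}}$, $X\mapsto XA^{*}+A^{*}X$. By Lemma \ref{lem:sym3} this map is surjective, and its image has dimension $d$ by Lemma \ref{lem:sym}; since $\dim M=d+1$, the kernel of $\Psi$ is one-dimensional. Writing a typical element of $M$ as $p(A)=\sum_{i=0}^{d}p(\th_{i})E_{i}$ and using \eqref{eq:EiAsEj}, one finds that $p(A)\in\ker\Psi$ if and only if $p(\th_{i-1})+p(\th_{i})=0$ for $1\le i\le d$, that is, $p(\th_{i})=(-1)^{i}p(\th_{0})$. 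Hence $\ker\Psi$ is spanned by a single element $p_0(A)$, where $p_0$ is the interpolating polynomial determined by $p_0(\th_{i})=(-1)^{i}$; equivalently, $p_0(A)$ is the unique, up to scalar, nonzero element of $M$ that anticommutes with $A^{*}$.

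For part (i) I would pass to a $\Phi$-standard basis, in which $A^{*}$ is diagonal and $A$ is tridiagonal with all diagonal entries zero (see \eqref{eq:Asnat}, \eqref{eq:Anat} and Lemma \ref{lem:cibinonzero}). Because $A$ is tridiagonal with zero diagonal, every permutation contributing to $\det(\lambda I-A)$ is a product of disjoint adjacent transpositions together with a set of fixed points, each fixed point contributing a factor $\lambda$; since the number of moved indices is even, only powers $\lambda^{k}$ with $k\equiv d+1\pmod 2$ occur, so $\det(\lambda I-A)=\lambda^{\varepsilon}g(\lambda^{2})$ for some polynomial $g$ and $\varepsilon\in\{0,1\}$ with $\deg g(\lambda^{2})=d+1-\varepsilon\ge 1$. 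If $\text{\rm Char}(\F)=2$, then the derivative of $\lambda^{\varepsilon}g(\lambda^{2})$ equals $\varepsilon\lambda^{\varepsilon-1}g(\lambda^{2})$, so $\det(\lambda I-A)$ and its derivative have a common factor of positive degree, whence $\det(\lambda I-A)$ has a repeated root; this contradicts the fact that $\th_{0},\dots,\th_{d}$ are distinct. Therefore $\text{\rm Char}(\F)\neq 2$, and consequently the values $(-1)^{i}$ are not all equal, so $p_0$ is nonconstant and $\deg p_0\ge 1$.

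Parts (ii) and (iii) I would then read off from the single identity $p_0(A)A^{*}=-A^{*}p_0(A)$. Since $p_0(A)=\sum_{i}(-1)^{i}E_{i}$ satisfies $p_0(A)^{2}=I$, it is invertible. Fix the $\Phi$-standard basis $\{v_{i}\}_{i=0}^{d}$ of Definition \ref{def:vi}, so that $A^{*}v_{i}=\th^{*}_{i}v_{i}$. Anticommutation gives $A^{*}\bigl(p_0(A)v_{0}\bigr)=-\th^{*}_{0}\,p_0(A)v_{0}$, and $p_0(A)v_{0}\neq 0$; hence $-\th^{*}_{0}$ is an eigenvalue of $A^{*}$. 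This yields (ii): there is an index $n$ with $\th^{*}_{n}=-\th^{*}_{0}$, it is unique because the $\th^{*}_{i}$ are distinct, and $n\in\{1,\dots,d\}$ because $\th^{*}_{0}\neq 0$ (Lemma \ref{lem:th0thd}) together with $\text{\rm Char}(\F)\neq 2$ forces $-\th^{*}_{0}\neq\th^{*}_{0}$. Moreover $p_0(A)v_{0}$ lies in the one-dimensional eigenspace $E^{*}_{n}V=\langle v_{n}\rangle$ (Corollary \ref{cor:coincide}), so $p_0(A)v_{0}=\mu v_{n}$ with $\mu\neq 0$. On the other hand, from $Av_{0}=c_{1}v_{1}$ and $Av_{i}=b_{i-1}v_{i-1}+c_{i+1}v_{i+1}$ with all $c_{i},b_{i}$ nonzero (Lemmas \ref{lem:Avi}, \ref{lem:cibinonzero}) one sees that $A^{m}v_{0}$ has a nonzero $v_{m}$-component and no component of index exceeding $m$; hence the top index occurring in $p_0(A)v_{0}$ is $\deg p_0$. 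Comparing with $\mu v_{n}$ forces $\deg p_0=n$. Finally, expanding $0=p_0(A)A^{*}+A^{*}p_0(A)=\sum_{r=0}^{n}\gamma_{r}\bigl(A^{r}A^{*}+A^{*}A^{r}\bigr)$, where $\gamma_{n}\neq 0$ is the leading coefficient of $p_0$, solves for $A^{n}A^{*}+A^{*}A^{n}$ in terms of the lower terms, which is (iii).

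The main obstacle is conceptual rather than computational: the kernel polynomial $p_0$ is manufactured entirely from the eigenvalues $\th_{i}$ of $A$, whereas the index $n$ in (ii) and (iii) is defined through the dual eigenvalues $\th^{*}_{i}$. The bridge is the invertible operator $p_0(A)$ together with the relation $p_0(A)A^{*}=-A^{*}p_0(A)$: it simultaneously forces the sign change $\th^{*}_{n}=-\th^{*}_{0}$ and, through the filtration $\langle v_{0},\dots,v_{m}\rangle$ coming from the tridiagonal shape of $A$, pins $\deg p_0$ to $n$. The exclusion of characteristic two is a separate, smaller hurdle, settled by the inseparability argument above.
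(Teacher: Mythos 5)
Your proof is correct, and while it turns on the same engine as the paper's — the unique (up to scalar) dependency $\sum_{i=0}^{d}\alpha_i\,(A^iA^*+A^*A^i)=0$, equivalently the one-dimensional kernel of $X\mapsto XA^*+A^*X$ on $M$ — you run it in the opposite direction. The paper takes an abstract dependency, \emph{defines} $n$ as its top degree, and extracts $\th^*_0+\th^*_n=0$ by sandwiching between $E^*_0$ and $E^*_n$ and invoking the zero/nonzero pattern of $E^*_0A^iE^*_n$ from Lemma \ref{lem:comments}(ii). You instead identify the kernel element explicitly as $\sum_i(-1)^iE_i$ — the element the paper only names $S$ in Definition \ref{def:S}, although Lemma \ref{lem:EiAsAsEipre1} already encodes the anticommutation $SA^*=-A^*S$ — then read off part (ii) from the eigenvector $Sv_0$ of $A^*$, and only afterwards recover that $n$ equals the degree of the interpolating polynomial via the filtration $\langle v_0,\dots,v_m\rangle$, which uses the same zero/nonzero pattern in the guise of Lemmas \ref{lem:Avi} and \ref{lem:cibinonzero}. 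Your part (i) is genuinely different: the paper gets $\text{\rm Char}(\F)\neq 2$ a posteriori from $\th^*_n=-\th^*_0$ together with $\th^*_n\neq\th^*_0$, whereas you prove it up front from the parity of the characteristic polynomial of a zero-diagonal irreducible tridiagonal matrix and an inseparability argument. What your route buys is the explicit anticommuting element $S$ and a transparent eigenvector explanation of why $-\th^*_0$ must occur as a dual eigenvalue; what the paper's buys is economy — it needs neither the invertibility of $S$, nor the degree computation, nor a separate characteristic-two argument.
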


\begin{proof}
Since the vectors \eqref{eq:symspan} are linearly dependent,
there exist scalars $\{\alpha_i\}_{i=0}^d$ in $\F$, not all zero, such that
\[
  \alpha_0 A^* + \sum_{i=1}^d \alpha_i (A^i A^* + A^* A^i) = 0.
\]
Define $n = \text{\rm max}\,\{i \,|\, 0 \leq i \leq d, \; \alpha_i \neq 0\}$.
Note that $n \geq 1$, since $A^* \neq 0$ by Lemma \ref{lem:trivial}.
So
\begin{align}
   \alpha_0 A^* + \sum_{i=1}^n \alpha_i (A^i A^* + A^* A^i) &= 0, 
  &  \alpha_n \neq 0.                                          \label{eq:sym5aux1}
\end{align}
By \eqref{eq:sym5aux1} the element $A^n A^* + A^* A^n$ is contained 
in the span of \eqref{eq:sym5aux}.
In \eqref{eq:sym5aux1}, 
multiply each side on the left by $E^*_0$ and right by $E^*_n$.
Simplify the result using 
$E^*_0 A^* = \th^*_0 E^*_0$, $A^* E^*_n = \th^*_n E^*_n$, $E^*_0 E^*_n =0$ to get
\[
  (\th^*_0 + \th^*_n) \sum_{i=1}^n \alpha_i E^*_0 A^i E^*_n = 0.
\]
By Lemma \ref{lem:comments}(ii), $E^*_0 A^i E^*_n = 0$ for $1 \leq i \leq n-1$.
So the above line  becomes
\[
   \alpha_n (\th^*_0 + \th^*_n) E^*_0 A^n E^*_n = 0.
\]
We have $\alpha_n \neq 0$,
and  $E^*_0 A^n E^*_n  \neq 0$ by Lemma \ref{lem:comments}(ii).
Thus $\th^*_0 + \th^*_n=0$,
and so $\th^*_n = - \th^*_0$.
The $\{\th^*_i\}_{i=0}^d$ are mutually distinct,
so no dual eigenvalue other than $\th^*_n$ is equal to $- \th^*_0$.
We have $\text{Char}(\F) \neq 2$; otherwise $\th^*_n = \th^*_0$.
The result follows.
\end{proof}

\begin{note}
Referring to Lemma \ref{lem:sym5}(ii),
it turns out that $n=d$;
this will be established in Lemma \ref{lem:onlyif}.
\end{note}

\begin{proposition}   \label{prop:AW}   \samepage
\ifDRAFT {\rm prop:AW}. \fi
There exist  $\beta$, $\varrho$, $\varrho^*$ in $\F$ that satisfy both
\eqref{eq:AW1}, \eqref{eq:AW2}.
\end{proposition}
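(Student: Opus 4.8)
The plan is to translate the two operator identities into conditions on the eigenvalue array and then to locate one value of $\beta$ that serves both. The trivial case $d=0$ is immediate ($A=A^*=0$, take $\varrho=\varrho^*=0$), so assume $d\geq 1$. By Lemma~\ref{lem:rec4}, the relation \eqref{eq:AW1} holds for a given pair $\beta,\varrho$ if and only if $\th_{i-1}^2-\beta\th_{i-1}\th_i+\th_i^2=\varrho$ for $1\leq i\leq d$; applying the same lemma to $\Phi^*$, \eqref{eq:AW2} holds for $\beta,\varrho^*$ if and only if $(\th^*_{i-1})^2-\beta\th^*_{i-1}\th^*_i+(\th^*_i)^2=\varrho^*$ for $1\leq i\leq d$, with the \emph{same} $\beta$. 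Since the eigenvalues are mutually distinct (Note~\ref{note:distinct}), in particular $\th_{i-1}\neq\th_{i+1}$, Lemma~\ref{lem:rec1} makes each constant-quadratic-form condition equivalent to the corresponding sequence being $\beta$-recurrent. Hence the whole proposition reduces to exhibiting a single $\beta$ for which both $\{\th_i\}_{i=0}^d$ and $\{\th^*_i\}_{i=0}^d$ are $\beta$-recurrent; once it is found, Lemma~\ref{lem:rec1}(i) produces $\varrho,\varrho^*$ and Lemma~\ref{lem:rec4} (for $\Phi$ and for $\Phi^*$) delivers the two relations.

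To produce $\beta$ I would exploit the dependency furnished by Lemma~\ref{lem:sym5}. Because the $d+1$ elements $\{A^iA^*+A^*A^i\}_{i=0}^d$ span the $d$-dimensional space $(MA^*M)^{\mathrm{sym}}$ (Lemmas~\ref{lem:sym4},~\ref{lem:sym}), there is a nontrivial relation $\sum_{i=0}^d\alpha_i(A^iA^*+A^*A^i)=0$. Setting $p(x)=\sum_i\alpha_ix^i$ and sandwiching this between $E_r$ and $E_s$ with $|r-s|=1$ gives $p(\th_r)+p(\th_s)=0$ (as $E_rA^*E_s\neq0$ by \eqref{eq:EiAsEj}), whence $p(\th_i)=(-1)^ip(\th_0)$ with $p(\th_0)\neq0$; this encodes the bipartite symmetry of the spectrum, concretely $\th_{d-i}=-\th_i$. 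Running the same argument on $\Phi^*$ gives the companion statement for $\{\th^*_i\}$. I would then combine this symmetry with the closed forms of Lemma~\ref{lem:cibi}---which already force $b_i+c_i=\th_0$ for $1\leq i\leq d-1$, together with $b_0=c_d=\th_0$ and dually $b^*_i+c^*_i=\th^*_0$---and impose the two-sided requirement that $A$ be tridiagonal on the $E^*$-eigenspaces \emph{while} $A^*$ is tridiagonal on the $E$-eigenspaces. Matching the two descriptions is what should pin down a common ratio $\beta=(\th_{i-1}+\th_{i+1})/\th_i$, independent of $i$, and simultaneously governing the starred sequence.

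The main obstacle is precisely this last step: showing that one $\beta$ works for both sequences. For $d\leq 2$ the quadratic-form condition amounts to two equations in the unknowns $\beta,\varrho$ and solves at once (using $\th_1\neq0$ from Lemma~\ref{lem:th0thd}), and for $d\leq 4$ the spectral symmetry $\th_{d-i}=-\th_i$ already forces recurrence on its own; but for $d\geq 5$ the system $\th_{i-1}^2-\beta\th_{i-1}\th_i+\th_i^2=\varrho$ is genuinely over-determined, so its consistency cannot be formal and must be squeezed out of the interaction of the two tridiagonal actions rather than extracted from either one alone. Everything surrounding this---the reduction via Lemmas~\ref{lem:rec4} and~\ref{lem:rec1}, and the final packaging of $\varrho$ and $\varrho^*$---is routine; the single substantive point is the existence of the common $\beta$.
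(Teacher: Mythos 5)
Your reduction is sound: by Lemma \ref{lem:rec4} (applied to $\Phi$ and to $\Phi^*$) together with Lemma \ref{lem:rec1}, the proposition is equivalent to finding one $\beta$ that makes both $\{\th_i\}_{i=0}^d$ and $\{\th^*_i\}_{i=0}^d$ $\beta$-recurrent. But the step you identify as the main obstacle is in fact left unproved, and the route you sketch toward it does not work. From the dependency $\sum_i\alpha_i(A^iA^*+A^*A^i)=0$ you correctly get $p(\th_r)+p(\th_s)=0$ for $|r-s|=1$, hence $p(\th_i)=(-1)^ip(\th_0)$ with $p(\th_0)\neq 0$; but this does not yield the spectral antisymmetry $\th_{d-i}=-\th_i$. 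In the paper that antisymmetry is Theorem \ref{thm:main}(iii), proved in Lemma \ref{lem:onlyif} using Corollary \ref{cor:const}, which already presupposes that the eigenvalue sequence is recurrent — so invoking it here would be circular. Your final sentence ("matching the two descriptions is what should pin down a common ratio $\beta$") is a hope rather than an argument, as you concede.

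The paper closes the gap by a different mechanism. First, $AA^*A$ lies in $(MA^*M)^{\rm sym}$, and by Lemma \ref{lem:sym5}(iii) together with a sandwiching argument (multiply by $E^*_0$ on the left and $E^*_t$ on the right, using $\th^*_0+\th^*_t\neq 0$ for $t\neq n$ and Lemma \ref{lem:comments}(ii)) one shows $AA^*A\in\text{\rm Span}\{A^*,\;AA^*+A^*A,\;A^2A^*+A^*A^2\}$. Sandwiching that single relation between $E^*_{i-1}$ and $E^*_{i+1}$ gives $\alpha_2(\th^*_{i-1}+\th^*_{i+1})=\th^*_i$, so $\beta=\alpha_2^{-1}$ (the degenerate case $\alpha_2=0$ forces $d=2$ and is handled separately) makes the dual sequence $\beta$-recurrent; Lemmas \ref{lem:rec1}(i) and \ref{lem:rec4} then yield the operator identity \eqref{eq:AW2}. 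The decisive trick is that the recurrence for $\{\th_i\}$ with the \emph{same} $\beta$ is then extracted from \eqref{eq:AW2} itself: multiplying that identity on the left by $E_{i-1}$ and on the right by $E_{i+1}$ and using $E_{i-1}{A^*}^2E_{i+1}\neq 0$ forces $\th_{i-1}-\beta\th_i+\th_{i+1}=0$. The operator identity, not any spectral symmetry, is what transports $\beta$ from one sequence to the other; this is the idea missing from your proposal.
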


\begin{proof}
We first show that
\begin{equation}
 AA^*A \in \text{\rm Span} \,  \{ A^*, \, A A^* + A^* A, \, A^2 A^* + A^* A^2\}.     \label{eq:AWpreaux}
\end{equation}
We assume that $d \geq 3$; otherwise \eqref{eq:AWpreaux} holds by Lemma \ref{lem:sym4}.
Recall the integer $n$ from Lemma \ref{lem:sym5}.
By Lemmas \ref{lem:sym4} and \ref{lem:sym5}(iii)
there exist scalars $\{\alpha_i\}_{i=0}^d$ in $\F$ with $\alpha_n = 0$ such that
\[
 A A^* A = \alpha_0 A^* + \sum_{i=1}^d \alpha_i (A^i A^* + A^* A^i).
\]
We show that $\alpha_i = 0$ for $ 3 \leq i \leq d$.
Suppose not.
Then there exists an integer $t$ $(3 \leq t \leq d)$ such that $\alpha_t \neq 0$.
We choose $t$ to be maximal.
Then $t \neq n$ and
\[
 A A^* A = \alpha_0 A^* + \sum_{i=1}^t \alpha_i (A^i A^* + A^* A^i).
\]
In the above line, multiply each side on the left by $E^*_0$ and  right by $E^*_t$.
Simplify the result to get
\[
E^*_0 A A^* A E^*_t
 = (\th^*_0 + \th^*_t) \sum_{i=1}^t \alpha_i E^*_0 A^i E^*_t.
\]
In the above line the left-hand side is zero by Lemma  \ref{lem:EsAAsAEs}, and 
$E^*_0 A^i E^*_t = 0$ $(1 \leq i \leq t-1)$ by Lemma \ref{lem:comments}(ii).
Thus
\begin{equation}
   0 =\alpha_t (\th^*_0 + \th^*_t)  E^*_0 A^t E^*_t.      \label{eq:AWpre3}
\end{equation}
We examine the factors in \eqref{eq:AWpre3}.
By construction $\alpha_t \neq 0$.
We have  $\th^*_0 + \th^*_t \neq 0$ by Lemma \ref{lem:sym5}(ii) and $t \neq n$.
Also $E^*_ 0 A^t E^*_t \neq 0$ by Lemma \ref{lem:comments}(ii).
Therefore the right-hand side of \eqref{eq:AWpre3} is nonzero, for a contradiction.
We have shown \eqref{eq:AWpreaux}.

Next we show that
there exists  $\beta \in \F$ such that
\begin{align}
    \th^*_{i-1} - \beta \th^*_i + \th^*_{i+1} &= 0  \qquad\qquad (1 \leq i \leq d-1).    \label{eq:AWpre2}
\end{align}
We assume that $d \geq 2$; otherwise the assertion is vacuous.
By \eqref{eq:AWpreaux} there exist scalars $\alpha_0$, $\alpha_1$, $\alpha_2$ in $\F$ such that
\[
  0 = \alpha_0 A^* + \alpha_1 (AA^* + A^* A) + \alpha_2 (A^2 A^* + A^* A^2) - A A^* A.
\]
For $1 \leq i \leq d-1$, in the above line multiply each side on the left by $E^*_{i-1}$ and right by $E^*_{i+1}$.
Simplify the result using Lemmas \ref{lem:comments}(ii) and \ref{lem:EsAAsAEs}   to get
\begin{align*}
 0 &= E^*_{i-1} A^2 E^*_{i+1} (\alpha_2 \th^*_{i+1} + \alpha_2 \th^*_{i-1} - \th^*_i).
                     \qquad\qquad  (1 \leq i \leq d-1).        
\end{align*}
We have $E^*_{i-1} A^2 E^*_{i+1} \neq 0$ by Lemma \ref{lem:comments}(ii).
Thus
\begin{align*}
  \alpha_2 (\th^*_{i-1} + \th^*_{i+1}) &= \th^*_i  \qquad\qquad  (1 \leq i \leq d-1).
\end{align*}
Assume for the moment that $\alpha_2 \neq 0$.
Then \eqref{eq:AWpre2} holds for $\beta = \alpha_2^{-1}$.
Next assume that $\alpha_2 = 0$.
Then $\th^*_i = 0$ for $1 \leq i \leq d-1$.
This forces $d=2$ and $\th^*_1 = 0$.
In this case  $\th^*_2 = - \th^*_0$ by Lemma \ref{lem:sym5}(ii)
and \eqref{eq:AWpre2} holds for any $\beta \in \F$.
We have shown that there exists $\beta \in \F$ that satisfies \eqref{eq:AWpre2}.

By \eqref{eq:AWpre2} and Lemma \ref{lem:rec1}(i)
there exists $\varrho^* \in \F$ such that
\begin{align*}
  \th^{*2}_{i-1} - \beta \th^*_{i-1} \th^*_i + {\th^*_i}^2 &= \varrho^*   \qquad\qquad (1 \leq i \leq d).
\end{align*}
By this and Lemma \ref{lem:rec4} (applied to $\Phi^*$) we obtain \eqref{eq:AW2}.
For $1 \leq i \leq d-1$,
multiply each side of \eqref{eq:AW2} on the left by $E_{i-1}$ and right by $E_{i+1}$.
Simplify the result using Lemmas  \ref{lem:comments}(ii) and \ref{lem:EsAAsAEs} (applied to $\Phi^*$) to get
\begin{align*}
  0  &=  E_{i-1} {A^*}^2 E_{i+1} (\th_{i-1} - \beta \th_i + \th_{i+1})   \qquad\qquad (1 \leq i \leq d-1).
\end{align*}
Applying Lemma \ref{lem:comments}(ii) to $\Phi^*$ we obtain
$E_{i-1} {A^*}^2 E_{i+1} \neq 0$. 
Thus
\begin{align*}
   \th_{i-1} - \beta \th_i + \th_{i+1} &= 0   \qquad\qquad (1 \leq i \leq d-1).
\end{align*}
By this and Lemma \ref{lem:rec1}(i) there exists  $\varrho \in \F$ such that
\begin{align*}
 \th_{i-1}^2 - \beta \th_{i-1} \th_i + \th_i^2 &= \varrho   \qquad\qquad  (1 \leq i \leq d).
\end{align*}
By this and Lemma \ref{lem:rec4} we obtain \eqref{eq:AW1}.
\end{proof}

We mention some results for later use.

\begin{lemma}    \label{lem:iso}   \samepage
\ifDRAFT {\rm lem:iso}. \fi
For $X \in \text{\rm End}(V)$ the following are equivalent:
\begin{itemize}
\item[\rm (i)]
$X$ commutes with each of $A$, $A^*$;
\item[\rm (ii)]
there exists $\lambda \in \F$ such that $X = \lambda I$.
\end{itemize}
\end{lemma}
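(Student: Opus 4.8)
The plan is to prove the substantive direction (i)~$\Rightarrow$~(ii); the converse (ii)~$\Rightarrow$~(i) is immediate, since $\lambda I$ commutes with every element of $\text{End}(V)$. I would first dispose of the trivial case $\dim V = 1$, where $\text{End}(V) = \F I$ and so the claim is automatic (and where Lemma \ref{lem:Avi} does not apply); thereafter I assume $\Phi$ is nontrivial, so that $d \geq 1$.

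For the main direction I would exploit that the eigenspaces of $A^*$ are one-dimensional, as recorded in Corollary \ref{cor:coincide}. Fix a $\Phi$-standard basis $\{v_i\}_{i=0}^d$, so that $A^* v_i = \th^*_i v_i$ and $E^*_i V = \langle v_i\rangle$. Since $X$ commutes with $A^*$, it preserves each eigenspace of $A^*$; because the $\{\th^*_i\}_{i=0}^d$ are mutually distinct this is genuinely an invariance of each individual line $E^*_i V$. As these lines are one-dimensional, there exist scalars $\lambda_i \in \F$ with $X v_i = \lambda_i v_i$ for $0 \leq i \leq d$.

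The final step is to use that $X$ also commutes with $A$ to force all the $\lambda_i$ to coincide. By Lemma \ref{lem:Avi} we have $A v_i = b_{i-1} v_{i-1} + c_{i+1} v_{i+1}$, and comparing $X A v_i$ with $A X v_i$ gives $b_{i-1}(\lambda_{i-1} - \lambda_i) v_{i-1} + c_{i+1}(\lambda_{i+1} - \lambda_i) v_{i+1} = 0$. Since $\{v_i\}_{i=0}^d$ is a basis, the coefficients vanish, and since every $b_{i-1}$ and $c_{i+1}$ is nonzero by Lemma \ref{lem:cibinonzero}, consecutive $\lambda$'s agree. Hence $\lambda_i = \lambda$ for a single $\lambda \in \F$, and $X = \lambda I$.

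There is no serious obstacle here, but the one point I would be careful about is staying within the paper's standing convention that $\F$ need not be algebraically closed. An alternative, more abstract route is available: by Lemma \ref{lem:comments}(i) the elements $A, A^*$ generate $\text{End}(V)$, so any $X$ commuting with both lies in the center of $\text{End}(V)$, which is $\F I$. I would nonetheless prefer the concrete argument above, precisely because the distinctness of the $\{\th^*_i\}_{i=0}^d$ and the nonvanishing of the intersection numbers hold over an arbitrary field, so no appeal to algebraic closedness (nor to Skolem--Noether or the structure of the center) is needed.
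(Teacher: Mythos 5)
Your proof is correct, but it takes a different route from the paper. The paper's proof is exactly the ``abstract'' alternative you mention at the end: by Lemma \ref{lem:comments}(i) the elements $A$, $A^*$ generate $\text{\rm End}(V)$, so any $X$ commuting with both lies in the center of $\text{\rm End}(V)$, which is spanned by $I$. Your concrete argument --- diagonalize $X$ on the one-dimensional eigenspaces $E^*_iV$ via Corollary \ref{cor:coincide}, then chain the eigenvalues together using Lemma \ref{lem:Avi} and the nonvanishing of the intersection numbers (Lemma \ref{lem:cibinonzero}) --- is a valid and more self-contained substitute. One remark on your closing justification, though: the advantage you claim over the abstract route is illusory. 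The paper's argument also works over an arbitrary field: Lemma \ref{lem:comments}(i) is proved via the tridiagonal-matrix generation result (Corollary \ref{cor:AEs0} and Lemma \ref{lem:As}(ii)), which nowhere assumes $\F$ algebraically closed, and the fact that the center of $\text{\rm End}(V)$ equals $\F I$ is a standard fact over any field; Skolem--Noether plays no role in this lemma (the paper invokes it only later, e.g.\ in Lemma \ref{lem:antiauto}). So both proofs are equally general; what your version buys is elementariness --- it needs only the standard basis and the intersection numbers, not the generation result --- at the cost of an explicit case split and a coefficient computation that the one-line central argument avoids.
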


\begin{proof}
(i) $\Rightarrow$ (ii)
By Lemma \ref{lem:comments}(i) the element
$X$ is contained in the center of $\text{\rm End}(V)$.
The center of $\text{\rm End}(V)$ is spanned by $I$.
The result follows.

(ii) $\Rightarrow$ (i)
Clear.
\end{proof}

The next two results follow from Lemma \ref{lem:iso}.

\begin{corollary}   \label{cor:iso3}    \samepage
\ifDRAFT {\rm cor:iso3}. \fi
For $\psi \in \text{\rm End}(V)$ the following are equivalent:
\begin{itemize}
\item[\rm (i)]
$\psi$ is an isomorphism of TB tridiagonal pairs
from $A,A^*$ to $A,A^*$;
\item[\rm (ii)]
there exists nonzero $\lambda \in \F$ such that $\psi = \lambda I$.
\end{itemize}
\end{corollary}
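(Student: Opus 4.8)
The plan is to derive this directly from Lemma \ref{lem:iso}, which already characterizes the elements of $\text{\rm End}(V)$ that commute with both $A$ and $A^*$ as exactly the scalar multiples of the identity. The only extra ingredient needed is to track the bijectivity built into the notion of an isomorphism of TB tridiagonal pairs, which will translate into the requirement that the scalar be nonzero.

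For the implication (i) $\Rightarrow$ (ii), I would begin from the definition of an isomorphism of TB tridiagonal pairs from $A,A^*$ to itself: $\psi$ is an $\F$-linear bijection satisfying $\psi A = A \psi$ and $\psi A^* = A^* \psi$. These two intertwining relations say precisely that $\psi$ commutes with each of $A$ and $A^*$, so Lemma \ref{lem:iso} applies and yields $\psi = \lambda I$ for some $\lambda \in \F$. Since $\psi$ is a bijection it is invertible, which forces $\lambda \neq 0$.

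For the converse (ii) $\Rightarrow$ (i), I would observe that $\psi = \lambda I$ with $\lambda \neq 0$ is an $\F$-linear bijection, with inverse $\lambda^{-1} I$, and that a scalar operator commutes with everything; hence both relations $\psi A = A \psi$ and $\psi A^* = A^* \psi$ hold trivially, so $\psi$ is an isomorphism of TB tridiagonal pairs from $A,A^*$ to $A,A^*$.

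There is essentially no obstacle here: the mathematical content is entirely carried by Lemma \ref{lem:iso}, and the argument amounts to matching the definition of a self-isomorphism to the hypotheses of that lemma while keeping track of invertibility. The one point deserving a moment of care is recalling that the bijectivity required in the definition of isomorphism is exactly what rules out $\lambda = 0$.
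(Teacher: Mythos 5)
Your proof is correct and follows the same route as the paper, which simply notes that Corollary \ref{cor:iso3} follows from Lemma \ref{lem:iso}. Your write-up supplies the (routine) details of that deduction, including the observation that bijectivity forces $\lambda \neq 0$.
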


\begin{corollary}   \label{cor:iso4}    \samepage
\ifDRAFT {\rm cor:iso4}. \fi
For $\psi \in \text{\rm End}(V)$  the following are equivalent:
\begin{itemize}
\item[\rm (i)]
$\psi$ is an isomorphism of TB tridiagonal systems
from $\Phi$ to $\Phi$;
\item[\rm (ii)]
there exists nonzero $\lambda \in \F$ such that $\psi = \lambda I$.
\end{itemize}
\end{corollary}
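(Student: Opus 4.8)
The plan is to deduce this directly from Lemma~\ref{lem:iso}, exactly as the remark preceding the statement suggests. Recall that by definition an isomorphism of TB tridiagonal systems from $\Phi$ to $\Phi$ is an $\F$-linear bijection $\psi : V \to V$ satisfying $\psi A = A\psi$, $\psi A^* = A^*\psi$, together with the commutation relations $\psi E_i = E_i \psi$ $(0 \leq i \leq d)$ and $\psi E^*_i = E^*_i \psi$ $(0 \leq i \leq d)$. The crucial observation is that the first two of these relations already pin down $\psi$ up to a scalar, so the idempotent conditions are essentially redundant here.

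For the implication (i) $\Rightarrow$ (ii), I would first extract from (i) the fact that $\psi$ commutes with each of $A$ and $A^*$. By Lemma~\ref{lem:iso} this forces $\psi = \lambda I$ for some $\lambda \in \F$. Since $\psi$ is a bijection, $\lambda \neq 0$, giving (ii). For (ii) $\Rightarrow$ (i), I would simply note that if $\psi = \lambda I$ with $\lambda \neq 0$, then $\psi$ is an $\F$-linear bijection and commutes with every element of $\text{\rm End}(V)$; in particular it commutes with $A$, $A^*$, and all the $E_i$, $E^*_i$, so it satisfies every requirement in the definition of an isomorphism of TB tridiagonal systems from $\Phi$ to $\Phi$.

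There is no real obstacle in this argument once Lemma~\ref{lem:iso} is in hand; the statement is an immediate specialization. The only point worth flagging is the asymmetry in how the system-isomorphism conditions are used: in the forward direction the relations involving $A$ and $A^*$ do all the work and the idempotent relations are not needed, whereas in the reverse direction the idempotent relations hold automatically because $\lambda I$ is central. This mirrors the proof of Corollary~\ref{cor:iso3} for pairs, the only difference being the presence of the (automatically satisfied) idempotent conditions in the system setting.
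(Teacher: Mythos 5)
Your proposal is correct and is exactly the argument the paper intends: the text states only that Corollaries \ref{cor:iso3} and \ref{cor:iso4} ``follow from Lemma \ref{lem:iso},'' and your two directions (commutation with $A$, $A^*$ forces $\psi=\lambda I$ with $\lambda\neq 0$ by bijectivity; conversely $\lambda I$ is central so all the required relations, including those with the $E_i$ and $E^*_i$, hold automatically) fill in that deduction in the standard way. No gaps.
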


\section{The Askey-Wilson sequence and the fundamental parameter}
\label{sec:AWseq}

In this section, we introduce the notion of an Askey-Wilson sequence
and fundamental parameter for TB tridiagonal pairs and systems.

Throughout this section
let $A,A^*$ denote a TB tridiagonal pair over $\F$.
Let $\Phi$ denote an associated TB tridiagonal system.

\begin{definition}    \label{def:AWseq}     \samepage
\ifDRAFT {\rm def:AWseq}. \fi
By an {\em Askey-Wilson sequence for $A,A^*$} we mean
a sequence $\beta,\varrho,\varrho^*$ of scalars in $\F$
that satisfy \eqref{eq:AW1} and \eqref{eq:AW2}.
\end{definition}

\begin{definition}    \label{def:sysAWseq}    \samepage
\ifDRAFT {\rm def:sysAWseq}. \fi
By an {\em Askey-Wilson sequence for $\Phi$} we mean
an Askey-Wilson sequence for $A,A^*$.
\end{definition}

\begin{lemma}   \label{lem:AWseqPhi}   \samepage
\ifDRAFT {\rm lem:AWseqPhi}. \fi
The following are the same:
\begin{itemize}
\item[\rm (i)]
an Askey-Wilson sequence for $\Phi$;
\item[\rm (ii)]
an  Askey-Wilson sequence for $\Phi^\downarrow$;
\item[\rm (iii)]
an  Askey-Wilson sequence for $\Phi^\Downarrow$.
\end{itemize}
\end{lemma}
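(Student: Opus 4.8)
The plan is to unwind the definitions; the statement is essentially definitional, and no appeal to the existence result Proposition \ref{prop:AW} is needed. First I would observe that passing from $\Phi$ to $\Phi^\downarrow$ or to $\Phi^\Downarrow$ only reorders the primitive idempotents of $A^*$ or of $A$, while leaving the pair $A,A^*$ itself unchanged. In particular, by Lemma \ref{lem:associated} the systems $\Phi$, $\Phi^\downarrow$, $\Phi^\Downarrow$ are all associated with the one TB tridiagonal pair $A,A^*$.

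Next I would invoke Definition \ref{def:sysAWseq}, according to which an Askey-Wilson sequence for a TB tridiagonal system is by definition an Askey-Wilson sequence for the associated TB tridiagonal pair. Since $\Phi$, $\Phi^\downarrow$, $\Phi^\Downarrow$ share the same associated pair $A,A^*$, each of the three notions in (i)--(iii) reduces to the same object, namely a triple $\beta,\varrho,\varrho^*$ of scalars in $\F$ satisfying the Askey-Wilson relations \eqref{eq:AW1}, \eqref{eq:AW2}. This gives the claim.

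The point that makes everything work --- and the only thing worth checking --- is that the relations \eqref{eq:AW1}, \eqref{eq:AW2} are written purely in terms of $A$ and $A^*$ and make no reference to any ordering of the eigenspaces; hence they cannot distinguish among $\Phi$, $\Phi^\downarrow$, $\Phi^\Downarrow$. There is no genuine obstacle here: once the definitions are traced, the coincidence is immediate. I would phrase the final proof as a single line citing Definition \ref{def:sysAWseq} together with Lemma \ref{lem:associated}.
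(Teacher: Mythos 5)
Your proposal is correct and coincides with the paper's own proof: the paper likewise observes that $\Phi$, $\Phi^\downarrow$, $\Phi^\Downarrow$ have the same associated TB tridiagonal pair $A,A^*$, and then lets Definition \ref{def:sysAWseq} do the rest. Nothing is missing.
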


\begin{proof}
The TB tridiagonal systems $\Phi$, $\Phi^\downarrow$, $\Phi^\Downarrow$
have the same associated TB tridiagonal pair $A,A^*$.
The result follows.
\end{proof}

Let $(\{\th_i\}_{i=0}^d; \{\th^*_i\}_{i=0}^d)$ denote the eigenvalue array of $\Phi$.

\begin{proposition}   \label{prop:recurrence}    \samepage
\ifDRAFT {\rm prop:recurrence}. \fi
Let $\beta, \varrho, \varrho^*$ denote a sequence of scalars taken from $\F$.
This sequence is an Askey-Wilson sequence for $\Phi$
if and only if both
\begin{align}
\th_{i-1}^2 - \beta \th_{i-1} \th_i + \th_i^2 &= \varrho    \qquad\qquad (1 \leq i \leq d),   \label{eq:AWrho}
\\
 \th^{*2}_{i-1} - \beta \th^*_{i-1} \th^*_i + {\th^*_i}^2 &= \varrho^* 
                                       \qquad\qquad  (1 \leq i \leq d).                                    \label{eq:AWrhos}
\end{align}
\end{proposition}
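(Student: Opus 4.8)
The plan is to reduce the statement entirely to Lemma \ref{lem:rec4} and its application to the dual system. Recall from Definition \ref{def:AWseq} that $\beta,\varrho,\varrho^*$ is an Askey-Wilson sequence for $\Phi$ precisely when both of the relations \eqref{eq:AW1} and \eqref{eq:AW2} hold. Hence it suffices to establish two separate equivalences, namely that \eqref{eq:AW1} is equivalent to \eqref{eq:AWrho} and that \eqref{eq:AW2} is equivalent to \eqref{eq:AWrhos}; conjoining these gives the proposition, since the pair of operator relations \eqref{eq:AW1}, \eqref{eq:AW2} corresponds exactly to the pair of scalar recurrences \eqref{eq:AWrho}, \eqref{eq:AWrhos}.

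The first equivalence is immediate. Lemma \ref{lem:rec4}, applied to $\Phi$ with the given $\beta$ and $\varrho$, asserts that condition \eqref{eq:AWrho} holds if and only if $A^2 A^* - \beta A A^* A + A^* A^2 = \varrho A^*$, which is \eqref{eq:AW1}. For the second equivalence I would apply Lemma \ref{lem:rec4} to the dual system $\Phi^*$, which is again a TB tridiagonal system (by the remarks preceding Definition \ref{def:TBTDsystem}) and which, by Lemma \ref{lem:relative}, has eigenvalue array $(\{\th^*_i\}_{i=0}^d;\{\th_i\}_{i=0}^d)$. Under the $*$-duality the roles of $A$ and $A^*$, and of $\th_i$ and $\th^*_i$, are interchanged, and the constant $\varrho$ is renamed $\varrho^*$. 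With these substitutions the hypothesis of Lemma \ref{lem:rec4} becomes $\th^{*2}_{i-1} - \beta \th^*_{i-1} \th^*_i + {\th^*_i}^2 = \varrho^*$ for $1 \leq i \leq d$, i.e. \eqref{eq:AWrhos}, and its conclusion $A^2 A^* - \beta A A^* A + A^* A^2 = \varrho A^*$ becomes ${A^*}^2 A - \beta A^* A A^* + A {A^*}^2 = \varrho^* A$, i.e. \eqref{eq:AW2}.

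There is no genuine obstacle here; the entire mathematical content is supplied by Lemma \ref{lem:rec4}. The only point deserving care is the bookkeeping of the $*$-duality, so that the two scalar recurrences \eqref{eq:AWrho} and \eqref{eq:AWrhos} are matched to the correct operator relations \eqref{eq:AW1} and \eqref{eq:AW2}; a direct inspection of the displayed relations confirms that the substitution sends \eqref{eq:AW1} to \eqref{eq:AW2} and \eqref{eq:AWrho} to \eqref{eq:AWrhos}, after which the equivalence follows at once.
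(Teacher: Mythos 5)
Your proposal is correct and matches the paper's own proof, which consists precisely of applying Lemma \ref{lem:rec4} to $\Phi$ and to $\Phi^*$. The bookkeeping of the $*$-duality you spell out is exactly the implicit content of the paper's one-line argument.
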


\begin{proof}
Apply Lemma \ref{lem:rec4} to $\Phi$ and $\Phi^*$.
\end{proof}

\begin{lemma}    \label{lem:AWseqdual}    \samepage
\ifDRAFT {\rm lem:AWseqdual}. \fi
Let $\beta,\varrho,\varrho^*$ denote an Askey-Wilson sequence for $A,A^*$.
Then $\beta,\varrho^*,\varrho$ is an Askey-Wilson sequence for $A^*,A$.
\end{lemma}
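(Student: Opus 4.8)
The plan is to unwind Definition \ref{def:AWseq} for the pair $A^*,A$ and recognize that its defining relations are just \eqref{eq:AW1} and \eqref{eq:AW2} with the roles of $\varrho$ and $\varrho^*$ interchanged. Since $A^*,A$ is again a TB tridiagonal pair (as noted just after Definition \ref{def:TBTDpair}), Definition \ref{def:AWseq} applies to it: a triple $\gamma,\mu,\mu^*$ is an Askey-Wilson sequence for $A^*,A$ exactly when the two relations obtained from \eqref{eq:AW1}, \eqref{eq:AW2} by the substitution $A\mapsto A^*$, $A^*\mapsto A$ hold, namely
\begin{align*}
 (A^*)^2 A - \gamma A^* A A^* + A (A^*)^2 &= \mu A,
\\
 A^2 A^* - \gamma A A^* A + A^* A^2 &= \mu^* A^*.
\end{align*}

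Next I would read off the identification. The first of these is identical to \eqref{eq:AW2} once we set $\gamma=\beta$ and $\mu=\varrho^*$, and the second is identical to \eqref{eq:AW1} once we set $\gamma=\beta$ and $\mu^*=\varrho$. By hypothesis $\beta,\varrho,\varrho^*$ is an Askey-Wilson sequence for $A,A^*$, so both \eqref{eq:AW1} and \eqref{eq:AW2} hold as identities in $\text{\rm End}(V)$. Hence the choice $\gamma=\beta$, $\mu=\varrho^*$, $\mu^*=\varrho$ satisfies both displayed relations, which is precisely the assertion that $\beta,\varrho^*,\varrho$ is an Askey-Wilson sequence for $A^*,A$.

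I expect no genuine obstacle here: the entire content is the invariance of the pair \eqref{eq:AW1}, \eqref{eq:AW2} under simultaneously swapping $A\leftrightarrow A^*$ and $\varrho\leftrightarrow\varrho^*$, and the only care required is the bookkeeping that pairs each relation for $A^*,A$ with the correct one of \eqref{eq:AW1}, \eqref{eq:AW2} (the one whose right-hand side carries the matching operator). As a cross-check one could instead route the argument through Proposition \ref{prop:recurrence}: by Lemma \ref{lem:relative} the eigenvalue array of $\Phi^*$ is $(\{\th^*_i\}_{i=0}^d;\{\th_i\}_{i=0}^d)$, so feeding this into the characterization \eqref{eq:AWrho}, \eqref{eq:AWrhos} interchanges the equations governing $\varrho$ and $\varrho^*$ and yields the same conclusion. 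I would present the direct operator argument as the main proof, since it needs nothing beyond the definitions.
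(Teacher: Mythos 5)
Your proof is correct and is exactly the paper's argument: the paper's proof reads simply ``By \eqref{eq:AW1} and \eqref{eq:AW2},'' i.e.\ the observation that swapping $A\leftrightarrow A^*$ and $\varrho\leftrightarrow\varrho^*$ interchanges the two relations. Your write-up just makes the bookkeeping explicit.
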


\begin{proof}
By \eqref{eq:AW1} and \eqref{eq:AW2}.
\end{proof}

\begin{lemma}    \label{lem:AWseqhhs}    \samepage
\ifDRAFT {\rm lem:AWseqhhs}. \fi
Let $\beta, \varrho, \varrho^*$ denote an Askey-Wilson sequence
for $A,A^*$.
Then for nonzero $\zeta$, $\zeta^* \in \F$ the 
TB tridiagonal pair $\zeta A, \zeta^* A^*$
has an Askey-Wilson sequence $\beta$, $\zeta^2 \varrho$, $\zeta^{*2} \varrho^*$.
\end{lemma}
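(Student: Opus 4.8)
The plan is to substitute $B=\zeta A$ and $B^{*}=\zeta^{*}A^{*}$ directly into the two Askey-Wilson relations \eqref{eq:AW1}, \eqref{eq:AW2} written for $B,B^{*}$, and reduce them to the corresponding relations for $A,A^{*}$ supplied by the hypothesis. The key observation is that every monomial occurring on the left-hand side of \eqref{eq:AW1} — namely $A^{2}A^{*}$, $AA^{*}A$, $A^{*}A^{2}$ — has the same multidegree, two in $A$ and one in $A^{*}$. Hence replacing $A$ by $\zeta A$ and $A^{*}$ by $\zeta^{*}A^{*}$ multiplies each such monomial by the common scalar $\zeta^{2}\zeta^{*}$, and since scalars are central in $\text{\rm End}(V)$ this factor can be pulled out of the entire left-hand side.

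Carrying this out, the left-hand side of the first Askey-Wilson relation for $B,B^{*}$ is
\[
 B^{2}B^{*}-\beta BB^{*}B+B^{*}B^{2}
 = \zeta^{2}\zeta^{*}\bigl(A^{2}A^{*}-\beta AA^{*}A+A^{*}A^{2}\bigr)
 = \zeta^{2}\zeta^{*}\varrho A^{*},
\]
where the last step uses \eqref{eq:AW1}. Rewriting $\zeta^{2}\zeta^{*}\varrho A^{*}=(\zeta^{2}\varrho)(\zeta^{*}A^{*})=(\zeta^{2}\varrho)B^{*}$ shows that $B,B^{*}$ satisfy \eqref{eq:AW1} with $\beta$ unchanged and $\varrho$ replaced by $\zeta^{2}\varrho$.

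The second relation is handled in exactly the same way, with the roles of $A,A^{*}$ (and of $\zeta,\zeta^{*}$) interchanged: each monomial in \eqref{eq:AW2} now has multidegree one in $A$ and two in $A^{*}$, so it scales by the common factor $\zeta\zeta^{*2}$, and \eqref{eq:AW2} gives left-hand side $\zeta\zeta^{*2}\varrho^{*}A=(\zeta^{*2}\varrho^{*})B$. Alternatively, since \eqref{eq:AW2} for a pair is precisely \eqref{eq:AW1} for that pair with its two generators swapped, this second computation can be obtained from the first by applying it to the pair $A^{*},A$ (whose Askey-Wilson sequence is $\beta,\varrho^{*},\varrho$ by Lemma \ref{lem:AWseqdual}) with scalars $\zeta^{*},\zeta$. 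Either route yields that $\beta,\zeta^{2}\varrho,\zeta^{*2}\varrho^{*}$ is an Askey-Wilson sequence for $\zeta A,\zeta^{*}A^{*}$, as claimed.

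I do not anticipate any real obstacle here; the entire content is the bookkeeping of multidegrees, and the only point worth stating with care is that $\beta$ is unaffected, because the term it multiplies ($AA^{*}A$, respectively $A^{*}AA^{*}$) carries the same multidegree as the other two terms of its relation. This uniform scaling is exactly what forces the asymmetric shape $\zeta^{2}\varrho,\ \zeta^{*2}\varrho^{*}$ of the conclusion.
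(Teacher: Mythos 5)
Your proposal is correct and is essentially the paper's own argument: the paper's proof is the one-line statement ``One verifies that $\beta$, $\zeta^2 \varrho$, $\zeta^{*2} \varrho^*$ satisfy the Askey-Wilson relations for $\zeta A, \zeta^* A^*$,'' and your multidegree bookkeeping is precisely that verification, written out. The alternative route via Lemma \ref{lem:AWseqdual} for the second relation is a nice economy but does not change the substance.
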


\begin{proof}
One verifies that $\beta$, $\zeta^2 \varrho$, $\zeta^{*2} \varrho^*$ satisfy
the Askey-Wilson relations for $\zeta A, \zeta^* A^*$.
\end{proof}

\begin{definition}    \label{def:pairfund}    \samepage
\ifDRAFT {\rm def:pairfund}. \fi
A scalar $\beta \in \F$ is called a {\em fundamental parameter for $A,A^*$}
whenever there exist $\varrho$, $\varrho^* \in \F$
such that $\beta,\varrho,\varrho^*$ is an Askey-Wilson sequence for $A,A^*$.
\end{definition}

\begin{definition}    \label{def:sysfund}    \samepage
\ifDRAFT {\rm def:sysfund}. \fi
By a  {\em fundamental parameter for $\Phi$} we mean
a fundamental parameter for $A,A^*$.
\end{definition}

\begin{lemma}    \label{lem:sysfund}    \samepage
\ifDRAFT {\rm lem:sysfund}. \fi
Let $\beta$ denote a fundamental parameter for $\Phi$.
Then $\beta$ is a fundamental parameter for each of $\Phi$, $\Phi^\downarrow$, $\Phi^\Downarrow$,
$\Phi^*$.
\end{lemma} 

\begin{proof}
By Lemmas \ref{lem:AWseqPhi} and \ref{lem:AWseqdual}.
\end{proof}

\begin{proposition}    \label{prop:beta}    \samepage
\ifDRAFT {\rm prop:beta}. \fi
A scalar $\beta \in \F$ is a fundamental parameter for $\Phi$
if and only if both
\begin{align}
 \th_{i-1} - \beta \th_i + \th_{i+1} &= 0  \qquad\qquad  (1 \leq i \leq d-1),             \label{eq:AWbeta1}
\\
  \th^*_{i-1} - \beta \th^*_i + \th^*_{i+1} &= 0 \qquad\qquad (1 \leq i \leq d-1).   \label{eq:AWbeta2}
\end{align}
\end{proposition}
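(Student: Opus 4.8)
The plan is to reduce this statement to the combination of Proposition \ref{prop:recurrence} and Lemma \ref{lem:rec1}, since those two results already isolate the precise link between the Askey--Wilson relations and the recurrence. By Definitions \ref{def:pairfund} and \ref{def:sysfund}, the scalar $\beta$ is a fundamental parameter for $\Phi$ exactly when there exist $\varrho, \varrho^* \in \F$ making $\beta, \varrho, \varrho^*$ an Askey--Wilson sequence. By Proposition \ref{prop:recurrence} this is equivalent to the existence of $\varrho, \varrho^*$ satisfying the quadratic identities \eqref{eq:AWrho} and \eqref{eq:AWrhos}. So the whole task becomes: show that these quadratic identities (for some $\varrho, \varrho^*$) are equivalent to the linear recurrences \eqref{eq:AWbeta1} and \eqref{eq:AWbeta2}.

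For the direction assuming \eqref{eq:AWbeta1} and \eqref{eq:AWbeta2}, I would apply Lemma \ref{lem:rec1}(i) separately to the sequence $\{\th_i\}_{i=0}^d$ and to $\{\th^*_i\}_{i=0}^d$. Each is $\beta$-recurrent by hypothesis, so part (i) furnishes scalars $\varrho$ and $\varrho^*$ realizing exactly \eqref{eq:AWrho} and \eqref{eq:AWrhos}. Proposition \ref{prop:recurrence} then certifies that $\beta, \varrho, \varrho^*$ is an Askey--Wilson sequence, whence $\beta$ is a fundamental parameter.

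For the converse, I would begin with a fundamental parameter $\beta$, extract via Proposition \ref{prop:recurrence} scalars $\varrho, \varrho^*$ satisfying \eqref{eq:AWrho} and \eqref{eq:AWrhos}, and then invoke Lemma \ref{lem:rec1}(ii) for each sequence. The only hypothesis of part (ii) that must be verified is that $\th_{i-1} \neq \th_{i+1}$ and $\th^*_{i-1} \neq \th^*_{i+1}$ for $1 \leq i \leq d-1$; this is immediate from Note \ref{note:distinct}, which records that the $\{\th_i\}_{i=0}^d$ are mutually distinct and likewise the $\{\th^*_i\}_{i=0}^d$. Lemma \ref{lem:rec1}(ii) then delivers \eqref{eq:AWbeta1} and \eqref{eq:AWbeta2}.

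There is no substantive obstacle here; the argument is essentially bookkeeping across the two cited results. The one point that warrants care is recognizing that the converse direction genuinely relies on the distinctness of the eigenvalues: without it, \eqref{eq:AWbeta1} and \eqref{eq:AWbeta2} would be strictly stronger than the quadratic identities. This asymmetry is exactly what distinguishes parts (i) and (ii) of Lemma \ref{lem:rec1}, and Note \ref{note:distinct} is what makes part (ii) applicable.
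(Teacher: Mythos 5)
Your proposal is correct and follows exactly the paper's own route: the paper's proof is the one-line citation ``By Lemma \ref{lem:rec1} and Proposition \ref{prop:recurrence},'' and your argument simply spells out that reduction, including the correct observation that the converse direction needs Lemma \ref{lem:rec1}(ii) together with the distinctness of the eigenvalues from Note \ref{note:distinct}.
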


\begin{proof}
By Lemma \ref{lem:rec1} and Proposition \ref{prop:recurrence}.
\end{proof}

\begin{lemma}   \label{lem:funddual}   \samepage
\ifDRAFT {\rm lem:funddual}. \fi
Let $\beta$ denote a fundamental parameter for $A,A^*$.
Then $\beta$ is a fundamental parameter for $A^*,A$.
\end{lemma}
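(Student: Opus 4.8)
The plan is to derive this as an immediate consequence of Lemma \ref{lem:AWseqdual} together with the definition of a fundamental parameter in Definition \ref{def:pairfund}. The key observation is that being a fundamental parameter for $A,A^*$ is an existential statement about the companion scalars $\varrho,\varrho^*$, and Lemma \ref{lem:AWseqdual} already tells us precisely how those scalars behave when the roles of $A$ and $A^*$ are interchanged.

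Concretely, I would proceed in three short steps. First, unpack the hypothesis: since $\beta$ is a fundamental parameter for $A,A^*$, Definition \ref{def:pairfund} furnishes scalars $\varrho,\varrho^* \in \F$ such that $\beta,\varrho,\varrho^*$ is an Askey-Wilson sequence for $A,A^*$, that is, \eqref{eq:AW1} and \eqref{eq:AW2} both hold. Second, apply Lemma \ref{lem:AWseqdual} to this Askey-Wilson sequence; the lemma yields that $\beta,\varrho^*,\varrho$ is an Askey-Wilson sequence for the pair $A^*,A$. Third, reapply Definition \ref{def:pairfund}, now to the pair $A^*,A$: the existence of scalars (namely $\varrho^*$ and $\varrho$) making $\beta,\varrho^*,\varrho$ an Askey-Wilson sequence for $A^*,A$ is exactly the assertion that $\beta$ is a fundamental parameter for $A^*,A$.

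I expect no genuine obstacle here, as the argument is a direct chase through the definitions. The only point requiring care is bookkeeping: the companion parameters $\varrho$ and $\varrho^*$ are swapped when passing from $A,A^*$ to $A^*,A$, which is precisely the content of Lemma \ref{lem:AWseqdual} and reflects the symmetry of the system \eqref{eq:AW1}--\eqref{eq:AW2} under the substitution $A \leftrightarrow A^*$, $\varrho \leftrightarrow \varrho^*$. Since the notion of fundamental parameter only records $\beta$ and quantifies existentially over $\varrho,\varrho^*$, this swap is invisible at the level of $\beta$, and the conclusion follows without further computation.
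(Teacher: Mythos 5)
Your proposal is correct and follows exactly the route the paper takes: the paper's proof simply cites Lemma \ref{lem:AWseqdual} (or, alternatively, Lemma \ref{lem:sysfund}), and your three-step unpacking of Definition \ref{def:pairfund} around that lemma is just the same argument written out in full.
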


\begin{proof}
By Lemma \ref{lem:AWseqdual} or Lemma \ref{lem:sysfund}.
\end{proof}

\begin{lemma}    \label{lem:fundaffine}    \samepage
\ifDRAFT {\rm lem:fundaffine}. \fi
Let $\beta$ denote a fundamental parameter for $A,A^*$.
Then $\beta$ is a fundamental parameter for $\zeta A, \zeta^* A^*$
for all nonzero $\zeta, \zeta^* \in \F$.
\end{lemma}

\begin{proof}
By Lemma \ref{lem:AWseqhhs}.
\end{proof}

The uniqueness of the Askey-Wilson sequence and fundamental parameter
will be discussed in Lemmas \ref{lem:fundunique}, \ref{lem:rhorhosunique}.

\section{More on recurrent sequences}
\label{sec:sym}

Recall the recurrent sequences from Definition \ref{def:recurrent}.
In this section we obtain a detailed description of these sequences.
In our description two special cases come up,
said to be symmetric and antisymmetric.
Throughout this section fix an integer $d \geq 1$.

\begin{definition}     \label{def:symasym}    \samepage
\ifDRAFT {\rm def:symasym}. \fi
Let $\{\sigma_i\}_{i=0}^d$ denote a sequence of scalars taken from $\F$.
This sequence is said to be {\em over $\F$} and have {\em diameter $d$}.
The sequence $\{\sigma_i\}_{i=0}^d$ is said to be {\em symmetric} whenever $\sigma_i = \sigma_{d-i}$
for $0 \leq i \leq d$.
The sequence $\{\sigma_i\}_{i=0}^d$ is said to be {\em antisymmetric} whenever $\sigma_i + \sigma_{d-i} =0$
for $0 \leq i \leq d$.
\end{definition}

For the rest of this section fix $\beta \in \F$.

\begin{definition}    \label{def:R}    \samepage
\ifDRAFT {\rm def:R}. \fi
Let $\R$ denote the set of $\beta$-recurrent sequences over $\F$ that have
diameter $d$.
Let $\R^\text{\rm sym}$ (resp. $\R^\text{\rm asym}$) denote the set of symmetric
(resp.\ antisymmetric) elements in $\R$.
Note that $\R$ is a subspace of the $\F$-vector space $\F^{d+1}$.
Moreover $\R^\text{\rm sym}$ and $\R^\text{\rm asym}$ are subspaces of $\R$.
\end{definition}

\begin{lemma}   \label{lem:directsum}   \samepage
\ifDRAFT {\rm lem:directsum}. \fi
If $\text{\rm Char}(\F) = 2$ then $\R^\text{\rm sym} = \R^\text{\rm asym}$.
If $\text{\rm Char}(\F) \neq 2$ then
\[
\R = \R^\text{\rm sym} + \R^\text{\rm asym}  \qquad\qquad  \text{\rm (direct sum)}.
\]
\end{lemma}

\begin{proof}
The first assertion is clear.
Concerning the second assertion, assume that $\text{\rm Char}(\F) \neq 2$.
For a sequence $\{\sigma_i\}_{i=0}^d$ in $\R$ define 
\begin{align*}
  \sigma^{+}_i &=  \frac{\sigma_i + \sigma_{d-i}}{2},
\qquad\qquad    \sigma^{-}_i =  \frac{\sigma_i - \sigma_{d-i}}{2} && (0 \leq i \leq d).
\end{align*}
We have $\sigma_i = \sigma^+_i + \sigma^-_i$ for $0 \leq i \leq d$.
The sequence $\{\sigma^+_i\}_{i=0}^d$ is $\beta$-recurrent and symmetric.
The sequence $\{\sigma^-_i\}_{i=0}^d$ is $\beta$-recurrent and antisymmetric.
By these comments $\R= \R^\text{\rm sym} + \R^\text{\rm asym}$.
We show that this sum is direct.
Pick an element $\{\sigma_i\}_{i=0}^d$ of $\R^\text{\rm sym} \cap \R^\text{\rm asym}$.
For $0 \leq i \leq d$ we have both $\sigma_{d-i} = \sigma_i$ and  $\sigma_{d-i} = - \sigma_i$,
and so $\sigma_i = 0$.
Therefore $\R^\text{\rm sym} \cap \R^\text{\rm asym} = 0$ and
consequently the sum $\R^\text{\rm sym} + \R^\text{\rm asym}$ is direct.
\end{proof}

Our next goal is to display a basis for  $\R^\text{\rm sym}$ and  $\R^\text{\rm asym}$,
under the assumption that $\text{\rm Char}(\F) \neq 2$.
Our strategy is to first display some nonzero elements in 
 $\R^\text{\rm sym}$ and  $\R^\text{\rm asym}$,
and a bit later show that these elements form a basis.
The cases $\beta=2$, $\beta =-2$ will be handled separately.

\begin{lemma}    \label{lem:symbasis1}    \samepage
\ifDRAFT {\rm lem:symbasis1}. \fi
Assume that $\text{\rm Char}(\F) \neq 2$ and $\beta = \pm 2$.
For $0 \leq i \leq d$ define $\sigma_i$ as follows:
\begin{equation}                               \label{eq:symbasis1}
\begin{array}{c|c}
 \text{\rm Case} & \sigma_i
\\ \hline
\beta = 2 &  1     \rule{0mm}{3ex} 
\\
\beta = -2, \;\; \text{\rm $d$ even}  & (-1)^i   \rule{0mm}{3ex}
\\
\beta=-2, \;\; \text{\rm $d$ odd} & (d-2i)(-1)^i      \rule{0mm}{3ex}
\end{array}
\end{equation}
Then $\{\sigma_i\}_{i=0}^d$ is nonzero and contained in $\R^\text{\rm sym}$.
\end{lemma}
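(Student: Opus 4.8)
The plan is to check, in each of the three cases of \eqref{eq:symbasis1}, the three properties that together give membership in $\R^\text{\rm sym}$ together with nonvanishing: namely that $\{\sigma_i\}_{i=0}^d$ is $\beta$-recurrent (so it lies in $\R$), that it is symmetric, and that it is not the zero sequence. Once these are in hand, Definition \ref{def:R} places the sequence in $\R^\text{\rm sym}$. All three properties reduce to direct substitutions, so the proof is essentially bookkeeping; the only place I would slow down is the nonvanishing in one case.

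For the $\beta$-recurrence I would substitute each formula into the left-hand side of \eqref{eq:rec} and verify it vanishes for $1 \leq i \leq d-1$. When $\beta = 2$ this is $1 - 2 + 1 = 0$; when $\beta = -2$ with $d$ even it is $(-1)^{i-1} + 2(-1)^i + (-1)^{i+1} = (-1)^i(-1 + 2 - 1) = 0$; and when $\beta = -2$ with $d$ odd one expands $(d-2i+2)(-1)^{i-1} + 2(d-2i)(-1)^i + (d-2i-2)(-1)^{i+1}$ and collects the coefficient of $(-1)^i$, which is $-(d-2i+2) + 2(d-2i) - (d-2i-2) = 0$. (Conceptually these are exactly the symmetric solutions of the recurrence, whose characteristic polynomial $x^2 - \beta x + 1$ has the double root $x = \pm 1$; but the direct check is shortest.) Symmetry $\sigma_i = \sigma_{d-i}$ is immediate when $\beta = 2$, follows from $(-1)^{d-i} = (-1)^i$ when $d$ is even, and in the last case uses $(-1)^{d-i} = -(-1)^i$ for $d$ odd together with $(2i-d) = -(d-2i)$.

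The one point that genuinely invokes the hypothesis $\text{\rm Char}(\F) \neq 2$ is the nonzero assertion in the case $\beta = -2$ with $d$ odd, and this is the only subtlety I expect. In the first two cases $\sigma_0 = 1 \neq 0$, so nonvanishing is automatic. In the remaining case $\sigma_0 = d$ and $\sigma_1 = 2 - d$ (here $d \geq 1$ since $d$ is odd, so $\sigma_1$ is present); if the whole sequence were zero, then both would vanish in $\F$, forcing the images of $d$ and $2 - d$ to be $0$, hence $2 = 0$ in $\F$, contradicting $\text{\rm Char}(\F) \neq 2$. I expect this small characteristic argument, rather than any of the algebraic identities, to be the only place requiring care.
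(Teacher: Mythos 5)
Your proposal is correct and follows exactly the route of the paper's proof, which simply states that one routinely checks the sequence is nonzero, $\beta$-recurrent, and symmetric; you have filled in those routine verifications accurately, including the one genuinely delicate point, namely using $\text{\rm Char}(\F) \neq 2$ to see that $\sigma_0 = d$ and $\sigma_1 = 2-d$ cannot both vanish in the case $\beta = -2$, $d$ odd.
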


\begin{proof}
One routinely checks that $\{\sigma_i\}_{i=0}^d$ is nonzero, $\beta$-recurrent, and symmetric.
The result follows.
\end{proof}

\begin{lemma}    \label{lem:asymbasis1}    \samepage
\ifDRAFT {\rm lem:asymbasis1}. \fi
Assume that  $\text{\rm Char}(\F) \neq 2$ and $\beta = \pm 2$.
For $0 \leq i \leq d$ define $\sigma_i$ as follows:
\begin{equation}                               \label{eq:asymbasis1}
\begin{array}{c|c}
 \text{\rm Case} & \sigma_i
\\ \hline
\beta = 2    &   d - 2i     \rule{0mm}{3ex} 
\\
\beta = -2, \;\; \text{\rm $d$ even}  &   (d-2i)(-1)^i   \rule{0mm}{3ex}
\\
\beta=-2, \;\; \text{\rm  $d$ odd} & (-1)^i     \rule{0mm}{3ex}
\end{array}
\end{equation}
Then $\{\sigma_i\}_{i=0}^d$ is nonzero and contained in $\R^\text{\rm asym}$.
\end{lemma}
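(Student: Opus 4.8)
The plan is to verify directly, in each of the three cases of \eqref{eq:asymbasis1}, that the displayed sequence $\{\sigma_i\}_{i=0}^d$ satisfies the three defining properties: it is nonzero, it is $\beta$-recurrent in the sense of \eqref{eq:rec}, and it is antisymmetric in the sense of Definition \ref{def:symasym}. This mirrors the proof of Lemma \ref{lem:symbasis1}, and all three checks reduce to elementary manipulations with the explicit formulas and with the signs $(-1)^i$, so the argument is essentially a routine computation carried out case by case.

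For the recurrence, I would substitute each formula into $\sigma_{i-1} - \beta \sigma_i + \sigma_{i+1}$ with the appropriate value $\beta = \pm 2$. In the case $\beta = 2$ the sequence $\sigma_i = d - 2i$ is an arithmetic progression, so the second difference $\sigma_{i-1} - 2\sigma_i + \sigma_{i+1}$ vanishes identically. In the two cases with $\beta = -2$, I would factor $(-1)^i$ out of $\sigma_{i-1} + 2\sigma_i + \sigma_{i+1}$; using $(-1)^{i\pm 1} = -(-1)^i$, the bracketed expression becomes $-(d-2i+2) + 2(d-2i) - (d-2i-2)$ (for $d$ even) or $-1 + 2 - 1$ (for $d$ odd), each of which collapses to $0$.

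For antisymmetry I would compute $\sigma_i + \sigma_{d-i}$ in each case, using $d - 2(d-i) = -(d-2i)$ together with the parity identity $(-1)^{d-i} = (-1)^i$ when $d$ is even and $(-1)^{d-i} = -(-1)^i$ when $d$ is odd. In every case this yields $\sigma_{d-i} = -\sigma_i$, as required.

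The one point requiring slight care, and the closest thing to an obstacle, is the nonzero claim, since the characteristic of $\F$ is unrestricted apart from being different from $2$, so the integer $d$ could in principle vanish in $\F$. For $\beta = -2$ with $d$ odd the sequence begins $\sigma_0 = 1 \neq 0$, so nonzeroness is immediate. For the remaining two cases I would argue that the $\sigma_i$ cannot all vanish: if they did, then $d - 2i = 0$ in $\F$ for all $i$ (the factor $(-1)^i$ being invertible), and subtracting this relation for two consecutive indices would give $2 = 0$, contradicting $\text{\rm Char}(\F) \neq 2$. This settles the lemma.
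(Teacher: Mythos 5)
Your proposal is correct and follows the same route as the paper, whose proof simply states that one routinely checks the sequence is nonzero, $\beta$-recurrent, and antisymmetric; you carry out exactly those three checks. Your extra care with the nonvanishing claim in positive characteristic (using two consecutive entries to derive $2=0$) is a sound way to justify a point the paper leaves implicit.
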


\begin{proof}
One routinely checks that $\{\sigma_i\}_{i=0}^d$ is nonzero, $\beta$-recurrent, and antisymmetric.
The result follows.
\end{proof}

\begin{lemma}    \label{lem:symbasis2}    \samepage
\ifDRAFT {\rm lem:symbasis2}. \fi
Assume that $\text{\rm Char}(\F) \neq 2$ and $\beta \neq \pm 2$.
Let $q \in \overline{\F}$ be such that $\beta = q^2+q^{-2}$.
For $0 \leq i \leq d$ define $\sigma_i$ by
\[
\sigma_i =
 \begin{cases}
    q^{d-2i} + q^{2i-d} & \quad \text{if $d$ is even},
  \\
   \displaystyle \frac{q^{d-2i} + q^{2i-d}}{q+q^{-1}}  & \quad \text{if $d$ is odd}. \rule{0mm}{5ex}
  \end{cases}
\]
Then $\{\sigma_i\}_{i=0}^d$ is nonzero and contained in  $\R^\text{\rm sym}$.
\end{lemma}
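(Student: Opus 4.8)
The plan is to reduce everything to the auxiliary sequence $\tau_i = q^{d-2i}+q^{2i-d}$ $(0 \le i \le d)$, a priori defined only over $\overline{\F}$, of which $\{\sigma_i\}_{i=0}^d$ is a rescaling. First I would verify the recurrence for $\tau$ directly: since $q^{d-2(i\pm 1)} = q^{\mp 2}q^{d-2i}$, one computes $\tau_{i-1}+\tau_{i+1} = (q^2+q^{-2})(q^{d-2i}+q^{2i-d}) = \beta\tau_i$, so $\tau_{i-1}-\beta\tau_i+\tau_{i+1}=0$ for $1 \le i \le d-1$; this is nothing more than the factorization $\beta = q^2+q^{-2}$ at work, and it is insensitive to the parity of $d$. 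Symmetry $\tau_{d-i}=\tau_i$ follows by substituting $d-i$ for $i$, which simply swaps the two exponents.

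Next, since $\beta \neq -2$ we have $q^2 \neq -1$, hence $q+q^{-1}\neq 0$, so the odd-$d$ formula is well defined and $\sigma_i = \tau_i$ when $d$ is even while $\sigma_i = \tau_i/(q+q^{-1})$ when $d$ is odd. Multiplying a $\beta$-recurrent symmetric sequence by a fixed nonzero scalar preserves both properties, so $\{\sigma_i\}_{i=0}^d$ is $\beta$-recurrent and symmetric in either case.

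The one genuinely delicate point, and the step I expect to be the main obstacle, is that membership in $\R$ requires each $\sigma_i$ to lie in $\F$, not merely in $\overline{\F}$; this is precisely why the odd case carries the factor $q+q^{-1}$. I would handle it by showing that the two families $u_k=q^{2k}+q^{-2k}$ and $w_k=(q^{2k+1}+q^{-2k-1})/(q+q^{-1})$ lie in $\F$ for all $k \ge 0$. Both satisfy the three-term recurrence $x_{k+1}=\beta x_k - x_{k-1}$, which comes from the identity $(q^2+q^{-2})(q^m+q^{-m}) = (q^{m+2}+q^{-m-2})+(q^{m-2}+q^{-m+2})$ with $m=2k$ or $m=2k+1$; their base cases $u_0=2$, $u_1=\beta$, $w_0=1$, $w_1=\beta-1$ all lie in $\F$, so all $u_k,w_k\in\F$ by induction. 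Since $d-2i$ is even when $d$ is even and odd when $d$ is odd, each $\sigma_i$ equals some $u_k$ or $w_k$ (using $u_{-k}=u_k$ and $w_{-k-1}=w_k$ to absorb negative exponents), whence $\sigma_i\in\F$. A slicker packaging is to check only $\sigma_0,\sigma_1\in\F$ this way and then let the relation $\sigma_{i+1}=\beta\sigma_i-\sigma_{i-1}$, valid because $\{\sigma_i\}$ is $\beta$-recurrent, propagate $\F$-membership to all $i$.

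Finally, for nonzero: as $\{\sigma_i\}$ is a nonzero scalar multiple of $\{\tau_i\}$, it suffices to see that $\{\tau_i\}\not\equiv 0$. If instead $\tau_0=\tau_1=0$, then $q^{2d}=-1$ and $q^{2d-4}=-1$, forcing $q^4=1$, i.e. $\beta = q^2+q^{-2}\in\{2,-2\}$, contrary to hypothesis. Since $d \ge 1$ the indices $0$ and $1$ are both available, so $\tau_0$ and $\tau_1$ are not both zero and $\{\tau_i\}\neq 0$, completing the verification that $\{\sigma_i\}_{i=0}^d$ is a nonzero element of $\R^{\text{\rm sym}}$.
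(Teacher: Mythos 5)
Your proof is correct and follows essentially the same route as the paper: verify the $\beta$-recurrence and symmetry directly from the factorization $\beta=q^2+q^{-2}$, then propagate $\F$-membership along the recurrence $\sigma_{i+1}=\beta\sigma_i-\sigma_{i-1}$ from explicitly $\F$-valued anchor terms. The only cosmetic difference is that the paper anchors at the two middle terms of the sequence (which equal $2,\beta$ for $d$ even and $1,1$ for $d$ odd), while you anchor your auxiliary families $u_k,w_k$ at $k=0,1$; these amount to the same induction.
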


\begin{proof}
Clearly $\{\sigma_i\}_{i=0}^d$ is nonzero.
We verify that $\{\sigma_i\}_{i=0}^d$ is contained in $\R^\text{\rm sym}$.
One routinely checks that $\{\sigma_i\}_{i=0}^d$ is $\beta$-recurrent and symmetric.
We show that $\sigma_i \in \F$ for $0 \leq i \leq d$.
First assume that $d$ is even.
Observe that $\sigma_{d/2} = 2$ and $\sigma_{d/2-1} = q^2 + q^{-2} = \beta$.
Therefore each of $\sigma_{d/2}$, $\sigma_{d/2-1}$ is contained in $\F$.
By this and since $\{\sigma_i\}_{i=0}^d$ is $\beta$-recurrent, we obtain $\sigma_i \in \F$
for $0 \leq i \leq d$.
Next assume that $d$ is odd.
Observe that $\sigma_{(d-1)/2} = 1$ and 
$\sigma_{(d+1)/2} = 1$.
Therefore each of $\sigma_{(d-1)/2}$, $\sigma_{(d+1)/2}$ is contained in $\F$.
By this and since  $\{\sigma_i\}_{i=0}^d$ is $\beta$-recurrent, we obtain $\sigma_i \in \F$
for $0 \leq i \leq d$.
Thus $\{\sigma_i\}_{i=0}^d$ is contained in $\R^\text{\rm sym}$.
The result follows.
\end{proof}

\begin{lemma}    \label{lem:asymbasis2}    \samepage
\ifDRAFT {\rm lem:asymbasis2}. \fi
Assume that $\text{\rm Char}(\F) \neq 2$ and $\beta \neq \pm 2$.
Let $q \in \overline{\F}$ be such that $\beta = q^2+q^{-2}$.
For $0 \leq i \leq d$ define $\sigma_i$ by
\[
\sigma_i =
 \begin{cases}
   \displaystyle \frac{q^{d-2i} - q^{2i-d}}{q^2-q^{-2}}    & \quad \text{if $d$ is even},
  \\
   \displaystyle \frac{q^{d-2i} - q^{2i-d}}{q-q^{-1}}  & \quad \text{if $d$ is odd}. \rule{0mm}{5ex}
  \end{cases}
\]
Then $\{\sigma_i\}_{i=0}^d$ is nonzero and contained in $\R^\text{\rm asym}$.
\end{lemma}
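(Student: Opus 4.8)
The plan is to follow the proof of Lemma~\ref{lem:symbasis2} almost verbatim, checking four things about the sequence $\{\sigma_i\}_{i=0}^d$: that it is $\beta$-recurrent, that it is antisymmetric, that each $\sigma_i$ lies in $\F$ (and not merely in $\overline{\F}$), and that the sequence is nonzero. The membership and nonvanishing points are the only ones requiring genuine care, since a priori $q$ lies only in $\overline{\F}$, so the entries of the sequence are scalars in $\overline{\F}$.

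First I would dispose of the two routine verifications. The recurrence $\sigma_{i-1}-\beta\sigma_i+\sigma_{i+1}=0$ has characteristic polynomial $\lambda^2-\beta\lambda+1=(\lambda-q^2)(\lambda-q^{-2})$, so its two fundamental solutions are $q^{2i}$ and $q^{-2i}$. Writing the numerator as $q^{d-2i}-q^{2i-d}=q^d\,q^{-2i}-q^{-d}\,q^{2i}$ exhibits $\sigma_i$ as a nonzero scalar multiple of a linear combination of these two solutions, the denominator $q^2-q^{-2}$ (resp.\ $q-q^{-1}$) being a nonzero scalar because $\beta\neq\pm 2$ forces $q^4\neq 1$. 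Hence $\{\sigma_i\}_{i=0}^d$ is $\beta$-recurrent. For antisymmetry, replacing $i$ by $d-i$ in the numerator gives $q^{2i-d}-q^{d-2i}$, which is the negative of the original numerator, so $\sigma_{d-i}=-\sigma_i$ and the sequence is antisymmetric.

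The substantive step is to show $\sigma_i\in\F$ for $0\leq i\leq d$, which I would handle exactly as in Lemma~\ref{lem:symbasis2}: exhibit two consecutive entries that lie in $\F$, and then invoke $\beta$-recurrence with $\beta\in\F$ to propagate membership to every index. When $d$ is even, direct substitution gives $\sigma_{d/2}=0$ and $\sigma_{d/2-1}=1$; when $d$ is odd it gives $\sigma_{(d-1)/2}=1$ and $\sigma_{(d+1)/2}=-1$. In each case the two displayed values are consecutive and lie in $\F$, so $\sigma_i\in\F$ for all $i$. These same computations also produce an entry equal to $1$, so the sequence is nonzero, and we conclude that $\{\sigma_i\}_{i=0}^d$ is contained in $\R^\text{\rm asym}$. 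The only point to watch is that the denominators are genuinely nonzero, which is guaranteed by the hypothesis $\beta\neq\pm 2$; beyond that the argument is entirely computational, with no real obstacle.
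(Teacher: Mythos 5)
Your proposal is correct and follows essentially the same route as the paper's proof: verify $\beta$-recurrence and antisymmetry, then establish $\sigma_i\in\F$ by exhibiting the same two consecutive entries ($\sigma_{d/2}=0$, $\sigma_{d/2-1}=1$ for $d$ even; $\sigma_{(d-1)/2}=1$, $\sigma_{(d+1)/2}=-1$ for $d$ odd) and propagating via the recurrence. Your explicit remarks on the characteristic polynomial and on why $q^4\neq 1$ are slightly more detailed than the paper's ``one routinely checks,'' but the argument is the same.
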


\begin{proof}
Clearly $\{\sigma_i\}_{i=0}^d$ is nonzero.
We verify that $\{\sigma_i\}_{i=0}^d$ is contained in $\R^\text{\rm asym}$.
One routinely checks that $\{\sigma_i\}_{i=0}^d$ is $\beta$-recurrent and antisymmetric.
We show that $\sigma_i \in \F$ for $0 \leq i \leq d$.
First assume that $d$ is even.
Observe that $\sigma_{d/2} = 0$ and $\sigma_{d/2-1} = 1$.
Therefore each of $\sigma_{d/2}$, $\sigma_{d/2-1}$ is contained in $\F$.
By this and since $\{\sigma_i\}_{i=0}^d$ is $\beta$-recurrent, we obtain $\sigma_i \in \F$
for $0 \leq i \leq d$.
Next assume that $d$ is odd.
Observe that $\sigma_{(d-1)/2} = 1$ and 
$\sigma_{(d+1)/2} = -1$.
Therefore each of $\sigma_{(d-1)/2}$, $\sigma_{(d+1)/2}$ is contained in $\F$.
By this and since  $\{\sigma_i\}_{i=0}^d$ is $\beta$-recurrent, we obtain $\sigma_i \in \F$
for $0 \leq i \leq d$.
Thus $\{\sigma_i\}_{i=0}^d$ is contained in $\R^\text{\rm asym}$.
The result follows.
\end{proof}

\begin{note}    \label{note:q}   \samepage
\ifDRAFT {\rm note:q}. \fi
The parameter $q$ in Lemmas \ref{lem:symbasis2}, \ref{lem:asymbasis2} is not uniquely determined
by $\{\sigma_i\}_{i=0}^d$.
To clarify the situation,
let $y$ denote an indeterminate,
and consider the equation 
\begin{equation}    \label{eq:betaq}
\beta = y^2 + y^{-2}.
\end{equation}
Let $0 \neq q \in \F$ denote a solution of \eqref{eq:betaq}.
Then the solutions of \eqref{eq:betaq} are
$q$, $-q$, $q^{-1}$, $-q^{-1}$.
In Lemmas \ref{lem:symbasis2} and \ref{lem:asymbasis2},
replacing $q$ by $-q$, $q^{-1}$, $-q^{-1}$ does not
change $\sigma_i$.
Thus the given sequence $\{\sigma_i\}_{i=0}^d$ depends only on $d$ and $\beta$,
and not on the choice of $q$.
\end{note}

In Lemmas \ref{lem:symbasis2} and \ref{lem:asymbasis2},
each scalar $\sigma_i$ can be represented as a polynomial in $\beta$.
The polynomial has Chebychev type, as we now explain.
Assume that $\text{\rm Char}(\F) \neq 2$.
For scalars $a$, $b$, $c$ in $\F$ define polynomials $T_0(x)$, $T_1(x)$, $T_2(x)$, \ldots
by
\begin{align*}
  T_0(x) &= a,
\\
  T_1(x) &= b x + c,
\\
  T_i (x) &= 2 x T_{i-1}(x) - T_{i-2}(x)  \qquad\qquad\qquad i=2,3,\ldots.
\end{align*}
Then $T_i(x)$ is the $i$th Chebychev polynomial with parameters $a$, $b$, $c$
\cite[Remark 2.5.3]{AAR}.
Now consider 
the sequence $\{\sigma_i\}_{i=0}^d$  from Lemma \ref{lem:symbasis2}.
\begin{itemize}
\item[\rm (i)]
Assume that $d$ is even. 
Then for $n=d/2$ and $a=2$, $b=2$, $c=0$,
\begin{align*}
    \sigma_i &= T_{n-i}(\beta/2)  \qquad\qquad (0 \leq i \leq n).
\end{align*}
\item[\rm (ii)]
Assume that $d$ is odd.
Then for $n=(d-1)/2$ and $a=1$, $b=2$, $c=-1$,
\begin{align*}
  \sigma_i = T_{n-i} (\beta/2)  \qquad\qquad  (0 \leq i \leq n).
\end{align*}
\end{itemize}
Next consider the sequence
$\{\sigma_i\}_{i=0}^d$  from Lemma \ref{lem:asymbasis2}.
\begin{itemize}
\item[\rm (i)]
Assume that $d$ is even.
Then for $n=d/2$ and $a=1$, $b=2$, $c=0$,
\begin{align*}
    \sigma_i &= T_{n-1-i}(\beta/2)  \qquad (0 \leq i \leq n-1),  
  \qquad\qquad  \sigma_n = 0.
\end{align*}
\item[\rm (ii)]
Assume that $d$ is odd.
Then for $n=(d-1)/2$ and $a=1$, $b=2$, $c=1$,
\begin{align*}
  \sigma_i = T_{n-i} (\beta/2)  \qquad\qquad  (0 \leq i \leq n).
\end{align*}
\end{itemize}

\begin{lemma}    \label{lem:dimR}    \samepage
\ifDRAFT {\rm lem:dimR}. \fi
Assume that  $\text{\rm Char}(\F) \neq 2$.
Then 
\begin{align*}
\dim \R &= 2, \qquad\qquad
\dim \R^\text{\rm sym} = 1, \qquad\qquad
\dim \R^\text{\rm asym} =1.
\end{align*}
\end{lemma}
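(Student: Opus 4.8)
The plan is to first pin down $\dim \R$ by a direct counting argument, then split $\R$ using the decomposition already established, and finally invoke the explicit sequences constructed earlier to show each summand is one-dimensional.

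First I would compute $\dim \R = 2$. By Definition \ref{def:recurrent}, a sequence $\{\sigma_i\}_{i=0}^d$ lies in $\R$ precisely when $\sigma_{i+1} = \beta \sigma_i - \sigma_{i-1}$ for $1 \leq i \leq d-1$, so when $d \geq 2$ the pair $(\sigma_0,\sigma_1)$ determines all remaining entries uniquely via the recurrence. The map $\R \to \F^2$ sending $\{\sigma_i\}_{i=0}^d \mapsto (\sigma_0,\sigma_1)$ is $\F$-linear; it is injective by the uniqueness just noted, and surjective since any choice of $(\sigma_0,\sigma_1)$ can be extended to a $\beta$-recurrent sequence by running the recurrence forward. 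Hence it is an isomorphism and $\dim \R = 2$. The degenerate case $d=1$ is immediate, since then the constraint in \eqref{eq:rec} is vacuous and $\R = \F^2$.

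Next, since $\text{\rm Char}(\F) \neq 2$, Lemma \ref{lem:directsum} gives the direct sum $\R = \R^\text{\rm sym} + \R^\text{\rm asym}$, so that $\dim \R^\text{\rm sym} + \dim \R^\text{\rm asym} = \dim \R = 2$. It therefore remains only to exhibit a nonzero element in each summand. Here the argument splits according to the value of $\beta$: if $\beta = \pm 2$ then Lemma \ref{lem:symbasis1} supplies a nonzero element of $\R^\text{\rm sym}$ and Lemma \ref{lem:asymbasis1} a nonzero element of $\R^\text{\rm asym}$, whereas if $\beta \neq \pm 2$ the corresponding nonzero elements are furnished by Lemmas \ref{lem:symbasis2} and \ref{lem:asymbasis2}. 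In every case $\dim \R^\text{\rm sym} \geq 1$ and $\dim \R^\text{\rm asym} \geq 1$; combined with the displayed sum being $2$, this forces $\dim \R^\text{\rm sym} = \dim \R^\text{\rm asym} = 1$.

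I do not anticipate a serious obstacle, as each step follows cleanly from results already in hand. The only points demanding attention are bookkeeping rather than difficulty: one must note that the case split $\beta = \pm 2$ versus $\beta \neq \pm 2$ is exactly what makes the four construction lemmas jointly cover all $\beta$, and one must keep track of the fact that the hypothesis $\text{\rm Char}(\F) \neq 2$ is used twice over, both to invoke the direct-sum decomposition of Lemma \ref{lem:directsum} and to guarantee that the earlier lemmas produce sequences that are genuinely nonzero.
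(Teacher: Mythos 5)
Your proof is correct and follows essentially the same route as the paper: the recurrence pins down a sequence by $(\sigma_0,\sigma_1)$, the direct sum from Lemma \ref{lem:directsum} splits $\R$, and the four construction lemmas supply nonzero elements of each summand. The only cosmetic difference is that you establish $\dim\R=2$ directly via surjectivity onto $\F^2$, whereas the paper only notes $\dim\R\leq 2$ and lets the two nonzero summands of the direct sum force equality.
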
 

\begin{proof}
Pick a sequence $\{\sigma_i\}_{i=0}^d$ in $\R$.
By the $\beta$-recurrence,
the scalars $\{\sigma_i\}_{i=0}^d$ are uniquely determined by 
$\sigma_0$, $\sigma_1$.
Thus $\dim \R \leq 2$.
The subspace $\R^\text{\rm sym}$ is nonzero by Lemmas \ref{lem:symbasis1} and \ref{lem:symbasis2}.
The subspace  $\R^\text{\rm asym}$ is nonzero
by Lemmas \ref{lem:asymbasis1}  and \ref{lem:asymbasis2}.
By these comments and Lemma \ref{lem:directsum} we obtain the results.
\end{proof}

\begin{corollary}     \label{cor:basis}    \samepage
\ifDRAFT {\rm cor:basis}. \fi
The following hold.
\begin{itemize}
\item[\rm (i)]
Assume that $\beta = \pm 2$.
Then the sequence $\{\sigma_i\}_{i=0}^d$ from Lemma \ref{lem:symbasis1} 
(resp.\ Lemma \ref{lem:asymbasis1})
is a basis for $\R^\text{\rm sym}$ (resp.\ $\R^\text{\rm asym}$).
\item[\rm (ii)]
Assume that $\beta \neq \pm 2$.
Then the sequence $\{\sigma_i\}_{i=0}^d$ from Lemma \ref{lem:symbasis2} 
(resp.\ Lemma \ref{lem:asymbasis2})
is a basis for $\R^\text{\rm sym}$ (resp.\ $\R^\text{\rm asym}$).
\end{itemize}
\end{corollary}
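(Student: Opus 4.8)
The plan is to reduce everything to the elementary fact that a single nonzero vector in a one-dimensional vector space is a basis. Almost all of the work has in fact already been carried out: Lemma \ref{lem:dimR} computes the relevant dimensions, and the four preceding lemmas (Lemmas \ref{lem:symbasis1}, \ref{lem:asymbasis1}, \ref{lem:symbasis2}, \ref{lem:asymbasis2}) each exhibit an explicit nonzero element of the relevant space. Note that the standing hypothesis $\text{\rm Char}(\F) \neq 2$ is in force throughout, since the sequences named in the corollary are only defined under this assumption.

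First I would invoke Lemma \ref{lem:dimR} to record that each of $\R^{\text{\rm sym}}$ and $\R^{\text{\rm asym}}$ has dimension $1$. For part (i), assume $\beta = \pm 2$. Lemma \ref{lem:symbasis1} supplies a nonzero sequence $\{\sigma_i\}_{i=0}^d$ lying in $\R^{\text{\rm sym}}$; since $\R^{\text{\rm sym}}$ is one-dimensional, this single nonzero element spans it and is therefore a basis. The identical argument applied to Lemma \ref{lem:asymbasis1} and the one-dimensional space $\R^{\text{\rm asym}}$ handles the antisymmetric case. For part (ii), assume $\beta \neq \pm 2$ and repeat the reasoning verbatim, now citing Lemma \ref{lem:symbasis2} for the nonzero element of $\R^{\text{\rm sym}}$ and Lemma \ref{lem:asymbasis2} for the nonzero element of $\R^{\text{\rm asym}}$.

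There is no genuine obstacle: the corollary is purely a bookkeeping consequence of the dimension count in Lemma \ref{lem:dimR} combined with the explicit nonzero witnesses constructed earlier. The only substantive point, implicit in the argument, is that $\text{\rm Char}(\F) \neq 2$ is essential; without it Lemma \ref{lem:dimR} is unavailable and, as Lemma \ref{lem:directsum} shows, $\R^{\text{\rm sym}}$ and $\R^{\text{\rm asym}}$ coincide, so the clean one-dimensional splitting that drives the proof breaks down.
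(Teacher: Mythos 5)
Your proposal is correct and is essentially the paper's own argument: the paper proves this corollary by simply citing Lemma \ref{lem:dimR}, with the nonzero witnesses from Lemmas \ref{lem:symbasis1}--\ref{lem:asymbasis2} doing the rest implicitly, exactly as you spell out. Your remark that $\text{\rm Char}(\F) \neq 2$ is a standing hypothesis here is also accurate and matches the paper's setup.
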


\begin{proof}
By Lemma \ref{lem:dimR}.
\end{proof}

For a sequence $\{\sigma_i\}_{i=0}^d$ in $\R$,
we sometimes consider the case in which $\{\sigma_i\}_{i=0}^d$ are mutually distinct.
Of course this does not happen if $\{\sigma_i\}_{i=0}^d$ is symmetric.
But it could happen if $\{\sigma_i\}_{i=0}^d$ is antisymmetric.
We now give the details.

\begin{definition}    \label{def:MutDist}    \samepage
\ifDRAFT {\rm def:MutDist}. \fi
A sequence $\{\sigma_i\}_{i=0}^d$ in $\R$ is said to be {\em MutDist}
whenever $\sigma_0$, $\sigma_1$, \ldots, $\sigma_d$ are mutually distinct.
\end{definition}

\begin{definition}    \label{def:MutDist2}    \samepage
\ifDRAFT {\rm def:MutDist2}. \fi
A subspace of $\R$ is said to be {\em MutDist}
whenever the subspace is nonzero and every nonzero element of the subspace is MutDist.
\end{definition}

\begin{lemma}   \label{lem:feasible}    \samepage
\ifDRAFT {\rm lem:feasible}. \fi
The following {\rm (i), (ii)} are equivalent:
\begin{itemize}
\item[\rm (i)]
$\R^\text{\rm asym}$ is MutDist;
\item[\rm (ii)]
$\R^{\rm asym}$ contains a MutDist element.
\end{itemize}
Assume that {\rm (i), (ii)} hold.
Then $\text{\rm Char}(\F) \neq 2$.
\end{lemma}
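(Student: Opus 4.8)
The direction (i) $\Rightarrow$ (ii) is immediate: by Definition \ref{def:MutDist2} a MutDist subspace is by definition nonzero, so it contains a nonzero element, and that element is MutDist. All the substance lies in the reverse direction (ii) $\Rightarrow$ (i), and the last assertion about $\text{\rm Char}(\F)$ will fall out of the same analysis.

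To prove (ii) $\Rightarrow$ (i), the plan is to first pin down the characteristic. Assume $\R^{\text{\rm asym}}$ contains a MutDist element $\{\sigma_i\}_{i=0}^d$, and suppose toward a contradiction that $\text{\rm Char}(\F) = 2$. Then Lemma \ref{lem:directsum} gives $\R^{\text{\rm sym}} = \R^{\text{\rm asym}}$, so $\{\sigma_i\}_{i=0}^d$ is simultaneously symmetric, i.e.\ $\sigma_i = \sigma_{d-i}$ for $0 \leq i \leq d$. In particular $\sigma_0 = \sigma_d$, and since $d \geq 1$ the indices $0$ and $d$ are distinct; this contradicts the assumption that $\sigma_0, \ldots, \sigma_d$ are mutually distinct. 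Hence $\text{\rm Char}(\F) \neq 2$. Because (i) and (ii) are equivalent and (ii) forces $\text{\rm Char}(\F) \neq 2$, this same argument establishes the final sentence of the lemma.

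Once $\text{\rm Char}(\F) \neq 2$ is known, the rest is short. By Lemma \ref{lem:dimR} we have $\dim \R^{\text{\rm asym}} = 1$, so $\R^{\text{\rm asym}}$ is spanned by the given MutDist element $\{\sigma_i\}_{i=0}^d$. Every nonzero element of $\R^{\text{\rm asym}}$ is therefore of the form $\{\lambda \sigma_i\}_{i=0}^d$ for some nonzero $\lambda \in \F$. Multiplication by a nonzero scalar preserves mutual distinctness, since $\sigma_i \neq \sigma_j$ implies $\lambda \sigma_i \neq \lambda \sigma_j$. Thus every nonzero element of $\R^{\text{\rm asym}}$ is MutDist, and $\R^{\text{\rm asym}}$ is nonzero, so by Definition \ref{def:MutDist2} the subspace $\R^{\text{\rm asym}}$ is MutDist, which is (i).

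The one genuinely delicate point is the characteristic argument: the MutDist hypothesis is incompatible with the collapse $\R^{\text{\rm asym}} = \R^{\text{\rm sym}}$ that occurs in characteristic $2$, since symmetric sequences always satisfy $\sigma_0 = \sigma_d$ and hence fail to be MutDist for $d \geq 1$. After that, the equivalence is a formality once the one-dimensionality of $\R^{\text{\rm asym}}$ from Lemma \ref{lem:dimR} is in hand, because scaling by a nonzero scalar respects distinctness.
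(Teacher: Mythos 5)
Your proof is correct and follows essentially the same route as the paper: both hinge on the one-dimensionality of $\R^{\text{\rm asym}}$ from Lemma \ref{lem:dimR} for (ii)$\Rightarrow$(i), and both extract $\text{\rm Char}(\F)\neq 2$ from the clash between antisymmetry ($\sigma_d=-\sigma_0$) and mutual distinctness ($\sigma_d\neq\sigma_0$), which you phrase via the collapse $\R^{\text{\rm sym}}=\R^{\text{\rm asym}}$ of Lemma \ref{lem:directsum}. Your ordering is in fact slightly more careful than the paper's, since Lemma \ref{lem:dimR} carries the hypothesis $\text{\rm Char}(\F)\neq 2$, so establishing the characteristic claim before invoking it is the logically clean way to proceed.
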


\begin{proof}
(i) $\Rightarrow$ (ii)
Follows from Definition \ref{def:MutDist2}.

(ii) $\Rightarrow$ (i)
Since $\R^\text{\rm asym}$ has dimension $1$ by Lemma \ref{lem:dimR}.

Assume that (i), (ii) hold.
Let $\{\sigma_i\}_{i=0}^d$ denote a MutDist element in $\R^\text{\rm asym}$.
Then $\sigma_d = - \sigma_0$ and $\sigma_d \neq \sigma_0$.
Thus $\text{\rm Char}(\F) \neq 2$.
\end{proof}

\begin{lemma}   \label{lem:deven}    \samepage
\ifDRAFT {\rm lem:deven}. \fi
Assume that $d \geq 2$ and $\beta=-2$.
Assume that $\R^\text{\rm asym}$ is MutDist.
Then $d$ is even.
\end{lemma}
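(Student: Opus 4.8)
The plan is to argue by contraposition: I will assume $d$ is odd and show that $\R^{\text{\rm asym}}$ fails to be MutDist. Since $d \geq 2$ and $d$ is odd, in fact $d \geq 3$, and this mild upgrade is the only place the hypothesis $d \geq 2$ is really used.

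First I would record that $\text{\rm Char}(\F) \neq 2$. This is automatic under the MutDist hypothesis by the last assertion of Lemma \ref{lem:feasible}, and it is what licenses the dimension count $\dim \R^{\text{\rm asym}} = 1$ from Lemma \ref{lem:dimR} and the explicit basis from Corollary \ref{cor:basis}. In the present case $\beta = -2$ with $d$ odd, Corollary \ref{cor:basis}(i) together with the third row of Lemma \ref{lem:asymbasis1} tells us that the sequence $\sigma_i = (-1)^i$ $(0 \leq i \leq d)$ is a basis for the one-dimensional space $\R^{\text{\rm asym}}$.

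The key observation is then immediate: this basis sequence is nonzero, yet its entries repeat. Indeed $\sigma_0 = 1 = \sigma_2$, and since $d \geq 3$ both indices $0$ and $2$ lie in the range $\{0,1,\ldots,d\}$ with $0 \neq 2$. Hence $\sigma_0, \sigma_1, \ldots, \sigma_d$ are not mutually distinct, so this nonzero element of $\R^{\text{\rm asym}}$ is not MutDist in the sense of Definition \ref{def:MutDist}. By Definition \ref{def:MutDist2} a subspace is MutDist only when every nonzero element is MutDist, so $\R^{\text{\rm asym}}$ is not MutDist, contradicting the hypothesis. Therefore $d$ is even.

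I do not expect any serious obstacle here once the explicit description of the antisymmetric basis is in hand: the entire content is that for $\beta = -2$ and $d$ odd the basis sequence is the alternating $\pm 1$ pattern, which visibly repeats. The only point requiring a moment's care is checking that the repeating indices stay within range, which is precisely why $d \geq 2$ (sharpened to $d \geq 3$ by parity) is assumed.
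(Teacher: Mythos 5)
Your proof is correct and follows essentially the same route as the paper: both arguments reduce to the observation that for $\beta=-2$ and $d$ odd the explicit antisymmetric sequence from Lemma \ref{lem:asymbasis1} is $\sigma_i = (-1)^i$, which is a nonzero element of $\R^{\text{\rm asym}}$ with $\sigma_0 = \sigma_2$, contradicting the MutDist hypothesis. The only cosmetic difference is that you phrase it as a contraposition and route through Corollary \ref{cor:basis}, whereas the paper appeals directly to Definition \ref{def:MutDist2}; the substance is identical.
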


\begin{proof}
By Lemma \ref{lem:feasible}, $\text{\rm Char}(\F) \neq 2$.
Let $\{\sigma_i\}_{i=0}^d$ be from Lemma \ref{lem:asymbasis1}. 
Then $\{\sigma_i\}_{i=0}^d$ is MutDist.
If $d$ is odd, then $\sigma_0 = \sigma_2$, for a contradiction. Thus $d$ is even.
\end{proof}

\begin{lemma}    \label{lem:closedpre4pre}    \samepage
\ifDRAFT {\rm lem:closedpre4pre}. \fi
Let the sequence $\{\sigma_i\}_{i=0}^d$ be from Lemmas 
\ref{lem:asymbasis1} and \ref{lem:asymbasis2}.
Then for $0 \leq i,j \leq d$ the scalar $\sigma_i - \sigma_j$ is given as follows:
\[      
\begin{array}{c|c}
 \text{\rm Case} & \sigma_i - \sigma_j
\\ \hline
 \beta = 2   &   2(j-i)     \rule{0mm}{3ex}
\\
\beta = -2, \;\; \text{\rm $d$ even}  &  \;\;\;
  \begin{array}{c|cc}
     & \text{\rm $j\,$ even}  &  \text{\rm $j\,$ odd}
    \\ \hline
     \text{\rm $i\,$  even} & 2(j-i) & 2(d-i-j)   \rule{0mm}{2.5ex}  \\
     \text{\rm $i\,$  odd} & 2(i+j-d) & 2(i-j)
   \end{array}                          \rule{0mm}{6ex}
\\
\beta \neq \pm 2, \;\; \text{\rm $d$ even} 
  & \displaystyle \frac{(q^{2j} - q^{2i})(1+q^{2d-2i-2j})}{q^d(q^2-q^{-2})}     \rule{0mm}{6ex}
\\
\beta \neq \pm 2, \;\; \text{\rm $d$ odd}
   & \displaystyle \frac{(q^{2j} - q^{2i})(1+q^{2d-2i-2j})}{q^d(q-q^{-1})}    \rule{0mm}{6ex}
\end{array}
\]
In the above table the scalar $q$ is from Lemma \ref{lem:asymbasis2}.
\end{lemma}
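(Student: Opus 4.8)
The plan is to prove the table by direct substitution into the explicit formulas of Lemmas \ref{lem:asymbasis1} and \ref{lem:asymbasis2}, handling each row separately; no conceptual machinery beyond those formulas is needed. Throughout, $q$ is the scalar fixed in Lemma \ref{lem:asymbasis2}.

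For the row $\beta = 2$, I would substitute $\sigma_i = d - 2i$ to get $\sigma_i - \sigma_j = (d-2i) - (d-2j) = 2(j-i)$ at once. For the row $\beta = -2$ with $d$ even, I would substitute $\sigma_i = (d-2i)(-1)^i$ and split into the four parity cases for the pair $(i,j)$; in each case the signs $(-1)^i$, $(-1)^j$ become explicit and a one-line simplification produces the corresponding entry of the inner $2 \times 2$ table. For example, when $i$ is even and $j$ is odd one obtains $(d-2i)+(d-2j) = 2(d-i-j)$, and the three remaining parity combinations are entirely analogous.

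The only row carrying any real content is $\beta \neq \pm 2$. Here I would write $\sigma_i = (q^{d-2i} - q^{2i-d})/\Delta$, where $\Delta = q^2 - q^{-2}$ when $d$ is even and $\Delta = q - q^{-1}$ when $d$ is odd. Since both sub-rows share this shape, I would treat them simultaneously: putting $\sigma_i - \sigma_j$ over the common denominator $\Delta$ reduces the claim to the single algebraic identity
\[
(q^{d-2i} - q^{2i-d}) - (q^{d-2j} - q^{2j-d}) = q^{-d}(q^{2j} - q^{2i})(1 + q^{2d-2i-2j}),
\]
which I would check by expanding the right-hand side. Indeed $(q^{2j}-q^{2i})(1 + q^{2d-2i-2j}) = q^{2j} - q^{2i} + q^{2d-2i} - q^{2d-2j}$, and multiplying by $q^{-d}$ gives $q^{2j-d} - q^{2i-d} + q^{d-2i} - q^{d-2j}$, which is exactly the left-hand side after regrouping. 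Dividing both sides by $\Delta$ then yields the two stated entries, with $\Delta$ producing the denominators $q^d(q^2-q^{-2})$ and $q^d(q-q^{-1})$ respectively.

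I do not anticipate any genuine obstacle, as the computation is elementary at every step. The one place demanding care is the factoring identity above, where the cancellation of the cross terms must be tracked correctly; because its left-hand numerator does not depend on the parity of $d$, the even and odd sub-rows fall out together once the identity is established. I would also remark that the row $\beta = -2$ with $d$ odd is absent from the table because there $\sigma_i = (-1)^i$ fails to be MutDist (compare Lemma \ref{lem:deven}), so that case is not needed later; were it wanted, $\sigma_i - \sigma_j = (-1)^i - (-1)^j$ is immediate.
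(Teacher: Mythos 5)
Your proposal is correct and is exactly the direct substitution that the paper intends; the paper's own proof simply says ``Routine,'' and your computations (including the key factoring identity $(q^{2j}-q^{2i})(1+q^{2d-2i-2j})=q^{2j}-q^{2i}+q^{2d-2i}-q^{2d-2j}$) check out in every case. Your side remark explaining the absence of the $\beta=-2$, $d$ odd row is also consistent with Lemma \ref{lem:deven}.
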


\begin{proof}
Routine.
\end{proof}

\begin{lemma}    \label{lem:closedpre4}    \samepage
\ifDRAFT {\rm lem:closedpre4}. \fi
For $d=1$,
$\R^\text{\rm asym}$ is MutDist if and only if
$\text{\rm Char}(\F) \neq 2$.
For  $d \geq 2$,
$\R^\text{\rm asym}$ is MutDist 
if and only if the following conditions hold:
\begin{equation}                               \label{eq:conditions}
\begin{array}{c|c}
\text{\rm Case} & \text{\rm Conditions}
\\ \hline
\beta = 2
&
\text{\rm $\text{\rm Char}(\F)$ is $0$ or greater than $d$}                \rule{0mm}{3ex}
\\
\beta = - 2
&
\text{\rm $d$ is even}, \quad
\text{\rm $\text{\rm Char}(\F)$ is $0$ or greater than $d$}                \rule{0mm}{3ex}
\\
\beta \neq \pm 2 
&
\;\; \text{\rm Char}(\F) \neq 2, \quad
  q^{2i} \neq 1 \;\; (1 \leq i \leq d),   \quad
  q^{2i} \neq -1 \;\; (1 \leq i \leq d-1)                  \rule{0mm}{3ex}
\end{array}
\end{equation}
In the above table the scalar $q$ is from Lemma \ref{lem:asymbasis2}.
\end{lemma}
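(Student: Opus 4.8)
The plan is to reduce the MutDist question for the whole one-dimensional space $\R^\text{\rm asym}$ to a single computation on an explicit generator, and then read off each case from the difference table in Lemma~\ref{lem:closedpre4pre}. First I would dispose of the degenerate case $d=1$: here the recurrence \eqref{eq:rec} is vacuous (the range $1\leq i\leq d-1$ is empty), so $\R^\text{\rm asym}$ consists of the antisymmetric pairs $(\sigma_0,-\sigma_0)$. A nonzero such sequence is MutDist exactly when $\sigma_0\neq-\sigma_0$, i.e.\ when $\text{\rm Char}(\F)\neq 2$, which is the asserted $d=1$ statement.

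For $d\geq 2$ I would argue as follows. By Lemma~\ref{lem:feasible}, $\R^\text{\rm asym}$ is MutDist if and only if it contains a MutDist element, and in that case $\text{\rm Char}(\F)\neq 2$; so $\text{\rm Char}(\F)\neq 2$ is necessary in every case. Assuming $\text{\rm Char}(\F)\neq 2$, Lemma~\ref{lem:dimR} gives $\dim\R^\text{\rm asym}=1$, and a generator $\{\sigma_i\}_{i=0}^d$ is furnished by Lemma~\ref{lem:asymbasis1} (when $\beta=\pm 2$) or Lemma~\ref{lem:asymbasis2} (when $\beta\neq\pm 2$). Since a nonzero scalar multiple $\lambda\{\sigma_i\}$ satisfies $\lambda\sigma_i=\lambda\sigma_j\Leftrightarrow\sigma_i=\sigma_j$, the generator is MutDist precisely when every nonzero element of $\R^\text{\rm asym}$ is. Thus $\R^\text{\rm asym}$ is MutDist if and only if $\sigma_i-\sigma_j\neq 0$ for all $0\leq i<j\leq d$, and the problem becomes: decide when every entry of the table in Lemma~\ref{lem:closedpre4pre} is nonzero.

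Now I would proceed case by case. For $\beta=2$ we have $\sigma_i-\sigma_j=2(j-i)$, which is nonzero for all distinct $i,j$ with $|i-j|\leq d$ exactly when $2m\neq 0$ for $1\leq m\leq d$; since $2m=0$ for some such $m$ occurs precisely when $\text{\rm Char}(\F)=2$ or $\text{\rm Char}(\F)$ is an odd prime at most $d$, this is equivalent to $\text{\rm Char}(\F)$ being $0$ or greater than $d$. For $\beta=-2$ I would first invoke Lemma~\ref{lem:deven} to see that MutDist forces $d$ even; granting this, each of the four parity-indexed entries of Lemma~\ref{lem:closedpre4pre} has the form $2m$ with $m\in\{\,j-i,\ i-j,\ d-i-j,\ i+j-d\,\}$, and as $(i,j)$ ranges over distinct pairs these $|m|$ realize exactly $1,2,\dots,d$ (the even values coming from equal-parity pairs, the odd values from opposite-parity pairs). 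Hence the condition is again $2m\neq 0$ for $1\leq m\leq d$, that is, $\text{\rm Char}(\F)$ is $0$ or greater than $d$, together with $d$ even. For $\beta\neq\pm 2$ I would take $q$ with $\beta=q^2+q^{-2}$ as in Lemma~\ref{lem:asymbasis2}; since $\beta\neq\pm 2$ forces $q^2\neq\pm 1$, the denominators $q^d(q^2-q^{-2})$ and $q^d(q-q^{-1})$ are nonzero, so $\sigma_i-\sigma_j=0$ if and only if one of the factors $q^{2j}-q^{2i}$ and $1+q^{2d-2i-2j}$ vanishes. As $j-i$ runs through all nonzero values of absolute value at most $d$, the first factor yields the conditions $q^{2k}\neq 1$ for $1\leq k\leq d$; as $d-i-j$ runs through all values of absolute value at most $d-1$ over distinct pairs, the second yields $q^{2k}\neq -1$ for $1\leq k\leq d-1$. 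Together with $\text{\rm Char}(\F)\neq 2$ this reproduces the last row of the table.

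The main obstacle is the index bookkeeping in the middle two cases: verifying that the differences $j-i$ and $d-i-j$ sweep out exactly the claimed ranges as $(i,j)$ ranges over distinct pairs, so that the resulting conditions are neither too strong nor too weak. In particular, for $\beta\neq\pm 2$ one must check that the second factor produces only exponents with $|d-i-j|\leq d-1$ (attained at $(i,j)=(0,1)$ and $(d,d-1)$) rather than $d$, which is precisely why the condition $q^{2k}\neq -1$ terminates at $k=d-1$. Everything else is the routine translation of ``$2m\neq 0$ for $1\leq m\leq d$'' into ``$\text{\rm Char}(\F)$ is $0$ or greater than $d$.''
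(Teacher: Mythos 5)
Your argument is correct and is exactly the verification the paper intends: its proof of this lemma is the one-line instruction ``Use Lemmas \ref{lem:feasible}--\ref{lem:closedpre4pre},'' and you have carried out that reduction (one-dimensionality of $\R^{\text{\rm asym}}$, the explicit generators, Lemma \ref{lem:deven} for $\beta=-2$, and the nonvanishing of the entries in the difference table) in full detail, including the key observation that $|d-i-j|\le d-1$ for distinct pairs, which is why the condition $q^{2k}\neq -1$ stops at $k=d-1$.
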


\begin{proof}
Use Lemmas \ref{lem:feasible}--\ref{lem:closedpre4pre}.
\end{proof}

We mention a lemma for later use.

\begin{lemma}    \label{lem:s1si}    \samepage
\ifDRAFT {\rm lem:s1si}. \fi
Let $\{\sigma_i\}_{i=0}^d$ denote a MutDist element in $\R^\text{\rm asym}$.
Then $\sigma_0$, $\sigma_d$ are nonzero and
\begin{align*}
  \sigma_1 \sigma_i &\neq \sigma_0 \sigma_{i-1}, \qquad\qquad
 \sigma_1 \sigma_i \neq \sigma_0 \sigma_{i+1}
 &&  (1 \leq i \leq d-1).
\end{align*}
\end{lemma}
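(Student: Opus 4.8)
The plan is to reduce both asserted inequalities to a single nonvanishing statement about an auxiliary $\beta$-recurrent sequence, and then to read that statement off from the canonical solution of the recurrence.

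First I would dispose of the easy claims. By Lemma \ref{lem:feasible} the hypothesis forces $\text{\rm Char}(\F) \neq 2$. If $\sigma_0 = 0$, then antisymmetry gives $\sigma_d = -\sigma_0 = 0 = \sigma_0$; since $d \geq 1$ the indices $0$ and $d$ are distinct, contradicting the MutDist property. Hence $\sigma_0 \neq 0$, and $\sigma_d = -\sigma_0 \neq 0$. For the two product inequalities, set $P_i = \sigma_1 \sigma_i - \sigma_0 \sigma_{i+1}$ and $Q_i = \sigma_1 \sigma_i - \sigma_0 \sigma_{i-1}$; the assertions are exactly $P_i \neq 0$ and $Q_i \neq 0$ for $1 \le i \le d-1$. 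The first key observation is that antisymmetry collapses these into one: substituting $\sigma_j = -\sigma_{d-j}$ into $Q_i$ yields $Q_i = \sigma_{d-1}\sigma_{d-i} - \sigma_d \sigma_{d-i+1} = -P_{d-i}$. Since $i \mapsto d-i$ permutes $\{1,\dots,d-1\}$, the whole lemma reduces to showing $P_i \neq 0$ for $1 \le i \le d-1$.

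The heart of the argument is then to identify $P$ with a scalar multiple of the standard ``second-kind'' solution of the recurrence. Because \eqref{eq:rec} is linear, $\{P_i\}$ inherits $\beta$-recurrence wherever it makes sense: one checks that $P_{i-1}-\beta P_i + P_{i+1} = \sigma_1(\sigma_{i-1}-\beta\sigma_i+\sigma_{i+1}) - \sigma_0(\sigma_i-\beta\sigma_{i+1}+\sigma_{i+2})$ vanishes for $1 \le i \le d-2$, which suffices to determine $P_2,\dots,P_{d-1}$ from $P_0,P_1$. Here $P_0 = 0$, and, using $\sigma_2 = \beta\sigma_1-\sigma_0$, one finds $P_1 = \sigma_0^2 - \beta\sigma_0\sigma_1 + \sigma_1^2 = \varrho$, the recurrence invariant of Lemma \ref{lem:rec1}(i). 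Letting $W$ denote the $\beta$-recurrent sequence with $W_0 = 0$, $W_1 = 1$, I conclude $P_i = \varrho W_i$ for $0 \le i \le d-1$. To get $\varrho \neq 0$ I evaluate at the top index: antisymmetry gives $P_{d-1} = \sigma_0^2 - \sigma_1^2 = (\sigma_0-\sigma_1)(\sigma_0+\sigma_1)$, and both factors are nonzero by MutDist (directly $\sigma_1 \neq \sigma_0$, and $\sigma_1 \neq -\sigma_0 = \sigma_d$ when $d \ge 2$), so $P_{d-1}\neq 0$ and hence $\varrho \neq 0$.

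What remains, and what I expect to be the main (if routine) obstacle, is to verify $W_i \neq 0$ for $1 \le i \le d-1$ by matching the zeros of $W$ against the precise MutDist conditions of Lemma \ref{lem:closedpre4}; this is where the proof splits by the value of $\beta$. For $\beta = 2$ one has $W_i = i$, and for $\beta = -2$ one has $W_i = (-1)^{i+1}i$ (with $d$ even, by Lemma \ref{lem:deven}); in both cases the requirement that $\text{\rm Char}(\F)$ be $0$ or exceed $d$ keeps $W_i \neq 0$ on the range. For $\beta \neq \pm 2$, writing $\beta = q^2 + q^{-2}$ with $q \in \overline{\F}$ gives $W_i = (q^{2i}-q^{-2i})/(q^2-q^{-2})$, so $W_i = 0$ would force $q^{2i} = \pm 1$, which is exactly what the conditions $q^{2i}\neq 1$ $(1\le i\le d)$ and $q^{2i}\neq -1$ $(1\le i\le d-1)$ rule out. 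Combining $\varrho \neq 0$ with $W_i \neq 0$ yields $P_i \neq 0$, and via $Q_i = -P_{d-i}$ the lemma follows. The only subtleties to watch are the boundary bookkeeping (the $P$-recurrence is valid only up to producing $P_{d-1}$, which is precisely the range needed) and invoking Lemma \ref{lem:deven} to legitimately use the even-$d$ formula in the $\beta = -2$ case.
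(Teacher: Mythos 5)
Your proof is correct. The paper handles this lemma in one line: since $\dim \R^{\text{\rm asym}}=1$ and the asserted (in)equalities are homogeneous in $\{\sigma_i\}_{i=0}^d$, it assumes without loss of generality that the sequence is the canonical one from Lemmas \ref{lem:asymbasis1}, \ref{lem:asymbasis2} and then verifies the claims directly against the conditions of Lemma \ref{lem:closedpre4}. You arrive at the same final case check but organize the computation differently and, I think, more transparently: the antisymmetry identity $Q_i=-P_{d-i}$ halves the work, and the observation that $P_i=\sigma_1\sigma_i-\sigma_0\sigma_{i+1}$ is $\beta$-recurrent with $P_0=0$, $P_1=\varrho$ identifies $P_i=\varrho W_i$ with $W$ the second-kind solution, so the whole lemma reduces to $\varrho\neq 0$ (which you get cleanly from $P_{d-1}=(\sigma_0-\sigma_1)(\sigma_0+\sigma_1)$ and MutDist) together with the nonvanishing of $W_i$ on $1\le i\le d-1$ — and the zero set of $W$ is exactly what the conditions \eqref{eq:conditions} exclude. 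This buys an explanation of \emph{why} the hypotheses of Lemma \ref{lem:closedpre4} are precisely what is needed, at the cost of introducing the auxiliary sequence; the paper's normalization is shorter to state but leaves the routine verification implicit. Your boundary bookkeeping is sound: the $P$-recurrence holds for $1\le i\le d-2$, which suffices to determine $P_i$ through $i=d-1$, and the case $d=1$ is vacuous for the product inequalities.
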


\begin{proof}
Without loss of generality, we may assume that $\{\sigma_i\}_{i=0}^d$ is
the sequence from Lemmas \ref{lem:asymbasis1}, \ref{lem:asymbasis2}.
One routinely verifies the results using Lemma \ref{lem:closedpre4}.
\end{proof}

Motivated by Lemma \ref{lem:th0thd}(iii) we consider the
relation $\sigma_1 \sigma_d = \sigma_0 \sigma_{d-1}$.
We will need the following result.

\begin{lemma}    \label{lem:constpre}    \samepage
\ifDRAFT {\rm lem:constpre}. \fi
For sequences $\{\sigma_i\}_{i=0}^d$ and $\{\tau_i\}_{i=0}^d$ in $\R$ the following are equivalent:
\begin{itemize}
\item[\rm (i)]
$\sigma_0 \tau_1 \neq \sigma_1 \tau_0$;
\item[\rm (ii)]
$\{\sigma_i\}_{i=0}^d$ and $\{\tau_i\}_{i=0}^d$ are linearly independent.
\end{itemize}
\end{lemma}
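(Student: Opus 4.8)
The plan is to identify $\R$ with $\F^2$ through the first two entries of a recurrent sequence, and then reduce the claim to the standard determinant criterion for two vectors in $\F^2$ to be linearly independent.

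First I would introduce the $\F$-linear map $\pi : \R \to \F^2$ sending $\{\sigma_i\}_{i=0}^d$ to the pair $(\sigma_0,\sigma_1)$. Linearity is immediate from the componentwise vector space structure on $\R \subseteq \F^{d+1}$ (Definition \ref{def:R}). I claim $\pi$ is a bijection. For injectivity, rewrite the defining recurrence \eqref{eq:rec} as $\sigma_{i+1} = \beta\sigma_i - \sigma_{i-1}$ for $1 \le i \le d-1$, so that $\sigma_0$ and $\sigma_1$ determine $\sigma_2,\dots,\sigma_d$ recursively; hence $\pi(\{\sigma_i\}) = (0,0)$ forces $\{\sigma_i\}=0$. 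For surjectivity, given $(a,b)\in\F^2$ set $\sigma_0 = a$, $\sigma_1 = b$, and define $\sigma_{i+1} = \beta\sigma_i - \sigma_{i-1}$ for $1 \le i \le d-1$; the resulting sequence lies in $\R$ by Definition \ref{def:recurrent} and maps to $(a,b)$. When $d=1$ the recurrence is vacuous and $\R = \F^2$ outright, so the same conclusion holds.

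Having established that $\pi$ is an isomorphism, the sequences $\{\sigma_i\}_{i=0}^d$ and $\{\tau_i\}_{i=0}^d$ are linearly independent in $\R$ if and only if their images $(\sigma_0,\sigma_1)$ and $(\tau_0,\tau_1)$ are linearly independent in $\F^2$. Two vectors in $\F^2$ are linearly independent precisely when the determinant of the matrix having them as columns is nonzero, that is $\sigma_0\tau_1 - \sigma_1\tau_0 \neq 0$. This is exactly condition (i), so (i) and (ii) are equivalent.

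This argument requires no hypothesis on $\text{\rm Char}(\F)$, and in particular does not lean on the dimension count of Lemma \ref{lem:dimR}. The only point deserving a moment's care is the surjectivity of $\pi$, namely that every prescribed pair of initial values really does extend to a $\beta$-recurrent sequence of diameter $d$; this is why the recurrence is used in the forward direction to build the sequence rather than read off as a constraint. Beyond that bookkeeping, no genuine obstacle arises.
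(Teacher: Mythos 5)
Your proof is correct and rests on the same key observation as the paper's: a $\beta$-recurrent sequence is determined by its first two entries, so everything reduces to the $2\times 2$ determinant criterion in $\F^2$ (the paper phrases this as a rank argument on the $(d+1)\times 2$ matrix with rows $(\sigma_i,\tau_i)$, while you package it as the isomorphism $\pi$). The surjectivity of $\pi$ is not actually needed for either implication, but including it is harmless.
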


\begin{proof}
(i) $\Rightarrow$ (ii)
Clear.

(ii) $\Rightarrow$ (i)
Consider the $d+1$ by $2$ matrix $H$ with row $i$ equal to 
$\sigma_i, \tau_i$ for $0 \leq i \leq d$.
The rank of $H$ is $2$.
By the $\beta$-recurrence, each row of $H$ is a linear combination
of rows $0$ and $1$.
Therefore rows $0$ and $1$ are linearly independent.
Now take the determinant of the submatrix of $H$ consisting of rows $0$ and $1$.
\end{proof}

\begin{proposition}    \label{prop:rec}     \samepage
\ifDRAFT {\rm prop:rec}. \fi
Assume that $\text{\rm Char}(\F) \neq 2$.
Then for a sequence $\{\sigma_i\}_{i=0}^d$ in $\R$ the following are equivalent:
\begin{itemize}
\item[\rm (i)]
$\sigma_1 \sigma_d = \sigma_0 \sigma_{d-1}$;
\item[\rm (ii)]
the sequence $\{\sigma_i\}_{i=0}^d$ is symmetric or antisymmetric.
\end{itemize}
\end{proposition}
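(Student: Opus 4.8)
The plan is to exploit the direct-sum decomposition $\R = \R^{\text{\rm sym}} \oplus \R^{\text{\rm asym}}$ furnished by Lemma \ref{lem:directsum} (available since $\text{\rm Char}(\F) \neq 2$). I would write the given sequence as $\sigma_i = p_i + m_i$ $(0 \leq i \leq d)$, where $\{p_i\}_{i=0}^d \in \R^{\text{\rm sym}}$ and $\{m_i\}_{i=0}^d \in \R^{\text{\rm asym}}$ are its symmetric and antisymmetric components. Thus $p_{d-i} = p_i$ and $m_{d-i} = -m_i$ for all $i$; in particular $\sigma_d = p_0 - m_0$ and $\sigma_{d-1} = p_1 - m_1$. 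Note that conclusion (ii) is precisely the assertion that $\{m_i\}_{i=0}^d = 0$ or $\{p_i\}_{i=0}^d = 0$.

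For the direction (ii) $\Rightarrow$ (i), I would simply substitute. If $\{\sigma_i\}_{i=0}^d$ is symmetric then $\sigma_d = \sigma_0$ and $\sigma_{d-1} = \sigma_1$, whereas if it is antisymmetric then $\sigma_d = -\sigma_0$ and $\sigma_{d-1} = -\sigma_1$; in either case $\sigma_1 \sigma_d = \sigma_0 \sigma_{d-1}$ follows at once.

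For the converse (i) $\Rightarrow$ (ii), the key step is to expand the difference in terms of the two components. Using the parity relations above,
\[
\sigma_1 \sigma_d - \sigma_0 \sigma_{d-1}
 = (p_1 + m_1)(p_0 - m_0) - (p_0 + m_0)(p_1 - m_1)
 = 2(p_0 m_1 - p_1 m_0),
\]
since the terms $p_0 p_1$ and $m_0 m_1$ cancel. Because $\text{\rm Char}(\F) \neq 2$, hypothesis (i) is therefore equivalent to $p_0 m_1 = p_1 m_0$. At this point I would invoke Lemma \ref{lem:constpre} applied to the two sequences $\{p_i\}_{i=0}^d$ and $\{m_i\}_{i=0}^d$, both of which lie in $\R$: the equality $p_0 m_1 = p_1 m_0$ holds exactly when $\{p_i\}_{i=0}^d$ and $\{m_i\}_{i=0}^d$ are linearly dependent.

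It then remains to convert linear dependence into the conclusion, and this is where the direct sum does the work. If $\{p_i\}_{i=0}^d$ and $\{m_i\}_{i=0}^d$ are dependent, I would choose scalars $\alpha,\gamma$, not both zero, with $\alpha p_i + \gamma m_i = 0$ for all $i$; then $\alpha \{p_i\}_{i=0}^d = -\gamma\{m_i\}_{i=0}^d$ lies in $\R^{\text{\rm sym}} \cap \R^{\text{\rm asym}} = 0$, forcing $\alpha\{p_i\}_{i=0}^d = 0$ and $\gamma\{m_i\}_{i=0}^d = 0$. Since $\alpha,\gamma$ are not both zero, one of $\{p_i\}_{i=0}^d$, $\{m_i\}_{i=0}^d$ vanishes, so $\{\sigma_i\}_{i=0}^d$ is antisymmetric or symmetric, which is (ii). I do not expect a serious obstacle: the only delicate point is the bookkeeping of the symmetric/antisymmetric parity at the indices $d$ and $d-1$, and the one genuinely conceptual move is recognizing that Lemma \ref{lem:constpre} converts the bilinear relation $p_0 m_1 = p_1 m_0$ into a linear-dependence statement that the direct-sum structure of $\R$ immediately resolves.
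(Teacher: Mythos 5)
Your proof is correct and follows essentially the same route as the paper's: decompose via Lemma \ref{lem:directsum} into symmetric and antisymmetric parts, compute $\sigma_1\sigma_d-\sigma_0\sigma_{d-1}=2(p_0m_1-p_1m_0)$, convert this to linear dependence via Lemma \ref{lem:constpre}, and use directness of the sum to force one component to vanish. The only difference is cosmetic: you treat the easy direction (ii)~$\Rightarrow$~(i) by direct substitution, while the paper runs the whole argument as a chain of equivalences.
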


\begin{proof}
By Lemma \ref{lem:directsum} there exist $\{\tau_i\}_{i=0}^d$ in $\R^\text{\rm sym}$
and  $\{\mu_i\}_{i=0}^d$ in $\R^\text{\rm asym}$ such that $\sigma_i = \tau_i + \mu_i$ for $0 \leq i \leq d$.
We have
\begin{align*}
  \sigma_0 \sigma_{d-1} - \sigma_1 \sigma_d
   &= (\tau_0 + \mu_0)(\tau_{d-1} + \mu_{d-1}) 
        - (\tau_1 + \mu_1)(\tau_d + \mu_d)  \\
   &= (\tau_0 + \mu_0)(\tau_1 - \mu_1)  
        - (\tau_1 + \mu_1)(\tau_0 - \mu_0)  \\
   &= 2(\tau_0 \mu_1 - \tau_1 \mu_0).
\end{align*}
By this and Lemma \ref{lem:constpre}, $\sigma_0 \sigma_{d-1} = \sigma_1 \sigma_d$
if and only if $\{\tau_i\}_{i=0}^d$ and $\{\mu_i\}_{i=0}^d$ are linearly dependent.
Since the sum $\R^\text{\em sym}+ \R^\text{\rm asym}$ is direct,
the sequences $\{\tau_i\}_{i=0}^d$, $\{\mu_i\}_{i=0}^d$ are linearly dependent if and only if at least one of 
$\{\tau_i\}_{i=0}^d$, $\{\mu_i\}_{i=0}^d$ is zero.
The result follows.
\end{proof}

\begin{corollary}   \label{cor:const}     \samepage
\ifDRAFT {\rm cor:const}. \fi
Assume that $\text{\rm Char}(\F) \neq 2$.
Let $\{\sigma_i\}_{i=0}^d$ denote a MutDist element in $\R$.
Then the following are equivalent:
\begin{itemize}
\item[\rm (i)]
$\{\sigma_i\}_{i=0}^d$ is antisymmetric;
\item[\rm (ii)]
$\sigma_1 \sigma_d = \sigma_0 \sigma_{d-1}$.
\end{itemize}
\end{corollary}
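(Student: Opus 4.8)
The plan is to derive this corollary directly from Proposition \ref{prop:rec}, which already characterizes the relation $\sigma_1 \sigma_d = \sigma_0 \sigma_{d-1}$ among sequences in $\R$. Since we assume $\text{\rm Char}(\F) \neq 2$, that proposition applies, and it tells us that condition (ii) holds if and only if $\{\sigma_i\}_{i=0}^d$ is symmetric or antisymmetric. Thus the whole task reduces to showing that, under the MutDist hypothesis, the symmetric alternative cannot occur, so that the disjunction \emph{symmetric or antisymmetric} collapses to \emph{antisymmetric}.

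First I would observe that a symmetric sequence $\{\sigma_i\}_{i=0}^d$ satisfies $\sigma_0 = \sigma_d$ by Definition \ref{def:symasym}. Since $d \geq 1$ is fixed throughout this section, the indices $0$ and $d$ are distinct, so the equality $\sigma_0 = \sigma_d$ contradicts the assumption that $\{\sigma_i\}_{i=0}^d$ is MutDist in the sense of Definition \ref{def:MutDist}. Hence no MutDist element of $\R$ is symmetric. (Note that for $d \geq 1$ a MutDist sequence has two distinct entries and so is automatically nonzero, which is all that is needed here.)

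Combining these two observations gives the claimed equivalence: granting the MutDist hypothesis, the sequence is symmetric or antisymmetric precisely when it is antisymmetric, and by Proposition \ref{prop:rec} this is in turn equivalent to $\sigma_1 \sigma_d = \sigma_0 \sigma_{d-1}$. The argument is short, and I do not expect any genuine obstacle; the only point requiring care is the explicit use of the standing assumption $d \geq 1$, which is exactly what excludes the symmetric case and keeps the dichotomy from degenerating.
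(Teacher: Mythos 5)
Your proof is correct and follows the paper's own argument exactly: the paper also deduces the corollary from Proposition \ref{prop:rec} together with the observation that no element of $\R^{\text{\rm sym}}$ is MutDist. Your explicit remark that $\sigma_0=\sigma_d$ for a symmetric sequence contradicts MutDist when $d\geq 1$ is precisely the detail the paper leaves implicit.
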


\begin{proof}
By Proposition \ref{prop:rec} and since no element of $\R^\text{\rm sym}$
is MutDist.
\end{proof}

\section{The classification of TB tridiagonal systems}
\label{sec:classify}

In this section we classify up to isomorphism the TB tridiagonal systems.
Fix an integer $d \geq 1$ and consider a sequence of scalars taken from $\F$:
\begin{equation}
   (\{\th_i\}_{i=0}^d; \{\th^*_i\}_{i=0}^d).                     \label{eq:parray1}
\end{equation}
We now state the classification.

\begin{theorem}  \label{thm:main}   \samepage
\ifDRAFT {\rm thm:main}. \fi
There  exists a TB tridiagonal system $\Phi$ over $\F$ that has eigenvalue
array \eqref{eq:parray1} if and only if the following {\rm (i)--(iii)} hold:
\begin{itemize}
\item[\rm (i)]
$\th_i \neq \th_j$, $\;\th^*_i \neq \th^*_j\;$ if $i \neq j$ $\;(0 \leq i,j \leq d)$;
\item[\rm (ii)]
there exists $\beta \in \F$ such that both
\begin{align}
\th_{i-1} - \beta \th_i + \th_{i+1} &= 0, &
\th^*_{i-1}-\beta \th^*_i + \th^*_{i+1} &= 0
&& (1 \leq i \leq d-1);                                   \label{eq:betarec}
\end{align}
\item[\rm (iii)]
$\th_i + \th_{d-i}=0$, $\;\; \th^*_i + \th^*_{d-i}=0\;$ $(0 \leq i \leq d)$. 
\end{itemize}
In this case $\Phi$ is unique up to isomorphism of TB tridiagonal systems.
\end{theorem}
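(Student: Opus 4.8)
The plan is to split the statement into three independent pieces: uniqueness, the necessity of (i)--(iii), and the construction establishing sufficiency; essentially all the difficulty sits in the last. Uniqueness is free: by Corollary \ref{cor:unique} a TB tridiagonal system is determined up to isomorphism by its eigenvalue array, so nothing more is needed once existence is known. For necessity, I would assume $\Phi$ has the given array; since $d\geq 1$ it is nontrivial, so $\mathrm{Char}(\F)\neq 2$ by Lemma \ref{lem:sym5}. Condition (i) is Note \ref{note:distinct}. For (ii), Proposition \ref{prop:AW} supplies a fundamental parameter $\beta$, and Proposition \ref{prop:beta} then gives exactly the two recurrences in (ii), so both $\{\th_i\}_{i=0}^d$ and $\{\th^*_i\}_{i=0}^d$ lie in $\R$. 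For (iii), each of these sequences is MutDist by (i); Lemma \ref{lem:th0thd}(iii) gives $\th^*_0\th^*_{d-1}=\th^*_1\th^*_d$ and $\th_0\th_{d-1}=\th_1\th_d$; and Corollary \ref{cor:const} (applicable since $\mathrm{Char}(\F)\neq 2$) converts each such relation, for a MutDist element of $\R$, into antisymmetry, which is precisely (iii).

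For existence I would construct the system by hand on $V=\F^{d+1}$. Put $A^*=\mathrm{diag}(\th^*_0,\dots,\th^*_d)$ and let $A$ be the tridiagonal matrix \eqref{eq:Anat} whose subdiagonal and superdiagonal entries are the scalars $c_i,b_i$ prescribed by \eqref{eq:ci} and \eqref{eq:bi}. The first thing to check is that these entries are well defined and nonzero. The denominators $\th^*_{i-1}-\th^*_{i+1}$ are nonzero by (i). Moreover (ii), (iii) together with Lemma \ref{lem:dimR} force $\{\th_i\}_{i=0}^d$ and $\{\th^*_i\}_{i=0}^d$ to be nonzero scalar multiples of the unique MutDist basis element $\{\sigma_i\}_{i=0}^d$ of $\R^{\mathrm{asym}}$, so each numerator is a nonzero multiple of $\sigma_1\sigma_i-\sigma_0\sigma_{i\pm 1}$, which is nonzero by Lemma \ref{lem:s1si}; and $c_d=b_0=\th_0\neq 0$ since $\th_0=-\th_d$ with $\mathrm{Char}(\F)\neq 2$. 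Hence $A$ is irreducible. At this stage $A^*$ is diagonalizable with simple spectrum $\{\th^*_i\}$ and eigenspaces $\F e_i$; the tridiagonal shape of $A$ gives $A V^*_i\subseteq V^*_{i-1}+V^*_{i+1}$; and Lemma \ref{lem:As}(iii) rules out a proper nonzero common invariant subspace. Thus Definition \ref{def:TBTDpair}(iii),(iv) hold, and it remains to treat the diagonalizability of $A$ and Definition \ref{def:TBTDpair}(ii).

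This last step is where I expect the real obstacle to lie: showing that the tridiagonal $A$ is diagonalizable with spectrum exactly $\{\th_i\}_{i=0}^d$ and that $A^*$ acts on the $A$-eigenspaces bipartitely, $A^*V_i\subseteq V_{i-1}+V_{i+1}$. Over a general field diagonalizability cannot follow from symmetrizability of $A$ alone, so I must pin down the eigenvalues explicitly. My plan is to exploit the all-ones vector $\mathbf 1=\sum_{i=0}^d e_i$, which is cyclic for $A^*$ because the diagonal entries of $A^*$ are distinct. A short telescoping computation with the $c_i,b_i$ formulas, using $\th_1/\th_0=\th^*_1/\th^*_0$ and the antisymmetry at the boundary rows, shows $A\mathbf 1=\th_0\mathbf 1$ and $A(A^*\mathbf 1)=\th_1(A^*\mathbf 1)$. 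Building on this, I would produce for each $k$ an eigenvector $u^{(k)}=p_k(A^*)\mathbf 1$ of $A$ for $\th_k$, with $p_k$ of degree $k$ determined by a three-term recurrence; linear independence of $\{u^{(k)}\}_{k=0}^d$ then forces diagonalizability and identifies the spectrum as $\{\th_i\}$, while the recurrence relating consecutive $u^{(k)}$ under $A^*$ simultaneously yields $A^*V_i\subseteq V_{i-1}+V_{i+1}$.

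An equivalent route, which I would keep in reserve, is to verify the two Askey-Wilson relations directly from the matrix entries: relation \eqref{eq:AW2} is immediate from the $\beta$-recurrence of $\{\th^*_i\}$ via Lemma \ref{lem:rec1}(i), since entrywise it reduces to the constancy of $\th^{*2}_{i-1}-\beta\th^*_{i-1}\th^*_i+\th^{*2}_i$, whereas \eqref{eq:AW1} is the delicate computation in which every off-diagonal contribution of $A^2A^*-\beta AA^*A+A^*A^2$ must cancel. Either way, the part needing the most care is the uniform control of the spectrum across the cases $\beta=2$, $\beta=-2$, and $\beta\neq\pm 2$, where I would fall back on the closed forms of Lemmas \ref{lem:asymbasis1} and \ref{lem:asymbasis2} for $\{\sigma_i\}$. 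Once the spectrum of $A$ and the bipartite action of $A^*$ are established, the constructed pair $A,A^*$ is a TB tridiagonal pair whose associated system realizes the prescribed eigenvalue array, completing the existence half and hence the theorem.
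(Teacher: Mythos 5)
Your treatment of uniqueness and of the necessity of (i)--(iii) coincides with the paper's Lemma \ref{lem:onlyif}: Corollary \ref{cor:unique} for uniqueness, Note \ref{note:distinct} for (i), Propositions \ref{prop:AW} and \ref{prop:beta} for (ii), and Lemma \ref{lem:th0thd}(iii) together with Corollary \ref{cor:const} for (iii). The existence construction --- $A^*$ diagonal, $A$ tridiagonal with entries \eqref{eq:ci}, \eqref{eq:bi}, nonvanishing obtained via Lemma \ref{lem:s1si} after reducing both eigenvalue sequences to scalar multiples of one element of $\R^{\text{\rm asym}}$, and Lemma \ref{lem:As}(iii) for irreducibility --- is also exactly the paper's (Definitions \ref{def:cibi}, \ref{def:AAs} and Lemmas \ref{lem:new1}--\ref{lem:cibithsi}, \ref{lem:Vsi}).

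The gap is in the step you yourself single out as the obstacle. You propose to produce eigenvectors $u^{(k)}=p_k(A^*)\mathbf 1$ ``determined by a three-term recurrence'' and to read off $A^*V_i\subseteq V_{i-1}+V_{i+1}$ from that recurrence. But the existence of a genuine three-term recurrence $A^*u^{(k)}\in\mathrm{span}\{u^{(k-1)},u^{(k+1)}\}$ is precisely the bipartite containment being proved, so as stated the plan is circular. What an upward induction actually delivers (and what the paper proves as \eqref{eq:target1}) is only $A^*V_i\subseteq V_0+\cdots+V_{i-1}+V_{i+1}$: when $A^*u^{(k)}$ is corrected to an eigenvector for $\th_{k+1}$, the correction terms a priori involve all of $u^{(0)},\dots,u^{(k-1)}$. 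The paper kills the unwanted terms by running a second, downward induction starting from $v_d=(1,-1,1,-1,\dots)^{\rm t}$ to get $A^*V_i\subseteq V_d+\cdots+V_{i+1}+V_{i-1}$, and then intersecting the two containments. Moreover, the engine of either induction is the identity $A^{*2}A-\beta A^*AA^*+AA^{*2}=\varrho^*A$, which is the easy entrywise verification (Lemma \ref{lem:paramAW}); in Lemma \ref{lem:AsVi2} it is applied to an eigenvector of $A$ to show that $(A-\th_{i+1}I)A^*v'$ lands in $V_0+\cdots+V_{i-1}$, which is what makes the correction possible. You relegate this relation to a ``reserve route,'' but it is not optional: without it (or an equivalent commutation identity) there is no way to control $AA^*u^{(k)}$, and the ``delicate'' relation \eqref{eq:AW1} that worries you is never needed --- the paper obtains it only a posteriori. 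So the two missing ideas are the two-sided induction and the recognition that the one directly verifiable Askey--Wilson relation is the tool that drives it.
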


\begin{note}    \label{note:d12}   \samepage
\ifDRAFT {\rm note:d12}. \fi
For $d \leq 2$, Theorem \ref{thm:main} remains valid if condition (ii) is deleted.
Here is the reason.
First assume that $d=1$.
Then \eqref{eq:betarec} is vacuous and so holds for any $\beta \in \F$.
Next assume that $d=2$.
Then condition (ii) follows from conditions (i), (iii).
Indeed by condition (iii) we have $\th_0 + \th_2=0$ and $2 \th_1=0$.
By this and condition (i) we have $\text{\rm Char}(\F) \neq 2$ and $\th_1=0$.
Similarly $\th^*_0 + \th^*_2=0$ and $\th^*_1=0$.
Therefore \eqref{eq:betarec} holds for any $\beta \in \F$.
\end{note}

The proof of Theorem \ref{thm:main} takes up most of this section.

\begin{lemma}   \label{lem:onlyif}    \samepage
\ifDRAFT {\rm lem:onlyif}. \fi
Assume that there exists a TB tridiagonal system $\Phi$ over $\F$ that
has eigenvalue array \eqref{eq:parray1}.
Then this eigenvalue array satisfies  conditions {\rm (i)--(iii)} in Theorem \ref{thm:main}.
Moreover $\Phi$ is unique up to isomorphism.
\end{lemma}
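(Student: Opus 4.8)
The plan is to verify conditions (i)--(iii) of Theorem \ref{thm:main} one at a time and then read off the uniqueness assertion from an earlier corollary; each item assembles facts already in place. Since Section \ref{sec:classify} fixes $d \geq 1$, the system $\Phi$ is nontrivial (Lemma \ref{lem:trivial}), and by Corollary \ref{cor:coincide} its dual eigenvalue sequence has the same diameter $d$, so the eigenvalue array has the shape \eqref{eq:parray1}. Condition (i) is then immediate from Note \ref{note:distinct}, which asserts that $\{\th_i\}_{i=0}^d$ are mutually distinct and likewise $\{\th^*_i\}_{i=0}^d$.

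For condition (ii), I would first apply Proposition \ref{prop:AW} to produce scalars $\beta, \varrho, \varrho^*$ satisfying the Askey-Wilson relations \eqref{eq:AW1}, \eqref{eq:AW2}; by Definition \ref{def:pairfund} this $\beta$ is a fundamental parameter for $\Phi$. Proposition \ref{prop:beta} then says precisely that such a $\beta$ satisfies the two recurrences \eqref{eq:AWbeta1}, \eqref{eq:AWbeta2}, which are exactly the relations in \eqref{eq:betarec}. This yields condition (ii) with one and the same $\beta$ governing both the eigenvalue and dual eigenvalue sequences; this single common $\beta$ is the feature the next step will exploit.

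Condition (iii) is the substantive step. Here I would pass to the setting of Section \ref{sec:sym}, taking $\beta$ to be the fundamental parameter just found. By condition (ii) both $\{\th_i\}_{i=0}^d$ and $\{\th^*_i\}_{i=0}^d$ are $\beta$-recurrent, hence lie in $\R$ (Definition \ref{def:R}), and by condition (i) each is MutDist (Definition \ref{def:MutDist}). Lemma \ref{lem:th0thd}(iii), available since $\Phi$ is nontrivial, supplies the product relations $\th_1 \th_d = \th_0 \th_{d-1}$ and $\th^*_1 \th^*_d = \th^*_0 \th^*_{d-1}$. To convert these into antisymmetry I would invoke Corollary \ref{cor:const}, which states that a MutDist element of $\R$ is antisymmetric if and only if it satisfies $\sigma_1 \sigma_d = \sigma_0 \sigma_{d-1}$; this requires $\text{\rm Char}(\F) \neq 2$, guaranteed by Lemma \ref{lem:sym5}(i) since $d \geq 1$. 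Applying Corollary \ref{cor:const} to each of the two sequences gives $\th_i + \th_{d-i} = 0$ and $\th^*_i + \th^*_{d-i} = 0$ for $0 \leq i \leq d$, which is condition (iii).

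Finally, the uniqueness assertion is nothing more than Corollary \ref{cor:unique}, which says a TB tridiagonal system is determined up to isomorphism by its eigenvalue array. The main obstacle is not any single computation but the bookkeeping inside condition (iii): one must use a single $\beta$ throughout so that both sequences genuinely lie in one space $\R$, confirm the hypotheses $\text{\rm Char}(\F) \neq 2$ and $d \geq 1$ that Corollary \ref{cor:const} and Lemma \ref{lem:th0thd} silently require, and recognize Lemma \ref{lem:th0thd}(iii) as exactly the product relation that Corollary \ref{cor:const} upgrades to antisymmetry.
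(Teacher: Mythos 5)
Your proof is correct and follows essentially the same route as the paper, which establishes condition (i) by construction, condition (ii) via Proposition \ref{prop:beta}, condition (iii) by combining Lemma \ref{lem:th0thd}(iii), Lemma \ref{lem:sym5}(i), and Corollary \ref{cor:const}, and uniqueness via Corollary \ref{cor:unique}. Your version simply makes explicit the intermediate steps (Proposition \ref{prop:AW} supplying the fundamental parameter, and the verification that both sequences are MutDist elements of the same space $\R$) that the paper leaves implicit.
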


\begin{proof}
Condition (i) holds by construction.
Condition (ii) holds by Proposition \ref{prop:beta}.
Condition (iii) holds by Lemmas \ref{lem:th0thd}(iii), \ref{lem:sym5}(i), and Corollary \ref{cor:const}.
The uniqueness follows from Corollary \ref{cor:unique}.
\end{proof}

For the rest of this section we assume that the sequence \eqref{eq:parray1} satisfies conditions (i)--(iii)
in Theorem \ref{thm:main}.
We will construct a TB tridiagonal system $\Phi$ over $\F$ that has eigenvalue array \eqref{eq:parray1}.
Fix $\beta \in \F$ that satisfies \eqref{eq:betarec}.
We will refer to Section \ref{sec:sym} with this $\beta$.
Observe that each of the sequences 
$\{\th_i\}_{i=0}^d$, $\{\th^*_i\}_{i=0}^d$ is MutDist and contained in $\R^\text{\rm asym}$.
Note that $\text{\rm Char}(\F) \neq 2$ by Lemma \ref{lem:feasible}.

\begin{lemma}    \label{lem:new1}   \samepage
\ifDRAFT {\rm lem:new1}. \fi
There exists $0 \neq \zeta \in \F$ such that
$\th^*_i = \zeta \th_i$ for $0 \leq i \leq d$.
\end{lemma}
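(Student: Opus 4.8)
The plan is to exploit the one-dimensionality of the space $\R^{\text{\rm asym}}$ of antisymmetric $\beta$-recurrent sequences. Recall from the discussion preceding this lemma that $\beta$ has been fixed so that both $\{\th_i\}_{i=0}^d$ and $\{\th^*_i\}_{i=0}^d$ are $\beta$-recurrent, antisymmetric, and MutDist; consequently both sequences are elements of $\R^{\text{\rm asym}}$ for this common $\beta$, and $\text{\rm Char}(\F) \neq 2$ by Lemma \ref{lem:feasible}.

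First I would observe that neither sequence is the zero sequence. Since $d \geq 1$ and the scalars $\{\th_i\}_{i=0}^d$ are mutually distinct (being MutDist), we have $\th_0 \neq \th_1$, so $\{\th_i\}_{i=0}^d \neq 0$; the same reasoning gives $\{\th^*_i\}_{i=0}^d \neq 0$. Next I would invoke Lemma \ref{lem:dimR}, which asserts that $\dim \R^{\text{\rm asym}} = 1$ whenever $\text{\rm Char}(\F) \neq 2$. Thus $\R^{\text{\rm asym}}$ is a line, and any nonzero element spans it. Since $\{\th_i\}_{i=0}^d$ is a nonzero element of this line, it forms a basis for $\R^{\text{\rm asym}}$; since $\{\th^*_i\}_{i=0}^d$ also lies in $\R^{\text{\rm asym}}$, it must be a scalar multiple of $\{\th_i\}_{i=0}^d$. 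This yields $\zeta \in \F$ with $\th^*_i = \zeta \th_i$ for $0 \leq i \leq d$, and $\zeta \neq 0$ because $\{\th^*_i\}_{i=0}^d$ is nonzero.

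There is essentially no obstacle here: the entire content has been front-loaded into the dimension count of Lemma \ref{lem:dimR}, together with the verification (already carried out in Section \ref{sec:sym}) that antisymmetry plus $\beta$-recurrence for a single common $\beta$ places both eigenvalue sequences in the same one-dimensional space. The only point requiring care is the bookkeeping that both sequences are attached to the \emph{same} $\beta$; this is precisely what the hypothesis \eqref{eq:betarec} of Theorem \ref{thm:main} guarantees, and it is what allows the collapse of the argument to a single appeal to Lemma \ref{lem:dimR}.
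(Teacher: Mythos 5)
Your proof is correct and is essentially identical to the paper's: both arguments reduce the claim to the fact that $\{\th_i\}_{i=0}^d$ and $\{\th^*_i\}_{i=0}^d$ are nonzero elements of the one-dimensional space $\R^{\text{\rm asym}}$ (Lemma \ref{lem:dimR}). Your version merely spells out the nonzeroness and the common choice of $\beta$ more explicitly than the paper does.
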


\begin{proof}
By Lemma \ref{lem:dimR} the subspace $\R^\text{\rm asym}$ has dimension one.
Each of $\{\th_i\}_{i=0}^d$, $\{\th^*_i\}_{i=0}^d$ is a nonzero element of $\R^\text{\rm asym}$.
The result follows.
\end{proof}

\begin{lemma}   \label{lem:new2}    \samepage
\ifDRAFT {\rm lem:new2}. \fi
The following hold.
\begin{itemize}
\item[\rm (i)]
Each of $\th_0$, $\th_d$, $\th^*_0$, $\th^*_d$ is nonzero.
\item[\rm (ii)]
$\th_1/\th_0 = \th_{d-1}/\th_d = \th^*_1 / \th^*_0 = \th^*_{d-1}/\th^*_d$.
\end{itemize}
\end{lemma}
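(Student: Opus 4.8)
The plan is to read off both assertions from the structural facts already assembled in this section, chiefly that each of $\{\th_i\}_{i=0}^d$ and $\{\th^*_i\}_{i=0}^d$ is a MutDist element of $\R^\text{\rm asym}$, together with Lemma \ref{lem:new1}.

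First I would settle (i). Since $\{\th_i\}_{i=0}^d$ is a MutDist element of $\R^\text{\rm asym}$, Lemma \ref{lem:s1si} gives at once that $\th_0$ and $\th_d$ are nonzero. Applying the same lemma to $\{\th^*_i\}_{i=0}^d$ yields that $\th^*_0$ and $\th^*_d$ are nonzero. This is the full content of (i), and it has the side benefit of guaranteeing that every denominator occurring in (ii) is nonzero.

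Next I would prove (ii). By condition (iii) of Theorem \ref{thm:main} the sequence $\{\th_i\}_{i=0}^d$ is antisymmetric, so $\th_{d-i} = -\th_i$ for $0 \leq i \leq d$; in particular $\th_d = -\th_0$ and $\th_{d-1} = -\th_1$, whence
\[
  \frac{\th_{d-1}}{\th_d} = \frac{-\th_1}{-\th_0} = \frac{\th_1}{\th_0}.
\]
The identical computation for $\{\th^*_i\}_{i=0}^d$ gives $\th^*_{d-1}/\th^*_d = \th^*_1/\th^*_0$. To link the two pairs of ratios I would invoke Lemma \ref{lem:new1}, which supplies a nonzero $\zeta \in \F$ with $\th^*_i = \zeta \th_i$ for all $i$; then
\[
  \frac{\th^*_1}{\th^*_0} = \frac{\zeta \th_1}{\zeta \th_0} = \frac{\th_1}{\th_0}.
\]
Combining these equalities shows that all four ratios coincide.

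I do not expect a genuine obstacle here: once Lemmas \ref{lem:s1si} and \ref{lem:new1} are in hand, the statement reduces to bookkeeping with the antisymmetry relation $\th_{d-i}=-\th_i$. The only point demanding care is to establish (i) before manipulating the ratios in (ii), so that each quotient is well defined.
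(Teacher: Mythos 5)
Your proof is correct and follows essentially the same route as the paper: part (i) comes from Lemma \ref{lem:s1si} applied to the two MutDist antisymmetric sequences, and part (ii) combines the relation $\th_1\th_d=\th_0\th_{d-1}$ (which the paper extracts via Corollary \ref{cor:const} and you obtain directly from $\th_{d-i}=-\th_i$) with Lemma \ref{lem:new1}. The only difference is cosmetic.
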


\begin{proof}
(i) 
By Lemmas \ref{lem:s1si} and \ref{lem:new1}.

(ii)
We have $\th_1 \th_d = \th_0 \th_{d-1}$ by Corollary \ref{cor:const}.
The result follows from this and Lemma \ref{lem:new1}.
\end{proof}

The following definition is motivated by Lemma \ref{lem:cibi}.

\begin{definition}   \label{def:cibi}    \samepage
\ifDRAFT {\rm def:cibi}. \fi
Define scalars $\{c_i\}_{i=1}^d$, $\{b_i\}_{i=0}^{d-1}$ by
\begin{align}
c_i &= \frac{\th_1 \th^*_i - \th_0 \th^*_{i+1}}{\th^*_{i-1} - \th^*_{i+1}}  \qquad (1 \leq i \leq d-1),
  \qquad\qquad  c_d = \th_0,                                                            \label{eq:defci2}
\\
b_i &= \frac{\th_1 \th^*_i - \th_0 \th^*_{i-1}}{\th^*_{i+1} - \th^*_{i-1}} \qquad (1 \leq i \leq d-1),
  \qquad\qquad  b_0 = \th_0.                                                           \label{eq:defbi2}
\end{align}
\end{definition}

\begin{lemma}    \label{lem:bd-i}    \samepage
\ifDRAFT {\rm lem:bd-i}. \fi
We have $c_i = b_{d-i}$ for $1 \leq i \leq d$.
\end{lemma}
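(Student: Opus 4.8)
The plan is to prove the identity by direct substitution, exploiting the antisymmetry of the dual eigenvalue sequence. Recall that throughout this part of the section the sequence $\{\th^*_i\}_{i=0}^d$ satisfies condition (iii) of Theorem \ref{thm:main}, namely $\th^*_j + \th^*_{d-j}=0$ for $0 \leq j \leq d$; equivalently $\th^*_{d-j} = -\th^*_j$. This antisymmetry is the only structural fact I will need, together with the explicit formulas in Definition \ref{def:cibi}.

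First I would dispose of the boundary case $i=d$ separately, since the defining formulas for $c_d$ and $b_0$ are the exceptional ones. Here $c_d = \th_0$ by \eqref{eq:defci2}, while $b_{d-i} = b_0 = \th_0$ by \eqref{eq:defbi2}, so $c_d = b_0$ and the claim holds for $i=d$.

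Next, for $1 \leq i \leq d-1$ the index shift gives $1 \leq d-i \leq d-1$, so $b_{d-i}$ is computed from the generic formula in \eqref{eq:defbi2}, namely $b_{d-i} = (\th_1 \th^*_{d-i} - \th_0 \th^*_{d-i-1})/(\th^*_{d-i+1} - \th^*_{d-i-1})$. I would then substitute the three antisymmetry relations $\th^*_{d-i} = -\th^*_i$, $\th^*_{d-i-1} = \th^*_{d-(i+1)} = -\th^*_{i+1}$, and $\th^*_{d-i+1} = \th^*_{d-(i-1)} = -\th^*_{i-1}$. Every dual eigenvalue in both numerator and denominator picks up a minus sign, and these overall signs cancel between numerator and denominator, leaving precisely the expression for $c_i$ from \eqref{eq:defci2}.

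I expect no genuine obstacle here: the computation is a one-line cancellation. The only points requiring care are bookkeeping ones, namely verifying that for $1 \leq i \leq d-1$ the shifted index $d-i$ lands in the range where the generic $b$-formula is valid (it does), and remembering to pair the boundary value $c_d$ with $b_0$ rather than with the generic formula. Thus the lemma follows immediately from Definition \ref{def:cibi} and the antisymmetry of $\{\th^*_i\}_{i=0}^d$.
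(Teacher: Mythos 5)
Your proof is correct and is exactly the paper's argument (the paper's proof reads simply ``Compare \eqref{eq:defci2}, \eqref{eq:defbi2} using the fact that the sequence $\{\th^*_i\}_{i=0}^d$ is antisymmetric''); you have merely spelled out the substitution and the boundary case $i=d$ explicitly. No issues.
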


\begin{proof}
Compare \eqref{eq:defci2}, \eqref{eq:defbi2} using the fact that the sequence
$\{\th^*_i\}_{i=0}^d$ is antisymmetric.
\end{proof}

\begin{lemma}    \label{lem:cibinonzero2}    \samepage
\ifDRAFT {\rm lem:cibinonzero2}. \fi
The scalars $\{c_i\}_{i=1}^d$, $\{b_i\}_{i=0}^{d-1}$ are all nonzero.
\end{lemma}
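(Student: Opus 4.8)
The plan is to reduce everything to the nonvanishing of the numerators in Definition~\ref{def:cibi}, since the denominators $\th^*_{i-1}-\th^*_{i+1}$ are automatically nonzero (the dual eigenvalues are mutually distinct by condition~(i) of Theorem~\ref{thm:main}). First I would dispose of the two boundary cases $c_d=\th_0$ and $b_0=\th_0$: each is nonzero by Lemma~\ref{lem:new2}(i), equivalently by the first assertion of Lemma~\ref{lem:s1si} applied to the MutDist antisymmetric sequence $\{\th_i\}_{i=0}^d$.

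For the interior cases $1\le i\le d-1$, the key device is Lemma~\ref{lem:new1}, which furnishes $0\ne\zeta\in\F$ with $\th^*_j=\zeta\th_j$ for all $j$. Substituting this into the numerator of $c_i$ gives
\[
 \th_1\th^*_i-\th_0\th^*_{i+1}=\zeta\,(\th_1\th_i-\th_0\th_{i+1}),
\]
and into the numerator of $b_i$ gives
\[
 \th_1\th^*_i-\th_0\th^*_{i-1}=\zeta\,(\th_1\th_i-\th_0\th_{i-1}).
\]
Now $\{\th_i\}_{i=0}^d$ is a MutDist element of $\R^{\text{asym}}$, so Lemma~\ref{lem:s1si} yields exactly $\th_1\th_i\ne\th_0\th_{i+1}$ and $\th_1\th_i\ne\th_0\th_{i-1}$ for $1\le i\le d-1$. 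Since $\zeta\ne 0$, both numerators are nonzero, whence $c_i\ne 0$ and $b_i\ne 0$.

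Alternatively one could establish the nonvanishing of the $c_i$ alone and then obtain the $b_i$ for free from Lemma~\ref{lem:bd-i}, which identifies $b_{d-i}=c_i$; I would note this as a shortcut. I do not expect a genuine obstacle, since the arithmetic content has already been isolated in Lemma~\ref{lem:s1si}, whose proof absorbed the case analysis on $\beta$ and the closed forms from Section~\ref{sec:sym}. The only point requiring a moment's care is confirming that the denominators are nonzero, which is immediate from the distinctness of the $\th^*_j$.
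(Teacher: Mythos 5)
Your proposal is correct and follows essentially the same route as the paper: Lemma \ref{lem:new1} reduces each $c_i$ to a nonzero multiple of $\th_1\th_i-\th_0\th_{i+1}$, Lemma \ref{lem:s1si} gives the nonvanishing, and the $b_i$ follow (the paper uses your ``shortcut'' via Lemma \ref{lem:bd-i} rather than invoking Lemma \ref{lem:s1si} a second time). No gaps.
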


\begin{proof}
By Lemma \ref{lem:new1},
$c_i$ is a nonzero scalar multiple of $\th_1 \th_i - \th_0 \th_{i+1}$ 
for $1 \leq i \leq d-1$ and $c_d=\th_0$.
By this and Lemma \ref{lem:s1si}, $c_i \neq 0$ for $1 \leq i \leq d$.
By this and Lemma \ref{lem:bd-i}, $b_i \neq 0$ for $0 \leq i \leq d-1$.
\end{proof}

\begin{lemma}    \label{lem:cibithsi}    \samepage
\ifDRAFT {\rm lem:cibithsi}. \fi
The following hold:
\begin{itemize}
\item[\rm (i)]
$c_d = \th_0$, $\;b_0 = \th_0$ and
\begin{align}
c_i + b_i &= \th_0     \qquad\qquad   (1 \leq i \leq d-1);      \label{eq:ci+bi}
\end{align}
\item[\rm (ii)]
$c_d \th^*_{d-1} = \th_1 \th^*_d$, $\;b_0 \th^*_1 = \th_1 \th^*_0$ and
\begin{align}
 c_i \th^*_{i-1} + b_i \th^*_{i+1} &= \th_1 \th^*_i     \qquad\qquad (1 \leq i \leq d-1).  \label{eq:ciths}
\end{align}
\end{itemize}
\end{lemma}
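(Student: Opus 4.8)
The plan is to establish every assertion by direct computation from Definition \ref{def:cibi}, using the single structural observation that the denominators of $c_i$ and $b_i$ are negatives of one another. The two endpoint identities $c_d = \th_0$ and $b_0 = \th_0$ in part (i) are immediate from Definition \ref{def:cibi}, so the content is the interior relation \eqref{eq:ci+bi}.

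For $1 \leq i \leq d-1$ I would rewrite $b_i$ over the denominator $\th^*_{i-1} - \th^*_{i+1}$ (absorbing the sign flip) and add it to $c_i$, giving
\[
 c_i + b_i = \frac{(\th_1 \th^*_i - \th_0 \th^*_{i+1}) - (\th_1 \th^*_i - \th_0 \th^*_{i-1})}{\th^*_{i-1} - \th^*_{i+1}}.
\]
The terms $\th_1 \th^*_i$ cancel in the numerator, leaving $\th_0(\th^*_{i-1} - \th^*_{i+1})$, which cancels the denominator to yield $\th_0$; this is \eqref{eq:ci+bi}. The interior cases of part (ii) go the same way: forming $c_i \th^*_{i-1} + b_i \th^*_{i+1}$ over the common denominator $\th^*_{i-1} - \th^*_{i+1}$, the two cross terms $\th_0 \th^*_{i-1}\th^*_{i+1}$ cancel and the remainder is $\th_1 \th^*_i(\th^*_{i-1} - \th^*_{i+1})$, so cancelling the denominator gives \eqref{eq:ciths}.

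The only step that is not pure denominator arithmetic is the pair of endpoint identities $c_d \th^*_{d-1} = \th_1 \th^*_d$ and $b_0 \th^*_1 = \th_1 \th^*_0$ in part (ii). Substituting $c_d = \th_0$ and $b_0 = \th_0$ reduces these to $\th_0 \th^*_{d-1} = \th_1 \th^*_d$ and $\th_0 \th^*_1 = \th_1 \th^*_0$, which are precisely the cross-multiplied forms of the equal ratios $\th_1/\th_0 = \th^*_{d-1}/\th^*_d = \th^*_1/\th^*_0$ supplied by Lemma \ref{lem:new2}(ii).

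There is no substantial obstacle here; the computation is routine telescoping of two fractions. The one place demanding care is exactly these boundary terms of part (ii), which cannot be obtained by combining the two defining fractions and instead draw on the ratio identities of Lemma \ref{lem:new2}(ii) (themselves a consequence of the antisymmetry in condition (iii) of Theorem \ref{thm:main}). Throughout, one relies on the standing assumptions of this section that $\text{\rm Char}(\F) \neq 2$ and that the differences $\th^*_{i-1} - \th^*_{i+1}$ are nonzero, so that the fractions in Definition \ref{def:cibi} are well defined.
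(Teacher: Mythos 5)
Your proof is correct and follows essentially the same route as the paper: part (i) and the interior identity \eqref{eq:ciths} by direct computation from Definition \ref{def:cibi}, and the two endpoint identities of part (ii) from $c_d=\th_0$, $b_0=\th_0$ together with the ratio identities of Lemma \ref{lem:new2}(ii). You have merely written out the fraction arithmetic that the paper leaves implicit.
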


\begin{proof}
(i)
Use Definition \ref{def:cibi}.

(ii)
To get \eqref{eq:ciths} use Definition \ref{def:cibi}.
Concerning the first two equations, use $c_d=\th_0$, $b_0 = \th_0$,
and Lemma \ref{lem:new2}.
\end{proof}

\begin{definition}   \label{def:AAs}    \samepage
\ifDRAFT {\rm def:AAs}. \fi
Define matrices $A$, $A^*$ in $\Mat_{d+1}(\F)$ by
\begin{align*}
 A &=
  \begin{pmatrix}
    0 & b_0 &    & & & \text{\bf 0}                  \\
    c_1 & 0 & b_1   \\
         & c_2  & \cdot & \cdot  \\
         &      & \cdot & \cdot & \cdot \\
         &       &         & \cdot & \cdot & b_{d-1} \\
     \text{\bf 0}   &        &          &         & c_d & 0   \\
  \end{pmatrix},
&
 A^* &= \text{diag}(\th^*_0, \, \th^*_1, \, \ldots, \, \th^*_d).
\end{align*}
\end{definition}

Recall the vector space $V = \F^{d+1}$.
We are going to show that $A,A^*$ is a TB tridiagonal pair on $V$,
and $\{\th_i\}_{i=0}^d$ (resp.\ $\{\th^*_i\}_{i=0}^d$) is a standard ordering
of the eigenvalues of $A$ (resp.\ $A^*$).

We first consider the eigenspaces of $A^*$, and the action of $A$ on
these eigenspaces.
Define matrices $\{E^*_i\}_{i=0}^d$ in $\Mat_{d+1}(\F)$ by
\begin{align*}
 E^*_i &= \text{diag}(0,\, \ldots, 0, \, \stackrel{i}{1}, \, 0, \, \ldots, \, 0)
    \qquad\qquad  (0 \leq i \leq d).
\end{align*}
For $0 \leq i \leq d$ define $V^*_i = E^*_i V$.
Observe that $V^*_i$ is a subspace of $V$ with dimension $1$.

\begin{lemma}    \label{lem:Vsi}    \samepage
\ifDRAFT {\rm lem:Vsi}. \fi
For $0 \leq i \leq d$, 
$V^*_i$ is the eigenspace of $A^*$ with eigenvalue $\th^*_i$.
Moreover
\begin{align*}
   A V^*_i &\subseteq V^*_{i-1} + V^*_{i+1}   \qquad\qquad (0 \leq i \leq d),
\end{align*}
where $V^*_{-1}=0$, $V^*_{d+1}=0$.
\end{lemma}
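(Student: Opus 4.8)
The plan is to argue directly from the explicit matrices in Definition \ref{def:AAs}, working in the standard basis $\{e_i\}_{i=0}^d$ of $V = \F^{d+1}$, where $e_i$ is the column vector with a $1$ in coordinate $i$ and $0$ elsewhere. By the definition of $E^*_i$ we have $V^*_i = E^*_i V = \F e_i$, so each $V^*_i$ is one-dimensional, and the whole argument reduces to reading off the entry patterns of $A$ and $A^*$.

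For the first assertion, $A^*$ is the diagonal matrix $\text{diag}(\th^*_0, \ldots, \th^*_d)$, so $A^* e_i = \th^*_i e_i$ for each $i$; thus $e_i$ is a $\th^*_i$-eigenvector of $A^*$. By condition (i) of Theorem \ref{thm:main} the $d+1$ scalars $\{\th^*_i\}_{i=0}^d$ are mutually distinct, so $A^*$ has $d+1$ distinct eigenvalues acting on the $(d+1)$-dimensional space $V$; hence every eigenspace of $A^*$ is one-dimensional. Since the eigenspace for $\th^*_i$ contains the nonzero vector $e_i$, it must equal $\F e_i = V^*_i$, which proves the first claim.

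For the second assertion, I would read off column $i$ of the tridiagonal matrix $A$. Because the diagonal entries of $A$ vanish, the only possibly-nonzero entries in column $i$ are the superdiagonal entry $b_{i-1}$ in row $i-1$ and the subdiagonal entry $c_{i+1}$ in row $i+1$. Hence $A e_i = b_{i-1} e_{i-1} + c_{i+1} e_{i+1}$, where the first term is omitted when $i = 0$ and the second when $i = d$. Since $V^*_j = \F e_j$, this shows $A e_i \in V^*_{i-1} + V^*_{i+1}$, and therefore $A V^*_i \subseteq V^*_{i-1} + V^*_{i+1}$ with the conventions $V^*_{-1} = 0$, $V^*_{d+1} = 0$.

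There is no genuine obstacle here: once the matrices of Definition \ref{def:AAs} are in hand, the statement follows immediately from their shapes. The only points deserving a moment's care are the two boundary indices $i = 0$ and $i = d$, handled by the stated zero-subspace conventions, and the appeal to the distinctness of the $\th^*_i$, which is exactly what upgrades the inclusion $V^*_i \subseteq \{v \in V : A^* v = \th^*_i v\}$ to the equality asserted in the lemma.
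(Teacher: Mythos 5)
Your proof is correct and follows exactly the route the paper intends: the paper's own proof is the one-line remark ``By the form of the matrices $A$, $A^*$ in Definition \ref{def:AAs},'' and your argument simply spells out that reading of the entry patterns, including the use of the distinctness of the $\th^*_i$ to pin down the eigenspaces and the boundary conventions at $i=0$ and $i=d$. Nothing further is needed.
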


\begin{proof}
By the form of the matrices $A$, $A^*$ in Definition \ref{def:AAs}.
\end{proof}

Next we consider the eigenspaces of $A$, and the action of $A^*$ on these
eigenspaces.
Each eigenspace of $A$ has dimension $1$, as we saw below \eqref{eq:Arij}.
But conceivably $A$ is not diagonalizable.
For $0 \leq i \leq d$ define
\[
   V_i = \{v \in V \,|\, A v = \th_i v\}.
\]
The subspace $V_i$ is nonzero if and only if $\th_i$ is an eigenvalue
of $A$, and in this case $V_i$ has dimension $1$.
Our next goal is to show that each of $\{\th_i\}_{i=0}^d$ is an eigenvalue of $A$,
and $A^* V_i \subseteq V_{i-1} + V_{i+1}$
for $0 \leq i \leq d$, where $V_{-1}=0$ and $V_{d+1}=0$.

By Lemma \ref{lem:rec1} there exists $\varrho^* \in \F$ such that
\begin{align}
 \varrho^* &= \th^{*2}_{i-1} - \beta \th^*_{i-1} \th^*_i + \th^{*2}_i   \qquad\qquad  (1\leq i \leq d).
                                  \label{eq:rhos}
\end{align}

\begin{lemma}    \label{lem:paramAW}    \samepage
\ifDRAFT {\rm lem:paramAW}. \fi
We have
\begin{equation}
  A^{*2} A - \beta A^* A A^* + A A^{*2} = \varrho^* A.        \label{eq:aux}
\end{equation}
\end{lemma}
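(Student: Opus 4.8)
The plan is to verify the identity \eqref{eq:aux} by comparing the two sides entry by entry, exploiting the fact that $A^*$ is diagonal while $A$ is tridiagonal with zero diagonal. First I would record that for the diagonal matrix $A^* = \text{diag}(\th^*_0,\ldots,\th^*_d)$ and any $X \in \Mat_{d+1}(\F)$, the $(i,j)$-entry of $A^* X$ is $\th^*_i X_{i,j}$ and that of $X A^*$ is $X_{i,j}\th^*_j$. Applying this with $X=A$ gives, for $0 \leq i,j \leq d$,
\[
(A^{*2}A)_{i,j} = \th^{*2}_i A_{i,j}, \qquad
(A^*AA^*)_{i,j} = \th^*_i \th^*_j A_{i,j}, \qquad
(AA^{*2})_{i,j} = \th^{*2}_j A_{i,j}.
\]

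Next I would combine these to obtain that the $(i,j)$-entry of the left-hand side of \eqref{eq:aux} equals
\[
A_{i,j}\bigl(\th^{*2}_i - \beta \th^*_i \th^*_j + \th^{*2}_j\bigr),
\]
while the $(i,j)$-entry of the right-hand side is $\varrho^* A_{i,j}$. It then suffices to check, for each pair $(i,j)$, that these two scalars agree.

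The decisive step is to split on whether $|i-j|=1$. By the form of $A$ in Definition \ref{def:AAs}, the entry $A_{i,j}$ vanishes unless $|i-j|=1$, so both sides have entry $0$ whenever $|i-j|\neq 1$. When $|i-j|=1$, the unordered pair $\{i,j\}$ is a pair of consecutive indices $\{k-1,k\}$ for some $1 \leq k \leq d$; since the expression $\th^{*2}_i - \beta \th^*_i \th^*_j + \th^{*2}_j$ is symmetric in $i$ and $j$, it equals $\th^{*2}_{k-1} - \beta \th^*_{k-1}\th^*_k + \th^{*2}_k$, which is $\varrho^*$ by \eqref{eq:rhos}. Hence the coefficient of $A_{i,j}$ on the left-hand side collapses to $\varrho^*$, matching the right-hand side. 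This yields equality of all entries, proving \eqref{eq:aux}.

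I do not expect any genuine obstacle here: the only thing to get right is the bookkeeping that lets the off-diagonal factor collapse to the constant $\varrho^*$, and this is exactly what the $\beta$-recurrence built into \eqref{eq:rhos} guarantees. Indeed, this is just the matrix counterpart of Lemma \ref{lem:rec4}, now read off for the diagonal generator $A^*$ rather than for $A$, and the same entrywise argument underlies both.
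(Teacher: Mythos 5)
Your proof is correct and is essentially identical to the paper's: both compute the $(i,j)$-entry of the difference of the two sides as $(\th^{*2}_i - \beta \th^*_i \th^*_j + \th^{*2}_j - \varrho^*)A_{i,j}$, then observe that $A_{i,j}=0$ unless $|i-j|=1$, in which case the first factor vanishes by \eqref{eq:rhos}. No gaps.
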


\begin{proof}
Let $Z$ denote the left-hand side of \eqref{eq:aux} minus the right-hand side of \eqref{eq:aux}.
We show that $Z=0$.
To do this, we use Definition \ref{def:AAs} and matrix multiplication to show that
each entry of $Z$ is zero.
For $0 \leq i,j \leq d$ the $(i,j)$-entry of $Z$ is
\begin{equation}
  Z_{i,j} =  (\th^{*2}_i - \beta \th^*_i \th^*_j + \th^{*2}_j - \varrho^*) A_{i,j}.    \label{eq:Zij}
\end{equation}
First assume that $|i-j| \neq 1$. 
Then $A_{i,j}=0$ so $Z_{i,j}=0$.
Next assume that $|i-j|= 1$.
Then in \eqref{eq:Zij} the first factor on the right is zero so $Z_{i,j}=0$.
We have shown $Z=0$, and the result follows.
\end{proof}

\begin{lemma}    \label{lem:AsVi2}    \samepage
\ifDRAFT {\rm lem:AsVi2}. \fi
For $0 \leq i \leq d$ the scalar $\th_i$ is an eigenvalue of $A$.
Moreover
\begin{align}
 A^* V_i &\subseteq V_{i-1} + V_{i+1}   \qquad\qquad   (0 \leq i \leq d),    \label{eq:AsVi2}
\end{align}
where $V_{-1}=0$ and $V_{d+1}=0$.
\end{lemma}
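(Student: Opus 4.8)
The plan is to first prove that every $\th_i$ is an eigenvalue of $A$ by exhibiting an explicit triangular action of $A$ on a convenient basis, and then to extract the tridiagonal action of $A^*$ from two filtrations together with the symmetries of the pair. Throughout I write $e_0,\dots,e_d$ for the standard basis of $V=\F^{d+1}$, so that $A^* e_k=\th^*_k e_k$, and I set $w=\sum_{k=0}^d e_k$. Using Definition \ref{def:AAs} and the boundary relations of Lemma \ref{lem:cibithsi}(i), a direct coordinate computation gives $Aw=\th_0 w$; similarly, the relations of Lemma \ref{lem:cibithsi}(ii) give $A A^* w=\th_1 A^* w$. Thus $w$ and $A^* w$ are eigenvectors of $A$ for $\th_0$ and $\th_1$ (note $A^* w\neq 0$ since $\th^*_0\neq 0$ by Lemma \ref{lem:new2}).

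Next I bootstrap using the Askey--Wilson relation \eqref{eq:aux} of Lemma \ref{lem:paramAW}. Right-multiplying \eqref{eq:aux} by $A^{*\,(j-2)}$ and applying the result to $w$ expresses $A A^{*\,j}w$ in terms of $A A^{*\,(j-1)}w$ and $A A^{*\,(j-2)}w$. An induction on $j$, whose base cases are the two identities above and whose inductive step uses $\beta\th_{j-1}-\th_{j-2}=\th_j$ (condition (ii) of Theorem \ref{thm:main}), then yields
\[
 A\,A^{*\,j}w=\th_j\,A^{*\,j}w+(\text{a linear combination of }A^{*\,l}w,\ l<j) \qquad (0\le j\le d).
\]
Since the $\th^*_i$ are distinct, the vectors $\{A^{*\,j}w\}_{j=0}^d$ form a Vandermonde basis of $V$, so in this basis $A$ is lower triangular with diagonal entries $\th_0,\dots,\th_d$. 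These are distinct, so $A$ is diagonalizable with eigenvalues exactly $\{\th_i\}_{i=0}^d$, each eigenspace $V_i$ is one-dimensional, and the flag $W_i=\text{\rm span}\{A^{*\,l}w:l\le i\}$ satisfies $W_i=V_0\oplus\cdots\oplus V_i$. This proves the first assertion.

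For the tridiagonal action I combine two containments. Trivially $A^* W_i\subseteq W_{i+1}$, whence $A^* V_i\subseteq V_0+\cdots+V_{i+1}$. Conjugating by $D=\text{diag}((-1)^k)$, which satisfies $DAD^{-1}=-A$ and $DA^*D^{-1}=A^*$ and (using the antisymmetry $\th_{d-i}=-\th_i$ from condition (iii)) sends $V_i$ to $V_{d-i}$, turns this containment into $A^* V_i\subseteq V_{i-1}+\cdots+V_d$. Intersecting the two gives $A^* V_i\subseteq V_{i-1}+V_i+V_{i+1}$.

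It remains to remove the $V_i$ term, i.e.\ to show $E_i A^* E_i=0$, where $E_i$ is the projection onto $V_i$. Here I use the reversal matrix $J$ with $J_{k,l}=\delta_{k,d-l}$: Lemma \ref{lem:bd-i} gives $JAJ=A$, while antisymmetry of $\{\th^*_i\}$ gives $JA^*J=-A^*$. Since $J$ commutes with $A$ it preserves each $V_l$, hence commutes with each $E_l$; therefore $E_i A^* E_i=-E_i A^* E_i$, and as $\text{\rm Char}(\F)\neq 2$ we conclude $E_i A^* E_i=0$. Combined with the previous paragraph this yields $A^* V_i\subseteq V_{i-1}+V_{i+1}$, as required. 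The main work is the eigenvalue claim: the delicate points are the boundary cases in the two base identities and checking that the triangular pattern propagates through the induction with the correct diagonal $\th_j$ (not merely the vanishing of lower entries); once diagonalizability with the prescribed spectrum is established, the tridiagonal action follows formally from the symmetries $D$ and $J$.
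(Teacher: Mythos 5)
Your proof is correct, but it takes a genuinely different route from the paper's. The paper runs two inductions: a bottom-up one starting from $v_0=(1,\dots,1)^{\rm t}$, $v_1=A^*v_0$ that produces an eigenvector for each $\th_i$ together with the containment $A^*V_i\subseteq V_0+\cdots+V_{i-1}+V_{i+1}$, and a mirror top-down induction starting from the alternating-sign vectors $v_d, v_{d-1}$ giving $A^*V_i\subseteq V_d+\cdots+V_{i+1}+V_{i-1}$; each containment already omits $V_i$, so intersecting finishes the proof, and the non-containment needed to keep the induction alive is supplied by irreducibility via Lemma \ref{lem:As}(iii). You instead triangularize $A$ on the Vandermonde basis $\{A^{*j}w\}_{j=0}^d$, which delivers diagonalizability, the spectrum, and the flag $W_i=V_0\oplus\cdots\oplus V_i$ in one stroke and without invoking irreducibility --- a real economy. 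The price is that the flag only gives the weaker containment $A^*V_i\subseteq V_0+\cdots+V_{i+1}$, so you need two symmetries to compensate: $D=\mathrm{diag}((-1)^k)$ (which is essentially the paper's top-down induction packaged as the element $S^*$ of Definition \ref{def:S}) for the mirror containment, and the reversal $J$ (using $c_i=b_{d-i}$ from Lemma \ref{lem:bd-i} and antisymmetry of $\{\th^*_i\}$) to kill the diagonal block. One small point to make explicit in the last step: conjugating by $J$ gives $E_iA^*E_i=-J(E_iA^*E_i)J$, not directly $E_iA^*E_i=-E_iA^*E_i$; you need to note that $E_iA^*E_i$ is a scalar multiple of $E_i$ (equivalently, that $J$ acts as $\pm 1$ on the one-dimensional space $V_i$) before cancelling the $J$'s, after which $\mathrm{Char}(\F)\neq 2$ finishes the argument as you say.
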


\begin{proof}
We first show that for $0 \leq i \leq d-1$,
\begin{align}
A^* V_i &\subseteq V_0 + V_1 + \cdots + V_{i-1} + V_{i+1},    \label{eq:target1}
\\
A^* V_i &\not\subseteq V_0 + V_1 + \cdots + V_{i-1}.      \label{eq:target2}
\end{align}
We prove this using induction on $i$.
Define vectors $v_0$, $v_1$ in $V$ by
\begin{align}
v_0 &= (1,1,\ldots,1)^\text{t},   \qquad\qquad
v_1 = (\th^*_0, \th^*_1, \ldots, \th^*_d)^\text{t}.    \label{eq:defv0v1}
\end{align} 
The vectors $v_0$, $v_1$ are nonzero.
Using Lemma \ref{lem:cibithsi} one finds that $A v_0 = \th_0 v_0$ and
$A v_1 = \th_1 v_1$. So $v_0 \in V_0$ and $v_1 \in V_1$.
Using the form of $A^*$ in Definition \ref{def:AAs} we find $A^* v_0 = v_1$. 
So \eqref{eq:target1}, \eqref{eq:target2} hold for $i=0$.
Assume that $1 \leq i \leq d-1$.
By induction,
\begin{align}
A^* V_{i-1} &\subseteq V_0 + V_1 + \cdots + V_{i-2} + V_i,    \label{eq:aux1}
\\
A^* V_{i-1} &\not\subseteq V_0 + V_1 +  \cdots + V_{i-2}.     \label{eq:aux2}
\end{align}
By \eqref{eq:aux2},  $V_{i-1} \neq 0$.
Pick $0 \neq v \in V_{i-1}$, and note that $v$ is a basis for $V_{i-1}$.
By \eqref{eq:aux1} there exist $w \in V_0 + V_1 +  \cdots + V_{i-2}$ and $v' \in V_i$
such that $A^* v = w + v'$. 
By \eqref{eq:aux2} the vector $v'$ is nonzero and hence  a basis for $V_i$.
We apply each side of \eqref{eq:aux} to $v$.
Evaluate each term using $A^* v = w+v'$,
and simplify the result using 
$\th_{i-1}-\beta \th_i = - \th_{i+1}$ to get
\begin{equation}
  (A-\th_{i+1} I)A^* v' =
 - \th_{i-1} A^* w + \beta A^* A w - A A^* w + \varrho^* \th_{i-1} v.   \label{eq:aux3}
\end{equation}
By construction $Aw \in V_0  + \cdots + V_{i-2}$.
By induction $A^* (V_0 + \cdots + V_{i-2}) \subseteq V_0  + \cdots + V_{i-1}$.
By these comments, the right-hand side of \eqref{eq:aux3} is contained
in $V_0 + \cdots + V_{i-1}$.
So \eqref{eq:aux3} yields
\[
  (A - \th_{i+1} I) A^* v' \in V_0 + V_1 + \cdots + V_{i-1}.
\]
Thus there exist vectors
$\{v_r\}_{r=0}^{i-1}$ in $V$ such that
$v_r \in V_r$ $(0 \leq r \leq i-1)$ and
$(A-\th_{i+1}I)A^* v' = \sum_{r=0}^{i-1} v_r$.
Define $v'' \in V$ by
\[
  v'' = A^* v' + \sum_{r=0}^{i-1} \frac{v_r}{\th_{i+1} - \th_r}.
\]
One routinely checks $A v'' = \th_{i+1} v''$. So $v'' \in V_{i+1}$.
Therefore \eqref{eq:target1} holds.
We show \eqref{eq:target2}.
Assume by way of contradiction that 
$A^* V_i \subseteq V_0 + V_1 +  \cdots + V_{i-1}$.
Define $W = V_0 + V_1 +  \cdots + V_i$.
Then $A^* W \subseteq W$.
By construction $AW \subseteq W$.
We have $W \neq  0$ since $i \geq 0$,
and $W \neq V$ since $i \leq d-1$.
These comments contradict Lemma \ref{lem:As}(iii).
Thus \eqref{eq:target2} holds.
We have now shown that \eqref{eq:target1} and \eqref{eq:target2}
hold for $0 \leq i \leq d-1$.

Next we show that for $1 \leq i \leq d$,
\begin{align}
A^* V_i &\subseteq V_d + V_{d-1} + \cdots + V_{i+1} + V_{i-1},    \label{eq:target1b}
\\
A^* V_i &\not\subseteq V_d + V_{d-1} + \cdots + V_{i+1}.      \label{eq:target2b}
\end{align}
We prove this using induction on $i=d,d-1,\ldots,1$.
Define vectors $v_d$, $v_{d-1}$ in $V$ by
\begin{align}
v_d &= (1,-1,1, -1, \ldots)^\text{t},
\qquad\qquad
v_{d-1} = (\th^*_0, -\th^*_1, \th^*_2, - \th^*_3, \ldots)^\text{t}.    \label{eq:defvdvd-1}
\end{align} 
Each of $v_d$, $v_{d-1}$ is nonzero.
Using Lemma \ref{lem:cibithsi} and the fact that $\{\th_i\}_{i=0}^d$ is antisymmetric,
we obtain $A v_d = \th_d v_d$ and
$A v_{d-1} = \th_{d-1} v_{d-1}$. So $v_d \in V_d$ and $v_{d-1} \in V_{d-1}$.
Using the form of $A^*$ in Definition \ref{def:AAs} and
\eqref{eq:defvdvd-1},
we obtain  $A^* v_d = v_{d-1}$. 
So \eqref{eq:target1b} and \eqref{eq:target2b} hold for $i=d$.
Assume that $1 \leq i \leq d-1$.
Adjusting the proof of \eqref{eq:target1} and \eqref{eq:target2},
we obtain \eqref{eq:target1b} and \eqref{eq:target2b}.
We have now shown that \eqref{eq:target1b}  and \eqref{eq:target2b}
hold for $1 \leq i \leq d$.

By \eqref{eq:target2} or \eqref{eq:target2b} we see that $V_i$ is nonzero for $0 \leq i \leq d$.
Consequently $\th_i$ is an eigenvalue of $A$ for $0 \leq i \leq d$.
Comparing \eqref{eq:target1} and \eqref{eq:target1b} we obtain \eqref{eq:AsVi2}. 
\end{proof}

\begin{lemma}    \label{lem:TDpair}    \samepage
\ifDRAFT {\rm lem:TDpair}. \fi
The pair $A,A^*$ is a TB tridiagonal pair on $V$.
Moreover,
$\{\th_i\}_{i=0}^d$ (resp.\ $\{\th^*_i\}_{i=0}^d$)
is a standard ordering of the eigenvalues of $A$ (resp.\ $A^*$).
\end{lemma}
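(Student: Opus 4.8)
The plan is to verify the four defining conditions (i)--(iv) of Definition \ref{def:TBTDpair} for the pair $A,A^*$ from Definition \ref{def:AAs}, and then to observe that the two given orderings are standard. Essentially all the substantive work has already been assembled in Lemmas \ref{lem:Vsi} and \ref{lem:AsVi2} (together with the nonvanishing of the intersection numbers), so the proof is largely a matter of collecting these facts and citing Lemma \ref{lem:As}.

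First I would dispose of condition (i). The matrix $A^*$ is diagonal, hence diagonalizable. For $A$, Lemma \ref{lem:AsVi2} shows that every $\th_i$ $(0 \leq i \leq d)$ is an eigenvalue; these $d+1$ scalars are mutually distinct by Theorem \ref{thm:main}(i), and $\dim V = d+1$, so $A$ has $d+1$ distinct eigenvalues. The sum of the eigenspace dimensions is then at least $d+1 = \dim V$, forcing each eigenspace $V_i$ to have dimension one and $A$ to be diagonalizable. This is the one place needing a moment of care, since a tridiagonal matrix need not be diagonalizable (as remarked below \eqref{eq:Arij}); what rescues us is that we already know all $d+1$ distinct scalars $\th_i$ are genuine eigenvalues.

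Next I would handle conditions (ii) and (iii) by direct citation. The containment $A^* V_i \subseteq V_{i-1} + V_{i+1}$ is precisely the conclusion of Lemma \ref{lem:AsVi2}, giving (ii) with the ordering $\{V_i\}_{i=0}^d$; likewise $A V^*_i \subseteq V^*_{i-1} + V^*_{i+1}$ is Lemma \ref{lem:Vsi}, giving (iii) with the ordering $\{V^*_i\}_{i=0}^d$. For condition (iv), I would invoke Lemma \ref{lem:As}(iii), whose hypotheses are that $A$ is irreducible and that $\th^*_0 \neq \th^*_i$ for $1 \leq i \leq d$. The latter is part of Theorem \ref{thm:main}(i), and irreducibility (the condition $c_i b_{i-1} \neq 0$) follows from Lemma \ref{lem:cibinonzero2}. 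Hence no proper nonzero common invariant subspace exists.

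Having verified (i)--(iv), the pair $A,A^*$ is a TB tridiagonal pair on $V$. Finally, an ordering of eigenspaces is standard exactly when it satisfies \eqref{eq:AsVi} (resp.\ \eqref{eq:AVsi}), which is precisely what Lemmas \ref{lem:AsVi2} and \ref{lem:Vsi} supply; since $V_i$ corresponds to $\th_i$ and $V^*_i$ to $\th^*_i$, the orderings $\{\th_i\}_{i=0}^d$ and $\{\th^*_i\}_{i=0}^d$ are standard orderings of the eigenvalues of $A$ and $A^*$. The main (and really the only) obstacle is establishing diagonalizability of $A$ in condition (i); everything else reduces to a direct appeal to the preceding lemmas.
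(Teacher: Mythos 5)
Your proposal is correct and follows essentially the same route as the paper: verify Definition \ref{def:TBTDpair}(i)--(iv) using Lemmas \ref{lem:Vsi}, \ref{lem:AsVi2}, and \ref{lem:As}(iii), then read off the standard orderings. The only difference is that you spell out the diagonalizability of $A$ (distinct eigenvalues $\{\th_i\}_{i=0}^d$ all realized in a space of dimension $d+1$) and the hypotheses of Lemma \ref{lem:As}(iii), which the paper leaves implicit.
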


\begin{proof}
We verify the conditions (i)--(iv) in Definition \ref{def:TBTDpair}.
By construction $A^*$ is diagonalizable.
By Lemma \ref{lem:AsVi2},  $A$ is diagonalizable.
Thus condition (i) holds.
Conditions (ii) and (iii) hold by Lemmas \ref{lem:AsVi2} and \ref{lem:Vsi}, respectively.
Condition (iv) holds by Lemma \ref{lem:As}(iii).
Thus $A,A^*$ is a TB tridiagonal pair on $V$.
We have shown that $\{V_i\}_{i=0}^d$ (resp.\ $\{V^*_i\}_{i=0}^d$)
is a standard ordering of the eigenspaces of $A$ (resp.\ $A^*$).
The result follows.
\end{proof}

For $0 \leq i \leq d$ let $E_i \in \text{\rm End}(V)$ denote the projection onto $V_i$.
Define
\[
   \Phi = (A; \{E_i\}_{i=0}^d; A^*; \{E^*_i\}_{i=0}^d).
\]

\begin{lemma}   \label{lem:Phi}  \samepage
\ifDRAFT {\rm lem:Phi}. \fi
The above $\Phi$ is a TB tridiagonal system on $V$ with eigenvalue array \eqref{eq:parray1}.
\end{lemma}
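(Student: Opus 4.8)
The plan is to verify directly the three defining conditions of a TB tridiagonal system in Definition \ref{def:TBTDsystem}, since at this point essentially all of the substantive work has already been carried out. Condition (i) requires that $A,A^*$ be a TB tridiagonal pair on $V$, and this is exactly the content of Lemma \ref{lem:TDpair}. So the only remaining task is to confirm that $\{E_i\}_{i=0}^d$ and $\{E^*_i\}_{i=0}^d$ are standard orderings of the primitive idempotents of $A$ and $A^*$ respectively.

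For the dual idempotents, I would recall from Lemma \ref{lem:Vsi} that for $0 \leq i \leq d$ the diagonal matrix $E^*_i$ is the projection onto $V^*_i = E^*_i V$, which is the eigenspace of $A^*$ with eigenvalue $\th^*_i$. Hence the $\{E^*_i\}_{i=0}^d$ are precisely the primitive idempotents of $A^*$, and since Lemma \ref{lem:TDpair} asserts that $\{V^*_i\}_{i=0}^d$ is a standard ordering of the eigenspaces of $A^*$, the corresponding ordering $\{E^*_i\}_{i=0}^d$ of the primitive idempotents is standard. For the idempotents of $A$, the one point that needs care is that $E_i$ really is a primitive idempotent: this uses that $A$ is diagonalizable, which was obtained in Lemma \ref{lem:AsVi2} and recorded again in the proof of Lemma \ref{lem:TDpair}. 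Given diagonalizability, $E_i$ is by definition the projection onto the eigenspace $V_i$ of $A$ for $\th_i$, so the $\{E_i\}_{i=0}^d$ are the primitive idempotents of $A$, and they are in standard order because $\{V_i\}_{i=0}^d$ is a standard ordering of the eigenspaces of $A$ by Lemma \ref{lem:TDpair}.

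Having checked conditions (i)--(iii), I conclude that $\Phi$ is a TB tridiagonal system on $V$. It then remains only to identify its eigenvalue array. By Definition \ref{def:eigenseq} the eigenvalue sequence of $\Phi$ records the eigenvalue of $A$ attached to each $E_i$, which by construction is $\th_i$, and the dual eigenvalue sequence records the eigenvalue of $A^*$ attached to each $E^*_i$, which by construction is $\th^*_i$. Therefore the eigenvalue array of $\Phi$ is $(\{\th_i\}_{i=0}^d; \{\th^*_i\}_{i=0}^d)$, namely the sequence \eqref{eq:parray1}, as required.

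I do not anticipate any genuine obstacle here: this lemma is the bookkeeping step that assembles the construction, and all the difficulty was absorbed into the inductive argument of Lemma \ref{lem:AsVi2} and the packaging in Lemma \ref{lem:TDpair}. The only mild subtlety worth flagging explicitly is the appeal to diagonalizability of $A$ when asserting that the $E_i$ are bona fide primitive idempotents rather than merely projections onto candidate eigenspaces; everything else is a direct citation of the preceding lemmas together with the defining equations for $A$, $A^*$, and $E^*_i$.
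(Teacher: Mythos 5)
Your proof is correct and follows essentially the same route as the paper: both cite Lemma \ref{lem:TDpair} for condition (i) of Definition \ref{def:TBTDsystem} and then observe that, by that lemma together with the construction of the $E_i$ and $E^*_i$, the idempotent orderings are standard and the eigenvalue array is \eqref{eq:parray1}. Your version merely spells out the bookkeeping (including the diagonalizability point from Lemma \ref{lem:AsVi2}) in more detail than the paper does.
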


\begin{proof}
By Lemma \ref{lem:TDpair} the pair $A,A^*$ is a TB tridiagonal pair on $V$.
By Lemma \ref{lem:TDpair} and the construction,
$\{E_i\}_{i=0}^d$ (resp.\ $\{E^*_i\}_{i=0}^d$) is a standard ordering
of the primitive idempotents of $A$ (resp.\ $A^*$).
The result follows.
\end{proof}

By Lemmas \ref{lem:onlyif}, \ref{lem:Phi}
we have now proved Theorem \ref{thm:main}.

\section{The eigenvalue array}
\label{sec:array}

In this section we introduce the notion of an eigenvalue array over $\F$.
Fix an integer $d \geq 1$ and consider a sequence of
scalars taken from $\F$:
\begin{equation}
 (\{\th_i\}_{i=0}^d; \{\th^*_i\}_{i=0}^d).           \label{eq:parray3}
\end{equation}

\begin{definition}    \label{def:eigenarray}    \samepage
\ifDRAFT {\rm def:eigenarray}. \fi
The sequence \eqref{eq:parray3} is called an {\em eigenvalue array over $\F$} 
whenever it satisfies
conditions {\rm (i)--(iii)} in Theorem \ref{thm:main}.
We call $d$ the {\em diameter} of the eigenvalue array.
\end{definition}

\begin{definition}     \label{def:corresponingPhi}    \samepage
\ifDRAFT {\rm def:correspondingPhi}. \fi
Assume that \eqref{eq:parray3} is an eigenvalue array over $\F$.
By the {\em corresponding TB tridiagonal system} we mean
the one from Theorem \ref{thm:main}.
\end{definition}

\begin{definition}    \label{def:paramAWseq}    \samepage
\ifDRAFT {\rm def:paramAWseq}. \fi
Assume that \eqref{eq:parray3} is an eigenvalue array over $\F$.
By an {\em Askey-Wilson sequence for \eqref{eq:parray3}}
we mean an Askey-Wilson sequence for the corresponding TB tridiagonal system.
\end{definition}

\begin{definition}    \label{def:arrayfund}    \samepage
\ifDRAFT {\rm def:arrayfund}. \fi
Assume that \eqref{eq:parray3} is an eigenvalue array over $\F$.
By a {\em fundamental parameter for \eqref{eq:parray3}} we mean a fundamental
parameter for the corresponding TB tridiagonal system.
\end{definition}

We have some comments.

\begin{lemma}    \label{lem:Char2}   \samepage
\ifDRAFT {\rm lem:Char2}. \fi
Assume that $\text{\rm Char}(\F) =2$.
Then there does not exist an eigenvalue array over $\F$ with diameter $d$.
\end{lemma}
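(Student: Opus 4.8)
The plan is to derive a contradiction directly from conditions (i) and (iii) of Theorem \ref{thm:main}, exploiting the fact that in characteristic $2$ the antisymmetry condition (iii) collapses into a symmetry that is incompatible with the distinctness condition (i).

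Suppose toward a contradiction that \eqref{eq:parray3} is an eigenvalue array over $\F$ with $\text{\rm Char}(\F) = 2$ and diameter $d$. Recall that $d \geq 1$ by the standing assumption of this section. First I would apply condition (iii) of Theorem \ref{thm:main} with $i = 0$, which gives $\th_0 + \th_d = 0$. Since $\text{\rm Char}(\F) = 2$ we have $-1 = 1$ in $\F$, so this relation reads $\th_0 = \th_d$. On the other hand, condition (i) of Theorem \ref{thm:main} asserts that $\th_0, \th_1, \ldots, \th_d$ are mutually distinct. Because $d \geq 1$, the indices $0$ and $d$ differ, so condition (i) forces $\th_0 \neq \th_d$, contradicting the previous sentence. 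Hence no such eigenvalue array exists.

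There is no serious obstacle here; the only point to watch is that the argument genuinely uses $d \geq 1$ (which holds throughout this section), since for $d = 0$ the relation $\th_0 = \th_d$ would be the harmless identity $\th_0 = \th_0$. I note that the same conclusion could alternatively be extracted from Lemma \ref{lem:feasible}, together with the observation made in the proof of Lemma \ref{lem:onlyif} that the eigenvalue sequence of any TB tridiagonal system is a MutDist element of $\R^\text{\rm asym}$; but the direct contradiction above is shorter and self-contained, so I would prefer it.
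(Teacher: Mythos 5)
Your proof is correct and is exactly the paper's argument: the paper's proof reads ``Combine conditions (i), (iii) in Theorem \ref{thm:main},'' which is precisely the contradiction you spell out between $\th_0+\th_d=0$ (hence $\th_0=\th_d$ in characteristic $2$) and the distinctness of $\{\th_i\}_{i=0}^d$ with $d\geq 1$.
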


\begin{proof}
Combine conditions (i), (iii) in Theorem \ref{thm:main}.
\end{proof}

\begin{lemma}    \label{lem:fundunique}   \samepage
\ifDRAFT {\rm lem:fundunique}. \fi
Assume that \eqref{eq:parray3} is an eigenvalue array over $\F$.
\begin{itemize}
\item[\rm (i)]
Assume that $d \leq 2$.
Then any scalar $\beta \in \F$ is a fundamental parameter for \eqref{eq:parray3}.
\item[\rm (ii)]
Assume that $d \geq 3$.
Then there exists a unique fundamental parameter $\beta$ for \eqref{eq:parray3}.
\item[\rm (iii)]
Assume that $d \geq 3$ and $d$ is odd.
Then $\beta \neq -2$.
\end{itemize}
\end{lemma}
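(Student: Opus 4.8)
The plan is to first unwind the definition of a fundamental parameter. By Definition \ref{def:arrayfund}, Definition \ref{def:sysfund}, and Proposition \ref{prop:beta}, a scalar $\beta \in \F$ is a fundamental parameter for \eqref{eq:parray3} exactly when both sequences are $\beta$-recurrent, i.e.
\[
\th_{i-1} - \beta \th_i + \th_{i+1} = 0, \qquad \th^*_{i-1} - \beta \th^*_i + \th^*_{i+1} = 0 \qquad (1 \le i \le d-1).
\]
Thus the whole lemma reduces to asking when these two recurrences determine $\beta$, and this is the lens through which I would treat all three parts.

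For (i), I would argue exactly as in Note \ref{note:d12}. When $d=1$ the range $1 \le i \le d-1$ is empty, so the recurrences are vacuous and every $\beta$ works. When $d=2$ the only constraint is at $i=1$, namely $\th_0 - \beta \th_1 + \th_2 = 0$; but condition (iii) of Theorem \ref{thm:main} forces $\th_0 + \th_2 = 0$ and $2\th_1 = 0$, while $\text{\rm Char}(\F)\neq 2$ (by Lemma \ref{lem:Char2}) and condition (i) then give $\th_1 = 0$. Hence the constraint collapses to $\th_0 + \th_2 = 0$, which holds independently of $\beta$, and the dual sequence behaves identically. So every $\beta \in \F$ is a fundamental parameter.

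For (ii), existence is immediate, since condition (ii) of Theorem \ref{thm:main} is built into the definition of an eigenvalue array (Definition \ref{def:eigenarray}) and supplies a $\beta$ making both recurrences hold. For uniqueness, I would suppose $\beta$ and $\beta'$ are both fundamental parameters; subtracting the two recurrences for $\{\th_i\}_{i=0}^d$ yields $(\beta' - \beta)\th_i = 0$ for $1 \le i \le d-1$. Since $d \ge 3$, the indices $i=1$ and $i=2$ both lie in this range, and $\th_1 \neq \th_2$ by condition (i), so $\th_1$ and $\th_2$ cannot both vanish; choosing an index with $\th_i \neq 0$ forces $\beta = \beta'$.

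For (iii), the plan is to invoke Lemma \ref{lem:deven}. Conditions (i) and (iii) make $\{\th_i\}_{i=0}^d$ an antisymmetric, MutDist sequence that is $\beta$-recurrent for the unique $\beta$ of part (ii), hence a MutDist element of $\R^\text{\rm asym}$ for that $\beta$; by Lemma \ref{lem:feasible}, $\R^\text{\rm asym}$ is then MutDist. If $\beta = -2$, Lemma \ref{lem:deven} (applicable since $d \ge 3 \ge 2$) would force $d$ even, contradicting the hypothesis that $d$ is odd, so $\beta \neq -2$. I do not expect a serious obstacle in any part; the only point requiring care is to confirm that the $\beta$ attached to $\R^\text{\rm asym}$ in part (iii) is precisely the fundamental parameter shown unique in part (ii), so that Lemmas \ref{lem:feasible} and \ref{lem:deven} are being applied for the correct value of $\beta$.
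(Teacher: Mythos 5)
Your proposal is correct and follows essentially the same route as the paper: part (i) via Note \ref{note:d12}, part (ii) by observing that the recurrence at two consecutive indices pins down $\beta$ because $\th_1$ and $\th_2$ cannot both vanish, and part (iii) via Lemma \ref{lem:deven}. The only cosmetic difference is that you phrase the uniqueness step as subtracting two recurrences to get $(\beta'-\beta)\th_i=0$, while the paper solves for $\beta$ directly from $\th_0-\beta\th_1+\th_2=0$ and $\th_1-\beta\th_2+\th_3=0$; these are the same argument.
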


\begin{proof}
(i)
By Note \ref{note:d12}.

(ii)
By Definition \ref{def:arrayfund} there exists at least one fundamental parameter $\beta$.
This parameter satisfies $\th_0 - \beta \th_1 + \th_2 =0$ and
$\th_1 - \beta \th_2 + \th_3=0$.
At least one of $\th_1$, $\th_2$ is nonzero since $\th_1 \neq \th_2$.
By these comments $\beta$ is uniquely determined by $\th_0$, $\th_1$, $\th_2$,  $\th_3$.
The result follows.

(iii)
By Lemma \ref{lem:deven}.
\end{proof}

\begin{lemma}    \label{lem:rhorhosunique}    \samepage
\ifDRAFT {\rm lem:rhorhosunique}. \fi
Assume that \eqref{eq:parray3} is an eigenvalue array over $\F$.
Let $\beta$ denote a fundamental parameter for \eqref{eq:parray3}.
Then there exists a unique ordered pair $\varrho, \varrho^*$
such that $\beta,\varrho,\varrho^*$ is an Askey-Wilson sequence for \eqref{eq:parray3}.
\end{lemma}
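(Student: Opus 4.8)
The plan is to separate the claim into existence, which is essentially free, and uniqueness, which is where the one-line idea lives. For existence, I would note that the hypothesis that $\beta$ is a fundamental parameter for \eqref{eq:parray3} is, by Definitions \ref{def:arrayfund} and \ref{def:pairfund}, precisely the assertion that at least one ordered pair $\varrho, \varrho^*$ exists with $\beta, \varrho, \varrho^*$ an Askey-Wilson sequence for \eqref{eq:parray3}. So nothing needs to be constructed; the work is entirely in showing that this pair is forced.

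For uniqueness I would appeal to Proposition \ref{prop:recurrence}, which characterizes an Askey-Wilson sequence $\beta, \varrho, \varrho^*$ for the corresponding system as the simultaneous validity of \eqref{eq:AWrho} and \eqref{eq:AWrhos}. The key observation is that specializing to $i=1$ already determines the unknowns: \eqref{eq:AWrho} forces
\[
\varrho = \th_0^2 - \beta \th_0 \th_1 + \th_1^2,
\]
while \eqref{eq:AWrhos} forces
\[
\varrho^* = {\th^*_0}^2 - \beta \th^*_0 \th^*_1 + {\th^*_1}^2.
\]
Since the right-hand sides are fixed scalars determined by $\beta$ and the entries of the array \eqref{eq:parray3}, any two Askey-Wilson sequences sharing the first coordinate $\beta$ must agree in the remaining two coordinates. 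This yields uniqueness.

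I do not anticipate a genuine obstacle here; the only subtlety is bookkeeping of the definitions. Existence is supplied by the fundamental-parameter hypothesis and should not be re-derived, and for uniqueness only the $i=1$ instances of the two recurrences in Proposition \ref{prop:recurrence} are needed, so one should resist invoking the full range $1 \leq i \leq d$.
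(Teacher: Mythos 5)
Your proof is correct and follows essentially the same route as the paper: existence is immediate from the definition of a fundamental parameter, and uniqueness comes from Proposition \ref{prop:recurrence} via the relations \eqref{eq:AWrho}, \eqref{eq:AWrhos} (the $i=1$ instances suffice since $d\geq 1$). The paper's proof is the same two-line argument, merely citing Proposition \ref{prop:AW} and Definition \ref{def:AWseq} for existence instead of unwinding Definition \ref{def:pairfund} directly.
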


\begin{proof}
The scalars $\varrho$, $\varrho^*$ exist by Proposition \ref{prop:AW} and
Definition \ref{def:AWseq}.
They are unique by \eqref{eq:AWrho}, \eqref{eq:AWrhos}.
\end{proof}

\begin{lemma}   \label{lem:rhorhos}    \samepage
\ifDRAFT {\rm lem:rhorhos}. \fi
Assume that \eqref{eq:parray3} is an eigenvalue array over $\F$.
Let  $\beta, \varrho, \varrho^*$ denote an Askey-Wilson sequence for \eqref{eq:parray3}.
Then the following hold.
\begin{itemize}
\item[\rm (i)]
Assume that $d$ is even. Then
\begin{align*}
  \varrho &= \th^2_r, &
  \varrho^* &= \th^{*2}_r, & & \text{where \quad $r=d/2-1$}.
\end{align*}
\item[\rm (ii)]
Assume that $d$ is odd. Then
\begin{align*}
  \varrho &= (\beta+2) \th^2_r, &
  \varrho^* &= (\beta+2) \th^{*2}_r, & & \text{where \quad $r=(d-1)/2$}.
\end{align*}
\end{itemize}
\end{lemma}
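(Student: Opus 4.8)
The plan is to exploit the fact, recorded in Proposition \ref{prop:recurrence}, that an Askey-Wilson sequence $\beta,\varrho,\varrho^*$ for \eqref{eq:parray3} satisfies
\begin{align*}
\varrho &= \th_{i-1}^2 - \beta \th_{i-1}\th_i + \th_i^2, &
\varrho^* &= \th^{*2}_{i-1} - \beta \th^*_{i-1}\th^*_i + \th^{*2}_i
&& (1 \leq i \leq d).
\end{align*}
The right-hand sides are independent of $i$, so I am free to evaluate them at whatever index is most convenient. My strategy is to choose an index near the middle of the sequence and invoke the antisymmetry supplied by condition (iii) of Theorem \ref{thm:main}, namely $\th_i + \th_{d-i}=0$ and $\th^*_i + \th^*_{d-i}=0$.

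First suppose that $d$ is even. Then $d-d/2 = d/2$, so antisymmetry gives $2\th_{d/2}=0$, and since $\text{\rm Char}(\F) \neq 2$ (by Lemma \ref{lem:Char2}, as \eqref{eq:parray3} is an eigenvalue array) we obtain $\th_{d/2}=0$; likewise $\th^*_{d/2}=0$. Setting $i=d/2$ in the above formulas and writing $r=d/2-1$, the terms involving $\th_{d/2}$ and $\th^*_{d/2}$ vanish, leaving $\varrho = \th_r^2$ and $\varrho^* = \th^{*2}_r$. This yields (i).

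Next suppose that $d$ is odd and put $r=(d-1)/2$, so that $d-r=r+1$. Antisymmetry gives $\th_{r+1} = -\th_r$ and $\th^*_{r+1}=-\th^*_r$. Setting $i=r+1$ in the above formulas and substituting these relations, each expression collapses to $(\beta+2)$ times a square; for instance
\begin{align*}
\varrho &= \th_r^2 - \beta\th_r(-\th_r) + (-\th_r)^2 = (\beta+2)\th_r^2,
\end{align*}
and the same computation applied to the dual sequence gives $\varrho^* = (\beta+2)\th^{*2}_r$. This yields (ii). There is no real obstacle here; the only points requiring care are the validity of the chosen index (one checks $1 \leq d/2 \leq d$ when $d$ is even and $1 \leq r+1 \leq d$ when $d$ is odd, both of which hold since $d \geq 1$) and the use of $\text{\rm Char}(\F)\neq 2$ in the even case to conclude $\th_{d/2}=0$.
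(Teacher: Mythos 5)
Your proof is correct and follows essentially the same route as the paper: both evaluate the identities of Proposition \ref{prop:recurrence} at the middle index $i=r+1$ and use the antisymmetry from Theorem \ref{thm:main}(iii) to get $\th_{d/2}=0$ (even case) or $\th_{r+1}=-\th_r$ (odd case). Your explicit appeal to $\text{\rm Char}(\F)\neq 2$ and the index-range check are minor additions of detail, not a different argument.
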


\begin{proof}
(i)
By Theorem \ref{thm:main}(iii),
$\th_{r+1}=0$.
By this and \eqref{eq:AWrho} at $i=r+1$ we get $\varrho = \th_r^2$.
Similar for $\varrho^*$.

(ii)
By Theorem \ref{thm:main}(iii),
$\th_r + \th_{r+1}=0$.
By this and \eqref{eq:AWrho} at $i=r+1$ we get  $\varrho = (\beta+2) \th_r^2$.
Similar for $\varrho^*$.
\end{proof}

\begin{lemma}    \label{lem:th0}    \samepage
\ifDRAFT {\rm lem:th0}. \fi
Assume that \eqref{eq:parray3} is an eigenvalue array over $\F$.
Then the following hold.
\begin{itemize}
\item[\rm (i)]
Assume that $d$ is even.
Then 
\[
    \th_i = 0 \;\; \text{ if and only if } \;\;  i=d/2 \qquad\qquad (0 \leq i \leq d).
\]
\item[\rm (ii)]
Assume that $d$ is odd.
Then
\[
   \th_i \neq 0 \qquad\qquad (0 \leq i \leq d).
\]
\end{itemize}
\end{lemma}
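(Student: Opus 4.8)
The plan is to read off the result directly from the structural properties of an eigenvalue array, namely the antisymmetry in Theorem \ref{thm:main}(iii), the mutual distinctness in Theorem \ref{thm:main}(i), and the fact that $\text{\rm Char}(\F) \neq 2$. The latter is available because the existence of an eigenvalue array over $\F$ forces $\text{\rm Char}(\F) \neq 2$ by Lemma \ref{lem:Char2}; equivalently, it follows from the remark (made just after \eqref{eq:betarec} is fixed) that $\{\th_i\}_{i=0}^d$ is a MutDist element of $\R^{\text{\rm asym}}$, together with Lemma \ref{lem:feasible}. No recurrence manipulation is needed: the antisymmetry $\th_i + \th_{d-i} = 0$ alone pins down where a zero can occur, and MutDist controls how many zeros there can be.

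For part (i), with $d$ even set $m = d/2$. Antisymmetry at $i = m$ gives $\th_m + \th_{d-m} = 2\th_m = 0$, so $\th_m = 0$ since $\text{\rm Char}(\F) \neq 2$. This establishes that $i = d/2$ yields a zero. For the converse, I use only that the $\{\th_i\}_{i=0}^d$ are mutually distinct by Theorem \ref{thm:main}(i): at most one of them can equal $0$, and since $\th_{d/2} = 0$ we conclude $\th_i = 0$ if and only if $i = d/2$.

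For part (ii), with $d$ odd I argue by contradiction. Suppose $\th_i = 0$ for some $i$ with $0 \leq i \leq d$. Then antisymmetry gives $\th_{d-i} = -\th_i = 0$, so both $\th_i$ and $\th_{d-i}$ vanish. Mutual distinctness forces $i = d-i$, i.e.\ $d = 2i$, which is impossible for $d$ odd. Hence no $\th_i$ is zero. There is no real obstacle here; the only point requiring care is to invoke $\text{\rm Char}(\F) \neq 2$ in part (i) so that $2\th_{d/2} = 0$ genuinely yields $\th_{d/2} = 0$, and to note in part (ii) that the fixed-point equation $i = d-i$ has no integer solution when $d$ is odd.
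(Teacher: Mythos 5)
Your proof is correct and follows essentially the same route as the paper: the paper's proof simply cites Lemma \ref{lem:Char2} for $\text{\rm Char}(\F)\neq 2$ and then declares the result a consequence of Theorem \ref{thm:main}(i),(iii), which is exactly the antisymmetry-plus-distinctness argument you spell out. Your version just makes the details explicit.
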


\begin{proof}
We have $\text{\rm Char}(\F) \neq 2$ by Lemma \ref{lem:Char2}.
The result follows in view of Theorem \ref{thm:main}(i), (iii).
\end{proof}

\begin{lemma}    \label{lem:rho0}    \samepage
\ifDRAFT {\rm lem:rho0}. \fi
Assume that \eqref{eq:parray3} is an eigenvalue array over $\F$.
Let $\beta, \varrho, \varrho^*$ denote an Askey-Wilson sequence for \eqref{eq:parray3}.
Then the following are equivalent:

\text{\rm (i) } $\varrho=0$;
\qquad
\text{\rm (ii) } $\varrho^* = 0$;
\qquad
\text{\rm (iii) } $d=1$ and  $\beta = -2$.
\end{lemma}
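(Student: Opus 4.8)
The plan is to establish the equivalence (i) $\Leftrightarrow$ (iii) directly, and then to obtain (ii) $\Leftrightarrow$ (iii) by a duality argument, so that all three statements become equivalent. Throughout I would use that $\text{\rm Char}(\F)\neq 2$ by Lemma \ref{lem:Char2}, which guarantees in particular that a nonzero scalar has nonzero square. The two main inputs for the first equivalence are the closed forms for $\varrho$ in Lemma \ref{lem:rhorhos} and the zero-pattern of the eigenvalue sequence in Lemma \ref{lem:th0}.

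For the implication (iii) $\Rightarrow$ (i): if $d=1$ and $\beta=-2$, then the index in Lemma \ref{lem:rhorhos}(ii) is $r=(d-1)/2=0$, so $\varrho=(\beta+2)\th_0^2=0$. For the converse (i) $\Rightarrow$ (iii), I would suppose $\varrho=0$ and split on the parity of $d$. If $d$ is even, Lemma \ref{lem:rhorhos}(i) gives $\varrho=\th_r^2$ with $r=d/2-1$; since $d$ is even, Lemma \ref{lem:th0}(i) shows $\th_i=0$ only for $i=d/2\neq r$, hence $\th_r\neq 0$ and $\varrho\neq 0$, a contradiction. Therefore $d$ is odd, and Lemma \ref{lem:rhorhos}(ii) gives $\varrho=(\beta+2)\th_r^2$ with $r=(d-1)/2$, where $\th_r\neq 0$ by Lemma \ref{lem:th0}(ii); thus $\varrho=0$ forces $\beta=-2$. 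Finally I would exclude $d\geq 3$: for odd $d\geq 3$, Lemma \ref{lem:fundunique}(iii) asserts $\beta\neq -2$, contradicting $\beta=-2$. Hence $d=1$, giving (iii).

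For (ii) $\Leftrightarrow$ (iii) I would apply the equivalence just proved to the dual eigenvalue array $(\{\th^*_i\}_{i=0}^d;\{\th_i\}_{i=0}^d)$. This is the eigenvalue array of $\Phi^*$ by Lemma \ref{lem:relative}, and it is again an eigenvalue array over $\F$ since conditions (i)--(iii) of Theorem \ref{thm:main} are symmetric in the two sequences. The dual array has the same diameter $d$ and the same fundamental parameter $\beta$ by Lemma \ref{lem:sysfund}, and $\beta,\varrho^*,\varrho$ is an Askey-Wilson sequence for it by Lemma \ref{lem:AWseqdual}. Applying (i) $\Leftrightarrow$ (iii) to this array, with $\varrho^*$ now playing the role of $\varrho$, yields that $\varrho^*=0$ holds if and only if $d=1$ and $\beta=-2$, which is exactly (ii) $\Leftrightarrow$ (iii). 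Combining the two equivalences shows (i), (ii), (iii) are mutually equivalent.

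I do not expect a genuine obstacle: the argument is bookkeeping layered on top of the closed forms in Lemma \ref{lem:rhorhos} and the zero-pattern in Lemma \ref{lem:th0}. The only steps requiring a little care are the parity split and the separate exclusion of odd $d\geq 3$ through Lemma \ref{lem:fundunique}(iii). As an alternative to the duality step for (ii) $\Leftrightarrow$ (iii), one could instead invoke Lemma \ref{lem:new1} to write $\th^*_i=\zeta\th_i$ with $\zeta\neq 0$; substituting into the defining relations \eqref{eq:AWrho}, \eqref{eq:AWrhos} gives $\varrho^*=\zeta^2\varrho$, whence (i) $\Leftrightarrow$ (ii) immediately.
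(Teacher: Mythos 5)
Your proposal is correct and follows essentially the same route as the paper: the parity split on $d$, the closed forms $\varrho=\th_r^2$ (for $d$ even) and $\varrho=(\beta+2)\th_r^2$ (for $d$ odd) from Lemma \ref{lem:rhorhos}, the nonvanishing of $\th_r$ from Lemma \ref{lem:th0}, and the exclusion of odd $d\geq 3$ via Lemma \ref{lem:fundunique}(iii). The only cosmetic difference is that the paper treats $\varrho$ and $\varrho^*$ in parallel, while you handle $\varrho^*$ by dualizing afterward; both are fine.
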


\begin{proof}
First assume that $d$ is even.
Then each of $\varrho$, $\varrho^*$ is nonzero by 
Lemmas \ref{lem:rhorhos}(i) and \ref{lem:th0}(i).
Next assume that $d$ is odd.
Then (i)--(iii) are equivalent
by Lemmas \ref{lem:fundunique}(iii), \ref{lem:rhorhos}(ii), \ref{lem:th0}(ii).
\end{proof}

\begin{lemma}    \label{lem:rhononzero}    \samepage
\ifDRAFT {\rm lem:rhononzero}. \fi
Assume that \eqref{eq:parray3} is an eigenvalue array over $\F$.
Then \eqref{eq:parray3} has an Askey-Wilson sequence $\beta, \varrho, \varrho^*$
such that $\varrho$, $\varrho^*$ are nonzero.
\end{lemma}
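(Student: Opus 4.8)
The plan is to reduce everything to Lemma \ref{lem:rho0}, which pins down exactly when $\varrho$ (equivalently $\varrho^*$) can vanish: for an Askey-Wilson sequence $\beta,\varrho,\varrho^*$ attached to the array one has $\varrho=0$ if and only if $\varrho^*=0$ if and only if $d=1$ and $\beta=-2$. Thus it suffices to produce a single fundamental parameter $\beta$ for \eqref{eq:parray3} that avoids this one degenerate configuration; feeding it into Lemma \ref{lem:rhorhosunique} then yields the desired pair $\varrho,\varrho^*$, both nonzero.

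First I would record that $\text{\rm Char}(\F)\neq 2$ by Lemma \ref{lem:Char2}, so in particular $-2\neq 0$ in $\F$. Next I would split on the diameter. If $d\geq 2$, then by Lemma \ref{lem:fundunique} a fundamental parameter $\beta$ for \eqref{eq:parray3} exists (uniquely when $d\geq 3$, and with $\beta$ arbitrary when $d=2$). Applying Lemma \ref{lem:rhorhosunique} to this $\beta$ gives a unique pair $\varrho,\varrho^*$ such that $\beta,\varrho,\varrho^*$ is an Askey-Wilson sequence for \eqref{eq:parray3}. Since $d\geq 2$, the condition ``$d=1$ and $\beta=-2$'' fails, so Lemma \ref{lem:rho0} forces $\varrho\neq 0$ and $\varrho^*\neq 0$, as required.

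The only remaining case is $d=1$, and here I would use the freedom afforded by Lemma \ref{lem:fundunique}(i), which asserts that for $d\leq 2$ every scalar in $\F$ is a fundamental parameter. I would simply choose $\beta=0$; this is legitimate because $0\neq -2$ (as $\text{\rm Char}(\F)\neq 2$), so $\beta$ differs from the single bad value. Lemma \ref{lem:rhorhosunique} again supplies $\varrho,\varrho^*$ with $\beta,\varrho,\varrho^*$ an Askey-Wilson sequence, and since the configuration ``$d=1$ and $\beta=-2$'' is avoided, Lemma \ref{lem:rho0} yields $\varrho\neq 0$ and $\varrho^*\neq 0$.

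There is no serious obstacle here: the entire content is already isolated in Lemma \ref{lem:rho0}, and the work reduces to choosing $\beta$ wisely. The one point requiring care is the case $d=1$, where---unlike $d\geq 2$---an arbitrary fundamental parameter might coincide with $-2$; the remedy is to exploit the total freedom in selecting $\beta$ guaranteed by Lemma \ref{lem:fundunique}(i) and pick $\beta\neq -2$, which is possible precisely because $\text{\rm Char}(\F)\neq 2$.
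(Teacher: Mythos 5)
Your proposal is correct and matches the paper's proof, which reads in full: ``By Lemma \ref{lem:rho0} and since the choice of $\beta$ is arbitrary for $d=1$.'' You have simply expanded that one-line argument—splitting on $d$, invoking Lemmas \ref{lem:fundunique} and \ref{lem:rhorhosunique} for the existence of the sequence, and making the explicit choice $\beta=0\neq -2$ (valid since $\text{\rm Char}(\F)\neq 2$ by Lemma \ref{lem:Char2}) in the only degenerate case $d=1$.
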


\begin{proof}
By Lemma \ref{lem:rho0} and since the choice of $\beta$ is arbitrary for $d=1$.
\end{proof}

\begin{definition}   \label{def:inter}    \samepage
\ifDRAFT {\rm def:inter}. \fi
Assume that \eqref{eq:parray3} is an eigenvalue array over $\F$.
By the {\em corresponding intersection numbers} (resp.\ {\em dual intersection numbers})
we mean the intersection numbers (resp.\ dual intersection numbers) of
the corresponding TB tridiagonal system.
\end{definition}

\section{The TB tridiagonal systems in closed form}
\label{sec:closedform}

In this section we give in closed form the eigenvalue array and the (dual) intersection numbers
of a TB tridiagonal system.
We also display the Askey-Wilson relations for the associated TB tridiagonal pair.
Fix an integer $d \geq 1$, and consider a sequence of scalars taken from $\F$:
\begin{equation}
   (\{\th_i\}_{i=0}^d; \{\th^*_i\}_{i=0}^d).                \label{eq:parray7}
\end{equation}

As a warmup we will handle the cases $d=1$, $d=2$ separately.

\begin{lemma}    \label{lem:d1}    \samepage
\ifDRAFT {\rm lem:d1}. \fi
For $d=1$ the following {\rm (i), (ii)} are equivalent:
\begin{itemize}
\item[\rm (i)]
the sequence  \eqref{eq:parray7} is an eigenvalue array over $\F$;
\item[\rm (ii)]
$\text{\rm Char}(\F) \neq 2$,
the scalars $\th_0$, $\th^*_0$ are nonzero,
and $\th_1 = - \th_0$, $\th^*_1 = - \th^*_0$.
\end{itemize}
Assume that {\rm (i), (ii)} hold.
Then the corresponding (dual) intersection numbers are
\[
  c_1=\th_0, \quad b_0 = \th_0, \quad c^*_1 = \th^*_0, \quad b^*_0 = \th^*_0.
\]
Moreover, the Askey-Wilson sequences for \eqref{eq:parray7} are
\[
    \beta, \quad (\beta+2)\th_0^2, \quad (\beta+2)\th^{*2}_0   \qquad \qquad\qquad (\beta \in \F).
\]
In addition,
\begin{align*}
 A A^* &= - A^* A,   \qquad\qquad  A^2 = \th_0^2 I, 
  \qquad\qquad A^{*2} = \th^{*2}_0 I.
\end{align*} 
\end{lemma}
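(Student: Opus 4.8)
The plan is to split the statement into its four assertions and dispatch each by specializing results already established for general $d$.

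First I would prove the equivalence (i) $\Leftrightarrow$ (ii). By Definition \ref{def:eigenarray}, the sequence \eqref{eq:parray7} is an eigenvalue array precisely when it satisfies conditions (i)--(iii) of Theorem \ref{thm:main} with $d=1$. Condition (ii) of the theorem is vacuous here (as recorded in Note \ref{note:d12}); condition (iii) reads $\th_1=-\th_0$ and $\th^*_1=-\th^*_0$; and condition (i) reads $\th_0\neq\th_1$ and $\th^*_0\neq\th^*_1$. Substituting the antisymmetry into the distinctness turns these into $2\th_0\neq 0$ and $2\th^*_0\neq 0$, which is exactly the assertion that $\text{\rm Char}(\F)\neq 2$ and $\th_0,\th^*_0$ are nonzero. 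This gives the equivalence.

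Assuming (i),(ii), the intersection numbers are immediate: since $d=1\geq 1$ the corresponding system is nontrivial by Lemma \ref{lem:trivial}, so Definition \ref{def:inter} together with Lemma \ref{lem:cibi} gives $c_1=c_d=\th_0$, $b_0=\th_0$, and dually $c^*_1=\th^*_0$, $b^*_0=\th^*_0$ (the interior formulas in Lemma \ref{lem:cibi} are vacuous when $d=1$). For the Askey--Wilson sequences I would invoke Lemma \ref{lem:fundunique}(i), by which every $\beta\in\F$ is a fundamental parameter for \eqref{eq:parray7}; Lemma \ref{lem:rhorhosunique} then supplies a unique pair $\varrho,\varrho^*$ for each such $\beta$, and Lemma \ref{lem:rhorhos}(ii) (the $d$ odd case, with $r=0$) evaluates them as $\varrho=(\beta+2)\th_0^2$ and $\varrho^*=(\beta+2)\th^{*2}_0$. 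Hence the Askey--Wilson sequences are exactly those listed.

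For the three operator relations I would work in the two-dimensional space $V$. The identities $A^2=\th_0^2 I$ and $A^{*2}=\th^{*2}_0 I$ follow from $A=\th_0 E_0+\th_1 E_1$ and $A^*=\th^*_0 E^*_0+\th^*_1 E^*_1$, using $\th_1=-\th_0$, $\th^*_1=-\th^*_0$ together with $E_iE_j=\delta_{i,j}E_i$ and $\sum_i E_i=I$. For the anticommutator the clean route is to compute $E_i(AA^*+A^*A)E_j=(\th_i+\th_j)E_iA^*E_j$ for $0\leq i,j\leq 1$: by \eqref{eq:EiAsEj} this vanishes unless $|i-j|=1$, and in that case $\th_i+\th_j=\th_0+\th_1=0$, so every block is zero; summing over $i,j$ via $I=\sum_iE_i$ yields $AA^*+A^*A=0$. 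Equivalently one may represent $A,A^*$ on a $\Phi$-standard basis as $A^\natural=\left(\begin{smallmatrix}0&\th_0\\\th_0&0\end{smallmatrix}\right)$ and $(A^*)^\natural=\text{\rm diag}(\th^*_0,-\th^*_0)$, read off all three relations by matrix multiplication, and transport them back through the algebra isomorphism $\natural$. None of the steps is a genuine obstacle; the only points needing care are verifying that the interior intersection-number and recurrence formulas really are vacuous at $d=1$, and noting that the single antisymmetry $\th_0+\th_1=0$ is precisely what forces both the anticommutation and the coincidence $\th_1^2=\th_0^2$.
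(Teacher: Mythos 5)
Your proposal is correct and follows essentially the same route as the paper: the equivalence via Theorem \ref{thm:main} (with Note \ref{note:d12} handling the vacuous recurrence), the intersection numbers via Lemma \ref{lem:cibi}, and the Askey--Wilson sequences via Lemmas \ref{lem:fundunique}(i) and \ref{lem:rhorhos}(ii). Your block computation $E_i(AA^*+A^*A)E_j=(\th_i+\th_j)E_iA^*E_j$ is just a spelled-out version of the paper's remark that $A^*$ swaps the eigenspaces of $A$, so there is no substantive difference.
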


\begin{proof}
(i) $\Rightarrow$ (ii)
By Theorem \ref{thm:main}(iii),
$\th_1 = - \th_0$ and $\th^*_1 = - \th^*_0$.
By this and Theorem \ref{thm:main}(i),
$\text{\rm Char}(\F) \neq 2$ and $\th_0 \neq 0$, $\th^*_0 \neq 0$.

(ii) $\Rightarrow$ (i)
One verifies the conditions (i)--(iii) in Theorem \ref{thm:main}.
Therefore \eqref{eq:parray7} is an eigenvalue array over $\F$.

Assume that {\rm (i), (ii)} hold.
Then the intersection numbers and dual intersection numbers
are obtained using Lemma \ref{lem:cibi}.
The Askey-Wilson sequences are obtained by
Lemmas \ref{lem:fundunique}(i) and \ref{lem:rhorhos}(ii).
By (ii) above, $A$ has eigenvalues $\th_0$, $- \th_0$. So $A^2 = \th_0^2 I$.
Similarly $A^{*2} = \th^{*2}_0 I$.
By \eqref{eq:AsVi}, $A^*$ swaps the eigenspaces of $A$. By this we find that $A A^* = - A^* A$.
\end{proof}

\begin{lemma}    \label{lem:d2}    \samepage
\ifDRAFT {\rm lem:d2}. \fi
For $d=2$ the following {\rm (i), (ii)} are equivalent:
\begin{itemize}
\item[\rm (i)]
the sequence  \eqref{eq:parray7} is an eigenvalue array over $\F$;
\item[\rm (ii)]
$\text{\rm Char}(\F) \neq 2$,
the scalars $\th_0$, $\th^*_0$ are nonzero,
and 
\begin{align*}
\th_1 &=0,  \qquad  \th_2 = - \th_0,  \qquad  \th^*_1 =0, \qquad  \th^*_2 = - \th^*_0.
\end{align*}
\end{itemize}
Assume that {\rm (i), (ii)} hold.
Then the corresponding (dual) intersection numbers are
\begin{align*}
c_1 &= \th_0/2, & c_2 &= \th_0, & c^*_1 &= \th^*_0/2, & c^*_2 &= \th^*_0,
\\
b_0 &= \th_0, & b_1 &= \th_0/2,  & b^*_0 &= \th^*_0,  & b^*_1 &= \th^*_0/2.
\end{align*}
Moreover, the Askey-Wilson sequences for \eqref{eq:parray7} are
\[
    \beta, \quad \th_0^2, \quad \th^{*2}_0   \qquad \qquad\qquad (\beta \in \F).
\]
In addition,
\begin{align}
A A^* A &= 0,  \qquad\qquad   A^2 A^* + A^* A^2 = \th_0^2 A^*,     \label{eq:AWd2no1}
\\
A^* A A^* &=0, \qquad\qquad A^{*2} A + A A^{*2} = \th^{*2}_0 A.   \label{eq:AWd2no2}
\end{align}
\end{lemma}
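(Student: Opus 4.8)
The plan is to follow the same template as the proof of Lemma \ref{lem:d1}, specializing the general machinery to the case $d=2$. The equivalence of (i) and (ii) is the easy part. For (i) $\Rightarrow$ (ii), I would invoke Theorem \ref{thm:main}(iii): the antisymmetry condition gives $\th_0 + \th_2 = 0$ and $2\th_1 = 0$, hence $\th_2 = -\th_0$; since the $\th_i$ are mutually distinct by Theorem \ref{thm:main}(i), the relation $2\th_1 = 0$ forces $\text{\rm Char}(\F) \neq 2$ and $\th_1 = 0$. The same argument applied to the dual sequence yields $\th^*_1 = 0$ and $\th^*_2 = -\th^*_0$, and distinctness gives $\th_0 \neq 0$, $\th^*_0 \neq 0$. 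For (ii) $\Rightarrow$ (i), I would simply verify conditions (i)--(iii) of Theorem \ref{thm:main} directly: distinctness follows from $\text{\rm Char}(\F) \neq 2$ and $\th_0 \neq 0$; the recurrence (ii) is vacuous here (or holds for any $\beta$, by Note \ref{note:d12}); and the antisymmetry (iii) is immediate from the stated values.

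For the intersection numbers under the assumption that (i), (ii) hold, I would plug the eigenvalue array into Lemma \ref{lem:cibi}. The boundary formulas give $c_2 = \th_0$, $b_0 = \th_0$ directly. For the interior entries $c_1$ and $b_1$, I would substitute $\th_0, \th_1 = 0, \th^*_0, \th^*_1 = 0, \th^*_2 = -\th^*_0$ into \eqref{eq:ci} and \eqref{eq:bi}; the computation reduces to $c_1 = (\th_1 \th^*_1 - \th_0 \th^*_2)/(\th^*_0 - \th^*_2) = \th_0 \th^*_0/(2\th^*_0) = \th_0/2$, and similarly $b_1 = \th_0/2$. The dual intersection numbers follow by applying the same formulas (i.e.\ \eqref{eq:csi}, \eqref{eq:bsi}) with the roles of the two sequences exchanged. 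The Askey-Wilson sequences come from Lemma \ref{lem:fundunique}(i), which allows any $\beta \in \F$ since $d = 2 \leq 2$, together with Lemma \ref{lem:rhorhos}(ii) with $r = (d-1)/2$; but one must be careful here, since $d=2$ is \emph{even}, so in fact I should use Lemma \ref{lem:rhorhos}(i), which gives $\varrho = \th_r^2$ with $r = d/2 - 1 = 0$, i.e.\ $\varrho = \th_0^2$ and $\varrho^* = \th^{*2}_0$, matching the claimed values.

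The identities \eqref{eq:AWd2no1}, \eqref{eq:AWd2no2} I would obtain in two ways. The relations $A^2 A^* + A^* A^2 = \th_0^2 A^*$ and $A^{*2} A + A A^{*2} = \th^{*2}_0 A$ are exactly the Askey-Wilson relations \eqref{eq:AW1}, \eqref{eq:AW2} with $\beta = 0$ (a legitimate choice of fundamental parameter) and $\varrho = \th_0^2$, $\varrho^* = \th^{*2}_0$, so they follow from Proposition \ref{prop:AW} together with the $\varrho$-values just computed. For $A A^* A = 0$: since $A$ has eigenvalues $\th_0, 0, -\th_0$ with $\th_1 = 0$, the middle eigenspace lies in $\Ker A$, and one can argue via the standard basis that $A A^* A$ annihilates each $v_i$; alternatively, setting $\beta = 0$ in the $\mathbb{Z}_3$-type reasoning, the relation $A A^* A = 0$ is what the vanishing of the $\beta$-term contributes. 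Concretely, I would compute the matrix of $A A^* A$ in the $\Phi$-standard basis using Lemma \ref{lem:Avi} and \eqref{eq:Asnat}, observing that $\th^*_1 = 0$ forces the relevant products to cancel, and similarly for $A^* A A^*$ using $\th_1 = 0$.

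The main obstacle I anticipate is purely bookkeeping: getting the parity of $d$ right when applying Lemma \ref{lem:rhorhos} (here $d=2$ is even, so part (i) applies, not part (ii) as one might reflexively copy from the $d=1$ case), and confirming that the claimed $\varrho = \th_0^2$ is consistent across all choices of $\beta$ — which it is, precisely because $\th_r^2$ in the even case does not depend on $\beta$. The identity $A A^* A = 0$ is the only piece not handed to us verbatim by a single prior result, so verifying it cleanly, either by the eigenvalue/kernel argument or by direct matrix computation in the standard basis, is where I would spend the most care.
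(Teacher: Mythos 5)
Your proposal is correct, and for the equivalence of (i), (ii), the intersection numbers, and the Askey--Wilson sequences it follows the paper's route exactly (the paper just says ``similar to Lemma \ref{lem:d1}'', which is the Theorem \ref{thm:main} argument you spell out; your self-correction to Lemma \ref{lem:rhorhos}(i) for even $d$ is the right call). The only place you diverge is in deriving \eqref{eq:AWd2no1}, \eqref{eq:AWd2no2}. The paper's device is slicker than either of your suggestions: since every $\beta \in \F$ is a fundamental parameter and the pair $\varrho = \th_0^2$, $\varrho^* = \th_0^{*2}$ is the same for all of them, the relation $A^2A^* - \beta AA^*A + A^*A^2 = \th_0^2 A^*$ holds for \emph{every} $\beta$; subtracting two instances with distinct $\beta$ immediately gives $AA^*A = 0$, and substituting back gives the anticommutator relation. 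Your primary plan --- direct computation of $AA^*A$ in the $\Phi$-standard basis using $\th^*_1 = 0$ and the computed intersection numbers --- is also valid (the only nontrivial entry is the coefficient of $v_1$ in $AA^*Av_1$, which is $b_0c_1\th^*_0 + c_2b_1\th^*_2 = 0$), as is the eigenspace argument ($A^*$ maps $V_0$ and $V_2$ into $V_1 = \Ker A$). But your secondary remark that ``setting $\beta=0$ \ldots\ the relation $AA^*A=0$ is what the vanishing of the $\beta$-term contributes'' is not an argument: setting $\beta=0$ merely removes the $AA^*A$ term from the identity and proves nothing about $AA^*A$ itself. What makes the conclusion work is the freedom to vary $\beta$ with $\varrho$ fixed, which is exactly what the paper exploits.
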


\begin{proof}
For the assertions above \eqref{eq:AWd2no1},
the proof is similar to Lemma \ref{lem:d1}.
To obtain \eqref{eq:AWd2no1},
set $\varrho = \th_0^2$ in \eqref{eq:AW1} and use the fact that
$\beta$ is arbitrary.
Equation \eqref{eq:AWd2no2} is similarly obtained.
\end{proof}

For the rest of this section, assume that $d \geq 3$.
In Examples \ref{ex:1}--\ref{ex:4} below, 
we display all the parameter arrays over $\F$ with diameter $d$.
In each case we display the (dual) intersection numbers from Definition \ref{def:cibi}, 
as well as the corresponding Askey-Wilson sequence $\beta, \varrho, \varrho^*$.
\begin{example}   \label{ex:1}   \samepage
\ifDRAFT {\rm ex:1}. \fi
Assume that $\text{\rm Char}(\F)$ is $0$ or greater than $d$.
Assume that there exist nonzero $h$, $h^* \in \F$ such that
\begin{align*}
  \th_i &= h (d-2i),
\qquad\qquad
  \th^*_i = h^* (d-2i)  
&& (0 \leq i \leq d).
\end{align*}
Then \eqref{eq:parray7} is an eigenvalue array over $\F$ with
fundamental parameter $\beta=2$.
The corresponding (dual) intersection numbers and the scalars $\varrho$, $\varrho^*$ are
\begin{align*}
 c_i &=  h i, &  c^*_i &= h^* i &&  (1 \leq i \leq d),
\\
 b_i &= h(d-i), & b^*_i &= h^*(d-i) && (0 \leq i \leq d-1),
\\
 \varrho &= 4 h^2, & \varrho^* &= 4 h^{*2}.
\end{align*}
\end{example}

\begin{proof}
Define $\beta=2$.
One routinely verifies that each of $\{\th_i\}_{i=0}^d$, $\{\th^*_i\}_{i=0}^d$
is $\beta$-recurrent and antisymmetric, so contained in  $\R^\text{\rm asym}$.
By Lemma \ref{lem:closedpre4} the subspace $\R^\text{\rm asym}$
is MutDist,
so each of $\{\th_i\}_{i=0}^d$, $\{\th^*_i\}_{i=0}^d$ is MutDist.
By these comments \eqref{eq:parray7} satisfies conditions (i)--(iii)
in Theorem \ref{thm:main}.
Thus \eqref{eq:parray7} is an eigenvalue array over $\F$ with fundamental
parameter $\beta$.
To get the (dual) intersection numbers and the scalars
$\varrho$, $\varrho^*$,  use Lemma \ref{lem:cibi} and
\eqref{eq:AWrho}, \eqref{eq:AWrhos}.
\end{proof}

The following Examples can be verified in a similar way.

\begin{example}   \label{ex:2}   \samepage
\ifDRAFT {\rm ex:2}. \fi
Assume that $d$ is even.
Assume that $\text{\rm Char}(\F)$ is $0$ or greater than $d$.
Assume that there exist nonzero $h$, $h^* \in \F$ such that
\begin{align*}
  \th_i &= h (d-2i)(-1)^i,
&
  \th^*_i &= h^* (d-2i)(-1)^i
&& (0 \leq i \leq d).
\end{align*}
Then \eqref{eq:parray7} is an eigenvalue array over $\F$ with
fundamental parameter $\beta=- 2$.
The corresponding (dual) intersection numbers and the scalars $\varrho$, $\varrho^*$ are
\begin{align*}
 c_i &=  h i, &  c^*_i &= h^* i &&  (1 \leq i \leq d),
\\
 b_i &= h(d-i), & b^*_i &= h^*(d-i) && (0 \leq i \leq d-1),
\\
 \varrho &= 4 h^2, & \varrho^* &= 4 h^{*2}.
\end{align*}
\end{example}

\begin{example}   \label{ex:3}   \samepage
\ifDRAFT {\rm ex:3}. \fi
Assume that $d$ is even and $\text{\rm Char}(\F) \neq 2$.
Let $q$ denote a nonzero scalar in $\overline{\F}$ such that
\begin{align*}
q^2+q^{-2} &\in \F,   &
q^{2i} &\neq 1 \quad (1 \leq i \leq d), &
q^{2i} &\neq -1 \quad (1 \leq i \leq d-1).
\end{align*}
Assume that there exist nonzero $h$, $h^* \in \F$ such that
\begin{align*}
  \th_i &= \frac{ h (q^{d-2i} - q^{2i-d}) }{q^2-q^{-2}}, 
&
  \th^*_i &= \frac{ h^* (q^{d-2i} - q^{2i-d}) }{q^2-q^{-2}}
&& (0 \leq i \leq d).
\end{align*}
Then \eqref{eq:parray7} is an eigenvalue array over $\F$ with
fundamental parameter $\beta=q^2+q^{-2}$.
Moreover $\beta \neq \pm 2$.
The corresponding (dual) intersection numbers and the scalars $\varrho$, $\varrho^*$ are
\begin{align*}
 c_i &=  \frac{h (q^{2i} - q^{-2i})}{(q^2-q^{-2})(q^{d-2i} + q^{2i-d})}  \qquad (1 \leq i \leq d-1),
 &   c_d &=  \frac{ h (q^{d} - q^{-d}) }{q^2-q^{-2}}, 
\\
b_i &=  \frac{h (q^{2d-2i} - q^{2i-2d})}{(q^2-q^{-2})(q^{d-2i} + q^{2i-d})}  \qquad (1 \leq i \leq d-1),
&  b_0 &= \frac{ h (q^{d} - q^{-d}) }{q^2-q^{-2}}, 
\\
 \varrho &= h^2.
\end{align*}
To get $\{c^*_i\}_{i=1}^d$, $\{b^*_i\}_{i=0}^{d-1}$, $\varrho^*$,
replace $h$ with $h^*$ in the above.
\end{example}

\begin{example}   \label{ex:4}   \samepage
\ifDRAFT {\rm ex:4}. \fi
Assume that $d$ is odd and  $\text{\rm Char}(\F) \neq 2$.
Let $q$ denote a nonzero scalar in $\overline{\F}$ such that
\begin{align*}
q^2+q^{-2} &\in \F,   &
q^{2i} &\neq 1 \quad (1 \leq i \leq d), &
q^{2i} &\neq -1 \quad (1 \leq i \leq d-1).
\end{align*}
Assume that there exist nonzero $h$, $h^* \in \F$ such that
\begin{align*}
  \th_i &= \frac{ h (q^{d-2i} - q^{2i-d}) }{q-q^{-1}}, 
&
  \th^*_i &= \frac{ h^* (q^{d-2i} - q^{2i-d}) }{q-q^{-1}}
&& (0 \leq i \leq d).
\end{align*}
Then \eqref{eq:parray7} is an eigenvalue array over $\F$ with
fundamental parameter $\beta=q^2+q^{-2}$.
Moreover $\beta \neq \pm 2$.
The corresponding (dual) intersection numbers and the scalars $\varrho$, $\varrho^*$ are
\begin{align*}
 c_i &=  \frac{h (q^{2i} - q^{-2i})}{(q-q^{-1})(q^{d-2i} + q^{2i-d})}  \qquad (1 \leq i \leq d-1),
 &   c_d &=  \frac{ h (q^{d} - q^{-d}) }{q-q^{-1}}, 
\\
b_i &=  \frac{h (q^{2d-2i} - q^{2i-2d})}{(q-q^{-1})(q^{d-2i} + q^{2i-d})}  \qquad (1 \leq i \leq d-1),
&  b_0 &= \frac{ h (q^{d} - q^{-d}) }{q-q^{-1}}, 
\\
 \varrho &= h^2  (q+q^{-1})^2.
\end{align*}
To get $\{c^*_i\}_{i=1}^d$, $\{b^*_i\}_{i=0}^{d-1}$, $\varrho^*$,
replace $h$ with $h^*$ in the above.
\end{example}

\begin{theorem}   \label{thm:thths}    \samepage
\ifDRAFT {\rm thm:thths}. \fi
Every eigenvalue array over $\F$ with diameter $d \geq 3$ is listed
in exactly one of the Examples \ref{ex:1}--\ref{ex:4}.
\end{theorem}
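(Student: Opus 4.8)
The plan is to assemble the theorem from the structural results of Section~\ref{sec:sym} together with the classification Theorem~\ref{thm:main}. Let $(\{\th_i\}_{i=0}^d;\{\th^*_i\}_{i=0}^d)$ be an eigenvalue array over $\F$ with diameter $d\ge 3$. By definition it satisfies conditions (i)--(iii) of Theorem~\ref{thm:main}, so there is $\beta\in\F$ for which both $\{\th_i\}_{i=0}^d$ and $\{\th^*_i\}_{i=0}^d$ are $\beta$-recurrent (condition (ii)) and antisymmetric (condition (iii)), and they are MutDist by condition (i). Thus each is a MutDist element of $\R^{\text{\rm asym}}$. By Lemma~\ref{lem:Char2} we have $\text{\rm Char}(\F)\neq 2$, and by Lemma~\ref{lem:fundunique}(ii) the parameter $\beta$ is uniquely determined by the array. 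Since $\dim\R^{\text{\rm asym}}=1$ (Lemma~\ref{lem:dimR}) and neither sequence is zero (the entries are mutually distinct and $d\ge 1$), each of $\{\th_i\}$, $\{\th^*_i\}$ is a nonzero scalar multiple of the distinguished generator of $\R^{\text{\rm asym}}$ exhibited in Lemmas~\ref{lem:asymbasis1} and~\ref{lem:asymbasis2}; I would record these proportionality constants as nonzero scalars $h,h^*$. The argument then splits according to whether $\beta=2$, $\beta=-2$, or $\beta\neq\pm 2$.

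First I would treat $\beta=\pm 2$. If $\beta=2$, then Lemma~\ref{lem:closedpre4} shows that $\R^{\text{\rm asym}}$ is MutDist exactly when $\text{\rm Char}(\F)$ is $0$ or greater than $d$, and Corollary~\ref{cor:basis}(i) with Lemma~\ref{lem:asymbasis1} gives the generator $\sigma_i=d-2i$; hence $\th_i=h(d-2i)$ and $\th^*_i=h^*(d-2i)$ with $h,h^*$ nonzero, which is precisely Example~\ref{ex:1}. If $\beta=-2$, then Lemma~\ref{lem:deven} (equivalently Lemma~\ref{lem:fundunique}(iii)) forces $d$ to be even, the MutDist feasibility condition is again that $\text{\rm Char}(\F)$ is $0$ or greater than $d$, and the generator is $\sigma_i=(d-2i)(-1)^i$; this reproduces Example~\ref{ex:2}.

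Next I would treat $\beta\neq\pm 2$. Choose $q\in\overline{\F}$ with $\beta=q^2+q^{-2}$; then $q^2+q^{-2}\in\F$ automatically, and the MutDist feasibility conditions from Lemma~\ref{lem:closedpre4} read $q^{2i}\neq 1$ for $1\le i\le d$ and $q^{2i}\neq -1$ for $1\le i\le d-1$, matching the hypotheses of Examples~\ref{ex:3} and~\ref{ex:4}. The generator of $\R^{\text{\rm asym}}$ is the closed form in Lemma~\ref{lem:asymbasis2}, whose shape depends on the parity of $d$; multiplying by $h$ and $h^*$ yields the displayed $\th_i$, $\th^*_i$. Thus the array falls under Example~\ref{ex:3} when $d$ is even and under Example~\ref{ex:4} when $d$ is odd, and by Note~\ref{note:q} the resulting sequences do not depend on the choice of $q$, so the description is unambiguous.

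Finally, for the ``exactly one'' clause I would observe that $\beta$ is an invariant of the array (Lemma~\ref{lem:fundunique}(ii)) and that the four Examples occupy disjoint regimes: $\beta=2$ for Example~\ref{ex:1}, $\beta=-2$ with $d$ even for Example~\ref{ex:2}, and $\beta\neq\pm 2$ split by the parity of $d$ for Examples~\ref{ex:3} and~\ref{ex:4}. Hence each array is listed in exactly one Example. The substantive content is carried by the earlier lemmas, so I expect the only delicate point to be the verbatim matching of the MutDist feasibility conditions of Lemma~\ref{lem:closedpre4} with the hypotheses imposed in the four Examples, together with confirming that $h,h^*$ are genuinely nonzero (which follows since the MutDist sequences are nonzero).
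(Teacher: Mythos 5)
Your proposal is correct and follows essentially the same route as the paper: establish that both sequences are MutDist elements of $\R^{\text{\rm asym}}$, invoke Lemmas \ref{lem:feasible} and \ref{lem:closedpre4} for the feasibility conditions, use Corollary \ref{cor:basis} to write $\th_i=h\sigma_i$, $\th^*_i=h^*\sigma_i$, and sort the cases by $\beta$ and the parity of $d$. Your explicit justification of the ``exactly one'' clause via the invariance of $\beta$ is a small elaboration the paper leaves implicit, but it is not a different argument.
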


\begin{proof}
Assume that \eqref{eq:parray7} is an eigenvalue array over $\F$,
and let $\beta$ denote its fundamental parameter.
Observe that each of $\{\th_i\}_{i=0}^d$, $\{\th^*_i\}_{i=0}^d$ is contained in $\R^\text{\rm asym}$.
By this and Lemma \ref{lem:feasible},
$\R^\text{\rm asym}$ is MutDist.
By this and Lemma \ref{lem:closedpre4} the conditions \eqref{eq:conditions} hold.
Let the sequence $\{\sigma_i\}_{i=0}^d$ be from Lemmas \ref{lem:asymbasis1}, \ref{lem:asymbasis2}.
By Corollary \ref{cor:basis}, $\{\sigma_i\}_{i=0}^d$ is a basis for $\R^\text{\rm asym}$.
Therefore there exist $h$, $h^* \in \F$ such that
$\th_i = h \sigma_i$, $\th^*_i = h^* \sigma_i$ for $0 \leq i \leq d$.
Now \eqref{eq:parray7} is listed in the following Examples:
\[
\begin{array}{c|c}
\text{\rm Case} & \text{\rm Listed in}
\\ \hline
\beta = 2 & \text{\rm Example \ref{ex:1}}          \rule{0mm}{2.5ex}
\\
\beta=-2 & \text{\rm Example \ref{ex:2}}       \rule{0mm}{2.5ex}
\\
\beta \neq \pm 2, \,\, \text{\rm $d$ is even}  & \text{\rm Example \ref{ex:3}}       \rule{0mm}{2.5ex}
\\
\beta \neq \pm 2, \,\, \text{\rm $d$ is odd}  & \text{\rm Example \ref{ex:4}}       \rule{0mm}{2.5ex}
\end{array}
\]
The result follows.
\end{proof}

\begin{note}
For the eigenvalue array in Example \ref{ex:1} (resp.\ \ref{ex:2}) (resp.\ \ref{ex:3}, \ref{ex:4}),
the corresponding TB tridiagonal system is said to have
Krawtchouk type (resp.\ Bannai/Ito type) (resp.\ $q$-Racah type).
\end{note}

We now display the Askey-Wilson relations.
Let $\Phi$ denote a TB tridiagonal system over $\F$
with eigenvalue array \eqref{eq:parray7}.
Let $A,A^*$ denote the TB tridiagonal pair associated with $\Phi$.

\begin{lemma}   \label{lem:normalizedAWrel}    \samepage
\ifDRAFT {\rm lem:normalizedAWrel}. \fi
Assume that $d \geq 3$.
Then with the notation in Examples  \ref{ex:1}--\ref{ex:4},
the Askey-Wilson relations for $A,A^*$ are given as follows:
\[
\begin{array}{c|c}
\text{\rm Case} & \text{\rm Askey-Wilson relations}
\\ \hline
\beta=2 &                    \rule{0mm}{5ex}
 \begin{array}{c}
   A^2 A^* - 2 A A^* A + A^* A^2 = 4 h^2 A^*   \\
   {A^*}^2 A - 2 A^* A A^* + A {A^*}^2 = 4 h^{*2} A
  \end{array}
\\
\beta=-2 &                    \rule{0mm}{5.5ex}
 \begin{array}{c}
   A^2 A^* + 2 A A^* A + A^* A^2 = 4 h^2 A^*    \\
   {A^*}^2 A + 2 A^* A A^* + A {A^*}^2 = 4 h^{*2} A
  \end{array}
\\
\beta \neq \pm 2, \,\, \text{\rm $d$ even} &         \rule{0mm}{5.5ex}
 \begin{array}{c}
  A^2 A^* - (q^2 + q^{-2})A A^* A + A^* A^2 = h^2 A^*   \\
  {A^*}^2 A - (q^2 + q^{-2})A^* A A^* + A {A^*}^2 = h^{*2} A
 \end{array}
\\
\beta \neq \pm 2, \,\, \text{\rm $d$ odd} &         \rule{0mm}{5.5ex}
 \begin{array}{c}
  A^2 A^* - (q^2 + q^{-2})A A^* A + A^* A^2 =  h^2 (q+q^{-1})^2 A^* \\
  {A^*}^2 A - (q^2 + q^{-2})A^* A A^* + A {A^*}^2 = h^{*2} (q+q^{-1})^2 A.
 \end{array}
\end{array}
\]
\end{lemma}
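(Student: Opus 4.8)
The plan is to reduce everything to the explicit Askey-Wilson sequences already recorded in Examples \ref{ex:1}--\ref{ex:4} and then perform a direct substitution. Since $d \geq 3$, Theorem \ref{thm:thths} guarantees that the eigenvalue array \eqref{eq:parray7} of $\Phi$ is listed in exactly one of Examples \ref{ex:1}--\ref{ex:4}, sorted by whether $\beta = 2$, $\beta = -2$, or $\beta \neq \pm 2$ with $d$ even, respectively $d$ odd. This partition is precisely the four-row case split in the table, so I would organize the argument case by case accordingly.

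In each case I would read off the Askey-Wilson sequence $\beta, \varrho, \varrho^*$ directly from the relevant Example: Example \ref{ex:1} gives $\beta = 2$, $\varrho = 4h^2$, $\varrho^* = 4h^{*2}$; Example \ref{ex:2} gives $\beta = -2$, $\varrho = 4h^2$, $\varrho^* = 4h^{*2}$; Examples \ref{ex:3} and \ref{ex:4} give $\beta = q^2 + q^{-2}$, with $\varrho = h^2$, $\varrho^* = h^{*2}$ in the even case and $\varrho = h^2(q+q^{-1})^2$, $\varrho^* = h^{*2}(q+q^{-1})^2$ in the odd case. By Definition \ref{def:AWseq}, each such triple $\beta, \varrho, \varrho^*$ is by definition one that satisfies the Askey-Wilson relations \eqref{eq:AW1}, \eqref{eq:AW2}, and Proposition \ref{prop:AW} assures us that this triple genuinely does satisfy those relations in $\text{\rm End}(V)$. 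Substituting each triple into \eqref{eq:AW1}, \eqref{eq:AW2} then produces verbatim the eight displayed equations.

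There is essentially no obstacle here beyond bookkeeping, since the computation of $\beta, \varrho, \varrho^*$ was already carried out when verifying the Examples. Two mild points are worth flagging for rigor. First, I would note that for $d \geq 3$ the relations are unambiguous: Lemma \ref{lem:fundunique}(ii) yields a unique fundamental parameter $\beta$, and Lemma \ref{lem:rhorhosunique} then yields the unique pair $\varrho, \varrho^*$, so the Askey-Wilson sequence appearing in the table is the only one. Second, in the $q$-Racah cases the scalar $q$ lies in $\overline{\F}$ while the displayed relations are equations over $\F$; one should confirm the coefficients are in $\F$. This holds because $\beta = q^2 + q^{-2} \in \F$ by the hypothesis of Examples \ref{ex:3}, \ref{ex:4}, and in the odd case $(q+q^{-1})^2 = \beta + 2 \in \F$, so that $\varrho = h^2(\beta+2)$ and $\varrho^* = h^{*2}(\beta+2)$ indeed lie in $\F$, as already asserted in Example \ref{ex:4}.
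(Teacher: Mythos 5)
Your proposal is correct and follows essentially the same route as the paper, whose entire proof is to evaluate \eqref{eq:AW1}, \eqref{eq:AW2} using the data in Examples \ref{ex:1}--\ref{ex:4}. The extra remarks on uniqueness of the Askey-Wilson sequence and on the coefficients lying in $\F$ are sound but not needed beyond what the Examples already assert.
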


\begin{proof}
Evaluate \eqref{eq:AW1}, \eqref{eq:AW2} using the data
in Examples \ref{ex:1}--\ref{ex:4}.
\end{proof}

We will return to the Askey-Wilson relations in Section \ref{sec:Z3}.

\section{The relatives of a TB tridiagonal system} 
\label{sec:selfdual}

Let $\Phi$ denote a TB tridiagonal system over $\F$.
Recall from Section \ref{sec:TBTDpair} the relatives 
$\Phi^*$, $\Phi^\downarrow$, $\Phi^\Downarrow$.
In this section we discuss how these relatives are related to $\Phi$
at an algebraic level.

\begin{lemma}   \label{lem:relatives2}   \samepage
\ifDRAFT {\rm lem:relatives2}. \fi
Consider a TB tridiagonal system over $\F$:
\[
 \Phi = (A; \{E_i\}_{i=0}^d; A^*; \{E^*_i\}_{i=0}^d).
\]
Then for $g \in \{\downarrow, \,\Downarrow, \, \downarrow \Downarrow\}$
the TB tridiagonal system $\Phi^g$ is isomorphic to the TB tridiagonal system
shown in the table below:
\[
\begin{array}{c|c}
g & \text{\rm $\Phi^g$ is isomorphic to}
\\ \hline
\downarrow &  (A; \{E_i\}_{i=0}^d; - A^*; \{E^*_i\}_{i=0}^d)    \rule{0mm}{2.8ex}
\\
\Downarrow & (- A; \{E_i\}_{i=0}^d;  A^*; \{E^*_i\}_{i=0}^d)   \rule{0mm}{2.5ex}
\\
\downarrow\Downarrow & (- A; \{E_i\}_{i=0}^d; - A^*; \{E^*_i\}_{i=0}^d)   \rule{0mm}{2.5ex}
\end{array}
\]
\end{lemma}

\begin{proof}
Use Lemmas \ref{lem:hhs}, \ref{lem:relative} and Corollary \ref{cor:unique},
together with Theorem \ref{thm:main}(iii).
\end{proof}

Referring to Lemma \ref{lem:relatives2},
the explicit isomorphisms will be displayed later in this section.

\begin{lemma}    \label{lem:relatives3}    \samepage
\ifDRAFT {\rm lem:relatives3}. \fi
Let $A,A^*$ denote a TB tridiagonal pair over $\F$.
Then the following TB tridiagonal pairs are mutually isomorphic:
\begin{align*}
   A,A^*, \qquad A, -A^*, \qquad  -A, A^*, \qquad -A,-A^*.
\end{align*}
\end{lemma}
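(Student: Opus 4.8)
The plan is to deduce this from Lemma \ref{lem:relatives2} together with the behaviour of the relatives under passage to the associated pair. First I would fix a TB tridiagonal system $\Phi = (A; \{E_i\}_{i=0}^d; A^*; \{E^*_i\}_{i=0}^d)$ associated with the given pair $A,A^*$; such a $\Phi$ exists once standard orderings of the primitive idempotents are chosen. If $A,A^*$ is trivial then $A = A^* = 0$ by Lemma \ref{lem:trivial}, so the four pairs literally coincide and there is nothing to prove; thus I may assume $\Phi$ is nontrivial.

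Next I would record two elementary observations. By Lemma \ref{lem:associated}, each of $\Phi$, $\Phi^\downarrow$, $\Phi^\Downarrow$, $\Phi^{\downarrow\Downarrow}$ has $A,A^*$ as its associated TB tridiagonal pair. Moreover an isomorphism of TB tridiagonal systems is, after discarding the conditions on the primitive idempotents, an isomorphism of the associated TB tridiagonal pairs: if $\psi$ is a system isomorphism then in particular $\psi A = A' \psi$ and $\psi A^* = A^{*\prime} \psi$, which is exactly the definition of a pair isomorphism.

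Then I would invoke Lemma \ref{lem:relatives2}, which identifies $\Phi^\downarrow$, $\Phi^\Downarrow$, $\Phi^{\downarrow\Downarrow}$, up to system isomorphism, with $(A;\{E_i\}_{i=0}^d;-A^*;\{E^*_i\}_{i=0}^d)$, $(-A;\{E_i\}_{i=0}^d;A^*;\{E^*_i\}_{i=0}^d)$, $(-A;\{E_i\}_{i=0}^d;-A^*;\{E^*_i\}_{i=0}^d)$, whose associated pairs are $A,-A^*$, $-A,A^*$, $-A,-A^*$ respectively. Combining this with the two observations: $\Phi^\downarrow$ is associated with $A,A^*$ yet is system-isomorphic to a system associated with $A,-A^*$, so the pairs $A,A^*$ and $A,-A^*$ are isomorphic. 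The same argument applied to $\Phi^\Downarrow$ and to $\Phi^{\downarrow\Downarrow}$ yields $A,A^* \cong -A,A^*$ and $A,A^* \cong -A,-A^*$. Since isomorphism of TB tridiagonal pairs is transitive, all four pairs are mutually isomorphic.

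I do not expect a serious obstacle, since the real content is packaged into Lemma \ref{lem:relatives2}, whose proof rests on the antisymmetry $\th_i + \th_{d-i} = 0 = \th^*_i + \th^*_{d-i}$ from Theorem \ref{thm:main}(iii) and on the fact that a TB tridiagonal system is determined up to isomorphism by its eigenvalue array (Corollary \ref{cor:unique}). The only point demanding care is that reversing the order of the dual idempotents $\{E^*_i\}_{i=0}^d$ should be matched against negating $A^*$ rather than against some other modification: the reordered idempotents $\{E^*_{d-i}\}_{i=0}^d$ carry dual eigenvalues $\th^*_{d-i} = -\th^*_i$, and this is precisely the eigenvalue array of the system built from $-A^*$ with the idempotents in their original order. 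This is where antisymmetry of the dual eigenvalue sequence is essential, and an analogous remark applies to $A$ for the $\Downarrow$ direction.
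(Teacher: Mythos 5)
Your proposal is correct and follows the same route as the paper, whose entire proof is ``By Lemma \ref{lem:relatives2}''; you have simply made explicit the intermediate steps (that each relative $\Phi^\downarrow$, $\Phi^\Downarrow$, $\Phi^{\downarrow\Downarrow}$ has associated pair $A,A^*$, and that a system isomorphism restricts to a pair isomorphism). The expansion is accurate and the care taken about matching $\downarrow$ with negation of $A^*$ via antisymmetry is exactly the content hidden in Lemma \ref{lem:relatives2}.
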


\begin{proof}
By Lemma \ref{lem:relatives2}.
\end{proof}

\begin{definition}   \label{def:selfdualpair}    \samepage
\ifDRAFT {\rm def:selfdualpair}. \fi
Let $A,A^*$ denote a TB tridiagonal pair over $\F$.
Then $A,A^*$ is said to be {\em self-dual} whenever $A,A^*$ is isomorphic
to $A^*,A$.
\end{definition}

\begin{definition}    \label{def:selfdualsystem}   \samepage
\ifDRAFT {\rm selfdualsystem}. \fi
Let $\Phi$ denote a TB tridiagonal system over $\F$.
Then $\Phi$ is said to be {\em self-dual} whenever $\Phi$ is isomorphic
to $\Phi^*$.
\end{definition}

\begin{lemma}    \label{lem:selfdualsystem}    \samepage
\ifDRAFT {\rm lem:selfdualsystem}. \fi
Let $\Phi$ denote a TB tridiagonal system over $\F$ with
eigenvalue array $(\{\th_i\}_{i=0}^d; \{\th^*_i\}_{i=0}^d)$.
Then the following {\rm (i), (ii)} are equivalent:
\begin{itemize}
\item[\rm (i)]
$\Phi$ is self-dual;
\item[\rm (ii)]
$\th_i = \th^*_i$ for $0 \leq i \leq d$.
\end{itemize}
\end{lemma}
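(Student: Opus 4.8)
The plan is to prove the equivalence by exploiting the fact, already established in Corollary~\ref{cor:unique}, that a TB tridiagonal system is determined up to isomorphism by its eigenvalue array, together with Lemma~\ref{lem:relative} which records how the eigenvalue array transforms under the relatives. The whole argument reduces to comparing eigenvalue arrays.

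\medskip\noindent\textbf{Proof of the direction (ii)$\Rightarrow$(i).} Suppose $\th_i=\th^*_i$ for $0\leq i\leq d$. By Lemma~\ref{lem:relative} the eigenvalue array of $\Phi^*$ is $(\{\th^*_i\}_{i=0}^d;\{\th_i\}_{i=0}^d)$, which under the hypothesis equals $(\{\th_i\}_{i=0}^d;\{\th^*_i\}_{i=0}^d)$, the eigenvalue array of $\Phi$. Since $\Phi$ and $\Phi^*$ are TB tridiagonal systems with the same eigenvalue array, Corollary~\ref{cor:unique} gives that $\Phi$ and $\Phi^*$ are isomorphic, so $\Phi$ is self-dual by Definition~\ref{def:selfdualsystem}.

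\medskip\noindent\textbf{Proof of the direction (i)$\Rightarrow$(ii).} Suppose $\Phi$ is self-dual, so $\Phi$ is isomorphic to $\Phi^*$. Isomorphic TB tridiagonal systems have the same eigenvalue array, as noted above Definition~\ref{def:fg}. The eigenvalue array of $\Phi$ is $(\{\th_i\}_{i=0}^d;\{\th^*_i\}_{i=0}^d)$, and by Lemma~\ref{lem:relative} the eigenvalue array of $\Phi^*$ is $(\{\th^*_i\}_{i=0}^d;\{\th_i\}_{i=0}^d)$. Equating the first components of these two arrays yields $\th_i=\th^*_i$ for $0\leq i\leq d$, which is (ii).

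\medskip\noindent\textbf{Main obstacle.} I expect essentially no obstacle: both implications are immediate once the uniqueness result Corollary~\ref{cor:unique} and the transformation table Lemma~\ref{lem:relative} are invoked. The only point requiring any care is to make sure the notion of ``isomorphic systems have equal eigenvalue arrays'' is being used in the correct direction for each implication—for (i)$\Rightarrow$(ii) one reads off equality of arrays from an assumed isomorphism, whereas for (ii)$\Rightarrow$(i) one manufactures an isomorphism from an assumed equality of arrays via Corollary~\ref{cor:unique}. Both facts are already in place, so the proof is short and formal.
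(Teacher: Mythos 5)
Your proof is correct and follows essentially the same route as the paper: both directions come from Lemma \ref{lem:relative} (to read off the eigenvalue array of $\Phi^*$) combined with Corollary \ref{cor:unique} and the observation that isomorphic systems share an eigenvalue array. Your write-up merely makes the two directions of that "if and only if" more explicit than the paper does.
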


\begin{proof}
By Lemma \ref{lem:relative}, $\Phi^*$ has eigenvalue array
$(\{\th^*_i\}_{i=0}^d; \{\th_i\}_{i=0}^d)$.
By Corollary \ref{cor:unique}, $\Phi$ and $\Phi^*$ are isomorphic
if and only if they have the same eigenvalue array.
The result follows.
\end{proof}

\begin{lemma}    \label{lem:selfdualsystemhhs}   \samepage
\ifDRAFT {\rm lem:selfdualsystemhhs}. \fi
Assume that $d \geq 3$.
Let $\Phi$ denote a TB tridiagonal system over $\F$ with
eigenvalue array $(\{\th_i\}_{i=0}^d; \{\th^*_i\}_{i=0}^d)$.
Let the scalars $h$, $h^*$ be from Examples \ref{ex:1}--\ref{ex:4}.
Then $\Phi$ is self-dual if and only if $h=h^*$.
\end{lemma}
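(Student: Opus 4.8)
The plan is to combine the self-duality criterion from Lemma~\ref{lem:selfdualsystem} with the closed-form parametrization of the eigenvalue array given in Examples~\ref{ex:1}--\ref{ex:4}. By Lemma~\ref{lem:selfdualsystem}, $\Phi$ is self-dual if and only if $\th_i = \th^*_i$ for $0 \leq i \leq d$, so the whole task reduces to comparing the two sequences $\{\th_i\}_{i=0}^d$ and $\{\th^*_i\}_{i=0}^d$ under the explicit formulas.

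First I would invoke Theorem~\ref{thm:thths}: since $d \geq 3$, the eigenvalue array $(\{\th_i\}_{i=0}^d; \{\th^*_i\}_{i=0}^d)$ is listed in exactly one of Examples~\ref{ex:1}--\ref{ex:4}, with a common fundamental parameter $\beta$ and a common scalar $q$ (in the $\beta \neq \pm 2$ cases). In each of these Examples the eigenvalues have the form $\th_i = h\,\sigma_i$ and $\th^*_i = h^*\,\sigma_i$, where $\{\sigma_i\}_{i=0}^d$ is the fixed basis sequence of $\R^\text{asym}$ from Lemmas~\ref{lem:asymbasis1}, \ref{lem:asymbasis2}, and $h,h^*$ are the nonzero scalars attached to the array.

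Given this, the equivalence is essentially immediate. For the direction $h = h^* \Rightarrow$ self-dual: if $h = h^*$ then $\th_i = h\sigma_i = h^*\sigma_i = \th^*_i$ for all $i$, so $\Phi$ is self-dual by Lemma~\ref{lem:selfdualsystem}. For the converse: assume $\Phi$ is self-dual, so $\th_i = \th^*_i$ for all $i$; then $h\sigma_i = h^*\sigma_i$, i.e. $(h - h^*)\sigma_i = 0$ for all $i$. Since $\{\sigma_i\}_{i=0}^d$ is a nonzero sequence (it is a basis for $\R^\text{asym}$, which is one-dimensional by Lemma~\ref{lem:dimR}), some $\sigma_i \neq 0$, and therefore $h = h^*$.

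The only point requiring mild care—and the step I expect to be the main (though minor) obstacle—is making sure the same $\sigma_i$ is used for both sequences. This is guaranteed because Theorem~\ref{thm:thths} places the single array $(\{\th_i\}; \{\th^*_i\})$ in one specific Example, so $\beta$, $q$, and hence $\{\sigma_i\}_{i=0}^d$ are determined by the array as a whole, not chosen separately for $\{\th_i\}$ and $\{\th^*_i\}$. Once this is noted, the argument is purely a matter of factoring out the common nonzero sequence, so no genuine computation is needed.
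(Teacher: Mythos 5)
Your proof is correct and follows the same route as the paper, which simply cites Lemma \ref{lem:selfdualsystem} (self-duality is equivalent to $\th_i = \th^*_i$ for all $i$) and reads off the conclusion from the closed forms $\th_i = h\sigma_i$, $\th^*_i = h^*\sigma_i$. Your additional remarks about the common sequence $\{\sigma_i\}_{i=0}^d$ being determined by the array as a whole are a reasonable elaboration of what the paper leaves implicit.
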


\begin{proof}
By Lemma \ref{lem:selfdualsystem}.
\end{proof}

\begin{lemma}    \label{lem:selfdualsystem2}    \samepage
\ifDRAFT {\rm lem:selfdualsystem2}. \fi
Consider a TB tridiagonal system over $\F$:
\begin{equation}                  \label{eq:Phi2}
   (A; \{E_i\}_{i=0}^d; A^*; \{E^*_i\}_{i=0}^d).
\end{equation}
Then there exists $0 \neq \zeta \in \F$ such that the TB tridiagonal system
\begin{equation}
   (\zeta A; \{E_i\}_{i=0}^d; A^*; \{E^*_i\}_{i=0}^d)               \label{eq:Phid}
\end{equation}
is self-dual.
\end{lemma}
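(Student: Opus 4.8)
The plan is to reduce the claim to a comparison of eigenvalue arrays and then invoke the classification machinery. By Lemma \ref{lem:selfdualsystem}, a TB tridiagonal system is self-dual exactly when its eigenvalue sequence coincides with its dual eigenvalue sequence. So I would first record, using Lemma \ref{lem:hhs}, that if \eqref{eq:Phi2} has eigenvalue array $(\{\th_i\}_{i=0}^d; \{\th^*_i\}_{i=0}^d)$, then the rescaled system \eqref{eq:Phid} has eigenvalue array $(\{\zeta\th_i\}_{i=0}^d; \{\th^*_i\}_{i=0}^d)$. Consequently, by Lemma \ref{lem:selfdualsystem} the system \eqref{eq:Phid} is self-dual if and only if $\zeta\th_i = \th^*_i$ for $0 \leq i \leq d$. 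Thus the entire task is to produce a nonzero $\zeta \in \F$ satisfying this system of scalar equations.

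Next I would dispose of the trivial case. If \eqref{eq:Phi2} is trivial, then $d=0$ and $\th_0 = \th^*_0 = 0$ by Note \ref{note:trivial}, so the required identity holds for every nonzero $\zeta$ and there is nothing more to check. Assume henceforth that \eqref{eq:Phi2} is nontrivial, so that $d \geq 1$. In this case the eigenvalue array of \eqref{eq:Phi2} is an eigenvalue array over $\F$ by Lemma \ref{lem:onlyif}, and both $\{\th_i\}_{i=0}^d$ and $\{\th^*_i\}_{i=0}^d$ are nonzero, antisymmetric, $\beta$-recurrent sequences for the common fundamental parameter $\beta$; that is, each lies in $\R^{\text{\rm asym}}$.

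The desired $\zeta$ is now delivered by Lemma \ref{lem:new1}: since $\R^{\text{\rm asym}}$ is one-dimensional (Lemma \ref{lem:dimR}) and contains both eigenvalue sequences as nonzero elements, there exists $0 \neq \zeta \in \F$ with $\th^*_i = \zeta\th_i$ for $0 \leq i \leq d$. This is precisely the identity isolated in the first paragraph, so \eqref{eq:Phid} is self-dual and the proof is complete.

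The argument is essentially bookkeeping once Lemma \ref{lem:new1} is in hand; the substantive input is the one-dimensionality of $\R^{\text{\rm asym}}$, which forces the two antisymmetric $\beta$-recurrent sequences to be proportional. I expect the only point requiring care to be the separation of the trivial system from the nontrivial one: in the trivial case the proportionality constant is undetermined, but the conclusion holds vacuously for any $\zeta$, whereas in the nontrivial case one must first confirm that the eigenvalue array genuinely qualifies as an eigenvalue array over $\F$ before Lemma \ref{lem:new1} applies.
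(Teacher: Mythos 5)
Your proposal is correct and follows essentially the same route as the paper, whose proof is simply ``By Lemmas \ref{lem:new1} and \ref{lem:selfdualsystem}'': you use Lemma \ref{lem:selfdualsystem} to reduce self-duality to the scalar identity $\zeta\th_i=\th^*_i$ and Lemma \ref{lem:new1} to produce $\zeta$. The extra bookkeeping you supply (Lemma \ref{lem:hhs} for the eigenvalue array of the rescaled system, the trivial case, and the appeal to Lemma \ref{lem:onlyif} before invoking Lemma \ref{lem:new1}) is all accurate and merely makes explicit what the paper leaves implicit.
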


\begin{proof}
By Lemmas \ref{lem:new1} and  \ref{lem:selfdualsystem}.
\end{proof}

\begin{lemma}    \label{lem:selfdualpair2}    \samepage
\ifDRAFT {\rm lem:selfdualpair2}. \fi
Let $A,A^*$ denote a TB tridiagonal pair over $\F$.
Then there exists $0 \neq \zeta \in \F$ such that
the TB tridiagonal pair $\zeta A, A^*$ is self-dual.
\end{lemma}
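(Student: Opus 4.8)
Lemma \ref{lem:selfdualpair2} is the pair-level analogue of Lemma \ref{lem:selfdualsystem2}, so the natural plan is to reduce it to the system level. The key observation is that a TB tridiagonal pair $A,A^*$ always has at least one associated TB tridiagonal system $\Phi = (A; \{E_i\}_{i=0}^d; A^*; \{E^*_i\}_{i=0}^d)$, obtained by choosing standard orderings of the primitive idempotents of $A$ and $A^*$.

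First I would invoke Lemma \ref{lem:selfdualsystem2} applied to this $\Phi$: it produces a nonzero $\zeta \in \F$ such that the TB tridiagonal system $(\zeta A; \{E_i\}_{i=0}^d; A^*; \{E^*_i\}_{i=0}^d)$ is self-dual. Note that $\{E_i\}_{i=0}^d$ is a standard ordering of the primitive idempotents of $\zeta A$ as well, by Lemma \ref{lem:hhs}, so this is a genuine TB tridiagonal system associated with the pair $\zeta A, A^*$. By Definition \ref{def:selfdualsystem} this system is isomorphic to its dual, where the dual system has underlying pair $A^*, \zeta A$.

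The remaining step is to pass from self-duality of the system to self-duality of the pair. Since the system $(\zeta A; \{E_i\}_{i=0}^d; A^*; \{E^*_i\}_{i=0}^d)$ is self-dual, there is an $\F$-linear bijection $\psi$ intertwining it with its dual; in particular $\psi$ intertwines the pair $\zeta A, A^*$ with the pair $A^*, \zeta A$. Hence $\zeta A, A^*$ is isomorphic to $A^*, \zeta A$, which is exactly the condition in Definition \ref{def:selfdualpair} for the pair $\zeta A, A^*$ to be self-dual.

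I do not expect a genuine obstacle here, since all the substantive work — the existence of the scaling factor $\zeta$, which ultimately rests on Lemma \ref{lem:new1} and the one-dimensionality of $\R^{\text{\rm asym}}$ — has already been carried out in Lemma \ref{lem:selfdualsystem2}. The only point requiring a little care is the bookkeeping that an isomorphism of systems restricts to an isomorphism of the associated pairs (the idempotent-intertwining conditions are stronger than what the pair-level statement needs, so they may simply be discarded). Thus the proof is essentially a one-line application of Lemmas \ref{lem:selfdualsystem2} and the definitions, and can be stated as: \emph{By Lemma \ref{lem:selfdualsystem2} and Definitions \ref{def:selfdualpair}, \ref{def:selfdualsystem}.}
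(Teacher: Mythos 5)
Your proposal is correct and follows exactly the paper's route: the paper's entire proof reads ``By Lemma \ref{lem:selfdualsystem2}.'' Your additional bookkeeping --- that an isomorphism of systems restricts to an isomorphism of the associated pairs, so system self-duality yields pair self-duality --- is precisely the detail the paper leaves implicit.
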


\begin{proof}
By Lemma \ref{lem:selfdualsystem2}.
\end{proof}

\begin{theorem}    \label{thm:selfdual}    \samepage
\ifDRAFT {\rm thm:selfdual}. \fi
Let $A,A^*$ denote a self-dual TB tridiagonal pair over $\F$.
Then the following TB tridiagonal pairs are mutually isomorphic:
\begin{align*}
   A,A^*, \qquad A, -A^*, \qquad  -A, A^*, \qquad -A,-A^*,
\\
  A^*,A, \qquad A^*,-A, \qquad -A^*,A, \qquad -A^*,-A.
\end{align*}
\end{theorem}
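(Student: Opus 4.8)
The plan is to observe that the eight pairs split naturally into two rows of four, that each row is already a single isomorphism class by a direct application of Lemma~\ref{lem:relatives3}, and that the self-duality hypothesis supplies exactly the one extra isomorphism needed to merge the two rows. Since isomorphism of TB tridiagonal pairs is an equivalence relation, once a single member of the first row is shown isomorphic to a single member of the second row, all eight pairs collapse into one class.

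First I would dispose of the top row
\[
A,A^*,\qquad A,-A^*,\qquad -A,A^*,\qquad -A,-A^*.
\]
This is immediate from Lemma~\ref{lem:relatives3} applied to the pair $A,A^*$, and requires no self-duality assumption. Next I would handle the bottom row
\[
A^*,A,\qquad A^*,-A,\qquad -A^*,A,\qquad -A^*,-A.
\]
The key point here is that $A^*,A$ is itself a TB tridiagonal pair, by the Note following Definition~\ref{def:TBTDpair}, so Lemma~\ref{lem:relatives3} applies verbatim with the roles of the two maps interchanged. Setting $B=A^*$ and $B^*=A$, the four pairs $B,B^*$, $B,-B^*$, $-B,B^*$, $-B,-B^*$ are precisely the four entries of the bottom row, and Lemma~\ref{lem:relatives3} renders them mutually isomorphic.

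Finally I would invoke the self-duality hypothesis. By Definition~\ref{def:selfdualpair}, the assertion that $A,A^*$ is self-dual means that $A,A^*$ is isomorphic to $A^*,A$. This single isomorphism bridges a member of the top row to a member of the bottom row, and transitivity of the isomorphism relation then forces all eight pairs to lie in one isomorphism class. I do not anticipate a genuine obstacle: every ingredient is already in place, and the only substantive input is the self-duality assumption, which enters exactly once, as the bridge between the two orbits produced by Lemma~\ref{lem:relatives3}. The one point to state with care is that Lemma~\ref{lem:relatives3} is being used twice, once for $A,A^*$ and once for $A^*,A$, the latter being legitimate precisely because interchanging the two maps of a TB tridiagonal pair again yields a TB tridiagonal pair.
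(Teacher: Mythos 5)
Your proposal is correct and is essentially the paper's own proof, which simply cites Lemma \ref{lem:relatives3} and Definition \ref{def:selfdualpair}; you have merely spelled out the details (applying the lemma to both $A,A^*$ and $A^*,A$, then using self-duality as the bridge). No further comment is needed.
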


\begin{proof}
By Lemma \ref{lem:relatives3} and Definition \ref{def:selfdualpair}.
\end{proof}

Our next goal is to display the isomorphisms in Lemma \ref{lem:relatives2}.
For the rest of this section,
fix a TB tridiagonal system over $\F$:
\[
  \Phi = (A; \{E_i\}_{i=0}^d; A^*; \{E^*_i\}_{i=0}^d).
\]
Let $(\{\th_i\}_{i=0}^d; \{\th^*_i\}_{i=0}^d)$ denote the eigenvalue array of $\Phi$.

\begin{definition}    \label{def:S}    \samepage
\ifDRAFT {\rm def:S}. \fi
Define
\begin{align}
  S &= \sum_{i=0}^d (-1)^i E_i, \qquad\qquad
 S^* = \sum_{i=0}^d (-1)^i E^*_i.                \label{eq:defS}
\end{align}
\end{definition}

\begin{lemma}   \label{lem:S2}    \samepage
\ifDRAFT {\rm lem:S2}. \fi
We have 
$S^2 = I$ and $S^{*2} = I$.
Moreover $S$ and $S^*$ are invertible.
\end{lemma}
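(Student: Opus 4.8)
The plan is to compute $S^2$ directly from the definition, relying only on the elementary properties of the primitive idempotents recalled in Section \ref{sec:pre}. Specifically, for the primitive idempotents $\{E_i\}_{i=0}^d$ of $A$ we have the orthogonality relation $E_i E_j = \delta_{i,j} E_i$ $(0 \leq i,j \leq d)$ together with the completeness relation $I = \sum_{i=0}^d E_i$, and the same two relations hold for the primitive idempotents $\{E^*_i\}_{i=0}^d$ of $A^*$. These are exactly the ingredients needed.

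First I would expand the product using the definition of $S$ from \eqref{eq:defS}:
\[
 S^2 = \Bigl(\sum_{i=0}^d (-1)^i E_i\Bigr)\Bigl(\sum_{j=0}^d (-1)^j E_j\Bigr)
     = \sum_{i=0}^d \sum_{j=0}^d (-1)^{i+j} E_i E_j.
\]
Next I would apply the orthogonality relation $E_i E_j = \delta_{i,j} E_i$, which annihilates every off-diagonal term and collapses the double sum to its diagonal, giving
\[
 S^2 = \sum_{i=0}^d (-1)^{2i} E_i = \sum_{i=0}^d E_i = I,
\]
where the last equality is the completeness relation. Repeating this computation verbatim with $\{E^*_i\}_{i=0}^d$ in place of $\{E_i\}_{i=0}^d$ yields $S^{*2} = I$.

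Finally, invertibility is immediate: the identity $S \cdot S = I$ exhibits $S$ as its own inverse, so $S$ is invertible with $S^{-1} = S$, and likewise $S^*$ is invertible with $S^{*-1} = S^*$. I do not anticipate any genuine obstacle here; the entire statement is a direct consequence of the idempotents being mutually orthogonal and summing to the identity, and the only point requiring any attention is the bookkeeping observation that $(-1)^{2i} = 1$ for every $i$, which is what forces the signs to disappear on the diagonal.
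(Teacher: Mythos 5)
Your proof is correct and is exactly the argument the paper uses: expand $S^2$ via $E_iE_j=\delta_{i,j}E_i$ and $I=\sum_{i=0}^d E_i$, observe the signs cancel on the diagonal, and note that $S\cdot S=I$ gives invertibility. The paper merely states these ingredients without writing out the computation.
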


\begin{proof}
Concerning $S$, use $E_i E_j = \delta_{i,j} E_i$ $(0 \leq i,j \leq d)$
and $I=\sum_{i=0}^d E_i$.
The case of $S^*$ is similar.
\end{proof}

\begin{lemma}    \label{lem:SEsi}    \samepage
\ifDRAFT {\rm lem:SEsi}. \fi
The following hold:
\begin{itemize}
\item[\rm (i)]
$S A = A S$; 
\item[\rm (ii)]
$S E_i = E_i S$ for $0 \leq i \leq d$;
\item[\rm (iii)]
$S A^* = - A^* S$;
\item[\rm (iv)]
$S E^*_i = E^*_{d-i} S$ for $0 \leq i \leq d$.
\end{itemize}
\end{lemma}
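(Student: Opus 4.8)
The plan is to prove the four assertions by directly computing how $S$ and $S^*$ interact with $A$, $A^*$, $E_i$, $E^*_i$, using the spectral decompositions $A = \sum_{i=0}^d \th_i E_i$, $A^* = \sum_{i=0}^d \th^*_i E^*_i$ together with the orthogonality relations $E_i E_j = \delta_{i,j} E_i$ (and likewise for the $E^*_i$) and the antisymmetry $\th_i + \th_{d-i} = 0$, $\th^*_i + \th^*_{d-i} = 0$ from Theorem \ref{thm:main}(iii). The key structural fact that makes everything work is that $S$ is itself a polynomial in $A$ (equivalently, $S = \sum_i (-1)^i E_i$ commutes with everything in the algebra $M$ generated by $A$), while $S^*$ lives in the dual algebra $M^*$; each therefore commutes with all primitive idempotents of its own kind and with its own generator, but acts nontrivially through the sign $(-1)^i$ on the opposite operator.

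First I would handle parts (i) and (ii), which concern how $S$ meets $A$ and $\{E_i\}$. Since $S = \sum_{i=0}^d (-1)^i E_i$ is built from the $E_i$, the relation $S E_j = E_j S$ for $0 \le j \le d$ is immediate from $E_i E_j = \delta_{i,j} E_i$: both sides equal $(-1)^j E_j$. That is part (ii). For part (i), $SA = AS$ follows because both $S$ and $A = \sum_i \th_i E_i$ are $\F$-linear combinations of the mutually orthogonal idempotents $\{E_i\}$, and any two such combinations commute; alternatively one notes $SE_i = E_iS$ for all $i$ and then $A$ is a sum of scalar multiples of the $E_i$.

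Next I would treat the cross-relations (iii) and (iv), which are where the antisymmetry enters. For (iv), I would compute $S E^*_i$ using part (ii)-type orthogonality is \emph{not} available since $S$ is made of the $E_j$, not the $E^*_j$; instead the cleanest route is to observe that $S$ implements the flip $\downarrow\Downarrow$ on the idempotents. Concretely, I expect to show $S E^*_i S^{-1} = E^*_{d-i}$, i.e.\ $S E^*_i = E^*_{d-i} S$, and the natural way to see this is to use the explicit $\Phi$-standard basis: with respect to a $\Phi$-standard basis $\{v_i\}$ the idempotent $E^*_i$ is the diagonal matrix with a single $1$ in position $i$, while $S$, being $\sum_j (-1)^j E_j$, turns out to act (up to scalar) as the exchange $v_i \leftrightarrow v_{d-i}$ because of the antisymmetry of the eigenvalues. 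Conjugating the diagonal idempotent $E^*_i$ by this exchange produces $E^*_{d-i}$, giving (iv). Then (iii) follows formally from (iv): since $A^* = \sum_i \th^*_i E^*_i$, we get
\[
 S A^* = \sum_{i=0}^d \th^*_i S E^*_i = \sum_{i=0}^d \th^*_i E^*_{d-i} S
 = \sum_{j=0}^d \th^*_{d-j} E^*_j S = - \sum_{j=0}^d \th^*_j E^*_j S = - A^* S,
\]
where the middle step reindexes $j = d-i$ and the last uses $\th^*_{d-j} = -\th^*_j$.

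The main obstacle I anticipate is justifying the claim underlying (iv) that $S$ genuinely effects the index-reversal $E^*_i \mapsto E^*_{d-i}$; this is the one nonformal input and is exactly where the hypothesis $\th_i + \th_{d-i} = 0$ is needed. If the matrix computation feels unmotivated, a cleaner conceptual alternative is to invoke the uniqueness machinery already developed: by Lemma \ref{lem:relatives2} the map $\downarrow\Downarrow$ sends $\Phi$ to a system isomorphic to $(-A;\{E_i\};-A^*;\{E^*_i\})$, and one checks that $S$ (respectively $S^*$) realizes the reversal $E^*_i \mapsto E^*_{d-i}$ (respectively $E_i \mapsto E_{d-i}$) that accompanies this relative, with the sign changes on $A^*$ (resp.\ $A$) coming precisely from antisymmetry. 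Either way, once (iv) is in hand, (iii) is a two-line reindexing as above, and parts (i), (ii) are purely formal consequences of orthogonality, so the whole lemma reduces to that single index-reversal verification.
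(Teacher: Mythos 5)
Your parts (i) and (ii) are fine, and your derivation of (iii) \emph{from} (iv) by reindexing is correct. But the proposal leaves the one substantive claim --- part (iv), equivalently the assertion that $S$ realizes the index reversal $E^*_i \mapsto E^*_{d-i}$ --- unproven, and both routes you sketch for it are deficient. The matrix route asserts that $S=\sum_j(-1)^jE_j$ ``turns out to act (up to scalar) as the exchange $v_i\leftrightarrow v_{d-i}$'' in a $\Phi$-standard basis, but this is exactly equivalent to (iv) (given (iv), $Sv_i=SE^*_iv=E^*_{d-i}Sv=v_{d-i}$ since $Sv=v$ for $v\in E_0V$), so stating it is not a proof; to establish it directly you would need the persymmetry $c_i=b_{d-i}$ of $A^\natural$ plus an argument pinning down the signs with which the exchange matrix acts on each $A$-eigenspace, none of which you carry out. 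The alternative via Lemma \ref{lem:relatives2} is circular: that lemma only produces an abstract isomorphism from $\Phi^{\downarrow\Downarrow}$ to $(-A;\{E_i\};-A^*;\{E^*_i\})$ via Corollary \ref{cor:unique}; the statement that $S$ (resp.\ $S^*$, $SS^*$) \emph{is} such an isomorphism is precisely what Lemmas \ref{lem:S}--\ref{lem:SSs} prove later, using the present lemma as input.

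The paper closes this gap by reversing your logical order: it proves (iii) first and deduces (iv) from it. Part (iii) is immediate from Lemma \ref{lem:EiAsAsEipre1}, i.e.\ $\sum_{i=0}^d(-1)^i(E_iA^*+A^*E_i)=0$, which in turn rests only on the tridiagonal structure: $E_i(SA^*+A^*S)E_j=\bigl((-1)^i+(-1)^j\bigr)E_iA^*E_j$ vanishes because $E_iA^*E_j\neq 0$ forces $|i-j|=1$, hence opposite parities. Then (iv) follows in one line by conjugating the polynomial formula \eqref{eq:Ei} for $E^*_i$ in terms of $A^*$: $SE^*_iS^{-1}=\prod_{j\neq i}\frac{-A^*-\th^*_jI}{\th^*_i-\th^*_j}=E^*_{d-i}$, using $\th^*_{d-j}=-\th^*_j$ from Theorem \ref{thm:main}(iii). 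If you adopt this order, your reindexing computation for (iii) becomes unnecessary, and the whole lemma is complete.
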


\begin{proof}
(i), (ii)
By construction.

(iii)
By Lemma \ref{lem:EiAsAsEipre1} and Definition \ref{def:S}.

(iv)
Using \eqref{eq:Ei} we obtain
\begin{align*}
S E^*_i S^{-1} 
   = \prod_{\stackrel{0 \leq j \leq d}{j \neq i}}
               \frac{S A^* S^{-1} - \th^*_j I }{\th^*_i - \th^*_j}.
\end{align*}
Evaluate the above equation using $SA^* S^{-1} = - A^*$ and Theorem \ref{thm:main}(iii)
to get $S E^*_i S^{-1} = E^*_{d-i}$.
The result follows.
\end{proof}

\begin{lemma}    \label{lem:SsEi}    \samepage
\ifDRAFT {\rm lem:SsEi}. \fi
The following hold:
\begin{itemize}
\item[\rm (i)]
$S^* A^* = A^* S^*$;
\item[\rm (ii)]
$S^* E^*_i = E^*_i S^*$ for $0 \leq i \leq d$;
\item[\rm (iii)]
$S^* A = - A S^*$;
\item[\rm (iv)]
$S^* E_i = E_{d-i} S^*$ for $0 \leq i \leq d$.
\end{itemize}
\end{lemma}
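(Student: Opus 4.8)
The plan is to deduce this lemma from the already-established Lemma \ref{lem:SEsi} by exploiting the $*$-duality, rather than redoing the four computations. The key bookkeeping observation is that the element denoted $S$ is built from the primitive idempotents of the first map of a TB tridiagonal system, while $S^*$ is built from the idempotents of the second map. When we pass from $\Phi$ to $\Phi^*=(A^*;\{E^*_i\}_{i=0}^d;A;\{E_i\}_{i=0}^d)$, the two maps (and their idempotents) switch roles. Consequently, in the notation of Definition \ref{def:S} applied to $\Phi^*$, the element ``$S$ of $\Phi^*$'' equals $\sum_{i=0}^d(-1)^iE^*_i=S^*$, and the element ``$S^*$ of $\Phi^*$'' equals $\sum_{i=0}^d(-1)^iE_i=S$.

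With this dictionary in hand, each of (i)--(iv) is precisely the corresponding statement of Lemma \ref{lem:SEsi} read off for $\Phi^*$. First I would record that Lemma \ref{lem:SEsi}(i), which asserts $SA=AS$, becomes $S^*A^*=A^*S^*$ under the substitution, giving (i). Likewise Lemma \ref{lem:SEsi}(ii) gives $S^*E^*_i=E^*_iS^*$, which is (ii); Lemma \ref{lem:SEsi}(iii) gives $S^*A=-AS^*$, which is (iii); and Lemma \ref{lem:SEsi}(iv) gives $S^*E_i=E_{d-i}S^*$, which is (iv). So the entire lemma is obtained in one stroke by invoking Lemma \ref{lem:SEsi} for $\Phi^*$.

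There is essentially no obstacle here beyond confirming the duality is legitimate: one must check that $\Phi^*$ is again a TB tridiagonal system (noted in the excerpt just above Definition \ref{def:TBTDsystem}) and that the symmetric hypothesis Theorem \ref{thm:main}(iii), which drives part (iv), already holds simultaneously for $\{\th_i\}_{i=0}^d$ and $\{\th^*_i\}_{i=0}^d$, so nothing new is required when the sequences are interchanged. If one preferred a self-contained argument instead, parts (i) and (ii) would follow from $S^*$ being a linear combination of the $E^*_i$ (hence commuting with $A^*$ and each $E^*_i$), part (iii) would follow from the dual of Lemma \ref{lem:EiAsAsEipre1} applied to $\Phi^*$, and part (iv) would follow by conjugating the expression \eqref{eq:Ei} for $E_i$ by $S^*$ and using $S^*A(S^*)^{-1}=-A$ together with Theorem \ref{thm:main}(iii); but the duality route is cleaner and I would present that.
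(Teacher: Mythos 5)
Your proposal is correct and matches the paper's proof, which is exactly the one-line argument "Apply Lemma \ref{lem:SEsi} to $\Phi^*$." Your additional care in spelling out the dictionary between $S$, $S^*$ for $\Phi$ and for $\Phi^*$ is a sound elaboration of the same idea.
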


\begin{proof}
Apply Lemma \ref{lem:SEsi} to $\Phi^*$.
\end{proof}

\begin{lemma}    \label{lem:S}    \samepage
\ifDRAFT {\rm lem:S}. \fi
The following hold:
\begin{itemize}
\item[\rm (i)]
$S$ is an isomorphism of TB tridiagonal systems from $\Phi^\downarrow$ to
\[
(A; \{E_i\}_{i=0}^d; - A^*; \{E^*_i\}_{i=0}^d).
\]
\item[\rm (ii)]
$S$ is an isomorphism of TB tridiagonal pairs
from $A,A^*$ to $A,-A^*$.
\end{itemize}
\end{lemma}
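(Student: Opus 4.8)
The plan is to prove Lemma \ref{lem:S} directly from the commutation relations for $S$ established in Lemma \ref{lem:SEsi}, together with the description of $\Phi^\downarrow$ from Definition \ref{def:fg} and the comments preceding Definition \ref{def:TBTDsystem}. Recall that $S$ is the invertible map $\sum_{i=0}^d (-1)^i E_i$ (Definition \ref{def:S}, Lemma \ref{lem:S2}), and that by Lemma \ref{lem:SEsi} we have $SA = AS$, $SE_i = E_i S$, $SA^* = -A^* S$, and $SE^*_i = E^*_{d-i} S$.

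**Proof of (i).** First I would recall that
\[
\Phi^\downarrow = (A; \{E_i\}_{i=0}^d; A^*; \{E^*_{d-i}\}_{i=0}^d),
\]
so the target system $\Psi = (A; \{E_i\}_{i=0}^d; -A^*; \{E^*_i\}_{i=0}^d)$ must be reached by an $\F$-linear bijection $\psi$ satisfying the four intertwining conditions from the definition of isomorphism of TB tridiagonal systems: $\psi A = A \psi$, $\psi E_i = E_i \psi$ for $0 \le i \le d$, $\psi A^* = -A^* \psi$, and $\psi E^*_{d-i} = E^*_i \psi$ for $0 \le i \le d$. I would verify that $\psi = S$ satisfies all four. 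The first two conditions are exactly Lemma \ref{lem:SEsi}(i),(ii). The third condition $SA^* = -A^* S$ is Lemma \ref{lem:SEsi}(iii). For the fourth, Lemma \ref{lem:SEsi}(iv) gives $SE^*_i = E^*_{d-i} S$; replacing $i$ by $d-i$ yields $SE^*_{d-i} = E^*_i S$, which is precisely what is required. Since $S$ is invertible by Lemma \ref{lem:S2}, it is the desired isomorphism.

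**Proof of (ii).** For the statement about TB tridiagonal pairs, I would observe that an isomorphism of TB tridiagonal systems is in particular an $\F$-linear bijection intertwining the underlying operators as required for an isomorphism of pairs. Concretely, $S$ satisfies $SA = AS$ and $SA^* = -A^* S$, and these are exactly the two conditions needed for $S$ to be an isomorphism of TB tridiagonal pairs from $A,A^*$ to $A,-A^*$. Invertibility again comes from Lemma \ref{lem:S2}. Alternatively, part (ii) follows immediately from part (i) by forgetting the idempotent data, since the associated pair of the system in (i) is precisely $A,-A^*$.

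**The main obstacle.** This lemma is essentially bookkeeping, so the only delicate point is matching indices correctly: one must confirm that the $\downarrow$ operation reverses the ordering of the \emph{dual} idempotents (producing $\{E^*_{d-i}\}$), and that Lemma \ref{lem:SEsi}(iv) conjugates $E^*_i$ to $E^*_{d-i}$ in exactly the complementary way, so that the composition lands on the standardly-ordered system with $A^*$ negated. The sign flip $SA^* = -A^* S$ and the index reversal $E^*_i \mapsto E^*_{d-i}$ must be seen to be compatible, which is where Theorem \ref{thm:main}(iii) (the antisymmetry $\th^*_i + \th^*_{d-i} = 0$) silently does the work behind Lemma \ref{lem:SEsi}(iv). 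No genuine computation remains beyond citing these earlier results.
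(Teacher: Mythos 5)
Your proof is correct and follows exactly the paper's route: the paper's entire proof is ``Use Lemma \ref{lem:SEsi},'' and you have simply spelled out the index-matching that this entails, including the key replacement $i \mapsto d-i$ in Lemma \ref{lem:SEsi}(iv) needed to intertwine the reversed dual idempotents of $\Phi^\downarrow$ with those of the target system. Nothing further is needed.
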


\begin{proof}
Use Lemma \ref{lem:SEsi}.
\end{proof}

\begin{lemma}    \label{lem:Ss}    \samepage
\ifDRAFT {\rm lem:Ss}. \fi
The following hold:
\begin{itemize}
\item[\rm (i)]
$S^*$ is an isomorphism of TB tridiagonal systems from $\Phi^\Downarrow$ to
\[
(- A; \{E_i\}_{i=0}^d; A^*; \{E^*_i\}_{i=0}^d).
\]
\item[\rm (ii)]
$S^*$ is an isomorphism of TB tridiagonal pairs
from $A,A^*$ to $-A,A^*$.
\end{itemize}
\end{lemma}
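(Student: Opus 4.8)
The plan is to deduce everything from the commutation relations for $S^*$ collected in Lemma \ref{lem:SsEi}, exactly as Lemma \ref{lem:S} was deduced from Lemma \ref{lem:SEsi}. Since $S^*$ is invertible by Lemma \ref{lem:S2}, it is an $\F$-linear bijection $V \to V$, so in each part it remains only to check the intertwining relations demanded by the definition of an isomorphism.

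For part (i), recall that $\Phi^\Downarrow = (A; \{E_{d-i}\}_{i=0}^d; A^*; \{E^*_i\}_{i=0}^d)$, so I must verify that $S^*$ intertwines the four data of $\Phi^\Downarrow$ with the four data of the target system $(-A; \{E_i\}_{i=0}^d; A^*; \{E^*_i\}_{i=0}^d)$. Concretely: the first maps require $S^* A = (-A) S^*$, which is Lemma \ref{lem:SsEi}(iii); the second maps require $S^* A^* = A^* S^*$, which is Lemma \ref{lem:SsEi}(i); the first-map idempotents require $S^* E_{d-i} = E_i S^*$ for $0 \leq i \leq d$, which is Lemma \ref{lem:SsEi}(iv) after the substitution $i \mapsto d-i$; and the second-map idempotents require $S^* E^*_i = E^*_i S^*$, which is Lemma \ref{lem:SsEi}(ii). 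These four checks, together with the bijectivity of $S^*$, establish (i).

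For part (ii), an isomorphism of TB tridiagonal pairs from $A,A^*$ to $-A,A^*$ is merely a bijection intertwining the two underlying maps, so it suffices to invoke the two relations $S^* A = -A S^*$ and $S^* A^* = A^* S^*$ already used above, again with the bijectivity from Lemma \ref{lem:S2}; alternatively (ii) is obtained from (i) by discarding the idempotent data.

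The argument is entirely mechanical once Lemma \ref{lem:SsEi} is in hand, so I do not expect any genuine obstacle; the only point requiring a moment of care is the index bookkeeping for the first-map idempotents, where the reversed ordering $\{E_{d-i}\}_{i=0}^d$ of $\Phi^\Downarrow$ must be matched against $S^* E_i = E_{d-i} S^*$ through the substitution $i \mapsto d-i$. A fully equivalent route would be to apply Lemma \ref{lem:S} to $\Phi^*$, using that $\Phi^\Downarrow = ((\Phi^*)^\downarrow)^*$ and that the map ``$S$'' attached to $\Phi^*$ is precisely $S^*$; this mirrors how Lemma \ref{lem:SsEi} itself was obtained from Lemma \ref{lem:SEsi}, and I would mention it only as a remark rather than carry it out.
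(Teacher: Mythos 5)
Your proposal is correct and follows exactly the paper's route: the paper's entire proof is ``Use Lemma \ref{lem:SsEi},'' and you have simply spelled out the four intertwining checks (with the correct index substitution $i \mapsto d-i$ for the reversed idempotent ordering of $\Phi^\Downarrow$) together with the invertibility of $S^*$ from Lemma \ref{lem:S2}. No issues.
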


\begin{proof}
Use Lemma \ref{lem:SsEi}.
\end{proof}

\begin{lemma}    \label{lem:SSsSsS}    \samepage
\ifDRAFT {\rm lem:SSsSsS}. \fi
We have $S S^* = (-1)^d S^* S$.
\end{lemma}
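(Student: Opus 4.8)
The plan is to prove the relation by a direct computation: push $S^*$ through the spectral expansion of $S$ and reindex. First I would expand $S S^*$ using Definition \ref{def:S}, writing $S S^* = \sum_{i=0}^d (-1)^i E_i S^*$. The essential input is Lemma \ref{lem:SsEi}(iv), which states $S^* E_i = E_{d-i} S^*$, equivalently $E_i S^* = S^* E_{d-i}$ for $0 \leq i \leq d$. Substituting this relation into the sum yields $S S^* = S^* \sum_{i=0}^d (-1)^i E_{d-i}$.

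Next I would reindex the remaining sum by setting $j = d-i$. The sign transforms as $(-1)^i = (-1)^{d-j} = (-1)^d (-1)^j$, so that $\sum_{i=0}^d (-1)^i E_{d-i} = (-1)^d \sum_{j=0}^d (-1)^j E_j = (-1)^d S$, again by Definition \ref{def:S}. Combining the two expressions gives $S S^* = (-1)^d S^* S$, which is the claim.

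There is no real obstacle here beyond careful sign bookkeeping; the only point requiring attention is the factor $(-1)^d$ produced by the reindexing, and this is precisely the source of the parity appearing in the statement. Conceptually, the commutation relation $E_i S^* = S^* E_{d-i}$ already encodes that conjugation by $S^*$ reverses the ordering of the primitive idempotents $\{E_i\}_{i=0}^d$ (Lemma \ref{lem:SsEi}), so the result may be read as saying that $S$ and $S^*$ commute up to the sign of this order reversal. An equally valid alternative would start from $S^* S$ and invoke Lemma \ref{lem:SEsi}(iv) in the form $E^*_i S = S E^*_{d-i}$; both routes converge at the same reindexing step and produce the same factor $(-1)^d$.
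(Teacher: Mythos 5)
Your proof is correct and is essentially the same computation as the paper's: the paper expands $S^*$ via Definition \ref{def:S} and commutes $S$ through using Lemma \ref{lem:SEsi}(iv), whereas you expand $S$ and commute $S^*$ through using the dual relation Lemma \ref{lem:SsEi}(iv); both reduce to the same reindexing $i \mapsto d-i$ that produces the factor $(-1)^d$. Indeed, the alternative route you mention at the end is precisely the one the paper takes.
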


\begin{proof}
Using Definition \ref{def:S} and Lemma \ref{lem:SEsi}(iv),
\\ \qquad
$\displaystyle S S^* = \sum_{i=0}^d (-1)^i S E^*_i
  = \sum_{i=0}^d (-1)^i E^*_{d-i} S
  = \sum_{i=0}^d (-1)^{d-i} E^*_i S = (-1)^d S^* S$.
\end{proof}

\begin{lemma}  \label{lem:SSs}    \samepage
\ifDRAFT {\rm lem:SSs}. \fi
The following hold:
\begin{itemize}
\item[\rm (i)]
$S S^*$ is an isomorphism of TB tridiagonal systems from
$\Phi^{\downarrow\Downarrow}$ to 
\[
(- A; \{E_i\}_{i=0}^d; - A^*; \{E^*_i\}_{i=0}^d).
\]
\item[\rm (ii)]
$S S^*$ is an isomorphism of TB tridiagonal pairs
from $A,A^*$ to $-A,-A^*$.
\end{itemize}
\end{lemma}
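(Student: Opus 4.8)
The plan is to prove (i) by checking directly that $\psi = S S^*$ intertwines the defining data of the two systems, using only the commutation relations recorded in Lemmas \ref{lem:SEsi} and \ref{lem:SsEi}; part (ii) will then drop out. First I would make the source system explicit: since $\downarrow$ reverses the ordering of the $A^*$-idempotents and $\Downarrow$ reverses the ordering of the $A$-idempotents, and these two operations commute, we have
\[
  \Phi^{\downarrow\Downarrow} = (A; \{E_{d-i}\}_{i=0}^d; A^*; \{E^*_{d-i}\}_{i=0}^d).
\]
The target system is $(-A; \{E_i\}_{i=0}^d; -A^*; \{E^*_i\}_{i=0}^d)$. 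Because $S$ and $S^*$ are invertible by Lemma \ref{lem:S2}, their product $S S^*$ is an invertible $\F$-linear map, so it remains only to verify the four intertwining conditions in the definition of an isomorphism of TB tridiagonal systems.

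Next I would carry out the four checks, in each case moving $S^*$ past the relevant element first and then $S$. For the generators,
\[
  S S^* A = -S A S^* = -A S S^*, \qquad S S^* A^* = S A^* S^* = -A^* S S^*,
\]
using $S^* A = -A S^*$ and $S A = A S$ for the first, and $S^* A^* = A^* S^*$ and $S A^* = -A^* S$ for the second (Lemmas \ref{lem:SsEi} and \ref{lem:SEsi}). For the idempotents, Lemma \ref{lem:SsEi}(iv) gives $S^* E_{d-i} = E_i S^*$ and Lemma \ref{lem:SEsi}(ii) gives $S E_i = E_i S$, whence $S S^* E_{d-i} = E_i S S^*$; dually, Lemma \ref{lem:SsEi}(ii) gives $S^* E^*_{d-i} = E^*_{d-i} S^*$ and Lemma \ref{lem:SEsi}(iv) gives $S E^*_{d-i} = E^*_i S$, whence $S S^* E^*_{d-i} = E^*_i S S^*$. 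These four relations are exactly the requirements that $S S^*$ be an isomorphism of TB tridiagonal systems from $\Phi^{\downarrow\Downarrow}$ to $(-A; \{E_i\}_{i=0}^d; -A^*; \{E^*_i\}_{i=0}^d)$, which proves (i).

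For (ii) I would simply forget the idempotent data: the two relations $S S^* A = -A S S^*$ and $S S^* A^* = -A^* S S^*$ obtained above, together with the invertibility of $S S^*$, say precisely that $S S^*$ is an isomorphism of TB tridiagonal pairs from $A,A^*$ to $-A,-A^*$. Alternatively one can read (ii) off by composing Lemma \ref{lem:Ss}(ii), which sends $A,A^* \mapsto -A,A^*$ via $S^*$, with $S$, which then sends $-A,A^* \mapsto -A,-A^*$ since $S A = A S$ and $S A^* = -A^* S$. I do not expect any genuine obstacle here: the only things to watch are the index bookkeeping in $\Phi^{\downarrow\Downarrow}$ and the order in which $S$ and $S^*$ act. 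It is worth noting that the factor $(-1)^d$ from Lemma \ref{lem:SSsSsS} never enters, since every relation is verified by moving $S$ and $S^*$ past the generators and idempotents one at a time rather than past each other.
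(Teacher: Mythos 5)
Your proof is correct, and it is essentially the paper's argument: the paper's proof is the one-line observation that the lemma follows by composing the isomorphisms $S$ and $S^*$ of Lemmas \ref{lem:S} and \ref{lem:Ss} (your alternative route for (ii)), and those lemmas in turn rest on exactly the commutation relations of Lemmas \ref{lem:SEsi} and \ref{lem:SsEi} that you verify directly. Your inlined computation just makes explicit the index bookkeeping for $\Phi^{\downarrow\Downarrow}$ that the composition leaves implicit, and your remark that the factor $(-1)^d$ from Lemma \ref{lem:SSsSsS} never enters is accurate.
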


\begin{proof}
Follows from Lemmas \ref{lem:S} and \ref{lem:Ss}.
\end{proof}

For $0 \leq i \leq d$ we define some polynomials in $\F[x]$:
\begin{align*}
 \tau_i(x) &= (x-\th_0)(x-\th_1)\cdots(x-\th_{i-1}), \\
 \eta_i (x) &= (x-\th_d)(x-\th_{d-1})\cdots(x-\th_{d-i+1}), \\
 \tau^*_i (x)&=  (x-\th^*_0)(x-\th^*_1)\cdots(x-\th^*_{i-1}), \\
 \eta^*_i (x) &= (x-\th^*_d)(x-\th^*_{d-1})\cdots(x-\th^*_{d-i+1}).
\end{align*}
 
\begin{theorem}  {\rm (See \cite{NT:selfdual}.)}  
\label{thm:sdiso}   \samepage
\ifDRAFT {\rm thm:sdiso}. \fi
Assume that $\Phi$ is self-dual.
Then the following four elements are equal, and this common element is
an isomorphism of TB tridiagonal systems from $\Phi$ to $\Phi^*$.
\begin{align*}
& \sum_{i=0}^d \eta_{d-i}(A) E^*_0 E_d \tau^*_i(A^*),
&& \sum_{i=0}^d \eta^*_{d-i}(A^*) E_0 E^*_d \tau_i(A),
\\
& \sum_{i=0}^d \tau^*_i(A^*) E_d E^*_0 \eta_{d-i}(A),
&& \sum_{i=0}^d \tau_i (A) E^*_d E_0 \eta^*_{d-i}(A^*).
\end{align*}
Moreover, the above common element is an isomorphism of
TB tridiagonal pairs from $A,A^*$ to $A^*,A$.
\end{theorem}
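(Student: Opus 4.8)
The plan is to write $\Psi_1,\Psi_2,\Psi_3,\Psi_4$ for the four displayed elements (top-left, top-right, bottom-left, bottom-right) and to argue in three stages: each $\Psi_k$ realizes \emph{half} of the relations of a system isomorphism $\Phi\to\Phi^*$; each $\Psi_k$ is invertible; and finally the four coincide, so the common map realizes \emph{all} the relations. Throughout I would use Lemma \ref{lem:selfdualsystem} to convert self-duality into the identities $\th_i=\th^*_i$ $(0\le i\le d)$, so that $\tau_i=\tau^*_i$ and $\eta_i=\eta^*_i$ as polynomials, and I would record $\Psi_1^\dagger=\Psi_3$, $\Psi_2^\dagger=\Psi_4$, which follow since $\dagger$ reverses products and fixes $A,A^*,E_i,E^*_i$ (Lemma \ref{lem:comments2}).

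The engine is a one-line push-through. From $A\,\eta_{d-i}(A)=\eta_{d-i+1}(A)+\th_i\,\eta_{d-i}(A)$ and $A^*\tau^*_i(A^*)=\tau^*_{i+1}(A^*)+\th^*_i\tau^*_i(A^*)$, I would move $A$ left and $A^*$ right through $\Psi_1=\sum_i\eta_{d-i}(A)\,E^*_0E_d\,\tau^*_i(A^*)$. After reindexing, the only discrepancy is the boundary terms $\eta_{d+1}(A)=\prod_j(A-\th_jI)=0$ and $\tau^*_{d+1}(A^*)=\prod_j(A^*-\th^*_jI)=0$, while the two diagonal sums agree because $\th_i=\th^*_i$; this gives $A\Psi_1=\Psi_1A^*$. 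From this and $\th_i=\th^*_i$ one gets $\Psi_1E^*_iV\subseteq E_iV$, hence $\Psi_1E^*_i=E_i\Psi_1$. The same computation applied to $\Psi_2$ (pushing $A^*$ left and $A$ right) yields the complementary relations $\Psi_2A=A^*\Psi_2$ and $\Psi_2E_i=E^*_i\Psi_2$. Thus $\Psi_1$ supplies the pair $\{\Psi_1A^*=A\Psi_1,\ \Psi_1E^*_i=E_i\Psi_1\}$ and $\Psi_2$ the complementary pair $\{\Psi_2A=A^*\Psi_2,\ \Psi_2E_i=E^*_i\Psi_2\}$, and $\Psi_3,\Psi_4$ inherit the matching relations via $\dagger$.

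For invertibility I would write $E^*_0E_d$ as a rank-one map with image $\F w_0$, $0\ne w_0\in E^*_0V$; here $E^*_0E_d\ne0$ because $E^*_0E_dE^*_d=(\text{leading coeff.})\,E^*_0A^dE^*_d\ne0$ by \eqref{eq:Ei} and Lemma \ref{lem:comments}(ii). Then $\Psi_1$ is a sum of $d+1$ rank-one maps with left parts $\eta_{d-i}(A)w_0$. Since $E_kE^*_0\ne0$ for all $k$ (by the identical argument applied to $E^*_dE_kE^*_0$), the vector $w_0$ is cyclic for $A$, so $\{\eta_{d-i}(A)w_0\}_{i=0}^d$ is a basis of $V$; the dual family of functionals is independent for the analogous reason, and hence $\Psi_1$ is invertible. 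The same holds for each $\Psi_k$.

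The main obstacle is the last stage: proving $\Psi_1=\Psi_2$ (equivalently that all four agree). This cannot come from the engine or from $\dagger$ alone, because the push-through only controls each $\Psi_k$'s interaction with one of $A,A^*$ and so can never produce the complementary relation; it is exactly the content established in \cite{NT:selfdual}, via the split decomposition and an explicit bidiagonal form. Granting $\Psi_1=\Psi_2=:\Psi$, this common map satisfies \emph{both} relation-pairs above, hence is a system isomorphism $\Phi\to\Phi^*$; being invertible with kernel invariant under both $A$ and $A^*$, it is automatically bijective, and forgetting the idempotents gives the isomorphism of TB tridiagonal pairs from $A,A^*$ to $A^*,A$. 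Finally $\Psi_3=\Psi^\dagger=\Psi_4$, and $\Psi^\dagger=c\,\Psi$ by Corollary \ref{cor:iso4} with $c^2=1$; here $c=1$ is read off from a single distinguished component, e.g.\ $\Psi_1E^*_0=\eta_d(A)E^*_0E_dE^*_0$, where every term with $i\ge1$ dies because $\tau^*_i(\th^*_0)=0$. Thus all four expressions coincide.
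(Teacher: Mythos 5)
The first thing to say is that the paper itself offers no proof of Theorem \ref{thm:sdiso}: it is stated with the attribution ``(See \cite{NT:selfdual})'', and that reference is listed in the bibliography as \emph{in preparation}, so there is no in-paper argument to measure yours against. Judged on its own terms, your proposal has a genuine gap, and it is the one you flag yourself: the equality of the four displayed elements --- in your notation $\Psi_1=\Psi_2$, from which $\Psi_3=\Psi_4$ follows by applying $\dagger$ --- is assumed rather than proven, and deferred to the very same external reference. But that equality \emph{is} the substance of the theorem. As your own analysis shows, each $\Psi_k$ individually delivers only half of the defining relations of a system isomorphism from $\Phi$ to $\Phi^*$ ($\Psi_1$ gives $A\Psi_1=\Psi_1A^*$ and $\Psi_1E^*_i=E_i\Psi_1$; $\Psi_2$ gives the complementary pair), so until the four are identified, no single displayed expression is shown to be an isomorphism of systems or of pairs. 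Everything after ``Granting $\Psi_1=\Psi_2$'' is conditional on the missing step, so the proposal does not constitute a proof.

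The parts you do carry out are essentially sound and worth keeping: the telescoping identities $x\,\eta_{d-i}(x)=\eta_{d-i+1}(x)+\th_i\,\eta_{d-i}(x)$ and $x\,\tau^*_i(x)=\tau^*_{i+1}(x)+\th^*_i\,\tau^*_i(x)$, together with $\eta_{d+1}(A)=0$, $\tau^*_{d+1}(A^*)=0$ and $\th_i=\th^*_i$ (Lemma \ref{lem:selfdualsystem}), do yield $A\Psi_1=\Psi_1A^*$; the relations $\Psi_1^\dagger=\Psi_3$ and $\Psi_2^\dagger=\Psi_4$ follow from Lemma \ref{lem:comments2}; and once $\Psi$ is known to satisfy all four intertwining relations, bijectivity follows cleanly from Definition \ref{def:TBTDpair}(iv) applied to $\ker\Psi$, provided you first verify $\Psi\neq 0$ (which is easiest via $\Psi_1E^*_d$, using $E^*_0E_dE^*_d\neq 0$ and the linear independence of $\{\eta_{d-i}(A)w_0\}_{i=0}^d$ with all coefficients $\tau^*_i(\th^*_d)$ nonzero). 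Two softer spots deserve mention. First, your standalone invertibility argument asserts that ``the dual family of functionals is independent for the analogous reason,'' but making that precise requires $E^*_0E_dE^*_j\neq 0$ for every $j$, which does not follow from Lemma \ref{lem:comments}(ii) alone (that lemma controls $E^*_0A^rE^*_j$ only for $r\leq j$, while $E_d$ is a polynomial of degree $d$ in $A$); fortunately this stage is superseded by the irreducibility argument at the end. Second, the normalization $c=1$ is not quite ``read off'': the identity $\Psi_1E^*_0=\eta_d(A)E^*_0E_dE^*_0$ is correct, but to extract $c$ from it you must either show $E^*_0E_dE^*_0\neq 0$ or compare against the matching component of $\Psi_3$, neither of which is immediate. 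To turn this into a complete proof you would have to supply the comparison $\Psi_1=\Psi_2$ directly, for instance by evaluating both sums on the basis $\{\eta_{d-i}(A)w_0\}_{i=0}^d$ or by the split-decomposition computation you allude to.
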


\section{The $\mathbb{Z}_3$-symmetric Askey-Wilson relations}
\label{sec:Z3}

For convenience,
we adjust our notation as follows.
\begin{center}
\begin{tabular}{c}
\hline
\hspace{4em} From now on we abbreviate $B=A^*$.  \hspace{4em} \rule{0mm}{3ex}
\\[4pt] \hline
\end{tabular}
\end{center}
Let $A,B$ denote a TB tridiagonal pair over $\F$.
We saw in Section \ref{sec:AW} that $A,B$ satisfy the Askey-Wilson relations
\begin{align}
A^2 B - \beta A B A + B A^2 &= \varrho B,                 \label{eq:ABAW1}
\\
B^2 A - \beta B A B + A B^2 &= \varrho^* A.              \label{eq:ABAW2}
\end{align}
A more detailed version of these relations is given in Lemma \ref{lem:normalizedAWrel}.
In this section we put the Askey-Wilson relations in a form said to be
$\mathbb{Z}_3$-symmetric.
This is done by introducing a third element $C$.

For the rest of this section, the following notation is in effect.
Assume that $\F$ is algebraically closed.
Fix an integer $d \geq 1$, and 
let $V$ denote a vector space over $\F$ with dimension $d+1$.
Let $A,B$ denote a TB tridiagonal pair on $V$.
By Lemma \ref{lem:rhononzero}
there exists an Askey-Wilson sequence $\beta, \varrho, \varrho^*$
for $A,B$ such that $\varrho$, $\varrho^*$ are nonzero.

\begin{definition}   \label{def:z}    \samepage
\ifDRAFT {\rm def:z}. \fi
Let $z$, $z'$, $z''$ denote scalars in $\F$ that satisfy
\begin{equation}                              \label{eq:defz}
\begin{array}{c|cc}
\text{\rm Case} &  z' z''  &   z'' z
\\ \hline
\beta = 2 & -\varrho  &  - \varrho^*         \rule{0mm}{2.5ex}
\\
\beta=-2 &  \varrho &  \varrho^*        \rule{0mm}{2.3ex}
\\
\beta \neq \pm 2 & \;\; \varrho (4-\beta^2)^{-1}  \;\; & \;\; \varrho^* (4-\beta^2)^{-1} \;\;        \rule{0mm}{2.3ex}
\end{array}
\end{equation}
Note that $z z' z'' \neq 0$.
\end{definition}

\begin{proposition}    \label{prop:Z3AW1pre}     \samepage
\ifDRAFT {\rm prop:Z3AW1pre}. \fi
Assume that $\beta = 2$.
Then there exists $C \in \text{\rm End}(V)$ such that
\begin{align}
  B C -  C B &= z A,    \label{eq:AW1pre1}
\\
  C A - A C &= z' B,   \label{eq:AW1pre2}
\\
 A B - B A &= z'' C.    \label{eq:AW1pre3}
\end{align}
\end{proposition}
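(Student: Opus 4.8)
The plan is to construct $C$ explicitly from $A$ and $B$ and then verify the three relations by direct computation. Since we are in the case $\beta=2$, the governing Askey-Wilson relations \eqref{eq:ABAW1}, \eqref{eq:ABAW2} read $A^2B-2ABA+BA^2=\varrho B$ and $B^2A-2BAB+AB^2=\varrho^*A$, which can be rewritten as $[A,[A,B]]=\varrho B$ and $[B,[B,A]]=\varrho^*A$, where $[X,Y]=XY-YB$. The natural guess, forced by \eqref{eq:AW1pre3}, is to \emph{define} $C=(z'')^{-1}(AB-BA)$, so that \eqref{eq:AW1pre3} holds by construction. Then the entire proposition reduces to checking \eqref{eq:AW1pre1} and \eqref{eq:AW1pre2}.

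With $C$ so defined, I would verify \eqref{eq:AW1pre2} as follows. Compute $CA-AC=(z'')^{-1}\bigl((AB-BA)A-A(AB-BA)\bigr)=(z'')^{-1}\bigl(-(A^2B-2ABA+BA^2)\bigr)=-(z'')^{-1}\varrho B$, using the first Askey-Wilson relation. Comparing with the desired $z'B$, this says we need $z'=-(z'')^{-1}\varrho$, i.e. $z'z''=-\varrho$, which is exactly the $\beta=2$ entry of the table in Definition \ref{def:z}. Symmetrically, $BC-CB=(z'')^{-1}\bigl(B(AB-BA)-(AB-BA)B\bigr)=(z'')^{-1}(B^2A-2BAB+AB^2)=(z'')^{-1}\varrho^*A$, using the second Askey-Wilson relation, and matching this against $zA$ forces $zz''=\varrho^*$, i.e. $z''z=\varrho^*$, again the table entry. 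So once $C$ is defined by \eqref{eq:AW1pre3}, the remaining two relations are equivalent to the defining conditions $z'z''=-\varrho$ and $z''z=-\varrho^*$ on the scalars $z,z',z''$.

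The scalars $z,z',z''$ are available precisely because Definition \ref{def:z} posits them satisfying those product conditions, and the existence of such scalars in the algebraically closed field $\F$ is what makes $C$ well-defined: given nonzero $\varrho,\varrho^*$ one chooses, say, $z''$ arbitrary nonzero and then solves $z'=-\varrho/z''$, $z=-\varrho^*/z''$, which also explains the remark $zz'z''\neq0$. The proof is therefore short: set $C=(z'')^{-1}(AB-BA)$, observe \eqref{eq:AW1pre3} is immediate, and compute the two commutators $CA-AC$ and $BC-CB$ using \eqref{eq:ABAW1}, \eqref{eq:ABAW2} respectively, reading off the coefficients from Definition \ref{def:z}.

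The only real subtlety — and the step I would be most careful about — is keeping the sign and ordering conventions consistent between the Askey-Wilson relations as stated and the nested commutators, since a single sign error in $[A,[A,B]]$ versus $A^2B-2ABA+BA^2$ would flip whether one obtains $-\varrho$ or $+\varrho$, and the table in Definition \ref{def:z} encodes these signs differently across the three cases $\beta=2,-2,\neq\pm2$. I expect no genuine obstacle for $\beta=2$; the work is purely a matter of expanding two commutators and matching coefficients. The analogous propositions for $\beta=-2$ and $\beta\neq\pm2$ (the statements numbered through Proposition \ref{prop:Z3AW3pre}) will presumably use the same construction of $C$ with the correspondingly adjusted scalar relations, and the table entries in \eqref{eq:defz} are designed so that exactly these computations close up in each case.
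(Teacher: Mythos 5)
Your approach is exactly the paper's: define $C$ by \eqref{eq:AW1pre3} and verify \eqref{eq:AW1pre1}, \eqref{eq:AW1pre2} by expanding the commutators and invoking \eqref{eq:ABAW1}, \eqref{eq:ABAW2} together with the table \eqref{eq:defz}. One sign slip to fix: in your computation of $BC-CB$ you write $B(AB-BA)-(AB-BA)B = B^2A-2BAB+AB^2$, but this expression equals $2BAB-B^2A-AB^2 = -(B^2A-2BAB+AB^2)$, so the forced condition is $z''z=-\varrho^*$ --- which is the actual $\beta=2$ entry of Definition \ref{def:z} and the sign you yourself use later when solving $z=-\varrho^*/z''$ --- not $z''z=\varrho^*$ as stated mid-computation.
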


\begin{proof}
Define $C$ by \eqref{eq:AW1pre3}.
One verifies \eqref{eq:AW1pre1} and \eqref{eq:AW1pre2} using
\eqref{eq:ABAW1}, \eqref{eq:ABAW2}, \eqref{eq:defz}.
\end{proof}

\begin{proposition}    \label{prop:Z3AW2pre}     \samepage
\ifDRAFT {\rm prop:Z3AW2pre}. \fi
Assume that $\beta = - 2$.
Then there exists $C \in \text{\rm End}(V)$ such that
\begin{align}
    B C + C B &= z A,    \label{eq:AW2pre1}
\\
  C A + A C &=  z' B,   \label{eq:AW2pre2}
\\
 A B + B A &=  z'' C.    \label{eq:AW2pre3}
\end{align}
\end{proposition}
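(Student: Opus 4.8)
The plan is to mirror the proof of Proposition \ref{prop:Z3AW1pre} exactly, with the commutators replaced by anticommutators. First I would \emph{define} $C$ by equation \eqref{eq:AW2pre3}, that is, set $C = (z'')^{-1}(AB+BA)$; this is legitimate since $z''\neq 0$ by the last line of Definition \ref{def:z}. With $C$ so defined, \eqref{eq:AW2pre3} holds by construction, and the entire task reduces to verifying the two relations \eqref{eq:AW2pre1} and \eqref{eq:AW2pre2}.

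To verify \eqref{eq:AW2pre1}, I would compute $BC+CB$ using the definition of $C$. We have
\[
z''(BC + CB) = B(AB+BA) + (AB+BA)B = BAB + B^2A + AB^2 + BAB = (B^2A + AB^2) + 2BAB.
\]
Now invoke the Askey-Wilson relation \eqref{eq:ABAW2}, which at $\beta=-2$ reads $B^2A + 2BAB + AB^2 = \varrho^* A$. Hence $z''(BC+CB) = \varrho^* A$, so $BC+CB = (\varrho^*/z'')A$. By the $\beta=-2$ row of \eqref{eq:defz} we have $z''z = \varrho^*$, so $\varrho^*/z'' = z$, giving exactly \eqref{eq:AW2pre1}. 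The verification of \eqref{eq:AW2pre2} is entirely parallel: expand $z''(CA+AC) = (AB+BA)A + A(AB+BA) = A^2B + 2ABA + BA^2$, apply \eqref{eq:ABAW1} in the form $A^2B + 2ABA + BA^2 = \varrho B$ (the $\beta=-2$ case), and then use $z'z'' = \varrho$ from \eqref{eq:defz} to conclude $CA+AC = (\varrho/z'')B = z'B$.

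I expect no genuine obstacle here; the proof is a short symmetric computation, and the only subtlety is bookkeeping the sign conventions so that the anticommutator expansions line up with the $\beta=-2$ specializations of \eqref{eq:ABAW1}, \eqref{eq:ABAW2} and with the correct rows of the table \eqref{eq:defz}. Accordingly I would keep the written proof terse, matching the style of Proposition \ref{prop:Z3AW1pre}: define $C$ via \eqref{eq:AW2pre3}, then state that one verifies \eqref{eq:AW2pre1} and \eqref{eq:AW2pre2} using \eqref{eq:ABAW1}, \eqref{eq:ABAW2}, \eqref{eq:defz}. The one point worth double-checking before committing is that the scalars $z,z',z''$ of Definition \ref{def:z} are constrained only through the products $z'z''$ and $z''z$, which is precisely what the two relations being proved require, so consistency of the overdetermined-looking system is automatic.
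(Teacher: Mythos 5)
Your proposal is correct and follows exactly the route the paper intends: the paper's proof is just ``Similar to the proof of Proposition \ref{prop:Z3AW1pre}'', which defines $C$ by the third relation and verifies the other two using \eqref{eq:ABAW1}, \eqref{eq:ABAW2}, \eqref{eq:defz}. Your explicit anticommutator computation and the use of $z'z''=\varrho$, $z''z=\varrho^*$ from the $\beta=-2$ row of \eqref{eq:defz} check out.
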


\begin{proof}
Similar to the proof of Proposition \ref{prop:Z3AW1pre}.
\end{proof}

\begin{proposition}    \label{prop:Z3AW3pre}     \samepage
\ifDRAFT {\rm prop:Z3AW3pre}. \fi
Assume that $\beta \neq \pm 2$.
Let $0 \neq q \in \F$ be such that $\beta = q^2+q^{-2}$.
Then there exists $C \in \text{\rm End}(V)$ such that
\begin{align}
 \frac{q B C - q^{-1} C B}{q^2-q^{-2}} &= z A ,    \label{eq:AW3pre1}
\\
 \frac{q C A - q^{-1} A C}{q^2-q^{-2}} &= z' B,   \label{eq:AW3pre2}
\\
 \frac{q A B - q^{-1} B A}{q^2-q^{-2}} &= z'' C.    \label{eq:AW3pre3}
\end{align}
\end{proposition}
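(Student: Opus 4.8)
The plan is to follow the template of Proposition~\ref{prop:Z3AW1pre} verbatim: define $C$ by the third relation \eqref{eq:AW3pre3}, and then verify the remaining two relations \eqref{eq:AW3pre1} and \eqref{eq:AW3pre2} by direct substitution, invoking the Askey-Wilson relations \eqref{eq:ABAW1}, \eqref{eq:ABAW2} together with the scalar identities recorded in Definition~\ref{def:z}. Since $\beta \neq \pm 2$ we have $4-\beta^2 \neq 0$, and by \eqref{eq:defz} the scalars $z,z',z''$ satisfy $zz'z''\neq 0$; in particular $z''\neq 0$, so I may set
\[
  C = \frac{qAB - q^{-1}BA}{z''(q^2-q^{-2})},
\]
which is exactly \eqref{eq:AW3pre3}.

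To verify \eqref{eq:AW3pre1}, I would clear denominators and compute $z''(q^2-q^{-2})(qBC - q^{-1}CB)$ by replacing $z''(q^2-q^{-2})C$ with $qAB - q^{-1}BA$. Expanding yields $(q^2+q^{-2})BAB - B^2A - AB^2 = \beta BAB - B^2A - AB^2$, which collapses to $-\varrho^* A$ by \eqref{eq:ABAW2}. Using the identity $(q^2-q^{-2})^2 = \beta^2-4 = -(4-\beta^2)$ together with the constraint $z z'' = \varrho^*(4-\beta^2)^{-1}$ from \eqref{eq:defz}, this rearranges to $qBC - q^{-1}CB = z(q^2-q^{-2})A$, which is \eqref{eq:AW3pre1}. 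The verification of \eqref{eq:AW3pre2} is entirely symmetric: expanding $z''(q^2-q^{-2})(qCA - q^{-1}AC)$ gives $\beta ABA - A^2B - BA^2$, which equals $-\varrho B$ by \eqref{eq:ABAW1}, and the constraint $z'z'' = \varrho(4-\beta^2)^{-1}$ then produces \eqref{eq:AW3pre2}.

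The only point requiring genuine care is the bookkeeping of the powers of $q$ and the crucial sign arising from $(q^2-q^{-2})^2 = -(4-\beta^2)$; this sign is precisely what reconciles the $-\varrho^*$ and $-\varrho$ generated by the Askey-Wilson relations with the positively-signed products $z z'' = \varrho^*(4-\beta^2)^{-1}$ and $z'z'' = \varrho(4-\beta^2)^{-1}$ prescribed in \eqref{eq:defz}. Once this is tracked correctly, the remainder is routine noncommutative algebra, parallel in every respect to the already-treated cases $\beta = 2$ and $\beta = -2$.
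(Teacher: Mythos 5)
Your proposal is correct and follows the same route as the paper: the paper's proof of Proposition \ref{prop:Z3AW3pre} simply refers back to the proof of Proposition \ref{prop:Z3AW1pre}, which likewise defines $C$ by the third relation and verifies the other two from \eqref{eq:ABAW1}, \eqref{eq:ABAW2} and \eqref{eq:defz}. Your sign bookkeeping via $(q^2-q^{-2})^2 = -(4-\beta^2)$ and the products $z''z$, $z'z''$ from Definition \ref{def:z} is exactly right.
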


\begin{proof}
Similar to the proof of Proposition \ref{prop:Z3AW1pre}.
\end{proof}

\begin{lemma}    \label{lem:z}    \samepage
\ifDRAFT {\rm lem:z}. \fi
Assume that $A,B$ is self-dual.
Then in Definition \ref{def:z},
the scalars $z$, $z'$, $z''$ can be chosen such that $z=z' = z''$.
\end{lemma}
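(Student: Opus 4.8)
The plan is to recall what freedom remains in the choice of $z,z',z''$ once the products $z'z''$ and $z''z$ are fixed by Definition~\ref{def:z}, and then exploit the self-duality hypothesis to pin down the remaining product $zz'$ so that all three scalars coincide. Observe that Definition~\ref{def:z} constrains only two of the three pairwise products; the third product $z z'$ is so far unconstrained. Since the defining conditions fix $z'z''$ and $z''z$, their ratio gives $z/z'=(z''z)/(z''z')=\varrho^*/\varrho$ (up to the common factor appearing in each case of \eqref{eq:defz}, which cancels). The self-duality hypothesis is what forces $\varrho=\varrho^*$, and hence $z=z'$; combined with a suitable sign/scaling choice this will yield $z=z'=z''$.

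First I would establish that self-duality implies $\varrho=\varrho^*$. By Definition~\ref{def:selfdualpair}, $A,B$ self-dual means $A,B$ is isomorphic to $B,A$. Under an isomorphism of TB tridiagonal pairs the eigenvalue array is preserved, so by Lemma~\ref{lem:selfdualsystem} (applied to an associated system) we get $\th_i=\th^*_i$ for $0\le i\le d$. Feeding $\th_i=\th^*_i$ into the defining relations \eqref{eq:AWrho}, \eqref{eq:AWrhos} of Proposition~\ref{prop:recurrence} immediately gives $\varrho=\varrho^*$. This is the key structural input.

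Next I would carry out the elementary algebra in each of the three cases of Definition~\ref{def:z}. With $\varrho=\varrho^*=:\varrho$, the two defining equations become $z'z''=z''z$ (up to the common case-dependent factor), which forces $z=z'$ since $z''\neq 0$ (recall $zz'z''\neq 0$). It then remains to choose the overall scale so that $z''$ equals this common value. In each case $z'z''$ is a fixed nonzero scalar $\kappa$ (namely $-\varrho$, $\varrho$, or $\varrho(4-\beta^2)^{-1}$), and we have the freedom to rescale: writing $z=z'=t$ and $z''=\kappa/t$, we may solve $t=\kappa/t$, i.e.\ $t^2=\kappa$, for $t$. Here the assumption that $\F$ is algebraically closed (in force throughout Section~\ref{sec:Z3}) guarantees that a square root $t$ of $\kappa$ exists in $\F$, and $t\neq 0$ since $\kappa\neq 0$. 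With this choice $z=z'=z''=t$, as desired. Finally one checks that this common value is still consistent with the case-defining products, which holds by construction.

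The main obstacle, and the step warranting care, is verifying that the scalars $z,z',z''$ genuinely have the claimed freedom---that is, confirming that nothing in Propositions~\ref{prop:Z3AW1pre}--\ref{prop:Z3AW3pre} or Definition~\ref{def:z} secretly constrains the third product $zz'$ beyond the two products already listed. One must check that the element $C$ produced in each proposition depends on the scaling of $z,z',z''$ only through the defining equation that introduces $C$, and that the remaining two relations, which were \emph{verified} rather than imposed, continue to hold under any rescaling compatible with the fixed products $z'z''$ and $z''z$. Once this consistency is confirmed, the square-root argument using algebraic closure completes the proof.
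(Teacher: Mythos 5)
Your proposal is correct and follows essentially the same route as the paper, whose entire proof reads ``Since $\varrho = \varrho^*$ and $\F$ is algebraically closed'': self-duality forces $\varrho=\varrho^*$, so the two product constraints in Definition \ref{def:z} prescribe a single nonzero scalar $\kappa$, and algebraic closure supplies $t$ with $t^2=\kappa$, whence $z=z'=z''=t$ satisfies both constraints. Your added care about the unconstrained product $zz'$ and the consistency of the choice is a reasonable elaboration of what the paper leaves implicit, but it is not a different argument.
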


\begin{proof}
Since $\varrho = \varrho^*$ and  $\F$ is algebraically closed.
\end{proof}

The next three results follow from Propositions \ref{prop:Z3AW1pre}--\ref{prop:Z3AW3pre}.

\begin{corollary}    \label{cor:Z3AW1}     \samepage
\ifDRAFT {\rm corollary:Z3AW1}. \fi
Assume that $\beta=2$.
Then for  
\[
  \rho = \rho^* = 4,  \qquad\qquad
 z = z' = z'' = 2 \sqrt{-1},
\]
the equations \eqref{eq:AW1pre1}--\eqref{eq:AW1pre3} become
\begin{align}
  B C - C B &= 2 \sqrt{-1} \, A,         \label{eq:AW1no1}
\\
  C A - A C &=  2 \sqrt{-1} \, B,          \label{eq:AW1no2}
\\
  A B - B A &=  2 \sqrt{-1} \, C.        \label{eq:AW1no3}
\end{align}
\end{corollary}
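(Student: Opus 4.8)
The plan is to read off \eqref{eq:AW1no1}--\eqref{eq:AW1no3} as the special case of Proposition \ref{prop:Z3AW1pre} obtained by assigning to the parameters $z$, $z'$, $z''$ the explicit common value $2\sqrt{-1}$. Thus the entire argument reduces to a consistency check against Definition \ref{def:z}, followed by a direct substitution into the three relations already furnished by Proposition \ref{prop:Z3AW1pre}.

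First I would invoke Lemma \ref{lem:z}: since we are in the self-dual case and $\F$ is algebraically closed, the scalars $z$, $z'$, $z''$ of Definition \ref{def:z} may be chosen with $z=z'=z''$. In the present case $\beta=2$, Definition \ref{def:z} requires $z'z''=-\varrho$ and $z''z=-\varrho^*$; under the common choice both collapse to $z^2=-\varrho=-\varrho^*$. The availability of this common choice is precisely the self-dual condition $\varrho=\varrho^*$, so I would note that at this point.

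Next I would impose the normalization $\varrho=\varrho^*=4$ recorded in the statement and solve $z^2=-4$. Because $\F$ is algebraically closed it contains a square root of $-1$, so I may take $z=z'=z''=2\sqrt{-1}$; this meets all constraints of Definition \ref{def:z}, since $(2\sqrt{-1})^2=-4=-\varrho=-\varrho^*$ and $zz'z''=(2\sqrt{-1})^3\neq 0$. Substituting this common value into \eqref{eq:AW1pre1}--\eqref{eq:AW1pre3} of Proposition \ref{prop:Z3AW1pre} then yields \eqref{eq:AW1no1}--\eqref{eq:AW1no3} verbatim.

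There is essentially no hard step here: all the content, including the production of $C$ and the three commutator relations, is already carried by Proposition \ref{prop:Z3AW1pre}. The only matters demanding care are bookkeeping ones, namely confirming that the sign in the $\beta=2$ row of Definition \ref{def:z} is consistent with $(2\sqrt{-1})^2=-4$, and observing that the square root $\sqrt{-1}$ is available only by algebraic closure of $\F$. I would finish by simply recording the substitution.
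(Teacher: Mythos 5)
Your proposal is correct and matches the paper's intent: the paper dispatches this corollary (and its two siblings) with the single remark that they ``follow from Propositions \ref{prop:Z3AW1pre}--\ref{prop:Z3AW3pre},'' i.e.\ exactly the substitution and sign check you carry out. One minor remark: since the corollary already hypothesizes $\varrho=\varrho^*=4$, the common choice $z=z'=z''$ is available directly from Definition \ref{def:z} and algebraic closure, so the appeal to Lemma \ref{lem:z} and self-duality is not strictly needed.
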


\begin{corollary}    \label{cor:Z3AW2}     \samepage
\ifDRAFT {\rm cor:Z3AW2}. \fi
Assume that $\beta=-2$.
Then for  
\[
\rho = \rho^* = 4,  \qquad \qquad
z = z' = z'' = 2,
\]
the equations \eqref{eq:AW2pre1}--\eqref{eq:AW2pre3} become
\begin{align}
  B C + C B &= 2 A,             \label{eq:AW2no1}
\\
  C A + A C &=  2 B,              \label{eq:AW2no2}
\\
  A B + B A &=  2 C.                  \label{eq:AW2no3}
\end{align}
\end{corollary}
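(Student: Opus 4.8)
The plan is to obtain Corollary \ref{cor:Z3AW2} as a direct specialization of Proposition \ref{prop:Z3AW2pre}, so the only real content is a normalization together with a consistency check and a substitution. First I would arrange the normalization $\varrho = \varrho^* = 4$. Since $\beta = -2$ and we are in the standing situation where $\varrho, \varrho^*$ are nonzero (Lemma \ref{lem:rhononzero}), and since $\F$ is algebraically closed, I can rescale $A$ and $B$ using Lemmas \ref{lem:fundaffine} and \ref{lem:AWseqhhs}: replacing $A$ by $\zeta A$ and $B$ by $\zeta^* B$ preserves $\beta$ and sends $\varrho \mapsto \zeta^2 \varrho$ and $\varrho^* \mapsto \zeta^{*2} \varrho^*$, so choosing $\zeta$ with $\zeta^2 = 4/\varrho$ and $\zeta^*$ with $\zeta^{*2} = 4/\varrho^*$ yields $\varrho = \varrho^* = 4$.

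Next I would verify that the choice $z = z' = z'' = 2$ is admissible in Definition \ref{def:z}. In the case $\beta = -2$, that definition imposes $z' z'' = \varrho$ and $z'' z = \varrho^*$; with $\varrho = \varrho^* = 4$ both constraints read $4$, and indeed $z' z'' = 2 \cdot 2 = 4$ and $z'' z = 2 \cdot 2 = 4$ when $z = z' = z'' = 2$. Hence these values are consistent with the table \eqref{eq:defz}, and the hypotheses of Proposition \ref{prop:Z3AW2pre} are met with this data.

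Finally I would invoke Proposition \ref{prop:Z3AW2pre}, which produces $C \in \text{End}(V)$ satisfying \eqref{eq:AW2pre1}--\eqref{eq:AW2pre3} for the chosen scalars $z$, $z'$, $z''$. Substituting $z = z' = z'' = 2$ into those three equations turns them into \eqref{eq:AW2no1}--\eqref{eq:AW2no3}, which is the claimed conclusion.

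I do not expect any serious obstacle, since the whole argument is normalization plus substitution. The single point requiring care is the degenerate case $d = 1$ with $\beta = -2$, where Lemma \ref{lem:rho0} forces $\varrho = \varrho^* = 0$, making the rescaling to $4$ impossible; I would either exclude this case explicitly or record that it is treated separately (as in Lemma \ref{lem:d1}), so that the nonvanishing of $\varrho, \varrho^*$ needed for the normalization is genuinely in force.
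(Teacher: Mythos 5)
Your proposal is correct and matches the paper's approach: the paper derives this corollary directly from Proposition \ref{prop:Z3AW2pre} by specializing the parameters, exactly as you do. Your worry about $d=1$ with $\beta=-2$ is already ruled out by the standing hypothesis of the section (the fixed Askey--Wilson sequence has $\varrho,\varrho^*$ nonzero, which by Lemma \ref{lem:rho0} excludes that degenerate case), so no separate treatment is needed.
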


\begin{corollary}    \label{cor:Z3AW3}     \samepage
\ifDRAFT {\rm cor:Z3AW3}. \fi
Assume that $\beta \neq \pm 2$.
Then for  
\[
  \rho = \rho^* = 4-\beta^2,  \qquad \qquad
  z = z' = z'' = 1,
\]
the equations \eqref{eq:AW3pre1}--\eqref{eq:AW3pre3} become
\begin{align}
\frac{q B C - q^{-1} C B}{q^2-q^{-2} } &= A,    \label{eq:AW3no1}
\\
\frac{q C A - q^{-1} A C}{q^2-q^{-2} }  &= B,      \label{eq:AW3no2}
\\
\frac{ q A B - q^{-1} B A}{q^2-q^{-2}}   &=  C.       \label{eq:AW3no3}
\end{align}
\end{corollary}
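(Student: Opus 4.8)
The plan is to derive Corollary~\ref{cor:Z3AW3} as the $\beta \neq \pm 2$ specialization of Proposition~\ref{prop:Z3AW3pre}. That proposition already supplies an element $C \in \text{\rm End}(V)$ satisfying \eqref{eq:AW3pre1}--\eqref{eq:AW3pre3} for scalars $z, z', z''$ constrained by Definition~\ref{def:z}, so the only real work is to fix the normalization and read off the relations. First I would record, from the $\beta \neq \pm 2$ row of Definition~\ref{def:z}, that $z'z'' = \varrho(4-\beta^2)^{-1}$ and $z''z = \varrho^*(4-\beta^2)^{-1}$; these are meaningful because $\beta \neq \pm 2$ forces $4-\beta^2 \neq 0$. (Here $\varrho, \varrho^*$ are the quantities written $\rho, \rho^*$ in the corollary statement.)

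Next I would impose $\varrho = \varrho^* = 4-\beta^2$. Substituting into the two product relations yields $z'z'' = 1$ and $z''z = 1$, so in particular $z = z'$. Appealing to Lemma~\ref{lem:z} in the self-dual case (where $\varrho = \varrho^*$), the scalars may be taken with $z = z' = z''$; together with $z'z'' = 1$ this forces $z^2 = 1$, and I would select the root $z = z' = z'' = 1$. One checks directly that this choice is consistent with Definition~\ref{def:z}, since then both products equal $1$. Finally, substituting $z = z' = z'' = 1$ into \eqref{eq:AW3pre1}--\eqref{eq:AW3pre3} turns them into \eqref{eq:AW3no1}--\eqref{eq:AW3no3}, which is the asserted conclusion.

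The hard part will be justifying that the normalization $\varrho = \varrho^* = 4-\beta^2$ is genuinely attainable rather than merely postulated. For this I would invoke the rescaling freedom: by Lemma~\ref{lem:AWseqhhs}, replacing $A, B$ by $\zeta A, \zeta^* B$ sends the Askey-Wilson sequence $\beta, \varrho, \varrho^*$ to $\beta, \zeta^2 \varrho, \zeta^{*2}\varrho^*$. Since $\F$ is algebraically closed and $\varrho, \varrho^*$ are nonzero by Lemma~\ref{lem:rhononzero}, there exist $\zeta, \zeta^* \in \F$ with $\zeta^2 \varrho = \zeta^{*2}\varrho^* = 4-\beta^2$, the target value being nonzero precisely because $\beta \neq \pm 2$. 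After this rescaling the argument above applies verbatim, and the remaining steps are purely formal substitutions that should present no further difficulty.
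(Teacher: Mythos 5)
Your proposal is correct and matches the paper, which derives this corollary (together with its two companions) simply by specializing Proposition \ref{prop:Z3AW3pre} and Definition \ref{def:z} to $\varrho=\varrho^*=4-\beta^2$ and $z=z'=z''=1$. Your additional verification that this normalization is attainable via Lemma \ref{lem:AWseqhhs} and algebraic closedness is a reasonable supplement that the paper leaves implicit.
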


\begin{note}
The equations \eqref{eq:AW1no1}--\eqref{eq:AW1no3}
are the defining relations
for $\mathfrak{sl}_2$ in the Pauli presentation.
Here are some details.
Assume that $\text{\rm Char}(\F) \neq 2$.
Let $\mathfrak{sl}_2$ denote the Lie algebra over $\F$ consisting of the $2$ by $2$ matrices
that have entries in $\F$ and trace $0$. 
The Lie bracket is $[r,s]=r s - s r$.
Consider the Pauli matrices (see \cite{Pauli}):
\begin{align*}
S_1 &= \begin{pmatrix} 0 & 1 \\ 1 & 0 \end{pmatrix}, &
S_2 &= \begin{pmatrix} 0 & - \sqrt{-1}  \\ \sqrt{-1} & 0 \end{pmatrix}, &
S_3 &= \begin{pmatrix} 1 & 0  \\ 0 & -1 \end{pmatrix}.
\end{align*}
These matrices form a basis for $\mathfrak{sl}_2$,
and satisfy the relations
\begin{align*}
[S_1, S_2]  &= 2 \sqrt{-1} \, S_3,  &
[S_2, S_3] &= 2 \sqrt{-1} \, S_1,   &
[S_3, S_1] &= 2 \sqrt{-1} \, S_2.
\end{align*}
\end{note}

\begin{note}
The equations \eqref{eq:AW2no1}--\eqref{eq:AW2no3} 
 are essentially the defining relations for
the anticommutator spin algebra (see \cite{Arik}).
Here are some details.
In \cite{Arik} Arik and Kayserilioglu introduced an $\F$-algebra
by generators $J_1,J_2,J_3$ and relations
\begin{align*}
 J_2 J_3 + J_3 J_2 &= J_1, &
 J_3 J_1 + J_1 J_3 &= J_2, &
 J_1 J_2 + J_2 J_1 &= J_3.
\end{align*}
This algebra is called the {\em anticommutator spin algebra}.
Observe that the above relations coincide with \eqref{eq:AW2no1}--\eqref{eq:AW2no3}
by setting $J_1 = A/2$, $J_2 = B/2$, $J_3 = C/2$.
\end{note}

\begin{note}
The equations \eqref{eq:AW3no1}--\eqref{eq:AW3no3} 
are essentially the 
defining relations for the quantum algebra $U_q(\mathfrak{so}_3)$.
Here are some details.
The algebra  $U_q(\mathfrak{so}_3)$ has a presentation by generators
$I_1, I_2, I_3$ and relations
\begin{align}
 q^{1/2}  I_2 I_3 - q^{-1/2} I_3 I_2 &= I_1,         \label{eq:I1}
\\
  q^{1/2}  I_3 I_1 - q^{-1/2} I_1 I_3 &= I_2,        \label{eq:I2}
\\
 q^{1/2}  I_1 I_2 - q^{-1/2} I_2 I_1 &= I_3.         \label{eq:I3}
\end{align}
As far as we know,
the equations \eqref{eq:I1}--\eqref{eq:I3} were first considered by 
Santilli \cite{San}.
Later in \cite{Fairlie}, Fairlie discovered that \eqref{eq:I1}--\eqref{eq:I3} show up in $U_q(\mathfrak{so}_3)$.
The fact that \eqref{eq:I1}--\eqref{eq:I3} give a presentation for $U_q(\mathfrak{so}_3)$ 
was proved by Odesski in \cite{odesski}.
See \cite{havlicek} for a more precise history.
The relations \eqref{eq:I1}--\eqref{eq:I3}
become \eqref{eq:AW3no1}--\eqref{eq:AW3no3}
by first setting
$I_1 = A/(q-q^{-1})$, 
$I_2 = B/(q-q^{-1})$, 
$I_3 = C/(q-q^{-1})$,
and then replacing $q$ with $q^2$.
\end{note}

\section{The elements $W$, $W'$, $W''$ and the automorphism $\rho$}
\label{sec:W}

Throughout this section, the following notation is in effect.
Assume that $\F$ is algebraically closed.
Fix an integer $d \geq 1$,
and let $V$ denote a vector space over $\F$ with dimension $d+1$.
Let $A,B$ denote a TB tridiagonal pair on $V$.
In view of Lemma \ref{lem:selfdualpair2}, assume that $A,B$ is self-dual.
Let $\beta,\varrho,\varrho^*$ denote the Askey-Wilson sequence for $A,B$
from above Definition \ref{def:z},
and note that $\varrho = \varrho^*$.
For the case $\beta\neq \pm 2$, fix $0 \neq q \in \F$ as in Proposition \ref{prop:Z3AW3pre}.

\begin{lemma}   \label{lem:th}    \samepage
\ifDRAFT {\rm lem:th}. \fi
Let $\{\th_i\}_{i=0}^d$ denote an eigenvalue sequence of $A,B$.
Then there exists $0 \neq h \in \F$ such that the following holds:
\begin{equation}          \label{eq:ththd}
\begin{array}{c|c}
 \text{\rm Case} &  \th_i 
\\ \hline
\beta=2 &  h(d-2i)   \rule{0mm}{2.7ex}
\\
\beta = -2 & h(d-2i)(-1)^i     \rule{0mm}{2.5ex}
\\
\beta \neq \pm 2 &\;\; h(q^{d-2i} - q^{2i-d}) \;\;    \rule{0mm}{2.5ex}
\end{array}
\end{equation}
\end{lemma}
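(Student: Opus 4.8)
The plan is to recognize the eigenvalue sequence $\{\th_i\}_{i=0}^d$ as a nonzero element of the one-dimensional space $\R^\text{\rm asym}$ of antisymmetric $\beta$-recurrent sequences from Section \ref{sec:sym}, and then match it against the explicit generators of $\R^\text{\rm asym}$ produced there.

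First I would record the structural facts. Since $A,B$ is a TB tridiagonal pair, its eigenvalue array $(\{\th_i\}_{i=0}^d;\{\th^*_i\}_{i=0}^d)$ satisfies conditions (i)--(iii) of Theorem \ref{thm:main}; in particular the $\th_i$ are mutually distinct (so $\{\th_i\}_{i=0}^d$ is MutDist), and $\text{\rm Char}(\F)\neq 2$ by Lemma \ref{lem:Char2}. The chosen $\beta$ is a fundamental parameter, so by Proposition \ref{prop:beta} the sequence $\{\th_i\}_{i=0}^d$ is $\beta$-recurrent, while by Theorem \ref{thm:main}(iii) it is antisymmetric. Hence $\{\th_i\}_{i=0}^d$ is a nonzero element of $\R^\text{\rm asym}$, and by Lemma \ref{lem:dimR} we have $\dim\R^\text{\rm asym}=1$. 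Thus any convenient nonzero generator of $\R^\text{\rm asym}$ spans it, and it suffices in each case to exhibit such a generator and read off the scalar multiple $h$.

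Next I would invoke the generators from Lemmas \ref{lem:asymbasis1} and \ref{lem:asymbasis2}. For $\beta=2$ the generator is $\sigma_i=d-2i$, which immediately gives $\th_i=h(d-2i)$ for some $0\neq h\in\F$. For $\beta\neq\pm 2$ the generator is $\sigma_i=(q^{d-2i}-q^{2i-d})/(q^2-q^{-2})$ when $d$ is even and $\sigma_i=(q^{d-2i}-q^{2i-d})/(q-q^{-1})$ when $d$ is odd; since $\F$ is algebraically closed we may take $q\in\F$, and since $\beta\neq\pm 2$ forces $q^2\neq\pm 1$ the denominators are nonzero elements of $\F$, so absorbing them into the scalar multiple yields $\th_i=h(q^{d-2i}-q^{2i-d})$ with $0\neq h\in\F$.

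The delicate point, and the step I expect to be the main obstacle, is the case $\beta=-2$: Lemma \ref{lem:asymbasis1} gives the generator $\sigma_i=(d-2i)(-1)^i$ only when $d$ is even, whereas for $d$ odd it gives $\sigma_i=(-1)^i$, which does not match the stated formula. I would dispose of this by showing that, under the present hypotheses, $\beta=-2$ forces $d$ to be even. Indeed, the Askey-Wilson sequence fixed above Definition \ref{def:z} has $\varrho,\varrho^*$ nonzero, so by Lemma \ref{lem:rho0} the possibility $d=1$, $\beta=-2$ is excluded; and for $d\geq 2$ with $\beta=-2$, the fact that $\{\th_i\}_{i=0}^d$ is MutDist makes $\R^\text{\rm asym}$ MutDist by Lemma \ref{lem:feasible}, whence Lemma \ref{lem:deven} forces $d$ even. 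With $d$ even the generator $\sigma_i=(d-2i)(-1)^i$ applies and gives $\th_i=h(d-2i)(-1)^i$, completing the case analysis and the proof.
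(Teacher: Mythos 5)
Your proof is correct. The paper itself disposes of this lemma in one line, by citing Lemmas \ref{lem:d1}, \ref{lem:d2} (for $d=1,2$) and Examples \ref{ex:1}--\ref{ex:4} (for $d\geq 3$), i.e.\ by reading the formulas off the already-established closed-form classification of eigenvalue arrays, using algebraic closedness to pull $q$ and the various denominators into $\F$ and into $h$. You instead re-run the underlying argument: $\{\th_i\}_{i=0}^d$ is a nonzero element of the one-dimensional space $\R^{\text{\rm asym}}$ (Proposition \ref{prop:beta} for the recurrence, Theorem \ref{thm:main}(iii) for antisymmetry, Lemma \ref{lem:dimR} for the dimension), so it is a scalar multiple of the explicit generators in Lemmas \ref{lem:asymbasis1}, \ref{lem:asymbasis2}. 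This is essentially the same mechanism by which Examples \ref{ex:1}--\ref{ex:4} and Theorem \ref{thm:thths} were proved, so the mathematical content coincides; what your version buys is a uniform treatment of all $d\geq 1$ without the separate appeal to Lemmas \ref{lem:d1}, \ref{lem:d2}, at the cost of redoing work the paper has already packaged. Your handling of the one genuinely delicate point --- that the case $\beta=-2$ with $d$ odd cannot occur, via Lemma \ref{lem:rho0} and the choice of Askey--Wilson sequence for $d=1$, and via Lemmas \ref{lem:feasible}, \ref{lem:deven} for $d\geq 2$ --- is exactly right and is the part the paper's citation leaves implicit.
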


\begin{proof}
By Lemmas \ref{lem:d1}, \ref{lem:d2} 
and Examples \ref{ex:1}--\ref{ex:4}, together with the assumption 
that $\F$ is algebraically closed.
\end{proof}

\begin{lemma}    \label{lem:rho}    \samepage
\ifDRAFT {\rm lem:rho}. \fi
Referring to Lemma \ref{lem:th},
\begin{equation}          \label{eq:rhorhod}
\begin{array}{c|cc}
 \text{\rm Case} &  \varrho
\\ \hline
\beta=2 &  4 h^2   \rule{0mm}{2.7ex}
\\
\beta = -2 & 4 h^2      \rule{0mm}{2.5ex}
\\
\beta \neq \pm 2 & \;\; h^2 (q^2-q^{-2})^2 \;\; \rule{0mm}{2.5ex}
\end{array}
\end{equation}
\end{lemma}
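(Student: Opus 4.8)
The plan is to read off $\varrho$ directly from the eigenvalue sequence via the recurrence established in Proposition \ref{prop:recurrence}. By \eqref{eq:AWrho}, an Askey-Wilson sequence $\beta,\varrho,\varrho^*$ for $A,B$ satisfies
\[
\th_{i-1}^2 - \beta \th_{i-1}\th_i + \th_i^2 = \varrho \qquad (1 \le i \le d),
\]
and by Lemma \ref{lem:rec1}(i) the left-hand side is independent of $i$. So it suffices to substitute the closed form of $\th_i$ from Lemma \ref{lem:th} into this expression and simplify; the outcome is forced to be independent of the particular $i$ chosen, which provides a built-in consistency check on the computation.

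In the case $\beta = 2$ the left-hand side collapses to the perfect square $(\th_{i-1}-\th_i)^2$, and from $\th_i = h(d-2i)$ one has $\th_{i-1}-\th_i = 2h$, giving $\varrho = 4h^2$. In the case $\beta=-2$ it collapses to $(\th_{i-1}+\th_i)^2$; using $\th_i = h(d-2i)(-1)^i$ one finds $\th_{i-1}+\th_i = 2h(-1)^{i-1}$, so that the $i$-dependent factor $(-1)^{i-1}$ is eliminated upon squaring and again $\varrho = 4h^2$.

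For $\beta \neq \pm 2$, I would write $\beta = q^2+q^{-2}$ and $\th_i = h(q^{d-2i}-q^{2i-d})$, and introduce the abbreviation $u = q^{d-2i}$ so that $\th_i = h(u-u^{-1})$ and $\th_{i-1} = h(q^2 u - q^{-2}u^{-1})$. Expanding $\th_{i-1}^2 - (q^2+q^{-2})\th_{i-1}\th_i + \th_i^2$ and collecting powers of $u$, the coefficients of $u^2$ and of $u^{-2}$ each vanish, leaving only the constant term $h^2\bigl(-4 + (q^2+q^{-2})^2\bigr) = h^2(q^2-q^{-2})^2$. This confirms $\varrho = h^2(q^2-q^{-2})^2$, as claimed.

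The only genuine work is the bookkeeping in the third case: one must track the coefficients of $u^{\pm 2}$ and verify that they cancel, leaving a constant. This is routine but is the step where an arithmetic slip is most likely, so I would organize the expansion around the substitution $u = q^{d-2i}$ precisely so that the vanishing of the $i$-dependent terms is visible at a glance and the residual constant is isolated cleanly.
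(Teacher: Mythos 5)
Your proposal is correct and follows exactly the paper's route: the paper's proof of this lemma is the one-line instruction ``Use Proposition \ref{prop:recurrence} and Lemma \ref{lem:th},'' and you have simply carried out the substitution of the closed forms from \eqref{eq:ththd} into \eqref{eq:AWrho} in each of the three cases. The case-by-case simplifications (the perfect squares for $\beta=\pm 2$ and the cancellation of the $u^{\pm 2}$ terms for $\beta\neq\pm 2$) all check out.
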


\begin{proof}
Use Proposition \ref{prop:recurrence} and Lemma \ref{lem:th}.
\end{proof}

In view of Lemma \ref{lem:z},
we choose the scalars $z$, $z'$, $z''$ in Definition \ref{def:z} such that $z=z'=z''$.
Let  $C \in \text{\rm End}(V)$ be from 
Propositions \ref{prop:Z3AW1pre}--\ref{prop:Z3AW3pre}.
Next we define some invertible elements $W$, $W'$ in $\text{\rm End}(V)$,
and use them to construct an automorphism $\rho$ of $\text{\rm End}(V)$
that sends 
\begin{align*}
A &\mapsto B, \qquad\qquad B \mapsto C, \qquad\qquad C \mapsto A.
\end{align*}
For $0 \leq i \leq d$ let $E_i$ (resp.\ $E'_i$) denote the primitive idempotent
of $A$ (resp.\ $B$) for the eigenvalue $\th_i$,
where $\th_i$ is from Lemma \ref{lem:th}.

\begin{definition}    \label{def:WWd}    \samepage
\ifDRAFT {\rm def:WWd}. \fi
Define  $W$, $W' \in \text{\rm End}(V)$ by
\begin{align}
W &= \sum_{i=0}^d t_i E_i, \qquad\qquad
W' = \sum_{i=0}^d t_i E'_i,                 \label{eq:defWWd}
\end{align}
where
\begin{equation}    \label{eq:deft}
\begin{array}{c|c}
\text{\rm Case} & t_i
\\ \hline
\beta=2 & 2^i h^i z^{-i}        \rule{0mm}{2.7ex}
\\
\beta=-2 & (-1)^{\lfloor i/2 \rfloor}     2^i h^i z^{-i}     \rule{0mm}{2.5ex}
\\
\beta \neq \pm 2 & h^i z^{-i} q^{i (d-i)}         \rule{0mm}{2.5ex}
\end{array}
\end{equation}
\end{definition}

\begin{lemma}    \label{lem:Winv}   \samepage
\ifDRAFT {\rm lem:Winv}. \fi
The elements $W$, $W'$ are invertible, with inverse
\begin{align*}
W^{-1} &= \sum_{i=0}^d t_i^{-1} E_i, \qquad\qquad
(W')^{-1} = \sum_{i=0}^d t_i^{-1} E'_i.  
\end{align*}
\end{lemma}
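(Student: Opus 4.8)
The plan is to verify directly that the proposed inverses work, relying only on the idempotent properties $E_i E_j = \delta_{i,j} E_i$ and $I = \sum_{i=0}^d E_i$ (and the analogous relations for the $E'_i$), together with the fact that each scalar $t_i$ is nonzero.

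First I would check that $t_i \neq 0$ for $0 \leq i \leq d$. This is immediate from \eqref{eq:deft}: in each of the three cases the scalar $t_i$ is a product of powers of $h$, $z$, and (when $\beta \neq \pm 2$) $q$, and each of $h$, $z$, $q$ is nonzero by Lemma \ref{lem:th}, Definition \ref{def:z}, and the standing hypothesis on $q$. The sign factor $(-1)^{\lfloor i/2 \rfloor}$ in the case $\beta = -2$ is also nonzero. Hence $t_i^{-1}$ is defined for each $i$, and the sum $\sum_{i=0}^d t_i^{-1} E_i$ makes sense.

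Next I would compute the product of $W$ with its proposed inverse. Using \eqref{eq:defWWd} and expanding,
\[
\Bigl(\sum_{i=0}^d t_i E_i\Bigr)\Bigl(\sum_{j=0}^d t_j^{-1} E_j\Bigr)
 = \sum_{i=0}^d \sum_{j=0}^d t_i t_j^{-1} E_i E_j
 = \sum_{i=0}^d t_i t_i^{-1} E_i
 = \sum_{i=0}^d E_i = I,
\]
where the middle step uses $E_i E_j = \delta_{i,j} E_i$ and the last step uses $I = \sum_{i=0}^d E_i$. The product in the opposite order is computed identically, so $\sum_{i=0}^d t_i^{-1} E_i$ is a two-sided inverse for $W$, showing that $W$ is invertible with the stated inverse. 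The argument for $W'$ is word-for-word the same, with $E_i$ replaced by the primitive idempotent $E'_i$ of $B$, which satisfies $E'_i E'_j = \delta_{i,j} E'_i$ and $\sum_{i=0}^d E'_i = I$.

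There is no real obstacle here: the entire content is the elementary observation that a linear combination of orthogonal idempotents summing to $I$ is invertible precisely when all the coefficients are nonzero. The only point requiring a moment's attention is the verification that $t_i \neq 0$ across the three cases of \eqref{eq:deft}, which I would dispatch by inspection.
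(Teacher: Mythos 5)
Your proof is correct and is exactly what the paper's terse ``By construction'' is standing in for: the orthogonality relations $E_iE_j=\delta_{i,j}E_i$, $\sum_i E_i=I$ (and likewise for the $E'_i$) together with $t_i\neq 0$ give the two-sided inverse immediately. Nothing further is needed.
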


\begin{proof}
By construction.
\end{proof}

Recall the antiautomorphism $\dagger$ of $\text{\rm End}(V)$, from Lemma \ref{lem:comments2}.

\begin{lemma}    \label{lem:Wdag}   \samepage
\ifDRAFT {\rm lem:Wdag}. \fi
The antiautomorphism $\dagger$ fixes each of
$W$, $W'$.
\end{lemma}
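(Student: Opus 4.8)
The plan is to observe that each of $W$, $W'$ lies in a subalgebra on which $\dagger$ already acts as the identity, so that the statement reduces to nothing more than the $\F$-linearity of $\dagger$ together with its known action on the primitive idempotents. The only point worth spelling out is the identification of the idempotents involved: since $B=A^*$, the element $E'_i$ (the primitive idempotent of $B$ for $\th_i$) is exactly the idempotent $E^*_i$ appearing in Lemma \ref{lem:comments2}.

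First I would recall from Lemma \ref{lem:comments2} that $\dagger$ is an $\F$-linear antiautomorphism of $\text{\rm End}(V)$ that fixes every element of $M$ and every element of $M^*$; in particular $E_i^\dagger = E_i$ and $(E'_i)^\dagger = E^{*\dagger}_i = E^*_i = E'_i$ for $0 \leq i \leq d$. By Definition \ref{def:M} the sets $\{E_i\}_{i=0}^d$ and $\{E'_i\}_{i=0}^d$ are bases for the $\F$-vector spaces $M$ and $M^*$ respectively, so from Definition \ref{def:WWd} we have $W \in M$ and $W' \in M^*$, with coefficients $t_i \in \F$ in each case.

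Next, applying the $\F$-linear map $\dagger$ to $W = \sum_{i=0}^d t_i E_i$ and using $E_i^\dagger = E_i$ yields
\[
 W^\dagger = \sum_{i=0}^d t_i E_i^\dagger = \sum_{i=0}^d t_i E_i = W.
\]
The identical computation with $(E'_i)^\dagger = E'_i$ gives $(W')^\dagger = W'$. This is the entire argument.

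There is no genuine obstacle here: the result is immediate once one records that $\dagger$ is $\F$-linear and fixes the primitive idempotents of both $A$ and $B$, and that $W$, $W'$ are $\F$-linear combinations of those idempotents. The only things requiring verification are that the scalars $t_i$ lie in $\F$ and that $W$, $W'$ belong to $M$, $M^*$; both are read off directly from Definition \ref{def:WWd} and the basis statements in Definition \ref{def:M}.
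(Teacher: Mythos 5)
Your proof is correct and is essentially identical to the paper's, which simply cites the definition of $W$, $W'$ as $\F$-linear combinations of the idempotents $E_i$, $E'_i$ together with the fact that $\dagger$ fixes each of these. Your extra remark identifying $E'_i$ with $E^*_i$ (valid since $B=A^*$ and, by self-duality, the eigenvalue sequences agree) is a harmless elaboration of the same one-line argument.
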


\begin{proof}
By \eqref{eq:defWWd} and since $\dagger$ fixes $E_i$ and $E'_i$
for $0 \leq i \leq d$.
\end{proof}

\begin{lemma}    \label{lem:WAWdB}    \samepage
\ifDRAFT {\rm lem:WAWdB}. \fi
We have 
\begin{align}
   A W &=  W A,         
\qquad\qquad
  B W' = W' B.             \label{eq:WAWdB}
\end{align}
\end{lemma}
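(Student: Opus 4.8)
The plan is to exploit the fact that $W$ is assembled entirely from the primitive idempotents of $A$, and hence lies in the commutative subalgebra generated by $A$; from this the commutation $AW = WA$ is automatic, and the dual identity $BW' = W'B$ follows by the identical argument applied to $B$.

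First I would recall from Definition \ref{def:WWd} that $W = \sum_{i=0}^d t_i E_i$, where $E_i$ is the primitive idempotent of $A$ for the eigenvalue $\th_i$. By \eqref{eq:Ei} each $E_i$ is a polynomial in $A$, so $W$ belongs to the subalgebra $M$ of $\text{\rm End}(V)$ generated by $A$ (see Definition \ref{def:M}). Since $M$ is commutative and contains $A$, we immediately obtain $AW = WA$.

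If a direct verification is preferred, I would instead invoke the eigenvalue relations $A E_i = \th_i E_i = E_i A$ for $0 \le i \le d$ and compute
\[
 A W = \sum_{i=0}^d t_i A E_i = \sum_{i=0}^d t_i \th_i E_i = \sum_{i=0}^d t_i E_i A = W A .
\]
For $B W' = W' B$ I would apply exactly the same reasoning with $A$ replaced by $B$ and $E_i$ replaced by the primitive idempotent $E'_i$ of $B$ for $\th_i$, using $B E'_i = \th_i E'_i = E'_i B$; the scalars $t_i$ appearing in $W'$ are the same ones used for $W$, by the definition \eqref{eq:defWWd}, so nothing further is needed.

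I do not expect a genuine obstacle here: the entire content of the lemma is the elementary fact that a linear combination of the spectral projections of an operator commutes with that operator. The slight subtlety worth a sentence of care is merely noting that $W$ is a function of $A$ alone (and $W'$ a function of $B$ alone), which is transparent from \eqref{eq:defWWd}.
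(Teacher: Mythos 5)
Your proof is correct and matches the paper's argument exactly: both rest on the observation that $W$ is a linear combination of the primitive idempotents $E_i$ of $A$, each of which commutes with $A$, and similarly for $W'$ and $B$. No further comment is needed.
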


\begin{proof}
The element $A$ commutes with $W$ by Definition \ref{def:WWd} and since
$A$ commutes with $E_i$ for $0 \leq i \leq d$.
Similarly $B$ commutes with $W'$.
\end{proof}

\begin{lemma}   \label{lem:WBWdC}   \samepage
\ifDRAFT {\rm lem:WBWdC}. \fi
We have
\begin{align}
 B W &=  W C,      
\qquad\qquad
 C W' =  W' A.             \label{eq:WBWdC}
\end{align}
\end{lemma}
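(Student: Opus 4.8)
The plan is to use that each of $W$, $W'$ is diagonal with respect to the relevant eigenspace decomposition: by \eqref{eq:defWWd} we have $W E_i = E_i W = t_i E_i$ and $W' E'_i = E'_i W' = t_i E'_i$ for $0 \le i \le d$. Inserting $I = \sum_{i=0}^d E_i$ on either side of $B W = W C$ and comparing the blocks $E_j(\,\cdot\,)E_i$, the relation $B W = W C$ becomes equivalent to the family of identities $t_i\, E_j B E_i = t_j\, E_j C E_i$ $(0 \le i,j \le d)$. Likewise, decomposing $C W' = W' A$ against the primitive idempotents $E'_i$ of $B$ reduces it to $t_i\, E'_j C E'_i = t_j\, E'_j A E'_i$ $(0 \le i,j \le d)$. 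So the whole lemma comes down to understanding the blocks of $C$.

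To control these blocks I would substitute the defining relation for $C$, namely the third $\mathbb{Z}_3$-symmetric relation \eqref{eq:AW1pre3}, \eqref{eq:AW2pre3}, or \eqref{eq:AW3pre3} according as $\beta = 2$, $\beta = -2$, or $\beta \neq \pm 2$, which expresses $z'' C$ as the appropriate combination of $A B$ and $B A$. Sandwiching between $E_j$ and $E_i$ and simplifying with $E_j A = \th_j E_j$ and $A E_i = \th_i E_i$ yields $E_j C E_i = \mu_{ij}\, E_j B E_i$ for an explicit scalar $\mu_{ij}$ (for instance $\mu_{ij} = (\th_j - \th_i)/z''$ when $\beta = 2$). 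Because $A,B$ is self-dual we have $B E'_i = \th_i E'_i$ (Lemma \ref{lem:selfdualsystem}), so the same substitution sandwiched between $E'_j$ and $E'_i$ gives $E'_j C E'_i = \mu'_{ij}\, E'_j A E'_i$ with $\mu'_{ij}$ explicit. A pleasant feature is that these formulas express $E_j C E_i$ (resp.\ $E'_j C E'_i$) as a scalar multiple of $E_j B E_i$ (resp.\ $E'_j A E'_i$) for \emph{all} $i,j$; hence the blocks with $|i-j| \neq 1$ on both sides of the two families vanish automatically, and only the cases $|i-j| = 1$ survive.

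For $|i-j| = 1$ the factors $E_j B E_i$ and $E'_j A E'_i$ are nonzero by \eqref{eq:EiAsEj} and \eqref{eq:EsiAEsj}, so the two families collapse to scalar identities in the $t_i$ and $\th_i$ (of the shape $t_i z'' = t_j(\th_j - \th_i)$ and $t_i(\th_i - \th_j) = t_j z''$ when $\beta = 2$, together with their $\beta = -2$ and $\beta \neq \pm 2$ analogues). I would verify these directly by inserting the closed forms of $\th_i$ from Lemma \ref{lem:th} and of $t_i$ from \eqref{eq:deft}, treating the three values of $\beta$ in turn.

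The step I expect to be the crux is the following compatibility: for each $\beta$, one of the two neighbouring cases $j = i+1$, $j = i-1$ holds directly from the definition of $\{t_i\}$, while the other forces the quadratic constraint $z^2 = -\varrho$ ($\beta = 2$), $z^2 = \varrho$ ($\beta = -2$), or $z^2 = \varrho(4-\beta^2)^{-1}$ ($\beta \neq \pm 2$). This is precisely the constraint imposed by Definition \ref{def:z}, together with $\varrho = \varrho^*$ and the value of $\varrho$ from Lemma \ref{lem:rho} (recall that $z = z' = z''$ was arranged in Lemma \ref{lem:z}), so the two neighbouring cases are simultaneously satisfied. Keeping the bookkeeping of signs and $q$-powers straight across the three cases, rather than any single manipulation, is where the real work lies.
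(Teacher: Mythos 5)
Your proposal is correct and follows essentially the same route as the paper: the paper likewise writes $z''C$ as the appropriate combination of $AB$ and $BA$, sandwiches between the primitive idempotents $E_i$, $E_j$, uses $E_iBE_j=0$ for $|i-j|\neq 1$, and reduces the $|i-j|=1$ blocks to scalar identities in the $t_i$ and $\th_i$ that are verified from \eqref{eq:defz}, \eqref{eq:ththd}, \eqref{eq:rhorhod}, \eqref{eq:deft}. Your identification of the crux — that one neighbouring case follows from the recursion defining $t_i$ while the other is exactly the constraint $z'z''=\pm\varrho$ (or $\varrho(4-\beta^2)^{-1}$) from Definition \ref{def:z} together with $\varrho=\varrho^*$ — is an accurate account of where Definition \ref{def:z} is actually used.
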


\begin{proof}
We first obtain $BW= WC$.
By \eqref{eq:AW1pre3}, \eqref{eq:AW2pre3}, \eqref{eq:AW3pre3} we have
\[
  C = e A B + e' B A,
\]
where
\begin{equation}        \label{eq:eta}
\begin{array}{c|cc}
\text{\rm Case} & e & e'
\\ \hline
\beta = 2 & z^{-1}  & - z^{-1}    \rule{0mm}{2.5ex}
\\
\beta = -2 & z^{-1}  & z^{-1}    \rule{0mm}{2.5ex}
\\
\beta \neq \pm 2 & \;\; \displaystyle \frac{q z^{-1}}{ q^2-q^{-2}}  \;\; 
                    &\displaystyle - \frac{q^{-1}z^{-1} }{q^2-q^{-2}}       \rule{0mm}{3.5ex}
\end{array}
\end{equation}
It suffices to show that
\begin{equation}   \label{eq:WBWinv2}
    e A B + e' B A -  W^{-1} B W = 0.
\end{equation}
To obtain \eqref{eq:WBWinv2}, we show that
\begin{align}
E_i (e A B + e' B A - W^{-1} B W) E_j &=0   \label{eq:tar}
\end{align}
for $0 \leq i,j \leq d$.
Let $i$, $j$ be given.
Using $E_i A = \th_i E_i$, $A E_j = \th_j E_j$, $E_i W^{-1} = t_i^{-1} E_i$, $W E_j = t_j E_j$
 we find that the left-hand side of \eqref{eq:tar}
is equal to
\begin{align}
  (e \th_i + e' \th_j -  t_i^{-1} t_j) E_i B E_j.    \label{eq:titj}
\end{align}
First assume that $|i-j| \neq 1$.
Then $E_i B E_j = 0$ by Lemma \ref{lem:EsiAEsj},
so \eqref{eq:titj} is zero.
Next assume that $|i-j| = 1$.
Using \eqref{eq:defz}, \eqref{eq:ththd}, \eqref{eq:rhorhod}, \eqref{eq:deft}, \eqref{eq:eta}
one routinely finds that 
in \eqref{eq:titj} the coefficient of $E_i B E_j$ is zero.
Therefore \eqref{eq:titj} is zero as desired.
We have obtained $BW= WC$.
The equation $CW'= W' A$ is similarly obtained.
\end{proof}

\begin{definition}    \label{def:P}    \samepage
\ifDRAFT {\rm def:P}. \fi
Define $P = W' W$.
Note that $P^\dagger = W W'$.
\end{definition}

\begin{lemma}    \label{lem:PA}    \samepage
\ifDRAFT {\rm lem:PA}. \fi
We have
\begin{align}
  A P &=   P B,  \qquad\qquad
  B P =  P C,  \qquad\qquad
  C P =  P A.      \label{eq:PA}
\end{align}
\end{lemma}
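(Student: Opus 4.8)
The plan is to read the three desired identities as statements about the inner automorphism $c_P : X \mapsto P^{-1} X P$, which is legitimate because $P = W'W$ is invertible (its factors $W$, $W'$ are invertible by Lemma \ref{lem:Winv}). Indeed $AP = PB$, $BP = PC$, $CP = PA$ are equivalent to $c_P(A) = B$, $c_P(B) = C$, $c_P(C) = A$, so it suffices to show that conjugation by $P$ cyclically permutes $A \mapsto B \mapsto C \mapsto A$.

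First I would dispatch $BP = PC$ and $CP = PA$ by direct manipulation using only Lemmas \ref{lem:WAWdB} and \ref{lem:WBWdC}. For the first, write $BP = B W' W = W' B W$ (using $BW' = W'B$ from Lemma \ref{lem:WAWdB}), and then $W' B W = W' W C = PC$ (using $BW = WC$ from Lemma \ref{lem:WBWdC}). For the second, write $CP = C W' W = W' A W$ (using $CW' = W'A$ from Lemma \ref{lem:WBWdC}), and then $W' A W = W' W A = PA$ (using $AW = WA$ from Lemma \ref{lem:WAWdB}). These two relations match up directly with the given commutation rules.

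The remaining identity $AP = PB$ is the one that does not follow from a single commutation, since the provided relations never pair $A$ with $W'$. Here I would exploit the $\mathbb{Z}_3$-symmetric Askey--Wilson relations of Propositions \ref{prop:Z3AW1pre}--\ref{prop:Z3AW3pre} (with $z = z' = z''$ by Lemma \ref{lem:z}), which express $A$ as a fixed bilinear combination of $B$ and $C$ and, in companion form, express $B$ as the same combination of $C$ and $A$. Applying the automorphism $c_P$ to the relation defining $A$ and substituting the already-established $c_P(B) = C$ and $c_P(C) = A$ turns the right-hand side into $z\,c_P(A)$ and the left-hand side into exactly the combination that equals $zB$; hence $c_P(A) = B$, that is, $AP = PB$. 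Concretely, in the case $\beta = 2$ one starts from $BC - CB = zA$, applies $c_P$ to obtain $CA - AC = z\,c_P(A)$, and recognizes the left side as $zB$ by \eqref{eq:AW1pre2}; the cases $\beta = -2$ and $\beta \neq \pm 2$ are identical after replacing the commutator by the anticommutator or the $q$-commutator, using \eqref{eq:AW2pre2} and \eqref{eq:AW3pre2} respectively.

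The main obstacle is precisely this last step: the elementary bookkeeping with $W$ and $W'$ is blind to $AP = PB$, so the argument must ``close the loop'' by invoking the symmetric relations. The only care needed is to verify, in each of the three cases for $\beta$, that the combination of $B, C$ producing $A$ transforms under $A \mapsto B$, $B \mapsto C$, $C \mapsto A$ into the combination of $C, A$ producing $B$, which is exactly the content of the $\mathbb{Z}_3$-symmetry built into Propositions \ref{prop:Z3AW1pre}--\ref{prop:Z3AW3pre}.
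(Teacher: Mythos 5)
Your proposal is correct. The first two identities are obtained exactly as in the paper: $BP=PC$ comes from $BW'=W'B$ together with $BW=WC$, and $CP=PA$ comes from $CW'=W'A$ together with $AW=WA$. Where you genuinely diverge is in closing the loop for $AP=PB$. The paper combines the two equations of Lemma \ref{lem:WBWdC} to get $BWW'=WW'A$, i.e.\ $BP^\dagger=P^\dagger A$ with $P^\dagger=WW'$, and then applies the antiautomorphism $\dagger$ (which fixes $A$, $B$, $W$, $W'$) to flip this into $AP=PB$; no case analysis on $\beta$ and no use of the Askey--Wilson relations is needed. You instead apply the automorphism $X\mapsto P^{-1}XP$ to the $\mathbb{Z}_3$-symmetric relation expressing $zA$ in terms of $B$ and $C$, substitute the already-proved $P^{-1}BP=C$ and $P^{-1}CP=A$, and identify the resulting left-hand side with $z'B=zB$ via the companion relation; dividing by $z$ (legitimate since $zz'z''\neq 0$ by Definition \ref{def:z}) gives $P^{-1}AP=B$. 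Both arguments are sound. The paper's route is shorter and uniform in $\beta$, at the cost of invoking the $\dagger$ machinery; yours trades that for a three-case verification (which is entirely mechanical given the $\mathbb{Z}_3$-symmetry of Propositions \ref{prop:Z3AW1pre}--\ref{prop:Z3AW3pre}) and makes transparent why the cyclic permutation must close up once two of its three legs are known.
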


\begin{proof}
In the equation on the left in \eqref{eq:WBWdC},
multiply each side on the left by $W'$ and use \eqref{eq:WAWdB}
to get $B P =  P C$.
In the equation on the right in \eqref{eq:WBWdC},
 multiply each side on the right by $W$ and use \eqref{eq:WAWdB}
to get $C P = P A$.
Combining the equations in \eqref{eq:WBWdC} we obtain
$B W W' = W W' A$.
By these comments and Definition \ref{def:P} we obtain
$B P^\dagger = P^\dagger A$.
Applying $\dagger$ we get $A P = P B$.
\end{proof}

\begin{definition}    \label{def:mu}    \samepage
\ifDRAFT {\rm def:mu}. \fi
Define the map $\rho : \text{\rm End}(V) \to \text{\rm End}(V)$, $X \mapsto P^{-1} X P$.
Note that $\rho$ is an automorphism of $\text{\rm End}(V)$ that fixes $P$.
\end{definition}

\begin{corollary}    \label{cor:mu}    \samepage
\ifDRAFT {\rm cor:mu}. \fi
The automorphism $\rho$ sends
\[
 A \mapsto B,  \qquad\qquad  B \mapsto C,  \qquad\qquad C \mapsto A.
\]
\end{corollary}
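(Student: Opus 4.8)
The plan is to read Corollary \ref{cor:mu} off directly from Lemma \ref{lem:PA} together with Definition \ref{def:mu}. Recall that $P = W'W$ is invertible (being a product of the invertible elements $W'$, $W$ from Lemma \ref{lem:Winv}), and that $\rho$ is defined by $\rho(X) = P^{-1} X P$. Lemma \ref{lem:PA} supplies the three identities
\[
 A P = P B, \qquad\qquad B P = P C, \qquad\qquad C P = P A.
\]
The entire task is to rewrite each of these as a conjugation statement.

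First I would take $AP = PB$ and multiply on the left by $P^{-1}$, obtaining $P^{-1} A P = B$; by Definition \ref{def:mu} this says $\rho(A) = B$. Next, from $BP = PC$ the same left-multiplication by $P^{-1}$ gives $P^{-1} B P = C$, that is $\rho(B) = C$. Finally, from $CP = PA$ I would get $P^{-1} C P = A$, i.e.\ $\rho(C) = A$. This exhausts the three required assignments, and since $\rho$ is already known to be an automorphism of $\text{\rm End}(V)$ (Definition \ref{def:mu}), no further verification is needed.

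There is no real obstacle at this stage: the substantive work was carried out earlier, in constructing $W$, $W'$ and establishing the intertwining relations $BW = WC$ and $CW' = W'A$ (Lemma \ref{lem:WBWdC}), which were then combined in Lemma \ref{lem:PA} to yield the cyclic conjugation behavior of $P$. The corollary is simply the translation of Lemma \ref{lem:PA} into the language of the automorphism $\rho$, so the proof is a two- or three-line application of the definition of $\rho$ and the invertibility of $P$.
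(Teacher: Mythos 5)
Your proposal is correct and is exactly the paper's argument: the paper's proof of Corollary \ref{cor:mu} is simply ``By Lemma \ref{lem:PA},'' and your left-multiplication by $P^{-1}$ merely spells out the conjugation step implicit in Definition \ref{def:mu}. Nothing further is needed.
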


\begin{proof}
By Lemma \ref{lem:PA}.
\end{proof}

\begin{corollary}    \label{cor:mu3}    \samepage
\ifDRAFT {\rm cor:mu3}. \fi
We have $\rho^3 = 1$.
Moreover, there exists $0 \neq \kappa \in \F$ such that  $P^3 = \kappa I$.
\end{corollary}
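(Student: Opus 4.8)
The plan is to deduce both claims from the single fact that $\rho$ cyclically permutes $A$, $B$, $C$, together with the generation property established earlier. First I would argue that $\rho^3 = 1$. By Corollary \ref{cor:mu} the automorphism $\rho$ sends $A \mapsto B \mapsto C \mapsto A$, so $\rho^3$ fixes each of $A$, $B$, $C$. In particular $\rho^3$ fixes $A$ and $B = A^*$. By Lemma \ref{lem:comments}(i) the elements $A$, $A^*$ generate the $\F$-algebra $\text{\rm End}(V)$, so any algebra automorphism fixing both of them must fix every element of $\text{\rm End}(V)$. Since $\rho^3$ is an automorphism of $\text{\rm End}(V)$ (being a power of the automorphism $\rho$ from Definition \ref{def:mu}), we conclude $\rho^3 = 1$.

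Next I would extract the statement about $P^3$. By Definition \ref{def:mu} we have $\rho(X) = P^{-1} X P$, and hence $\rho^3(X) = P^{-3} X P^3$ for all $X \in \text{\rm End}(V)$. Combining this with $\rho^3 = 1$ gives $P^{-3} X P^3 = X$, equivalently $X P^3 = P^3 X$, for every $X \in \text{\rm End}(V)$. Thus $P^3$ lies in the center of $\text{\rm End}(V)$. As recalled in the proof of Lemma \ref{lem:iso}, the center of $\text{\rm End}(V)$ is spanned by $I$, so there exists $\kappa \in \F$ with $P^3 = \kappa I$.

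Finally I would check $\kappa \neq 0$. Here I invoke Definition \ref{def:P}, where $P = W' W$, together with Lemma \ref{lem:Winv}, which guarantees that $W$ and $W'$ are invertible. Hence $P$ is invertible, so $P^3$ is invertible, and therefore $\kappa I = P^3$ forces $\kappa \neq 0$.

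There is no serious obstacle here; the result is a formal consequence of Corollary \ref{cor:mu} and the generation fact. The only point requiring care is that the argument for $\rho^3 = 1$ hinges entirely on $A, A^*$ generating $\text{\rm End}(V)$ (Lemma \ref{lem:comments}(i)) rather than merely on the $\mathbb{Z}_3$-symmetric relations themselves, so I would make sure to cite that generation statement explicitly rather than treating $\rho^3 = 1$ as obvious from the cyclic action on the three generators alone.
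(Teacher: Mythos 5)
Your proof is correct and follows essentially the same route as the paper: both arguments rest on the cyclic action from Corollary \ref{cor:mu}, the fact that only scalar multiples of $I$ commute with both $A$ and $B$ (you unpack Lemma \ref{lem:iso} via the generation statement Lemma \ref{lem:comments}(i), which is exactly how that lemma is proved), and the invertibility of $P$. The only difference is the order of deduction — the paper first obtains $P^3=\kappa I$ from Lemma \ref{lem:iso} and then concludes $\rho^3=1$, while you establish $\rho^3=1$ first and then read off the centrality of $P^3$ — which is immaterial.
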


\begin{proof}
By Corollary \ref{cor:mu} the element $P^3$ commutes with both $A$, $B$.
By this and Lemma \ref{lem:iso} 
there exists $\kappa \in \F$ such that $P^3 = \kappa I$.
We have $\kappa \neq 0$ since $P$ is invertible.
Now $\rho^3=1$ follows by Definition \ref{def:mu}. 
\end{proof}

\begin{note}   {\rm (See \cite[Corollary 14.11]{T:Ltriple}.)}            \label{note:P3}    \samepage
\ifDRAFT {\rm note:P3}. \fi
Referring to Corollary \ref{cor:mu3}, the scalar $\kappa$ is given as follows:
\[
\begin{array}{c|ccc}
\text{\rm Case} & \beta=2 & \beta = -2 & \beta \neq \pm 2
\\ \hline
\kappa  & (-1)^d 2^{-d} h^{-d} z^d & 1 & (-1)^d h^{-d} z^d q^{d(d-1)}  \rule{0mm}{2.7ex}
\end{array}
\]
\end{note}

By construction and Corollary \ref{cor:mu}, the element $C$ is diagonalizable with
eigenvalues $\{\th_i\}_{i=0}^d$.
For $0 \leq i \leq d$ let $E''_i$ denote the primitive idempotent of $C$
for the eigenvalue $\th_i$.

\begin{lemma}   \label{lem:rhoE}    \samepage
\ifDRAFT {\rm lem:rhoE}. \fi
For $0 \leq i \leq d$ the automorphism $\rho$ sends
\[
 E_i \mapsto E'_i,   \qquad\qquad
 E'_i \mapsto E''_i,  \qquad\qquad
 E''_i \mapsto E_i. 
\]
\end{lemma}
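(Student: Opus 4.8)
The plan is to leverage the automorphism $\rho$ and its action on the generators, which is already pinned down by Corollary \ref{cor:mu}. The key observation is that $\rho$ is an \emph{algebra} automorphism, so it commutes with the polynomial constructions that produce primitive idempotents. Recall from \eqref{eq:Ei} that each primitive idempotent is a specific polynomial in the corresponding generator: $E_i = \prod_{j \neq i}(A - \th_j I)/(\th_i - \th_j)$, and similarly $E'_i$ is that same polynomial evaluated at $B$, while $E''_i$ is that polynomial evaluated at $C$. This works precisely because, by construction (see the comment above this lemma), $A$, $B$, $C$ all share the common eigenvalue sequence $\{\th_i\}_{i=0}^d$ from Lemma \ref{lem:th}.

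First I would fix $0 \leq i \leq d$ and write $E_i = p_i(A)$, where $p_i(x) = \prod_{j \neq i}(x-\th_j)/(\th_i - \th_j) \in \F[x]$. Then, since $\rho$ is an automorphism of $\text{\rm End}(V)$ and $\rho(A) = B$ by Corollary \ref{cor:mu}, we get
\[
 \rho(E_i) = \rho(p_i(A)) = p_i(\rho(A)) = p_i(B).
\]
The second step is to identify $p_i(B)$ with $E'_i$. Because $B$ is diagonalizable with eigenvalues $\{\th_i\}_{i=0}^d$ and $E'_i$ is its primitive idempotent for $\th_i$, formula \eqref{eq:Ei} applied to $B$ gives exactly $E'_i = p_i(B)$. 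Hence $\rho(E_i) = E'_i$. The same argument with $\rho(B) = C$ yields $\rho(E'_i) = p_i(C) = E''_i$, again using \eqref{eq:Ei} for $C$ with its primitive idempotents $E''_i$.

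For the final arrow $\rho(E''_i) = E_i$, I would use $\rho(C) = A$ from Corollary \ref{cor:mu}, giving $\rho(E''_i) = p_i(A) = E_i$. Alternatively one can simply invoke $\rho^3 = 1$ from Corollary \ref{cor:mu3}: applying $\rho$ three times returns $E_i$ to itself, so the three idempotents cycle as claimed, and the third arrow follows automatically from the first two.

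I do not anticipate a serious obstacle here; the proof is essentially a one-line application of the fact that an algebra automorphism commutes with polynomial evaluation. The only point requiring mild care is justifying that $E''_i = p_i(C)$, i.e.\ that $C$ really is diagonalizable with eigenvalue sequence $\{\th_i\}_{i=0}^d$ and that $E''_i$ is defined as its primitive idempotent for $\th_i$ — but this is guaranteed by the paragraph immediately preceding the lemma, which establishes diagonalizability of $C$ via $C = P A P^{-1}$ (so $C$ is conjugate to $A$) and then introduces the $E''_i$ accordingly. Thus the heart of the argument is recognizing that all three families of idempotents arise from one and the same family of polynomials $\{p_i\}_{i=0}^d$, applied to the three $\rho$-cyclically-permuted generators.
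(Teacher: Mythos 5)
Your proof is correct and is essentially identical to the paper's: both write $E_i$ as the polynomial from \eqref{eq:Ei} evaluated at $A$ and use that the inner automorphism $\rho$ commutes with polynomial evaluation, so $\rho(E_i)=f(\rho(A))=f(B)=E'_i$, with the remaining two arrows handled the same way. The only cosmetic difference is that the paper spells this out as $P^{-1}f(A)P=f(P^{-1}AP)$ rather than invoking $\rho^3=1$ for the last arrow.
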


\begin{proof}
We first show that $\rho$ sends $E_i \mapsto E'_i$.
In \eqref{eq:Ei} the element $E_i$ is expressed as a polynomial in $A$;
let $f(x) $ denote the polynomial.
Using the equation on the left in \eqref{eq:PA},
\[
 P^{-1} E_i P = P^{-1} f(A) P = f(P^{-1} A P) = f(B) = E'_i.
\]
Thus $\rho$ sends $E_i \mapsto E'_i$.
Similarly, $\rho$ sends $E'_i \mapsto E''_i$ and $E''_i \mapsto E'_i$.
\end{proof}

\begin{definition}    \label{def:Wdd}    \samepage
\ifDRAFT {\rm def:Wdd}. \fi
Define $W''  \in \text{\rm End}(V)$ by
\[
  W'' = \sum_{i=0}^d t_i E''_i,
\]
where $\{t_i\}_{i=0}^d$ are from \eqref{eq:deft}.
\end{definition}

\begin{lemma}    \label{lem:rhoW}    \samepage
\ifDRAFT {\rm lem:rhoW}. \fi
The automorphism $\rho$ sends
\[
  W \mapsto W', \qquad\qquad
  W' \mapsto W'', \qquad\qquad
  W'' \mapsto W.
\]
In other words
\begin{align}
  W P &= P W', \qquad\qquad
  W' P = P W'', \qquad\qquad
  W'' P = P W.            \label{eq:WP}
\end{align}
\end{lemma}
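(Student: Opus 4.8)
The plan is to deduce everything from Lemma \ref{lem:rhoE}, which records the action of $\rho$ on the primitive idempotents, together with the observation that $W$, $W'$, $W''$ are built from those idempotents using a \emph{common} coefficient sequence $\{t_i\}_{i=0}^d$. No genuinely new computation is required beyond bookkeeping.

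First I would recall that $\rho$ is an automorphism of the $\F$-algebra $\text{End}(V)$ by Definition \ref{def:mu}, hence in particular $\F$-linear and fixing each scalar $t_i$. Applying $\rho$ term-by-term to the defining expression $W = \sum_{i=0}^d t_i E_i$ from Definition \ref{def:WWd}, and using the first assertion of Lemma \ref{lem:rhoE} that $\rho$ sends $E_i \mapsto E'_i$, gives $\rho(W) = \sum_{i=0}^d t_i E'_i = W'$. The identical computation, now invoking the second assertion $E'_i \mapsto E''_i$ of Lemma \ref{lem:rhoE} and the definition of $W''$ in Definition \ref{def:Wdd}, yields $\rho(W') = \sum_{i=0}^d t_i E''_i = W''$; and the third assertion $E''_i \mapsto E_i$ gives $\rho(W'') = \sum_{i=0}^d t_i E_i = W$.

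Finally I would translate these three statements into the displayed relations. Since $\rho(X) = P^{-1} X P$ by Definition \ref{def:mu}, the equality $\rho(W) = W'$ reads $P^{-1} W P = W'$, and multiplying on the left by $P$ gives $W P = P W'$; the relations $W' P = P W''$ and $W'' P = P W$ follow the same way from $\rho(W')=W''$ and $\rho(W'')=W$.

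The only point requiring any care—and it is hardly an obstacle—is that the whole argument hinges on $W$, $W'$, $W''$ sharing the identical coefficient sequence $\{t_i\}_{i=0}^d$; this is exactly how they are set up in \eqref{eq:defWWd} and Definition \ref{def:Wdd}, so the cyclic permutation of the idempotents forced by Lemma \ref{lem:rhoE} transports directly to the desired cyclic permutation of $W$, $W'$, $W''$. In particular no separate case analysis on the value of $\beta$ is needed, since the coefficients $t_i$ enter only as scalars fixed by $\rho$.
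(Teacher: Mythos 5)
Your proof is correct and follows essentially the same route as the paper: the paper's proof likewise cites Definitions \ref{def:WWd}, \ref{def:Wdd} and Lemma \ref{lem:rhoE} for the first assertion (the cyclic action of $\rho$ on $W$, $W'$, $W''$ via the shared coefficients $t_i$) and Definition \ref{def:mu} for the translation into the relations \eqref{eq:WP}. Your write-up just spells out the term-by-term computation that the paper leaves implicit.
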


\begin{proof}
The first assertion is by Definitions \ref{def:WWd}, \ref{def:Wdd} and Lemma \ref{lem:rhoE}.
The second assertion follows in view of Definition \ref{def:mu}.
\end{proof}

\begin{lemma}    \label{lem:PWWd}   \samepage
\ifDRAFT {\rm lem:PWWd}. \fi
The element $P$ is equal to each of
\[
  W' W,  \qquad\qquad
  W'' W', \qquad\qquad
  W W''.
\]
\end{lemma}

\begin{proof}
By Definition \ref{def:P}, $P = W' W$.
Apply $\rho$ to this and use Lemma \ref{lem:rhoW} to get
$P = W'' W'$.
Similarly $P = W W''$.
\end{proof}

\begin{definition}    \label{def:braid}    \samepage
\ifDRAFT {\rm def:braid}. \fi
Elements $X$, $Y$ in $\text{\rm End}(V)$ are said to satisfy the
{\em braid relation} whenever $XYX=YXY$.
\end{definition}

\begin{lemma}   \label{lem:braid}    \samepage
\ifDRAFT {\rm lem:braid}. \fi
Any two of $W$, $W'$, $W''$ satisfy the braid relation.
\end{lemma}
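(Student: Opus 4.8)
The plan is to prove the braid relation $WW'W = W'WW'$ (and its two cyclic analogues, which then follow automatically from the $\mathbb{Z}_3$-symmetry) by exploiting the automorphism $\rho$ together with the element $P = W'W$ and the relation $P^3 = \kappa I$ from Corollary \ref{cor:mu3}. The key observation is that $\rho$ cyclically permutes $W \mapsto W' \mapsto W'' \mapsto W$ (Lemma \ref{lem:rhoW}), and that $P$ has three factorizations $P = W'W = W''W' = WW''$ (Lemma \ref{lem:PWWd}). These facts should let me convert a braid-relation statement into a statement about $P$, which is already under control.

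\medskip

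Here is the intended computation. First I would establish the single braid relation $WW'W = W'WW'$; the other two relations among $\{W,W',W''\}$ then follow by applying $\rho$ and invoking Lemma \ref{lem:rhoW}. To prove $WW'W = W'WW'$, I would start from $P = W'W$ and use Lemma \ref{lem:rhoW} in the form $W'P = PW''$ and $WP = PW'$. A natural route is to compute $P^2$ in two ways using the factorizations from Lemma \ref{lem:PWWd}: since $P = W'W = WW''$, we get
\begin{align*}
P^2 &= (W'W)(WW'')
\end{align*}
and also $P^2 = (W'W)(W'W)$; more usefully, combining $P = W''W'$ with the intertwining relations \eqref{eq:WP} lets me slide $P$ past the various $W$'s. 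Concretely, from $WP = PW'$ I get $W = PW'P^{-1}$, and from $W'P = PW''$ I get $W' = PW''P^{-1}$, etc. Substituting these expressions for $W, W', W''$ into the candidate identity and using $P^3 = \kappa I$ to reduce powers of $P$ should collapse both sides to the same element. Alternatively, and perhaps more cleanly, I would verify directly that $WW'W$ and $W'WW'$ are both equal to $PW$ (or to $WP$): since $P = W'W$, we have $W'WW' = PW'$, while $WW'W = W(W'W) = WP$; so the braid relation $WW'W = W'WW'$ is \emph{equivalent} to $WP = PW'$, which is exactly the first equation of \eqref{eq:WP}. This reduces the whole lemma to relations I already have in hand.

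\medskip

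Thus the cleanest argument is: by Definition \ref{def:P} and Lemma \ref{lem:PWWd}, $W'W = P$, so $W'WW' = PW'$; meanwhile $WW'W = W(W'W) = WP$; and $WP = PW'$ by the first relation in \eqref{eq:WP}. Hence $WW'W = W'WW'$. Applying the automorphism $\rho$ to this identity and using Lemma \ref{lem:rhoW} (which sends $W \mapsto W' \mapsto W'' \mapsto W$) yields $W'W''W' = W''W'W''$, and applying $\rho$ once more yields $W''WW'' = WW''W$. These three identities cover all three pairs, so any two of $W, W', W''$ satisfy the braid relation.

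\medskip

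\textbf{The main obstacle} I anticipate is essentially bookkeeping rather than conceptual: making sure the three factorizations $P = W'W = W''W' = WW''$ are paired with the correct intertwining relation from \eqref{eq:WP} so that each braid word is rewritten as the \emph{same} product $WP = PW'$ (and its $\rho$-images) without a sign or ordering slip. There is no hard calculation here once one notices that the braid relation for a pair is literally equivalent to one of the relations in \eqref{eq:WP} via the factorization $P = W'W$; the only care needed is to confirm that applying $\rho$ to the base identity produces exactly the braid relations for the other two pairs, which it does since $\rho$ is an algebra automorphism and permutes the three elements cyclically.
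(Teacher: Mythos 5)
Your final "cleanest argument" is correct and is essentially the paper's own proof: the paper simply cites \eqref{eq:WP} together with Lemma \ref{lem:PWWd}, i.e.\ the observation that via the factorizations of $P$ each braid relation is literally one of the intertwining identities $WP = PW'$, $W'P = PW''$, $W''P = PW$. Your use of $\rho$ to generate the other two relations from the first is a harmless minor variation; the detours through $P^2$ and $P^3 = \kappa I$ earlier in the writeup are unnecessary.
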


\begin{proof}
By \eqref{eq:WP} and Lemma \ref{lem:PWWd}.
\end{proof}

Corollary \ref{cor:mu} and Lemma \ref{lem:rhoE}
show that $A,B,C$ is a Leonard triple in the sense of Curtin \cite{Cur}.
We will say more about Leonard triples in Section \ref{sec:conc}.

\section{Some antiautomorphisms associated with a TB tridiagonal pair}  
\label{sec:anti}

Throughout this section, the following notation is in effect.
Assume that $\F$ is algebraically closed.
Fix an integer $d \geq 1$,
and let $V$ denote a vector space over $\F$ with dimension $d+1$.
Let $A,B$ denote a self-dual TB tridiagonal pair on $V$.
Let $\beta,\varrho,\varrho^*$ denote the Askey-Wilson sequence for $A,B$
from above Definition \ref{def:z}.
Choose the scalars $z$, $z'$, $z''$ in Definition \ref{def:z} such that $z=z'=z''$,
and let  $C \in \text{\rm End}(V)$ be from 
Propositions \ref{prop:Z3AW1pre}--\ref{prop:Z3AW3pre}.
In this section we obtain some antiautomorphisms of $\text{\rm End}(V)$
that act on $A$, $B$, $C$ in an attractive manner.
Recall the antiautomorphism $\dagger$ of $\text{\rm End}(V)$, from Lemma \ref{lem:comments2}.
By construction $\dagger$ fixes $A$, $B$.

\begin{lemma}  \label{lem:antiauto}    \samepage
\ifDRAFT {\rm lem:antiauto}. \fi
For a map $\xi : \text{\rm End}(V) \to \text{\rm End}(V)$
the following are equivalent:
\begin{itemize}
\item[\rm (i)]
$\xi$ is an antiautomorphism of $\text{\rm End}(V)$;
\item[\rm (ii)]
there exists an invertible $T \in \text{\rm End}(V)$ such that
$X^\xi = T^{-1} X^\dagger T$ for all $X \in \text{\rm End}(V)$.
\end{itemize}
\end{lemma}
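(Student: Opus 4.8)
The plan is to prove the two implications separately, relying on two facts already in hand: from Lemma \ref{lem:comments2}, $\dagger$ is an antiautomorphism of $\text{\rm End}(V)$ satisfying $(X^\dagger)^\dagger = X$; and from the Skolem--Noether theorem recalled in Section \ref{sec:pre}, every automorphism of $\text{\rm End}(V)$ is inner. The guiding idea is that composing $\xi$ with the fixed antiautomorphism $\dagger$ converts the problem about antiautomorphisms into a problem about automorphisms, where Skolem--Noether applies directly.

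For the implication (ii) $\Rightarrow$ (i): assuming $X^\xi = T^{-1} X^\dagger T$ for an invertible $T$, I would observe that $\xi$ is the composite of the antiautomorphism $\dagger$ with the inner automorphism $X \mapsto T^{-1} X T$. Each factor is $\F$-linear and bijective, so $\xi$ is too, and I would check the product-reversal directly: $(XY)^\xi = T^{-1}(XY)^\dagger T = T^{-1} Y^\dagger X^\dagger T = (T^{-1} Y^\dagger T)(T^{-1} X^\dagger T) = Y^\xi X^\xi$. Hence $\xi$ is an antiautomorphism, and this direction is immediate.

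For (i) $\Rightarrow$ (ii): assuming $\xi$ is an antiautomorphism, I would introduce the composite map $\gamma : \text{\rm End}(V) \to \text{\rm End}(V)$, $X \mapsto (X^\dagger)^\xi$, applying $\dagger$ first and then $\xi$. The key point is that a composite of two antiautomorphisms is an automorphism: $\gamma$ is $\F$-linear and bijective, fixes $I$ (since both $\dagger$ and $\xi$ do), and the two product-reversals cancel, giving $\gamma(XY) = ((XY)^\dagger)^\xi = (Y^\dagger X^\dagger)^\xi = (X^\dagger)^\xi (Y^\dagger)^\xi = \gamma(X)\gamma(Y)$. By Skolem--Noether there is an invertible $T$ with $(X^\dagger)^\xi = T^{-1} X T$ for all $X$. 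I would then substitute $X^\dagger$ for $X$ and invoke $(X^\dagger)^\dagger = X$ to solve for $X^\xi$, yielding $X^\xi = T^{-1} X^\dagger T$, which is exactly the $T$ required in (ii).

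There is no serious obstacle here; the essential content is carried entirely by Skolem--Noether, and the remainder is (anti)homomorphism bookkeeping. The one point I would watch carefully is the order of composition: applying $\dagger$ before $\xi$ is what makes the reversals cancel to produce an automorphism, and the involutory identity $(X^\dagger)^\dagger = X$ is what lets me invert the substitution cleanly to isolate $X^\xi$.
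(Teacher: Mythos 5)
Your proof is correct and follows essentially the same route as the paper: compose $\xi$ with $\dagger$ to obtain an automorphism, apply Skolem--Noether to realize it as conjugation by an invertible $T$, and then use $(X^\dagger)^\dagger = X$ to recover $X^\xi = T^{-1}X^\dagger T$. The paper dismisses the converse direction as clear, whereas you verify it explicitly, but the substance is identical.
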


\begin{proof}
(i) $\Rightarrow$ (ii)
Consider the composition 
\[
\omega  :  \text{\rm End}(V) \xrightarrow{\;\;\; \dagger \;\;\;} \text{\rm End}(V) 
    \xrightarrow{\;\;\; \xi \;\;\;} \text{\rm End}(V).
\]
The map $\omega$ is an automorphism of $\text{\rm End}(V)$.
By the Skolem-Noether theorem,
there exists an invertible $T \in \text{\rm End}(V)$ such that
$X^\omega = T^{-1} X T$ for all $X \in \text{\rm End}(V)$.
Thus $X^\xi = (X^\dagger)^\omega = T^{-1} X^\dagger T$
for all $X \in \text{\rm End}(V)$.

(ii) $\Rightarrow$ (i)
Clear.
\end{proof}

Let  $\{\th_i\}_{i=0}^d$ denote the eigenvalue sequence of $A,B$
from Lemma \ref{lem:th},
and let $E_i$, $E'_i$ be from above Definition \ref{def:WWd}.
Let $W$, $W'$ be from Definition \ref{def:WWd}, 
and let $P$ be from Definition \ref{def:P}.

\begin{definition}   \label{def:dag}    \samepage
\ifDRAFT {\rm def:dag}. \fi
Define the maps $\dagger'$, $\dagger'' : \text{\rm End}(V) \to \text{\rm End}(V)$,
$X \mapsto T^{-1} X^\dagger T$,
where $T$ is from the table below.
\[
\begin{array}{c|cc}
       &    \dagger' & \dagger ''
\\ \hline
T & P^\dagger P  &  (P P^\dagger)^{-1}      \rule{0mm}{2.7ex}
\end{array}
\]
Note that $\dagger'$, $\dagger''$ are antiautomorphisms of $\text{\rm End}(V)$.
\end{definition}

Recall the automorphism $\rho$ of $\text{\rm End}(V)$, from Definition \ref{def:mu}.

\begin{lemma}   \label{lem:dag}    \samepage
\ifDRAFT {\rm lem:dag}. \fi
The map $\dagger'$ is equal to the composition
\[
 \text{\rm End}(V) \xrightarrow{\;\;\; \rho^{-1} \;\;\;} \text{\rm End}(V) 
    \xrightarrow{\;\;\; \dagger \;\;\;} \text{\rm End}(V)
      \xrightarrow{\;\;\; \rho \;\;\;} \text{\rm End}(V).
\]
The map $\dagger''$ is equal to the composition
\[
 \text{\rm End}(V) \xrightarrow{\;\;\; \rho \;\;\;} \text{\rm End}(V) 
    \xrightarrow{\;\;\; \dagger \;\;\;} \text{\rm End}(V)
      \xrightarrow{\;\;\; \rho^{-1} \;\;\;} \text{\rm End}(V).
\]
\end{lemma}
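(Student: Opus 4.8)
The plan is to verify each of the two claimed identities by directly unwinding the relevant composition of maps, using only that $\rho$ is inner (conjugation by $P$) and that $\dagger$ is an antiautomorphism. Following the paper's postfix convention, the first displayed diagram applied to $X \in \text{\rm End}(V)$ reads $X \mapsto \bigl((X^{\rho^{-1}})^\dagger\bigr)^\rho$, and the second reads $X \mapsto \bigl((X^{\rho})^\dagger\bigr)^{\rho^{-1}}$; so it suffices to compute these two expressions and match them against Definition \ref{def:dag}.

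First I would record the two elementary facts that drive the computation. Since $\rho : X \mapsto P^{-1} X P$ by Definition \ref{def:mu}, its inverse is $\rho^{-1} : X \mapsto P X P^{-1}$. Since $\dagger$ is an antiautomorphism with $(X^\dagger)^\dagger = X$ by Lemma \ref{lem:comments2}, it reverses products, $(UVW)^\dagger = W^\dagger V^\dagger U^\dagger$, and applying $\dagger$ to $P P^{-1} = I$ yields $(P^{-1})^\dagger = (P^\dagger)^{-1}$. For the first identity, compute $X^{\rho^{-1}} = P X P^{-1}$, then $(P X P^{-1})^\dagger = (P^{-1})^\dagger X^\dagger P^\dagger = (P^\dagger)^{-1} X^\dagger P^\dagger$, and finally apply $\rho$ to obtain $P^{-1} (P^\dagger)^{-1} X^\dagger P^\dagger P = (P^\dagger P)^{-1} X^\dagger (P^\dagger P)$. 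By Definition \ref{def:dag} with $T = P^\dagger P$, this is precisely $X^{\dagger'}$. For the second identity, compute $X^\rho = P^{-1} X P$, then $(P^{-1} X P)^\dagger = P^\dagger X^\dagger (P^\dagger)^{-1}$, and apply $\rho^{-1}$ to obtain $P P^\dagger X^\dagger (P^\dagger)^{-1} P^{-1} = (P P^\dagger) X^\dagger (P P^\dagger)^{-1}$. By Definition \ref{def:dag} with $T = (P P^\dagger)^{-1}$, so that $T^{-1} = P P^\dagger$, this is precisely $X^{\dagger''}$, recalling from Definition \ref{def:P} that $P^\dagger = W W'$.

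There is no real obstacle here; the only thing to watch is bookkeeping. Because $\dagger$ reverses the order of the threefold products, one must track that the conjugating factors reassemble in the correct order and that every occurrence of $(P^{-1})^\dagger$ is rewritten as $(P^\dagger)^{-1}$ rather than $P^\dagger$. Once these reversals are handled, both identities drop out immediately, and the conclusion that $\dagger'$, $\dagger''$ equal the stated compositions requires no further input.
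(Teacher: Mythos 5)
Your proof is correct and is exactly the paper's intended argument: the paper's proof of this lemma is the one-line ``Use Definitions \ref{def:mu} and \ref{def:dag},'' and your computation simply carries out that unwinding explicitly, with the conjugators $(P^\dagger P)^{\pm 1}$ and $(PP^\dagger)^{\pm 1}$ assembled correctly. The bookkeeping points you flag --- reversal of products under $\dagger$ and the identity $(P^{-1})^\dagger = (P^\dagger)^{-1}$ --- are handled properly throughout.
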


\begin{proof}
Use Definitions \ref{def:mu} and \ref{def:dag}.
\end{proof}

Our next goal is to describe how $\dagger$, $\dagger'$, $\dagger''$ act on $A$, $B$, $C$.
We will treat separately the cases $\beta=2$, $\beta=-2$, $\beta \neq \pm 2$.

\begin{proposition}    \label{prop:dagger2}    \samepage
\ifDRAFT {\rm prop:dagger2}. \fi
Assume that $\beta = 2$.
Then the antiautomorphisms $\dagger$, $\dagger'$, $\dagger''$ act on $A$, $B$, $C$
as follows:
\begin{itemize}
\item[\rm (i)]
$\dagger$ fixes $A$, $B$ and sends $C \mapsto  -C$;
\item[\rm (ii)]
$\dagger'$ fixes $B$, $C$ and sends $A \mapsto -A$;
\item[\rm (iii)]
$\dagger''$ fixes $C$, $A$ and  sends $B \mapsto -B$.
\end{itemize}
\end{proposition}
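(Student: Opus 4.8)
The plan is to prove part (i) directly from the defining relation for $C$, and then obtain parts (ii) and (iii) from (i) by exploiting the threefold symmetry carried by the automorphism $\rho$. For part (i), recall from Lemma \ref{lem:comments2} that $\dagger$ fixes both $A$ and $B$, and that in the present case $\beta=2$ the element $C$ is defined by \eqref{eq:AW1pre3}, namely $AB-BA=z''C$ with $z''=z\neq 0$ (here $z=z'=z''$ by our choice in Definition \ref{def:z}). First I would apply the antiautomorphism $\dagger$ to this relation. Since $\dagger$ reverses products and fixes $A$, $B$, the left-hand side becomes $(AB-BA)^\dagger=B^\dagger A^\dagger-A^\dagger B^\dagger=BA-AB=-(AB-BA)$, while the right-hand side becomes $z\,C^\dagger$. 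Comparing the two expressions and cancelling the nonzero scalar $z$ yields $C^\dagger=-C$, which is exactly (i).

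For parts (ii) and (iii) the key tool is Lemma \ref{lem:dag}, which presents $\dagger'$ and $\dagger''$ as conjugates of $\dagger$ by $\rho$: explicitly $X^{\dagger'}=\bigl((X^{\rho^{-1}})^\dagger\bigr)^{\rho}$ and $X^{\dagger''}=\bigl((X^{\rho})^\dagger\bigr)^{\rho^{-1}}$. By Corollary \ref{cor:mu} the automorphism $\rho$ cyclically permutes $A\mapsto B\mapsto C\mapsto A$, so $\rho^{-1}$ sends $A\mapsto C$, $C\mapsto B$, $B\mapsto A$. Since $\rho$ is an $\F$-algebra automorphism it is $\F$-linear and in particular commutes with multiplication by $-1$. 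The remaining work is then a short bookkeeping of how each of $A$, $B$, $C$ travels through the three maps, using (i) together with $A^\dagger=A$ and $B^\dagger=B$. For instance, to compute $A^{\dagger'}$ I would track the image of $A$: first $\rho^{-1}$ sends $A$ to $C$, then $\dagger$ sends $C$ to $-C$ by (i), and finally $\rho$ sends $-C$ to $-A$, giving $A^{\dagger'}=-A$; whereas $B$ and $C$ each pass through an element fixed by $\dagger$ (namely $A$ or $B$) and hence return unchanged, so $B^{\dagger'}=B$ and $C^{\dagger'}=C$. This establishes (ii), and the computation for $\dagger''$ is entirely parallel, with $B$ now the generator routed through $C$ and therefore negated, which gives (iii).

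I do not anticipate a genuine obstacle here: once (i) is in place, parts (ii) and (iii) are essentially forced by the $\mathbb{Z}_3$-symmetry of the triple $(A,B,C)$ under $\rho$ that was set up in the previous section. The one point demanding care is the ordering of the compositions in Lemma \ref{lem:dag}, that is, whether $\rho$ or $\rho^{-1}$ is applied first. This ordering determines which generator lands on $C$ under the inner map and thus picks up the sign change from (i), and it is precisely what distinguishes the roles of $A$, $B$, $C$ in (ii) versus (iii); verifying that the bookkeeping matches the asserted pattern is the only place where a sign error could creep in.
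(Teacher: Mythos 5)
Your proposal is correct and follows essentially the same route as the paper: part (i) by applying $\dagger$ to the relation $AB-BA=z''C$ and using that $\dagger$ fixes $A$ and $B$, and parts (ii), (iii) by conjugating $\dagger$ with $\rho$ via Lemma \ref{lem:dag} and Corollary \ref{cor:mu}. Your sign-tracking through the compositions matches the asserted pattern, so nothing is missing.
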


\begin{proof}
(i)
By construction $A^\dagger = A$ and $B^\dagger = B$.
Applying $\dagger$ to each side of \eqref{eq:AW1pre3},
we obtain $C^\dagger = - C$.

(ii), (iii)
Use Corollary \ref{cor:mu}, 
Lemma \ref{lem:dag}, and (i) above.
\end{proof}

\begin{proposition}    \label{prop:dagger3}    \samepage
\ifDRAFT {\rm prop:dagger3}. \fi
Assume that $\beta = - 2$.
Then the maps $\dagger$, $\dagger'$, $\dagger''$ coincide,
and this map fixes each of $A$, $B$, $C$.
\end{proposition}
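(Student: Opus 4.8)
The plan is to model the argument on the already-proved $\beta=2$ case (Proposition \ref{prop:dagger2}), but to exploit a sign symmetry special to $\beta=-2$. The crucial first step is to check that $\dagger$ fixes $C$. Recall from Proposition \ref{prop:Z3AW2pre}, together with the choice $z=z'=z''$, that \eqref{eq:AW2pre3} reads $AB + BA = z\,C$, so $C = z^{-1}(AB+BA)$. Since $\dagger$ is an antiautomorphism of $\text{End}(V)$ that fixes both $A$ and $B$ (Lemma \ref{lem:comments2}), applying $\dagger$ to this expression gives $C^\dagger = z^{-1}(B^\dagger A^\dagger + A^\dagger B^\dagger) = z^{-1}(BA+AB) = C$. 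The point is that the \emph{anticommutator} in \eqref{eq:AW2pre3} is invariant under reversing the order of $A$ and $B$; by contrast, in the $\beta=2$ case the \emph{commutator} of \eqref{eq:AW1pre3} changes sign under $\dagger$, which is exactly why that case gives $C^\dagger = -C$ rather than $C^\dagger = C$. Thus $\dagger$ fixes each of $A$, $B$, $C$.

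Next I would show that $\dagger'$ and $\dagger''$ also fix each of $A$, $B$, $C$. By Corollary \ref{cor:mu} the automorphism $\rho$ cyclically sends $A \mapsto B \mapsto C \mapsto A$, so $\rho^{-1}$ sends $A \mapsto C$, $B \mapsto A$, $C \mapsto B$. By Lemma \ref{lem:dag}, $\dagger'$ acts as $X \mapsto ((X^{\rho^{-1}})^\dagger)^\rho$. Tracking $A$: first $\rho^{-1}$ sends $A$ to $C$, then $\dagger$ fixes $C$, then $\rho$ sends $C$ to $A$; hence $A^{\dagger'}=A$. Tracking $B$ and $C$ in the same way gives $B^{\dagger'}=B$ and $C^{\dagger'}=C$. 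Similarly $\dagger''$ acts as $X \mapsto ((X^{\rho})^\dagger)^{\rho^{-1}}$, and the analogous three-step bookkeeping, using that $\dagger$ fixes all of $A$, $B$, $C$, yields $A^{\dagger''}=A$, $B^{\dagger''}=B$, $C^{\dagger''}=C$. So all three maps fix $A$, $B$, $C$.

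Finally I would invoke uniqueness to conclude that the three maps literally coincide. Each of $\dagger$, $\dagger'$, $\dagger''$ is an antiautomorphism of $\text{End}(V)$ fixing both $A$ and $B$; since $A$ and $B$ generate $\text{End}(V)$ (Lemma \ref{lem:comments}(i)), the uniqueness clause of Lemma \ref{lem:comments2} forces all three to equal the single such antiautomorphism $\dagger$. Hence $\dagger=\dagger'=\dagger''$, and this common map fixes each of $A$, $B$, $C$, as claimed. I do not expect any serious obstacle here: the only genuinely computational ingredient is the short sign check showing $\dagger$ fixes $C$, and everything else is cyclic bookkeeping through $\rho$ combined with the uniqueness of Lemma \ref{lem:comments2}.
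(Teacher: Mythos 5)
Your proof is correct, but it takes a genuinely different route from the paper's. The paper works directly with the conjugating elements in Definition \ref{def:dag}: using \eqref{eq:defz}, \eqref{eq:deft} and Lemma \ref{lem:rho} it computes $t_i^2=1$ for $0 \leq i \leq d$, hence $W^2=I$ and $(W')^2=I$, hence $P^\dagger P = I$ and $P P^\dagger = I$; the maps $\dagger'$, $\dagger''$ are then \emph{literally} equal to $\dagger$ because their conjugators are the identity, and $C^\dagger=C$ follows from \eqref{eq:AW2pre3} exactly as in your first step. You instead prove $C^\dagger=C$ first, push this through the $\rho$-conjugation description of Lemma \ref{lem:dag} to see that $\dagger'$ and $\dagger''$ each fix $A$, $B$, $C$, and then invoke the uniqueness clause of Lemma \ref{lem:comments2} to force all three antiautomorphisms to coincide. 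Both arguments are sound. The paper's computation buys the stronger structural fact that $W$ and $W'$ are involutions (so $P^\dagger=P^{-1}$) in the $\beta=-2$ case, which is of independent interest; your argument avoids touching the scalars $t_i$ altogether, runs parallel to the proofs of Propositions \ref{prop:dagger2} and \ref{prop:dagger}, and makes transparent that the whole statement rests only on the anticommutator being symmetric under order reversal plus the cyclic action of $\rho$ and the uniqueness of the antiautomorphism fixing the two generators.
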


\begin{proof}
By Definitions \ref{def:z}, \ref{def:WWd} and Lemma \ref{lem:rho}
we obtain  $t_i^2 = 1$ for $0 \leq i \leq d$.
So $W^2 = I$ and $(W')^2 = I$.
By this and Definition \ref{def:P},
$P^\dagger P = I$ and $P P^\dagger = I$.
By this and Definition \ref{def:dag}
we obtain $\dagger' = \dagger$ and $\dagger'' = \dagger$.
By construction $\dagger$ fixes $A$, $B$.
By this and \eqref{eq:AW2pre3} we obtain $C^\dagger = C$.
The result follows.
\end{proof}

\begin{proposition}    \label{prop:dagger}    \samepage
\ifDRAFT {\rm prop:dagger}. \fi
Assume that $\beta \neq \pm 2$.
Then the antiautomorphisms $\dagger$, $\dagger'$, $\dagger''$ act on $A$, $B$, $C$
as follows:
\begin{itemize}
\item[\rm (i)]
$\dagger$ fixes $A$, $B$ and sends $C \mapsto C - \frac{AB - BA}{z(q-q^{-1})}$;
\item[\rm (ii)]
$\dagger'$ fixes $B$, $C$ and sends $A \mapsto A - \frac{BC- CB}{z(q-q^{-1})}$;
\item[\rm (iii)]
$\dagger''$ fixes $C$, $A$ and sends $B \mapsto B - \frac{CA - AC}{z(q-q^{-1})}$.
\end{itemize}
\end{proposition}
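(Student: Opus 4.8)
The plan is to treat part (i) directly from the $\mathbb{Z}_3$-symmetric relation, and then to obtain parts (ii) and (iii) for free by exploiting the cyclic symmetry built into $\rho$. Throughout I use that we have chosen $z=z'=z''$ (Lemma \ref{lem:z}), that $\dagger$ is an antiautomorphism fixing $A$ and $B$ (Lemma \ref{lem:comments2}), and that $\rho$ cyclically permutes $A\mapsto B\mapsto C\mapsto A$ (Corollary \ref{cor:mu}). The only genuine computation is in part (i); parts (ii), (iii) are purely formal consequences of it.

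For part (i), I would start from \eqref{eq:AW3pre3}, which with $z''=z$ reads $qAB-q^{-1}BA = z(q^2-q^{-2})C$, so that $C = \bigl(qAB-q^{-1}BA\bigr)/\bigl(z(q^2-q^{-2})\bigr)$. Applying $\dagger$ and using $(AB)^\dagger=BA$, $(BA)^\dagger=AB$ gives $C^\dagger = \bigl(qBA-q^{-1}AB\bigr)/\bigl(z(q^2-q^{-2})\bigr)$. Subtracting, I get
\[
 C^\dagger - C = \frac{(q+q^{-1})(BA-AB)}{z(q^2-q^{-2})} = \frac{BA-AB}{z(q-q^{-1})},
\]
where the last step uses the factorization $q^2-q^{-2}=(q-q^{-1})(q+q^{-1})$. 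This rearranges to $C^\dagger = C-\frac{AB-BA}{z(q-q^{-1})}$, which is exactly (i); the sign and the cancellation of $q+q^{-1}$ are the one place where care is needed.

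For parts (ii) and (iii), I would invoke Lemma \ref{lem:dag}, which expresses $\dagger'$ as $\rho^{-1}$ followed by $\dagger$ followed by $\rho$, and $\dagger''$ as $\rho$ followed by $\dagger$ followed by $\rho^{-1}$. Since $\rho^{-1}$ sends $B\mapsto A$ and $C\mapsto B$, and $\dagger$ fixes $A,B$, while $\rho$ sends them back, one checks immediately that $\dagger'$ fixes $B$ and $C$; similarly $\dagger''$ fixes $C$ and $A$. To evaluate $A^{\dagger'}$ I would compute $A^{\dagger'}=\bigl((A^{\rho^{-1}})^\dagger\bigr)^\rho=(C^\dagger)^\rho$, apply part (i), and then push $\rho$ through, using $C^\rho=A$, $(AB)^\rho=BC$, $(BA)^\rho=CB$, to land on $A-\frac{BC-CB}{z(q-q^{-1})}$. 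The same maneuver with $\rho^{-1}$ gives $B^{\dagger''}=(C^\dagger)^{\rho^{-1}}=B-\frac{CA-AC}{z(q-q^{-1})}$. The main thing to watch is the bookkeeping of which generator maps where under $\rho^{\pm1}$, so that the commutator in the image formula is the correctly rotated version of $AB-BA$; once part (i) is in hand, no further relations are required.
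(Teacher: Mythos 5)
Your proposal is correct and follows essentially the same route as the paper: the paper's proof of Proposition \ref{prop:dagger} simply says to mimic the proof of Proposition \ref{prop:dagger2}, i.e.\ apply $\dagger$ to \eqref{eq:AW3pre3} to get part (i), and then derive (ii), (iii) from Corollary \ref{cor:mu}, Lemma \ref{lem:dag}, and part (i). Your explicit computation of $C^\dagger - C$ (with the cancellation of $q+q^{-1}$ against $q^2-q^{-2}$) and your conjugation bookkeeping under $\rho^{\pm 1}$ are exactly what the paper leaves implicit, and both check out.
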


\begin{proof}
Similar to the proof of Proposition \ref{prop:dagger2},
using \eqref{eq:AW3pre3} instead of \eqref{eq:AW1pre3}.
\end{proof}

\begin{definition}   \label{def:ddag}    \samepage
\ifDRAFT {\rm def:ddag}. \fi
Define maps $\ddagger$, $\ddagger'$, $\ddagger'' : \text{\rm End}(V) \to \text{\rm End}(V)$,
$X \mapsto T^{-1} X^\dagger T$,
where $T$ is from the table below.
\[
\begin{array}{c|ccc}
       &    \ddagger & \ddagger' & \ddagger''
\\ \hline
T & W  & (W')^{-1}  &  W W' W          \rule{0mm}{2.7ex}
\end{array}
\]
Note that $\ddagger$, $\ddagger'$, $\ddagger''$ are antiautomorphisms of $\text{\rm End}(V)$.
\end{definition}

\begin{lemma}   \label{lem:ddag}    \samepage
\ifDRAFT {\rm lem:ddag}. \fi
The map $\ddagger'$ is equal to the composition
\begin{equation}     
 \text{\rm End}(V) \xrightarrow{\;\;\; \rho^{-1} \;\;\;} \text{\rm End}(V) 
    \xrightarrow{\;\;\; \ddagger \;\;\;} \text{\rm End}(V)
      \xrightarrow{\;\;\; \rho \;\;\;} \text{\rm End}(V).                \label{eq:ddaggerd}
\end{equation}
The map $\ddagger''$ is equal to the composition
\begin{equation}
 \text{\rm End}(V) \xrightarrow{\;\;\; \rho \;\;\;} \text{\rm End}(V) 
    \xrightarrow{\;\;\; \ddagger \;\;\;} \text{\rm End}(V)
      \xrightarrow{\;\;\; \rho^{-1} \;\;\;} \text{\rm End}(V).          \label{eq:ddaggerdd}
\end{equation}
\end{lemma}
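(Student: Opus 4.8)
The plan is to prove each identity by checking that the two antiautomorphisms in question agree on the generating pair $A,B$. Since $A,B$ generate $\text{End}(V)$ by Lemma \ref{lem:comments}(i), and since each side of each claimed equality is an antiautomorphism of $\text{End}(V)$ (the composition being an automorphism, an antiautomorphism, and an automorphism in turn), an antiautomorphism is determined by its values on $A$ and $B$; so it suffices to compare the action of the two sides on $A$ and on $B$. This is the same reduction that stands behind Lemma \ref{lem:dag}, but here it will have to be carried out explicitly rather than read off the multipliers.

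First I would record the action of $\ddagger$ on $A,B,C$. From $A^\dagger=A$, $B^\dagger=B$ (construction), $W^\dagger=W$ (Lemma \ref{lem:Wdag}), $AW=WA$ (Lemma \ref{lem:WAWdB}) and $BW=WC$ (Lemma \ref{lem:WBWdC}) one gets $A^\ddagger=W^{-1}AW=A$ and $B^\ddagger=W^{-1}BW=C$; since $W^\dagger=W$, the map $\ddagger$ is an involution, so $C^\ddagger=B$. Next, by Corollary \ref{cor:mu} the automorphism $\rho$ acts as $A\mapsto B\mapsto C\mapsto A$, whence $\rho^{-1}$ acts as $A\mapsto C\mapsto B\mapsto A$. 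Composing, the map \eqref{eq:ddaggerd} sends $A\mapsto C\mapsto B\mapsto C$ and $B\mapsto A\mapsto A\mapsto B$, so it fixes $B$ and sends $A\mapsto C$.

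I would then compute $\ddagger'$ on $A,B$ straight from Definition \ref{def:ddag}, where $X^{\ddagger'}=W'X^\dagger (W')^{-1}$: using $BW'=W'B$ (Lemma \ref{lem:WAWdB}) gives $B^{\ddagger'}=W'B(W')^{-1}=B$, and using $CW'=W'A$ (Lemma \ref{lem:WBWdC}) gives $A^{\ddagger'}=W'A(W')^{-1}=C$. These match the values just found for \eqref{eq:ddaggerd}, proving $\ddagger'=\rho^{-1}\ddagger\rho$. The identity for $\ddagger''$ is symmetric: the composition \eqref{eq:ddaggerdd} sends $A\mapsto B$, $B\mapsto A$, $C\mapsto C$, and I would check that $\ddagger''$ does the same. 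Here the one nonroutine point is to rewrite the multiplier as $WW'W=W'WW'$ via the braid relation (Lemma \ref{lem:braid}); the commutations $BW'=W'B$, $BW=WC$, $CW'=W'A$ then collapse $B^{\ddagger''}$ to $A$, involutivity gives $A^{\ddagger''}=B$, and the same braid identity yields $W^{\ddagger''}=W'$, from which $C^{\ddagger''}=W'A(W')^{-1}=C$.

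The main obstacle is resisting the one-line multiplier argument that suffices for Lemma \ref{lem:dag}. There $\dagger'=\rho^{-1}\dagger\rho$ holds because the element conjugating $X^\dagger$ in the composition is exactly $P^\dagger P$, which is the $T$ of Definition \ref{def:dag}. For $\ddagger'$, by contrast, the composition conjugates $X^\dagger$ by $P^\dagger W P$ (which one can rewrite as $P^\dagger P\,W'$ using $WP=PW'$ from Lemma \ref{lem:rhoW}), whereas Definition \ref{def:ddag} uses $(W')^{-1}$; these two elements agree only up to a central scalar, and establishing that directly is awkward because it hides a nontrivial relation among $W,W'$. Routing everything through the action on the generators $A,B$ sidesteps this completely, the only cost being the short braid-relation simplification needed in the $\ddagger''$ computation.
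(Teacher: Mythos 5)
Your proof is correct, but it takes a genuinely different route from the paper's. The paper works at the level of conjugating elements: it computes that the composition \eqref{eq:ddaggerd} sends $X \mapsto H^{-1}X^\dagger H$ with $H = P^\dagger W P = WW'WW'W$, then uses $P^3 = \kappa I$ (Corollary \ref{cor:mu3}) to get $H = \kappa (W')^{-1}$, so the central scalar washes out and the composition coincides with $\ddagger'$ on the nose; the $\ddagger''$ case is analogous. You instead reduce to checking agreement on the generating pair $A,B$ (valid, since two antiautomorphisms agreeing on a generating set of the algebra agree everywhere, by Lemma \ref{lem:comments}(i)), and verify the actions directly from the intertwining relations $AW=WA$, $BW'=W'B$, $BW=WC$, $CW'=W'A$. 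What the paper's route buys is brevity and an exact identification of the conjugators; what yours buys is independence from Corollary \ref{cor:mu3}, at the cost of essentially proving Proposition \ref{prop:ddagger}(ii),(iii) along the way --- note the paper deduces those statements \emph{from} this lemma, so your argument reverses the dependency (this is not circular, since you derive everything from Definition \ref{def:ddag} and Lemmas \ref{lem:WAWdB}, \ref{lem:WBWdC}, but it does make part of Proposition \ref{prop:ddagger} redundant). Two small remarks: the braid relation is not actually needed in your $\ddagger''$ computation --- conjugating $B$ by $WW'W$ directly collapses via $W^{-1}BW=C$, then $(W')^{-1}CW'=A$, then $W^{-1}AW=A$ --- and the check that the composition and $\ddagger''$ agree on $C$ is superfluous once they agree on $A$ and $B$. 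Also, your aside that the paper's multiplier identity ``hides a nontrivial relation'' somewhat undersells it: given Corollary \ref{cor:mu3}, the identification $W'H=\kappa I$ is immediate.
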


\begin{proof}
Note by Lemma \ref{lem:Wdag} that $\dagger$ fixes each of $W$, $W'$.
We first show that $\ddagger'$ is equal to the composition \eqref{eq:ddaggerd}.
Pick any $X \in \text{\rm End}(V)$.
By Definition \ref{def:ddag}, $\ddagger'$ sends
$X \mapsto W' X^\dagger (W')^{-1}$.
By Definitions \ref{def:P}, \ref{def:mu}, \ref{def:ddag}
the composition \eqref{eq:ddaggerd} sends
$X \mapsto H^{-1} X^\dagger H$,
where $H = W W' W W' W$.
By Definition \ref{def:P} and Corollary \ref{cor:mu3} we obtain
$H = \kappa (W')^{-1}$.
By these comments $\ddagger'$ is equal to the composition \eqref{eq:ddaggerd}.
One similarly shows that $\ddagger''$ is equal to the
composition \eqref{eq:ddaggerdd}.
\end{proof}

\begin{proposition}    \label{prop:ddagger}    \samepage
\ifDRAFT {\rm prop:ddagger}. \fi
The antiautomorphisms $\ddagger$, $\ddagger'$, $\ddagger''$ act on  $A$, $B$, $C$
as follows:
\begin{itemize}
\item[\rm (i)]
$\ddagger$ fixes $A$ and swaps $B$, $C$;
\item[\rm (ii)]
$\ddagger'$ fixes $B$ and swaps $C$, $A$;
\item[\rm (iii)]
$\ddagger''$ fixes $C$ and swaps $A$, $B$.
\end{itemize}
\end{proposition}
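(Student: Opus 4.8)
The plan is to prove part (i) directly from the definition of $\ddagger$, and then deduce (ii) and (iii) formally from (i) using the cyclic symmetry recorded in Lemma \ref{lem:ddag} and Corollary \ref{cor:mu}.

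First I would establish (i). By Definition \ref{def:ddag} we have $X^\ddagger = W^{-1} X^\dagger W$ for all $X \in \text{\rm End}(V)$, and by Lemma \ref{lem:comments2} the antiautomorphism $\dagger$ fixes $A$ and $B$. Hence $A^\ddagger = W^{-1} A W$ and $B^\ddagger = W^{-1} B W$. Now $AW = WA$ by Lemma \ref{lem:WAWdB} gives $A^\ddagger = A$, and $BW = WC$ by Lemma \ref{lem:WBWdC} gives $B^\ddagger = C$. It then remains only to show $C^\ddagger = B$.

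For $C^\ddagger = B$ I would not compute $C^\dagger$ directly (which is case-dependent, by Propositions \ref{prop:dagger2}--\ref{prop:dagger}); instead I would apply the antiautomorphism $\ddagger$ to the relation defining $C$, namely \eqref{eq:AW1pre3}, \eqref{eq:AW2pre3}, or \eqref{eq:AW3pre3} according to $\beta = 2$, $\beta = -2$, $\beta \neq \pm 2$. Since $\ddagger$ reverses products and satisfies $A^\ddagger = A$, $B^\ddagger = C$, applying it to the third relation produces an expression in $CA$ and $AC$ equal to a multiple of $C^\ddagger$; comparing with the second relation \eqref{eq:AW1pre2}/\eqref{eq:AW2pre2}/\eqref{eq:AW3pre2} and invoking the normalization $z = z' = z''$ (available by Lemma \ref{lem:z}) yields $C^\ddagger = B$. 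For example, when $\beta = 2$, applying $\ddagger$ to $AB - BA = z'' C$ gives $CA - AC = z'' C^\ddagger$, and comparison with $CA - AC = z' B$ forces $C^\ddagger = B$; the cases $\beta = -2$ and $\beta \neq \pm 2$ are identical in structure. This completes (i).

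With (i) in hand, (ii) and (iii) follow by chasing the cyclic action. By Lemma \ref{lem:ddag}, $\ddagger'$ is the composition $\rho^{-1}$, then $\ddagger$, then $\rho$, while $\ddagger''$ is $\rho$, then $\ddagger$, then $\rho^{-1}$; and Corollary \ref{cor:mu} gives $A \mapsto B \mapsto C \mapsto A$ under $\rho$. I would simply track each generator through the three-step composition: under $\ddagger'$ one finds $B \mapsto A \mapsto A \mapsto B$, $A \mapsto C \mapsto B \mapsto C$, and $C \mapsto B \mapsto C \mapsto A$, so $\ddagger'$ fixes $B$ and swaps $A$, $C$, proving (ii); the analogous chase for $\ddagger''$ gives $C \mapsto A \mapsto A \mapsto C$, $A \mapsto B \mapsto C \mapsto B$, and $B \mapsto C \mapsto B \mapsto A$, proving (iii). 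The only genuine obstacle is the identity $C^\ddagger = B$, which I handle uniformly across the three values of $\beta$ by exploiting the antiautomorphism property on the defining relation for $C$ rather than evaluating $C^\dagger$ case by case.
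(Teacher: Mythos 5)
Your proof is correct, and its overall skeleton matches the paper's: establish (i) directly from Definition \ref{def:ddag} together with Lemmas \ref{lem:WAWdB}, \ref{lem:WBWdC}, then obtain (ii) and (iii) by conjugating with $\rho$ via Lemma \ref{lem:ddag} and Corollary \ref{cor:mu}; your generator chase for (ii), (iii) is exactly what the paper intends. The one place you diverge is the identity $C^\ddagger = B$. The paper gets it in a single line without touching the Askey-Wilson relations: since $\dagger$ fixes $W$ (Lemma \ref{lem:Wdag}) and fixes $B$, one has $C^\ddagger = W^{-1} C^\dagger W = (W C W^{-1})^\dagger = B^\dagger = B$, where $W C W^{-1} = B$ is just Lemma \ref{lem:WBWdC} read backwards. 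Your alternative, applying the antiautomorphism $\ddagger$ to the relation defining $C$ and comparing with the second $\mathbb{Z}_3$-symmetric relation under the normalization $z = z' = z''$, is also valid: the cancellation is legitimate because $z'' \neq 0$ by Definition \ref{def:z}, and the second relation does hold for the $C$ constructed in Propositions \ref{prop:Z3AW1pre}--\ref{prop:Z3AW3pre}. What it costs you is a three-case verification (even if structurally identical) and an implicit appeal to that second relation, whereas the conjugation trick is case-free and uses only $\dagger W = W$; what it buys you is independence from Lemma \ref{lem:Wdag}. Either way the argument is complete.
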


\begin{proof}
Note by Lemma \ref{lem:Wdag} that
$\dagger$ fixes each of $W$, $W'$.

(i)
By Lemma \ref{lem:WAWdB} and Definition \ref{def:ddag}, $A^\ddagger = A$.
By Lemma \ref{lem:WBWdC}, 
$B^\ddagger = C$ and
$C^\ddagger = W^{-1} C^\dagger W
   = (W C W^{-1})^\dagger = B$.

(ii), (iii)
Use Corollary \ref{cor:mu}, Lemma \ref{lem:ddag}, and (i) above.
\end{proof}

The existence of the
antiautomorphisms $\ddagger$, $\ddagger'$, $\ddagger''$ shows
that the Leonard triple $A,B,C$ is modular in the sense of Curtin \cite{Cur}.

\begin{lemma}   \label{lem:ddag2}    \samepage
\ifDRAFT {\rm lem:ddag2}. \fi
We have $\ddagger^2=1$, $(\ddagger')^2=1$, $(\ddagger'')^2 = 1$.
\end{lemma}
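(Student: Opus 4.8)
The plan is to treat the three antiautomorphisms uniformly. Each of $\ddagger$, $\ddagger'$, $\ddagger''$ has the shape $X \mapsto T^{-1} X^\dagger T$ for a suitable invertible $T \in \text{\rm End}(V)$, namely $T = W$, $T = (W')^{-1}$, $T = W W' W$ respectively (Definition \ref{def:ddag}). So I would first isolate the single general fact that governs all three cases: if $T$ is invertible and fixed by $\dagger$, then the map $\xi : X \mapsto T^{-1} X^\dagger T$ satisfies $\xi^2 = 1$. Establishing this once settles the lemma in all three cases.

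The first step is to check that the relevant conjugator $T$ is fixed by $\dagger$ in each case. By Lemma \ref{lem:Wdag} the antiautomorphism $\dagger$ fixes $W$ and $W'$. Since $\dagger$ is an antiautomorphism it reverses products and sends inverses to inverses, so $((W')^{-1})^\dagger = ((W')^\dagger)^{-1} = (W')^{-1}$ and $(W W' W)^\dagger = W^\dagger (W')^\dagger W^\dagger = W W' W$. Thus each of $W$, $(W')^{-1}$, $W W' W$ is fixed by $\dagger$, so all three antiautomorphisms are instances of the general map $\xi$ above.

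The second step is the involution computation for $\xi$. Recall from Lemma \ref{lem:comments2} that $\dagger$ is an antiautomorphism with $(X^\dagger)^\dagger = X$. For $T$ with $T^\dagger = T$ I would compute
\[
(X^\xi)^\xi = T^{-1}\bigl(T^{-1} X^\dagger T\bigr)^\dagger T
 = T^{-1}\bigl(T^\dagger (X^\dagger)^\dagger (T^{-1})^\dagger\bigr) T
 = T^{-1}\bigl(T X T^{-1}\bigr) T = X,
\]
using $(T^{-1})^\dagger = (T^\dagger)^{-1} = T^{-1}$ and $(X^\dagger)^\dagger = X$ in the middle. This gives $\xi^2 = 1$, and specializing $T$ to $W$, $(W')^{-1}$, $W W' W$ yields $\ddagger^2 = 1$, $(\ddagger')^2 = 1$, $(\ddagger'')^2 = 1$.

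There is no substantial obstacle here; the only thing to keep track of carefully is the order-reversal of $\dagger$ together with the fact that it preserves inverses, which is exactly what makes the conjugator $T$ and its image behave as expected. The choice $T = (W')^{-1}$ for $\ddagger'$ (rather than $W'$) is a minor point one should not overlook, but it is fixed by $\dagger$ for the same reason $W'$ is.
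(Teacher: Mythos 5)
Your proof is correct, but it takes a different route from the paper. The paper's proof is a two-line argument at a higher level: each square $\ddagger^2$, $(\ddagger')^2$, $(\ddagger'')^2$ is an automorphism of $\text{\rm End}(V)$ (being a composition of two antiautomorphisms), and by Proposition \ref{prop:ddagger} it fixes each of $A$, $B$, $C$; since $A$, $B$ generate $\text{\rm End}(V)$ by Lemma \ref{lem:comments}(i), the square must be the identity. Your argument instead computes $\xi^2=1$ directly for any map $\xi : X \mapsto T^{-1}X^\dagger T$ with $T$ invertible and $T^\dagger = T$, and then checks that each of $W$, $(W')^{-1}$, $WW'W$ is fixed by $\dagger$ (the last because it is a palindrome in $\dagger$-fixed factors). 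Both are complete: your version is more self-contained and slightly more general, since it does not rely on Proposition \ref{prop:ddagger} or on how the maps act on $A$, $B$, $C$, only on Lemmas \ref{lem:comments2}, \ref{lem:Winv}, and \ref{lem:Wdag}; the paper's version is shorter on the page because it reuses machinery already established, and the same generation argument is the paper's standard device for identifying automorphisms (compare Corollaries \ref{cor:mu3} and \ref{cor:sig}).
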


\begin{proof}
Each of the given squares is an automorphism of $\text{\rm End}(V)$ that
fixes each of $A$, $B$, $C$.
The result follows in view of Lemma \ref{lem:comments}(i).
\end{proof}

Next we explain how $\ddagger$, $\ddagger'$, $\ddagger''$
are related to  the automorphism $\rho$ from Definition \ref{def:mu}.

\begin{proposition}    \label{prop:muddagger}    \samepage
\ifDRAFT {\rm prop:muddagger}. \fi
The automorphism $\rho$ is equal to each of the following compositions:
\begin{align}
 & \text{\rm End}(V) \xrightarrow{\;\; \ddagger' \;\;}
  \text{\rm End}(V) \xrightarrow{\;\;\, \ddagger \;\;\, }
   \text{\rm End}(V),                                         \label{eq:comp1}                    
\\
&  \text{\rm End}(V) \xrightarrow{\;\, \ddagger'' \;\, }
  \text{\rm End}(V) \xrightarrow{\;\; \ddagger' \;\;}
   \text{\rm End}(V),                                        \label{eq:comp2}
\\
&  \text{\rm End}(V) \xrightarrow{\;\;\, \ddagger \;\;\, }
   \text{\rm End}(V) \xrightarrow{\;\, \ddagger'' \;\,}
   \text{\rm End}(V).                                        \label{eq:comp3}
\end{align}
\end{proposition}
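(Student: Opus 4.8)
The plan is to exploit the elementary fact that a composition of two antiautomorphisms of $\text{\rm End}(V)$ is an automorphism, so that each of the three composites \eqref{eq:comp1}--\eqref{eq:comp3} is an automorphism of $\text{\rm End}(V)$. Since $\rho$ is itself an automorphism (Definition \ref{def:mu}), it then suffices to check that $\rho$ and each composite agree on a generating set. By Lemma \ref{lem:comments}(i) the elements $A$, $B$ generate $\text{\rm End}(V)$, so in principle I would only need to verify agreement on $A$ and $B$; I would nevertheless record the action on $C$ as well, since it costs nothing and makes the $\mathbb{Z}_3$-pattern visible.

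First I would recall the relevant actions. By Proposition \ref{prop:ddagger}, $\ddagger$ fixes $A$ and swaps $B,C$; $\ddagger'$ fixes $B$ and swaps $C,A$; and $\ddagger''$ fixes $C$ and swaps $A,B$. By Corollary \ref{cor:mu}, $\rho$ sends $A \mapsto B \mapsto C \mapsto A$. Reading the diagram \eqref{eq:comp1} so that the leftmost labelled arrow acts first (maps are written as right superscripts, so $\ddagger'$ is applied before $\ddagger$), I would trace the images: $A \mapsto A^{\ddagger'} = C \mapsto C^{\ddagger} = B$, then $B \mapsto B^{\ddagger'} = B \mapsto B^{\ddagger} = C$, and $C \mapsto C^{\ddagger'} = A \mapsto A^{\ddagger} = A$. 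Thus the composite \eqref{eq:comp1} sends $A \mapsto B$, $B \mapsto C$, $C \mapsto A$, matching $\rho$ on generators, hence equals $\rho$.

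For \eqref{eq:comp2} and \eqref{eq:comp3} I would repeat the same bookkeeping: \eqref{eq:comp2} (apply $\ddagger''$ then $\ddagger'$) gives $A \mapsto B \mapsto B$, $B \mapsto A \mapsto C$, $C \mapsto C \mapsto A$; and \eqref{eq:comp3} (apply $\ddagger$ then $\ddagger''$) gives $A \mapsto A \mapsto B$, $B \mapsto C \mapsto C$, $C \mapsto B \mapsto A$. In both cases the net action is $A \mapsto B$, $B \mapsto C$, $C \mapsto A$, so each composite equals $\rho$. Alternatively, once \eqref{eq:comp1} is settled I could obtain \eqref{eq:comp2} and \eqref{eq:comp3} more conceptually by conjugating \eqref{eq:comp1} with $\rho$ and invoking Lemma \ref{lem:ddag}, which presents $\ddagger'$ and $\ddagger''$ as the $\rho$-conjugates of $\ddagger$; this realizes the three identities as $\rho$-translates of one another.

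The only point requiring care — and the one I would flag as the main (minor) obstacle — is the direction of composition. Because the paper writes maps as right superscripts, the arrow diagram must be read left-to-right as "apply first, then second," and it is precisely this convention that makes the traced images land on $\rho$ rather than on $\rho^{-1}$. Once the convention is pinned down, no further input is needed: agreement of the two automorphisms on the generators $A$, $B$ together with Lemma \ref{lem:comments}(i) forces them to coincide on all of $\text{\rm End}(V)$.
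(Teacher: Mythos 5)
Your proof is correct, but it takes a different route from the paper. The paper argues directly at the level of the conjugating elements: by Definition \ref{def:ddag} the composite \eqref{eq:comp1} sends $X \mapsto W^{-1}(W')^{-1} X W' W$ (using that $\dagger$ is an involution fixing $W$ and $W'$), and since $P = W'W$ this is exactly $X \mapsto P^{-1}XP = X^{\rho}$; the identities \eqref{eq:comp2} and \eqref{eq:comp3} are then deduced from Lemma \ref{lem:ddag}, which exhibits $\ddagger'$ and $\ddagger''$ as $\rho$-conjugates of $\ddagger$. You instead observe that each composite of two antiautomorphisms is an automorphism and verify agreement with $\rho$ on the generators $A$, $B$ (and $C$) using Proposition \ref{prop:ddagger} and Corollary \ref{cor:mu}, invoking Lemma \ref{lem:comments}(i) to conclude equality on all of $\text{\rm End}(V)$. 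Your traced images are all correct, and you read the composition order correctly (leftmost arrow acts first), which is indeed the one point where a sign of error could creep in. The trade-off: the paper's computation needs only the definitions of $\rho$, $\ddagger$, $\ddagger'$, $\ddagger''$ and the fact that $\dagger$ fixes $W$, $W'$, whereas your argument leans on the already-established Proposition \ref{prop:ddagger}; in return, your version makes the $\mathbb{Z}_3$-symmetry of the action on $\{A,B,C\}$ visible at a glance and avoids any matrix/conjugator bookkeeping. Your closing remark about obtaining \eqref{eq:comp2}, \eqref{eq:comp3} by $\rho$-conjugation via Lemma \ref{lem:ddag} is essentially the paper's own mechanism for those two cases.
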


\begin{proof}
We first show that $\rho$ is equal to the composition \eqref{eq:comp1}.
Pick any $X \in \text{\rm End}(V)$.
By Definition \ref{def:mu},
$\rho$ sends $X \mapsto P^{-1} X P$.
By Definition \ref{def:ddag},
the composition \eqref{eq:comp1} sends
$X \mapsto W^{-1} (W')^{-1} X W' W$.
By Definition \ref{def:P}, $P = W' W$.
By these comments $\rho$ is equal to the composition \eqref{eq:comp1}.
By this and Lemma \ref{lem:ddag}
we find that $\rho$ is equal to each of
\eqref{eq:comp2}, \eqref{eq:comp3}.
\end{proof}

\section{An action of ${\rm PSL}_2(\mathbb{Z})$ associated with a TB tridiagonal pair}
\label{sec:auto}

Throughout this section, the following notation is in effect.
Assume that $\F$ is algebraically closed.
Fix an integer $d \geq 1$,
and let $V$ denote a vector space over $\F$ with dimension $d+1$.
Let $A,B$ denote a self-dual TB tridiagonal pair on $V$.
Let $\beta,\varrho,\varrho^*$ denote the Askey-Wilson sequence for $A,B$
from above Definition \ref{def:z}.
Choose the scalars $z$, $z'$, $z''$ in Definition \ref{def:z} such that
 $z=z'=z''$,
and let $C \in \text{\rm End}(V)$ be from 
Propositions \ref{prop:Z3AW1pre}--\ref{prop:Z3AW3pre}.

In this section we display an action of $\text{\rm PSL}_2(\mathbb{Z})$ on
$\text{\rm End}(V)$ as a group of automorphisms,
that act on $A$, $B$, $C$ in an attractive manner.
Recall from \cite{Alperin} that  ${\rm PSL}_2(\mathbb{Z})$ 
has a presentation by generators $r$, $s$ 
and relations $r^3=1$, $s^2=1$.
To get the action of $\text{\rm PSL}_2(\mathbb{Z})$,
we need an automorphism of $\text{\rm End}(V)$ that has order $3$,
and one that has order $2$.
In Corollary \ref{cor:mu3}
we obtained an automorphism $\rho$ of $\text{\rm End}(V)$ that has order $3$.
Next we obtain an automorphism of $\text{\rm End}(V)$ that has order $2$.
Recall the elements $W$, $W'$ from Definition \ref{def:WWd}.

\begin{definition}    \label{def:sig}    \samepage
\ifDRAFT {\rm def:sig}. \fi
Define the map $\sigma : \text{\rm End}(V) \to \text{\rm End}(V)$,
$X \mapsto T X T^{-1}$,
where $T=W W' W$.
Note that $\sigma$ is an automorphism of $\text{\rm End}(V)$.
\end{definition}

Recall the antiautomorphism $\dagger$ of $\text{\rm End}(V)$ from Lemma \ref{lem:comments2},
and the antiautomorphism $\ddagger''$ of $\text{\rm End}(V)$ from Definition \ref{def:ddag}.

\begin{proposition}    \label{prop:sig}    \samepage
\ifDRAFT {\rm prop:sig}. \fi
The automorphism $\sigma$ is equal to the composition
\[
    \text{\rm End}(V) \xrightarrow{\;\; \ddagger'' \;\;} \text{\rm End}(V)
     \xrightarrow{\;\;\, \dagger \;\;\,} \text{\rm End}(V).
\]
\end{proposition}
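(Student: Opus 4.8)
The plan is to verify the equality of two automorphisms of $\text{\rm End}(V)$ by showing they are defined by conjugation by the same element (up to an irrelevant scalar), and to trace through the definitions carefully. The automorphism $\sigma$ sends $X \mapsto T X T^{-1}$ with $T = W W' W$, while the composition $\ddagger'' \circ \dagger$ (in the order: first apply $\ddagger''$, then $\dagger$) is built from the antiautomorphisms defined in Definition \ref{def:ddag} and Lemma \ref{lem:comments2}. First I would compute the action of the composition on an arbitrary $X \in \text{\rm End}(V)$: applying $\ddagger''$ gives $(W W' W)^{-1} X^\dagger (W W' W)$, and then applying $\dagger$ gives $\bigl((W W' W)^{-1} X^\dagger (W W' W)\bigr)^\dagger$.

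The key step is to expand this using the fact that $\dagger$ is an antiautomorphism with $(Y^\dagger)^\dagger = Y$ (Lemma \ref{lem:comments2}) together with Lemma \ref{lem:Wdag}, which states that $\dagger$ fixes each of $W$ and $W'$. Applying $\dagger$ to the product reverses the order of factors and replaces each factor by its image under $\dagger$. Since $\dagger$ fixes $W$ and $W'$ and satisfies $(X^\dagger)^\dagger = X$, one finds
\[
 \bigl((W W' W)^{-1} X^\dagger (W W' W)\bigr)^\dagger
 = (W W' W)^\dagger \, X \, \bigl((W W' W)^{-1}\bigr)^\dagger
 = (W W' W) \, X \, (W W' W)^{-1},
\]
where I have used $(W W' W)^\dagger = W^\dagger (W')^\dagger W^\dagger = W W' W$ and $\bigl((W W' W)^{-1}\bigr)^\dagger = \bigl((W W' W)^\dagger\bigr)^{-1} = (W W' W)^{-1}$. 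This is exactly $X^\sigma$ by Definition \ref{def:sig}, so the two maps agree on every $X$, establishing the proposition.

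The main obstacle — really just a bookkeeping point — is getting the order of composition and the placement of inverses exactly right, since $\dagger$ is an antiautomorphism and thus reverses products; a sign error in the order of $W$, $W'$, or in whether one conjugates by $T$ or $T^{-1}$ would spoil the match. The cleanest way to handle this is to apply $\dagger$ term-by-term to the three-factor product and invoke Lemma \ref{lem:Wdag} at each step, so that the whole computation reduces to transparent uses of the antiautomorphism property and the self-fixing of $W$, $W'$ under $\dagger$. No nontrivial algebra about the Askey-Wilson relations or the eigenvalues is needed here; everything follows formally from the definitions of $\sigma$, $\ddagger''$, and $\dagger$ once Lemma \ref{lem:Wdag} is in hand.
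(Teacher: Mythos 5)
Your proof is correct and follows exactly the route the paper intends: the paper's proof is the one-line ``Routine verification using Definitions \ref{def:ddag} and \ref{def:sig},'' and your computation is precisely that verification spelled out, using Lemma \ref{lem:Wdag} and the antiautomorphism properties of $\dagger$ to turn $\bigl((WW'W)^{-1}X^\dagger(WW'W)\bigr)^\dagger$ into $(WW'W)X(WW'W)^{-1}$. Nothing further is needed.
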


\begin{proof}
Routine verification using Definitions \ref{def:ddag} and \ref{def:sig}.
\end{proof}

\begin{proposition}     \label{prop:sig2}   \samepage
\ifDRAFT {\rm prop:sig2}. \fi
The automorphism $\sigma$ swaps $A$, $B$ and sends
$C \mapsto C^\dagger$.
\end{proposition}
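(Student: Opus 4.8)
The plan is to compute the action of $\sigma$ on each of $A$, $B$, $C$ directly from its definition $\sigma: X \mapsto TXT^{-1}$ with $T = WW'W$, leveraging the commutation relations already established in Section~\ref{sec:W}. The cleanest route, however, is to use Proposition~\ref{prop:sig}, which identifies $\sigma$ with the composition of the antiautomorphism $\ddagger''$ followed by $\dagger$. Since the action of both $\ddagger''$ and $\dagger$ on $A$, $B$, $C$ is already recorded, the statement should follow by bookkeeping rather than fresh matrix computation.

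\medskip

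First I would recall from Proposition~\ref{prop:ddagger}(iii) that $\ddagger''$ fixes $C$ and swaps $A$, $B$. Then I would apply $\dagger$ to each of the three resulting elements. Here a case distinction on $\beta$ enters, because $\dagger$ acts on $C$ differently in each case: by Proposition~\ref{prop:dagger2}(i) for $\beta=2$, Proposition~\ref{prop:dagger3} for $\beta=-2$, and Proposition~\ref{prop:dagger}(i) for $\beta\neq\pm2$, while in all cases $\dagger$ fixes $A$ and $B$. Tracing the composition $\sigma = \dagger\circ\ddagger''$ (applied in the order given in Proposition~\ref{prop:sig}): starting from $A$, first $\ddagger''$ sends $A\mapsto B$, then $\dagger$ fixes $B$, so $\sigma$ sends $A\mapsto B$; starting from $B$, first $\ddagger''$ sends $B\mapsto A$, then $\dagger$ fixes $A$, so $\sigma$ sends $B\mapsto A$. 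This establishes that $\sigma$ swaps $A$ and $B$. For $C$: first $\ddagger''$ fixes $C$, then $\dagger$ sends $C\mapsto C^\dagger$, so $\sigma$ sends $C\mapsto C^\dagger$, which is precisely the claimed image.

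\medskip

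The proof therefore reduces to the single line
\[
 A^\sigma = (A^{\ddagger''})^\dagger = B^\dagger = B, \qquad
 B^\sigma = (B^{\ddagger''})^\dagger = A^\dagger = A, \qquad
 C^\sigma = (C^{\ddagger''})^\dagger = C^\dagger,
\]
where the first equality in each chain is Proposition~\ref{prop:sig}, the second is Proposition~\ref{prop:ddagger}(iii), and the third uses that $\dagger$ fixes $A$ and $B$ by construction (as noted above Lemma~\ref{lem:antiauto}). \textbf{The main subtlety} is not the algebra but ensuring that the order of composition is read correctly: Proposition~\ref{prop:sig} writes $\sigma$ as $\ddagger''$ followed by $\dagger$, so for an element $X$ one has $X^\sigma = (X^{\ddagger''})^\dagger$, and one must apply $\ddagger''$ first. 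Once this convention is fixed, no case analysis on $\beta$ is actually needed for the final statement, since the image of $C$ is expressed uniformly as $C^\dagger$; the $\beta$-dependence of $C^\dagger$ is already packaged inside the symbol $C^\dagger$ and need not be unfolded here. I would close by remarking that $\sigma$ is indeed an automorphism (not merely a map) because it is the composition of two antiautomorphisms, as already observed in Definition~\ref{def:sig}.
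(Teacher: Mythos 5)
Your proof is correct and follows exactly the paper's own route: the paper's proof of this proposition is the one-line citation of Propositions \ref{prop:ddagger}(iii) and \ref{prop:sig}, which is precisely the bookkeeping you carry out. Your added care about the order of composition and the observation that no case analysis on $\beta$ is needed are both sound.
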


\begin{proof}
By Propositions \ref{prop:ddagger}(iii) and \ref{prop:sig}.
\end{proof}

\begin{corollary}   \label{cor:sig}  \samepage
\ifDRAFT {\rm cor:sig}. \fi
We have $\sigma^2=1$.
\end{corollary}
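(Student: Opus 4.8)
The plan is to exploit the fact that $\sigma$ is an automorphism of $\text{\rm End}(V)$ together with the action of $\sigma$ on the generators $A$, $B$, which is already recorded in Proposition \ref{prop:sig2}. The general principle is that an automorphism of $\text{\rm End}(V)$ is determined by its action on any generating set, so it suffices to check that $\sigma^2$ fixes enough elements to force it to be the identity.

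First I would recall from Definition \ref{def:sig} that $\sigma$ is an automorphism of $\text{\rm End}(V)$, so $\sigma^2$ is again an automorphism. Next I would invoke Proposition \ref{prop:sig2}, which says that $\sigma$ swaps $A$ and $B$. Applying $\sigma$ twice then gives
\[
  \sigma^2(A) = \sigma(\sigma(A)) = \sigma(B) = A, \qquad\qquad
  \sigma^2(B) = \sigma(\sigma(B)) = \sigma(A) = B,
\]
so $\sigma^2$ fixes each of $A$ and $B$.

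Finally I would appeal to Lemma \ref{lem:comments}(i), which asserts that $A$ and $B$ generate the $\F$-algebra $\text{\rm End}(V)$. Since $\sigma^2$ is an $\F$-algebra homomorphism fixing both generators, it fixes every element of $\text{\rm End}(V)$; hence $\sigma^2 = 1$. (Alternatively, one could observe that $\sigma^2$ commutes with $A$ and $B$ in the appropriate sense and cite Lemma \ref{lem:iso}, but the generation argument is the most direct.)

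I do not anticipate any genuine obstacle here: the content of the corollary lies entirely in Proposition \ref{prop:sig2}, and the passage from ``$\sigma$ swaps $A,B$'' to ``$\sigma^2=1$'' is immediate once one remembers that $A,B$ generate $\text{\rm End}(V)$. The only point requiring minor care is making explicit that fixing a generating set of an algebra forces an automorphism to be the identity, which is exactly what Lemma \ref{lem:comments}(i) supplies.
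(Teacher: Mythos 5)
Your proof is correct and is essentially identical to the paper's: the paper likewise observes that Proposition \ref{prop:sig2} forces $\sigma^2$ to fix both $A$ and $B$, and then concludes via Lemma \ref{lem:comments}(i) that $\sigma^2=1$ since $A,B$ generate $\text{\rm End}(V)$.
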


\begin{proof}
By Proposition \ref{prop:sig2} the automorphism
$\sigma^2$ fixes both $A$, $B$.
The result follows by Lemma \ref{lem:comments}(i).
\end{proof}

\begin{corollary}    \label{cor:PSL}    \samepage
\ifDRAFT {\rm cor:PSL}. \fi
The group ${\rm PSL}_2(\mathbb{Z})$ acts on $\text{\rm End}(V)$
as a group of automorphisms,
such that $r$ acts as $\rho$ and $s$ acts as $\sigma$.
\end{corollary}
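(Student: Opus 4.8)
The plan is to verify the defining relations of $\text{PSL}_2(\mathbb{Z})$ for the pair $\rho$, $\sigma$, and then invoke the universal property of a presentation. Recall that $\text{PSL}_2(\mathbb{Z})$ has the presentation $\langle r,s \mid r^3=1,\ s^2=1\rangle$. Since a group given by generators and relations is universal among groups equipped with elements satisfying those relations, to produce a homomorphism $\text{PSL}_2(\mathbb{Z}) \to \text{Aut}(\text{End}(V))$ sending $r \mapsto \rho$ and $s \mapsto \sigma$ it suffices to exhibit automorphisms $\rho$, $\sigma$ of $\text{End}(V)$ satisfying $\rho^3=1$ and $\sigma^2=1$. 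This homomorphism then supplies the desired action.

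First I would recall that $\rho$ is an automorphism of $\text{End}(V)$ with $\rho^3=1$; this is exactly Corollary \ref{cor:mu3}. Next I would recall that $\sigma$ is an automorphism of $\text{End}(V)$ with $\sigma^2=1$; this is Corollary \ref{cor:sig}. Both facts are already established in the excerpt, so the two generators are in hand together with the required orders. The only remaining point is to phrase the conclusion correctly as a group action.

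The proof itself is therefore short. I would write: by Corollary \ref{cor:mu3} the map $\rho$ is an automorphism of $\text{End}(V)$ satisfying $\rho^3=1$, and by Corollary \ref{cor:sig} the map $\sigma$ is an automorphism of $\text{End}(V)$ satisfying $\sigma^2=1$. Since $\text{PSL}_2(\mathbb{Z}) = \langle r,s \mid r^3=1,\ s^2=1\rangle$, by the universal property of this presentation there is a unique group homomorphism $\text{PSL}_2(\mathbb{Z}) \to \text{Aut}(\text{End}(V))$ sending $r \mapsto \rho$ and $s \mapsto \sigma$. This homomorphism is precisely an action of $\text{PSL}_2(\mathbb{Z})$ on $\text{End}(V)$ as a group of automorphisms, with $r$ acting as $\rho$ and $s$ acting as $\sigma$, as claimed.

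I do not anticipate a genuine obstacle here, since all the substantive work (constructing $\rho$, $\sigma$ and verifying their orders) has been carried out in the preceding sections. The only thing to be careful about is invoking the presentation correctly: one must note that the relations $r^3=1$ and $s^2=1$ are the \emph{only} relations, so that no further compatibility between $\rho$ and $\sigma$ need be checked. The potential pitfall would be mistakenly thinking that additional relations (for instance, a braid-type relation between $r$ and $s$) must be verified; the given presentation of $\text{PSL}_2(\mathbb{Z})$ as a free product $\mathbb{Z}/3 * \mathbb{Z}/2$ makes clear that only the orders of the two generators matter, so the verification reduces entirely to the two cited corollaries.
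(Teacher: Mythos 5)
Your proof is correct and matches the paper's argument exactly: the paper's proof is simply ``By Corollaries \ref{cor:mu3} and \ref{cor:sig},'' relying on the stated presentation of ${\rm PSL}_2(\mathbb{Z})$ by generators $r$, $s$ with relations $r^3=1$, $s^2=1$. Your more explicit appeal to the universal property of the presentation is just a fuller write-up of the same reasoning.
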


\begin{proof}
By Corollaries \ref{cor:mu3} and \ref{cor:sig}.
\end{proof}

\section{Concluding remarks}
\label{sec:conc}

In earlier sections we discussed TB tridiagonal pairs.
In this section we summarize what is known about general tridiagonal pairs,
using the TB case as a guide.
The definition of a tridiagonal pair is given in the Introduction.
The definition of a tridiagonal system is analogous to
Definition  \ref{def:TBTDsystem}.
For the rest of this section,
let $\Phi=(A; \{E_i\}_{i=0}^d; A^*; \{E^*_i\}_{i=0}^\delta)$ denote a tridiagonal system
on $V$.
It is known that $d=\delta$ (see \cite[Lemma 4.5]{ITT}).
By \cite[Theorem 10.1]{ITT}
there exists a sequence of scalars $\beta,\gamma, \gamma^*, \varrho, \varrho^*$ in $\F$
such that both
\begin{align}
 0 &= [ A,\; A^2 A^* - \beta A A^* A + A^* A^2 - \gamma (A A^* + A^* A) - \varrho A^*],
                                       \label{eq:TDrel1}
\\
 0 &= [A^*, \; A^{*2} A - \beta A^* A A^* + A A^{*2} - \gamma^* (A^*A + A A^*) - \varrho^* A],
                                       \label{eq:TDrel2}
\end{align}
where $[r,s]$ means $r s-s r$.
The above equations are called the {\em tridiagonal relations}.
We now describe the eigenvalues.
For $0 \leq i \leq d$ let $\th_i$ (resp.\ $\th^*_i$) denote the eigenvalue of $A$
(resp.\ $A^*$) associated with $E_i$ (resp.\ $E^*_i$).
By \cite[Theorem 11.1]{ITT} the expressions
\[
   \frac{\th_{i-2} - \th_{i+1}}{\th_{i-1}-\th_{i}},  \qquad\qquad
   \frac{\th^*_{i-2} - \th^*_{i+1}}{\th^*_{i-1}-\th^*_{i}}
\]
are equal and independent of $i$ for $2 \leq i \leq d-1$.
Moreover,
\begin{align*}
  \gamma &= \th_{i-1} - \beta \th_i + \th_{i+1}  &&  (1 \leq i \leq d-1),
\\
  \gamma^* &= \th^*_{i-1} - \beta \th^*_i + \th^*_{i+1} &&  (1 \leq i \leq d-1),
\\
  \varrho &= \th^2_{i-1} - \beta \th_{i-1} \th_i + \th_i^2 - \gamma (\th_{i-1} + \th_i)  && (1 \leq i \leq d),
\\
 \varrho^* &= \th^{*2}_{i-1} - \beta \th^*_{i-1} \th^*_i + \th^{*2}_i - \gamma^* (\th^*_{i-1} + \th^*_i)
                       &&  (1 \leq i \leq d).
\end{align*}
Note that $\gamma$, $\gamma^*$ are zero if $\Phi$ is TB.
One topic that we did not discuss in earlier sections is 
the split decomposition.
This is defined as follows.
For $0 \leq i \leq d$ define
\[
  U_i = (E^*_0 V + \cdots + E^*_i V) \cap (E_i V+ \cdots + E_d V).
\]
By \cite[Theorem 4.6]{ITT} we have $V = \sum_{i=0}^d U_i$ (direct sum).
This sum is called the {\em $\Phi$-split decomposition} of $V$.
By \cite[Theorem 4.6]{ITT}  the elements $A$, $A^*$ act on $\{U_i\}_{i=0}^d$ as follows:
\begin{align}
(A-\th_i I) U_i &\subseteq U_{i+1} \qquad (0 \leq i \leq d-1), 
& (A- \th_d I) U_d &= 0,                                               \label{eq:AUi}
\\
(A^* - \th^*_i I) U_i &\subseteq U_{i-1} \qquad (1 \leq i \leq d),
& (A^* - \th^*_0 I) U_0 &= 0.                                         \label{eq:AsUi}
\end{align}
By \cite[Corollary 5.7]{ITT},
for $0 \leq i \leq d$ the dimensions of $U_i$, $E_i V$, $E^*_i V$ are equal;
denote this common dimension by $\rho_i$.
By \cite[Corollary 5.7]{ITT} we have $\rho_i = \rho_{d-i}$.
The sequence $(\rho_0, \rho_1, \ldots, \rho_d)$ is called the {\em shape of $\Phi$}.
By \cite[Theorem 1.3]{NT:shape} the shape satisfies $\rho_i \leq \rho_0 \binom{d}{i}$ for $0 \leq i \leq d$.
Additional results concerning the split decomposition and the shape can be found in
\cite{N:refine, NT:formula, NT:unit, NT:tde, NT:maps, N:height1, T:split, T:LBUB,  Vidar, IT:shape}.
Some miscellaneous results about tridiagonal pairs and systems can be found in
\cite{AC, Bock1, IT:sharpen,  IT:Mock, Al2}.

The tridiagonal system $\Phi$ is said to be {\em sharp} whenever $\rho_0 = 1$.
If $\F$ is algebraically closed then $\Phi$ is sharp \cite[Theorem 1.3]{NT:structure}.
In \cite[Theorem 3.1]{INT} the sharp tridiagonal systems are classified up to isomorphism.
This result makes heavy use of 
\cite{NT:sharp, NT:towards, NT:structure, NT:mu, NT:shape, NT:qRacahmu, IT:qRacah, IT:Drin}.
For the moment, assume that $\Phi$ is sharp.
By \cite[Theorem 11.5]{NT:sharp} and \cite[Theorem 3.1]{INT},
there exists an antiautomorphism $\dagger$ of $\text{\rm End}(V)$
that fixes each of $A$, $A^*$.
By linear algebra,
there exists a nondegenerate symmetric bilinear form  $\b{\;,\;} : V \times V \to \F$
such that
$\b{Xu, v} = \b{u, X^\dagger v}$ for all $u,v \in V$ and $X \in \text{\rm End}(V)$.
See 
\cite{AC3, NT:sharp, Tanaka} for results on the bilinear form.

We now assume that $\rho_i=1$ for $0 \leq i \leq d$.
In this case, $A,A^*$ is called a {\em Leonard pair} and 
$\Phi$ is called a {\em Leonard system}.
For some surveys on this topic, see \cite{NT:Krawt, T:survey, T:intro}.
For a Leonard pair, we can improve on the tridiagonal relations \eqref{eq:TDrel1}, \eqref{eq:TDrel2}
as follows.
By \cite[Theorem 1.5]{TV} there exist scalars $\omega$, $\eta$, $\eta^*$ in $\F$ such that both
\begin{align}
  A^2 A^* - \beta A A^* A + A^* A^2 - \gamma (A A^* + A^* A)- \varrho A^*
   &=  \gamma^* A^2 + \omega A + \eta I,                         \label{eq:AWrelation1}
\\
 A^{*2} A - \beta A^* A A^* + A A^{*2} - \gamma^* (A^* A + A A^*) - \varrho^* A
  &= \gamma A^{*2} + \omega A^* + \eta^* I.                    \label{eq:AWrelation2}
\end{align}
These equations are  called the {\em Askey-Wilson relations} (see \cite{TV, Z}).
Observe that in \eqref{eq:AWrelation1} the right-hand side is a polynomial in $A$,
and therefore commutes with $A$. Thus \eqref{eq:AWrelation1} implies \eqref{eq:TDrel1}.
Similarly \eqref{eq:AWrelation2} implies \eqref{eq:TDrel2}.
If $A,A^*$ is TB then
the scalars $\gamma$, $\gamma^*$, $\omega$, $\eta$, $\eta^*$ are all zero,
and \eqref{eq:AWrelation1}, \eqref{eq:AWrelation2} become
\eqref{eq:AW1}, \eqref{eq:AW2}.
In some cases the Askey-Wilson relations can be put in $\mathbb{Z}_3$-symmetric form
(see \cite[Theorem 10.1]{Huang}).
Additional results concerning the Askey-Wilson relations can be found in
\cite{Vid1, Vid2}.
We recall the $\Phi$-split basis.
Let $\{U_i\}_{i=0}^d$ denote the $\Phi$-split decomposition of $V$.
Pick any nonzero $v \in E^*_0 V$.
For $0 \leq i \leq d$ define $u_i = \tau_i (A) v$,
where $\tau_i$ is from above Theorem \ref{thm:sdiso}.
By \cite[Section 21]{T:survey} we have $0 \neq u_i \in U_i$.
Consequently the vectors $\{u_i\}_{i=0}^d$ form a basis for $V$,
called the {\em $\Phi$-split basis} for $V$.
With respect to this basis, 
the matrices representing $A$, $A^*$ take the form
\begin{align*} 
 A &:
  \begin{pmatrix}
    \th_0 &  &    & & & \text{\bf 0}                  \\
    1 & \th_1    \\
         & 1  & \cdot  \\
         &      & \cdot & \cdot  \\
         &       &         & \cdot & \cdot  \\
     \text{\bf 0}   &        &          &         & 1 & \th_d   \\
  \end{pmatrix},
&
 A^* &:
  \begin{pmatrix}
    \th^*_0 & \vphi_1 &    & & & \text{\bf 0}                  \\
       & \th^*_1 & \vphi_2   \\
         &    & \cdot & \cdot  \\
         &      &  & \cdot & \cdot \\
         &       &         &  & \cdot & \vphi_d \\
     \text{\bf 0}   &        &          &         & & \th^*_d   \\
  \end{pmatrix},
\end{align*}
where $\{\vphi_i\}_{i=1}^d$ are nonzero scalars in $\F$.
The sequence $\{\vphi_i\}_{i=1}^d$ is called the {\em first split sequence} of $\Phi$.
The first split sequence $\{\phi_i\}_{i=1}^d$ of $\Phi^\Downarrow$ is called the
{\em second split sequence} of $\Phi$.
The sequence 
\[
    (\{\th_i\}_{i=0}^d; \{\th^*_i\}_{i=0}^d; \{\vphi_i\}_{i=1}^d; \{\phi_i\}_{i=1}^d)
\]
is called the {\em parameter array} of $\Phi$.
By \cite[Lemma 3.11]{T:Leonard} $\Phi$ is determined up to isomorphism
by its parameter array.
In \cite[Theorem 1.9]{T:Leonard}
the Leonard systems are classified up to isomorphism
in terms of the parameter array.
In the TB case the first and second split sequences look as follows.
By \cite[Theorem 1.5]{NT:balanced} and \cite[Theorem 23.6]{T:survey},
\[
   \vphi_i = -\phi_i = (\th^*_i - \th^*_{i-1})(\th_0 + \th_1 + \cdots + \th_{i-1})
   \qquad \qquad (1 \leq i \leq d).
\] 
In the table below,
for $1 \leq i \leq d$ the scalar $\vphi_i$ is displayed in closed form.
\[
\begin{array}{c|c}
\text{\rm Case}  &  \vphi_i
\\ \hline
\text{\rm Example \ref{ex:1}} &
 2 h h^* i (i-d-1)                          \rule{0mm}{2.7ex}
\\
\text{\rm Example \ref{ex:2}} &
 h h^* (-1)^i (d+1) (d - 2i + 1)- h h^* (d-2i+1)^2       \rule{0mm}{2.7ex}
\\
\text{\rm Example \ref{ex:3}} &                               \rule{0mm}{2.7ex}
 h h^* (q^{i} - q^{-i})(q^{i-d-1}-q^{d-i+1})(q^{d-2i+1}+q^{2i-d-1}) (q^2-q^{-2})^{-2}
\\
\text{\rm Example \ref{ex:4}} &                        \rule{0mm}{2.7ex}
  h h^* (q^{i} - q^{-i})(q^{i-d-1}-q^{d-i+1})(q^{d-2i+1}+q^{2i-d-1}) (q-q^{-1})^{-2}
\end{array}
\]

Another topic  not discussed in earlier sections,
is the connection between Leonard systems and orthogonal polynomials.
For $0 \leq i \leq d$ define a polynomial $u_i (x) \in \F[x]$ by
\[
 u_i (x) = \sum_{\ell=0}^i 
 \frac{(x-\th_0)(x-\th_1)\cdots (x-\th_{\ell-1}) (\th^*_i-\th^*_0)(\th^*_i - \th^*_1)\cdots(\th^*_i - \th^*_{\ell-1})}
        {\vphi_1 \vphi_2 \cdots \vphi_\ell}.
\]
Define $U \in \Mat_{d+1}(\F)$ that has $(i,j)$-entry $u_i (\th_j)$ for $0 \leq i,j \leq d$.
By \cite[Theorem 15.8]{T:qRacah},
$U$ is the transition matrix from an $A^*$-eigenbasis to an $A$-eigenbasis.
By \cite[Section 5]{T:parray}
the polynomials $\{u_i(x)\}_{i=0}^d$ are from the terminating branch of the
Askey scheme, which consists of the following polynomial families:
$q$-Racah,
$q$-Hahn,
dual $q$-Hahn,
$q$-Krawtchouk,
dual $q$-Krawtchouk,
affine $q$-Krawtchouk,
quantum $q$-Krawtchouk,
Racah,
Hahn,
dual Hahn,
Krawtchouk,
Bannai/Ito,
Orphan.
For a discussion of these polynomials, see \cite[pp. 260--300]{BI}.
In the TB case,
Example \ref{ex:1} corresponds to a special case of Krawtchouk polynomials,
Example \ref{ex:2} corresponds to a special case of Bannai/Ito polynomials,
and 
Examples \ref{ex:3}, \ref{ex:4} correspond  to a special case of $q$-Racah polynomials.
Some miscellaneous results about Leonard pairs and systems can be found in
\cite{Cur2, H, NT:affine, N:endparam, N:endentries, NT:det, NT:switch, 
WangHouGao, WangHouGao4, Han1, Han2, Han3,T:24points, Tanaka2}.

At the end of Section \ref{sec:W} and below Proposition \ref{prop:ddagger},
we mentioned the notion of a Leonard triple.
This notion was introduced in \cite[Definition 1.2]{Cur}.
A Leonard triple on $V$ is a $3$-tuple of elements in $\text{\rm End}(V)$
such that for each map,  
there exists a basis for $V$ with respect to which
the matrix representing that map is diagonal and the matrices representing
the other two maps are irreducible tridiagonal. 
To investigate Leonard triples, the following result is useful.
For a Leonard pair $A,A^*$ on $V$ and for $X \in \text{\rm End}(V)$,
consider the matrices that represent $X$ with respect to a standard eigenbasis for $A$ and $A^*$.
If these matrices are both tridiagonal, then $X$ is a linear combination of
$I$, $A$, $A^*$, $AA^*$, $A^* A$ \cite[Theorem 3.2]{NT:span}.
Using this result, the Leonard triples have recently been classified up to isomorphism.
The classification is summarized as follows.
Using the eigenvalues one breaks down the analysis into four cases,
called $q$-Racah, Racah, Krawtchouk, and Bannai/Ito \cite{ChanGaoHou, HouLiuGao}.
The Leonard triples are classified up to isomorphism
in \cite{Huang} (for $q$-Racah type);
\cite{GWH} (for Racah type);
\cite{KangHouGao} (for Krawtchouk type);
\cite{HouXuGao} (for Bannai/Ito type with even diameter);
\cite{HouWangGao} (for Bannai/Ito type with odd diameter).
Additional results on Leonard triples can be found in
\cite{HouGao, KorZ, T:Ltriple, N:LBTD, TZ} (for $q$-Racah type);
\cite{AlCur, LiuHouGao, Post} (for Racah type);
\cite{BalMa, Mik} (for Krawtchouk type);
\cite{Brown1, BGV, GVZ, GVZ2, HouWangGao3, HouZhangGao, TsuVZ} (for Bannai/Ito type);
\cite{N:TDTD} (for general case).

We mention some algebras related to tridiagonal pairs and Leonard pairs.
The {\em tridiagonal algebra} (see \cite[Definition 3.9]{T:Tworelations})
is defined by two generators subject to the tridiagonal relations
\eqref{eq:TDrel1}, \eqref{eq:TDrel2}.
The {\em Onsager algebra} (see \cite{HT:tetra, Tanabe, Davies1, Davies2})
is the tridiagonal algebra for the case $\beta=2$, $\gamma=0$, $\gamma^*=0$,
$\varrho \neq 0$, $\varrho^* \neq 0$.
The {\em $q$-Onsager algebra} (see \cite{Bas1, Bas4,T:positivepart, T:Lusztig, BasK})
is the tridiagonal algebra for the case $\beta=q^2+q^{-2}$, $\gamma=0$, $\gamma^*=0$,
$\varrho \neq 0$, $\varrho^* \neq 0$.
The {\em Askey-Wilson algebra} (see \cite{Z}) 
is defined by two generators subject to the Askey-Wilson relations
\eqref{eq:AWrelation1}, \eqref{eq:AWrelation2}.
This algebra has a central extension (see \cite{T:AWalgebra}) that we now describe.
Fix nonzero $q \in \F$ such that $q^4 \neq 1$.
The {\em universal Askey-Wilson algebra} $\Delta_q$
is defined by generators and relations in the following way.
The generators are $A$, $B$, $C$.
The relations assert that each of
\begin{align*}
& A + \frac{q BC - q^{-1}CB}{q^2-q^{-2}},
&& B + \frac{q CA - q^{-1}AC}{q^2-q^{-2}},
&& C + \frac{q AB - q^{-1}BA}{q^2-q^{-2}}
\end{align*}
is central in $\Delta_q$.
By  \cite[Theorem 3.1, Theorem 3.11]{T:AWalgebra}
the group  ${\rm PSL}_2(\mathbb{Z})$ acts on $\Delta_q$
as a group of automorphisms,
such that the ${\rm PSL}_2(\mathbb{Z})$-generator $r$ (resp.\ $s$)
sends $A \mapsto B \mapsto C \mapsto A$ (resp.\ $A \leftrightarrow B$).
For more information on $\Delta_q$, see
\cite{Huang2, T:AWalgebra, T:AWalgebra2, TZ, Huang4, Huang3}.
The double affine Hecke algebra (DAHA) for a reduced root system was defined by Cherednik (see \cite{Cherednik}),
and the definition was extended to include nonreduced root systems by Sahi (see \cite{Sahi}).
The most general DAHA of rank 1 is said to have type $(C_1^\vee, C_1)$.
In \cite{IT:DAHA} a universal DAHA  of type $(C_1^\vee , C_1)$ was introduced,
and denoted $\hat{H}_q$.
An injective algebra homomorphism $\Delta_q \to \hat{H}_q$ 
is given in \cite[Section 4]{T:AWalgebraDAHA}; see also \cite{IT:DAHA, Koorn1, Koorn2}.
The paper \cite{NT:DAHA} shows how $\hat{H}_q$ is related to Leonard pairs.
Additional results in the literature link tridiagonal pairs and Leonard pairs with
the Lie algebra $\mathfrak{sl}_2$ (see \cite{AlCur, BT:equitable, IT:Krawt, NT:Krawt, Al3, AC5, AC4}), 
the quantum algebras $U_q(\mathfrak{sl}_2)$ 
   (see \cite{Al, Bock2, BockT, Wor, ITW:equitable, T:uqsl2equitable}),
$U_q(\widehat{\mathfrak{sl}}_2)$ 
 (see \cite{AC2, BT:Borel,  F:RL, IT:uqsl2hat, IT:non-nilpotent, IT:aug, T:Bockting}),
the tetrahedron Lie algebra $\boxtimes$ (see \cite{BT:loop, H:tetra,  IT:loop, MP})
and its $q$-deformation $\boxtimes_q$ (see \cite{IT:tetra, F, IRT, IT:inverting, IT:drg, Y2, Y3}).

Tridiagonal pairs have been used to investigate the open $XXZ$ spin chain and related models in
statistical mechanics (see \cite{Bas1, Bas2, Bas3, Bas4, Bas5, Bas6, Bas7}).
The study of tridiagonal pairs has lead to some conceptual advances,
such as 
the bidiagonal pairs/triples (see \cite{F2, F3}),
the billiard arrays (see \cite{T:Billiard,Y}),
Hessenberg pairs (see \cite{God1, God2, God3}),
and the lowering-raising triples (see \cite{N:LRtriple, N:LRpair, T:LRtriple}).

\section{Acknowledgements}
The authors thank Sarah Bockting-Conrad,
Brian Curtin, Jae-ho Lee, and Hajime Tanaka
for giving the paper a close reading and offering many valuable
suggestions.

\bigskip

{

\small

}

\bigskip\bigskip\noindent
Kazumasa Nomura\\
Tokyo Medical and Dental University\\
Kohnodai, Ichikawa, 272-0827 Japan\\
email: knomura@pop11.odn.ne.jp

\bigskip\noindent
Paul Terwilliger\\
Department of Mathematics\\
University of Wisconsin\\
480 Lincoln Drive\\ 
Madison, Wisconsin, 53706 USA\\
email: terwilli@math.wisc.edu

\bigskip\noindent
{\bf Keywords.}
Tridiagonal pair, Leonard pair, tridiagonal matrix
\\
\noindent
{\bf 2010 Mathematics Subject Classification.} 17B37, 15A21


\begin{thebibliography}{10}

\bibitem{Al2}
H. Alnajjar,
Self-duality of tridiagonal pairs of $q$-Serre type,
Int.\ Math.\ Forum 5 (2010) 1409--1415.

\bibitem{Al}
H. Alnajjar,
Leonard pairs associated with the equitable generators of the quantum
algebra $U_q(\mathfrak{sl}_2)$,
Linear Multilinear Algebra 59 (2011) 1127--1142.

\bibitem{Al3}
H. Alnajjar,
A tridiagonal linear map with respect to the eigenbases of equitable basis of $sl_2$,
Linear Multilinear Algebra 63 (2015) 1458--1467.

\bibitem{AC}
H.~Alnajjar, B.~Curtin,
A family of tridiagonal pairs,
Linear Algebra Appl.\ 390 (2004) 369--384.

\bibitem{AC2}
H.~Alnajjar, B.~Curtin,
A family of tridiagonal pairs related to the quantum affine 
algebra $U\sb q(\widehat{\mathfrak{sl}}\sb 2)$,
Electron. J. Linear Algebra 13 (2005) 1--9. 

\bibitem{AC3}
 H. Alnajjar, B. Curtin,
A bilinear form for tridiagonal pairs of $q$-Serre type, 
Linear Algebra Appl.\ 428 (2008) 2688--2698.

\bibitem{AC5}
H. Alnajjar, B. Curtin,
Leonard pairs from the equitable basis of $sl2$,
Electron.\ J. Linear Algebra 20 (2010) 490--505. 


\bibitem{AC4}
H.~Alnajjar, B.~Curtin,
A bidiagonal and tridiagonal linear map with respect to eigenbases of equitable basis of $sl_2$,
Linear Multilinear Algebra 61 (2013) 1668--1674.


\bibitem{AlCur}
H. Alnajjar, B. Curtin,
Leonard triples from the equitable basis of $\mathfrak{sl}_2$,
Linear Algebra Appl.\ 482 (2015) 47--54.

\bibitem{Alperin}
R.C. Alperin,
 $\text{\rm PSL}_2(\mathbb{Z}) = \mathbb{Z}_2 \star \mathbb{Z}_3$,
Amer.\ Math.\ Monthly 100 (1993) 385--386.

\bibitem{AAR}
G. Andrews, R. Askey, R. Roy,
Special Functions,
Encyclopedia of Mathematics and its Applications 71
(1999), 
Cambridge Univ.\ Press.

\bibitem{Arik}
M. Arik, U. Kayserilioglu,
The anticommutator spin algebra, its representations and
quantum group invariance,
Int.\ J. Mod.\ Phys.\ A, 18 (2003), 5039--5045.

\bibitem{AW}
R. Askey, J. Wilson,
A set of orthogonal polynomials that generalize the Racah coefficients
or $6-j$ symbols,
SIAM J. Math.\ Anal.\ 10 (1979) 1008--1016.

\bibitem{BalMa}
J. Balmaceda, J.P. Maralit,
Leonard triples from Leonard pairs constructed from
the standard basis of the Lie algebra $\mathfrak{sl}_2$,
Linear Algebra Appl.\ 437 (2012) 1961--1977.

\bibitem{BI}
E. Bannai, T. Ito,
Algebraic Combinatorics I, Association Schemes,
he Benjamin/Cummings Publishing Co.\ Inc., 
Menlo Park, CA, 1984.

\bibitem{Bas1}
P. Baseilhac,
An integrable structure related with tridiagonal algebras,
Nuclear Phys.\ B 705 (2005) 605--619.

\bibitem{Bas2}
 P. Baseilhac,
Deformed Dolan-Grady relations in quantum integrable models,
Nuclear Phys.\  B 709 (2005) 491--521.

\bibitem{Bas3}
P. Baseilhac,
The $q$-deformed analogue of the Onsager algebra: beyond the Bethe ansatz approach,
Nuclear Phys.\ B 754 (2006) 309--328.

\bibitem{Bas4}
P. Baseilhac,
A family of tridiagonal pairs and related symmetric functions,
J. Phys.\ A 39 (2006) 11773--11791.

\bibitem{Bas5}
P. Baseilhac,  S. Belliard,
The half-infinite $XXZ$ chain in Onsager's approach,
Nuclear Phys.\ B 873 (2013) 550--584.

\bibitem{Bas6}
P. Baseilhac,  K. Koizumi,
A deformed analogue of Onsager's symmetry in the $XXZ$ open spin chain,
J. Stat.\ Mech.\ Theory Exp.\ 2005, no. 10, P10005, 15 pp. (electronic).

\bibitem{Bas7}
P. Baseilhac,  K. Koizumi,
Exact spectrum of the $XXZ$ open spin chain from the $q$-Onsager
algebra representation theory,
J. Stat.\ Mech.\ Theory Exp.\ 2007, no. 9, P09006,
27 pp. (electronic).

\bibitem{BasK}
P. Baseilhac, S. Kolb,
Braid group action and root vectors for the $q$-Onsager algebra;
{\tt arXiv:1706.08747}.



\bibitem{BT:Borel}
G.~Benkart, P.~Terwilliger,
Irreducible modules for the quantum affine algebra
$U_q(\widehat{\mathfrak{sl}}_2)$ and its Borel subalgebra,
J. Algebra 282 (2004) 172--194.

\bibitem{BT:loop}
G.~Benkart, P.~Terwilliger,
The universal central extension of the three-point
$\mathfrak{sl}_2$ loop algebra,
Proc.\ Amer.\ Math.\ Soc.\ 135 (6) (2007) 1659--1668.

\bibitem{BT:equitable}
G. Benkart, P. Terwilliger,
The equitable basis for $\mathfrak{sl}_2$. 
Math.\ Z. 268 (2011) 535--557.

\bibitem{BGV}
H. De Bie, V. Genest, L. Vinet,
A Dirac-Dunkl equation on $S^2$ and the Bannai-Ito algebra,
Comm.\ Math.\ Phys.\ 344 (2016) 447--464.


\bibitem{Bock1}
 S. Bockting-Conrad,
Two commuting operators associated with a tridiagonal pair,
Linear Algebra Appl.\ 437 (2012) 242--270.

\bibitem{Bock2}
S. Bockting-Conrad,
Tridiagonal pairs of $q$-Racah type, the double lowering operator $\psi$, 
and the quantum algebra $U_q(\mathfrak{sl}_2)$,
Linear Algebra Appl.\  445 (2014) 256--279.

\bibitem{BockT}
 S. Bockting-Conrad, P. Terwilliger,
The algebra $U_q(\mathfrak{sl}_2)$ in disguise,
Linear Algebra Appl.\  459 (2014) 548--585.

\bibitem{BCN}
A.E. Brower, A.M. Cohen, A. Neumaier,
Distance-Regular Graphs,
Springer-Verlag, Berlin, 1989. 

\bibitem{Brown1}
G.M.F. Brown,
Totally bipartite/abipartite Leonard pairs and Leonard triples
of Bannai/Ito type,
Electronic J. Linear Algebra 26 (2013) 258--299.

\bibitem{ChanGaoHou}
S. Chang, S. Gao, B. Hou,
The Leonard triples with quantum parameter being not a unit root,
Linear Multilinear Algebra 63 (2015) 2546--2560.

\bibitem{Cherednik}
I. Cherednik,
Double affine Hecke algebras, Knizhnik-Zamolodchikov equations, and Macdonald's operators,
Int.\ Math.\ Res.\ Not.\ (1992) 171--180.


\bibitem{Cur}
B. Curtin,
Modular Leonard triples,  
Linear algebra Appl.  424 (2007) 510--539. 

\bibitem{Cur2}
B. Curtin,
Spin Leonard pairs,
Ramanujan J. 13 (2007) 319--332.


\bibitem{Davies1}
B. Davies,
Onsager's algebra and superintegrability,
J. Phys.\ A  23 (1990) 2245--2261.

\bibitem{Davies2}
B. Davies,
Onsager's algebra and the Dolan-Grady condition in the non-self-dual case,
J. Math.\ Phys.\ 32 (1991) 2945--2950.


\bibitem{Del}
P. Delsarte,
An algebraic approach to the association schemes of coding theory,
Philips Research Reports Suppliements, No.10 (1973).



\bibitem{Fairlie}
D.B. Fairlie,
Quantum deformations of SU(2),
J. Phys.\ A 23, L183 (1990).

\bibitem{F:RL}
D.~Funk-Neubauer,
Raising/lowering maps and modules for the
quantum affine algebra $U_q(\widehat{\mathfrak{sl}}_2)$,
Comm.\ Algebra 35 (2007) 2140--2159.

\bibitem{F}
D. Funk-Neubauer,
Tridiagonal pairs and the $q$-tetrahedron algebra,
Linear Algebra Appl.\  431 (2009) 903--925.

\bibitem{F2}
D. Funk-Neubauer,
Bidiagonal pairs, the Lie algebra $\mathfrak{sl}_2$, and the quantum group $U_q(\mathfrak{sl}_2)$,
J. Algebra Appl. 12 (2013) 1250207, 46 pp.

\bibitem{F3}
D. Funk-Neubauer,
Bidiagonal triples,
Linear Algebra Appl.\  521 (2017)  104--134.

\bibitem{GWH}
S. Gao, Y. Wang, B. Hou,
The classification of Leonard triples of Racah type,
Linear Algebra Appl. 439 (2013) 1834--1861.


\bibitem{GavK}
A.M. Gaviilik, A. U. Klimyk,
$q$-Deformed orthogonal and pseudo-orthogonal algebras and their representations,
Lett.\ Math.\ Phys.\ 21 (1991) 215--220.


\bibitem{GVZ}
V. Genest, L. Vinet, A. Zhedanov,
The Bannai-Ito polynomials as Racah coefficients of the $\mathfrak{sl}_1(2)$ algebra,
Proc.\ Amer.\ Math.\ Soc.\ 142 (2014) 1545--1560.

\bibitem{GVZ2}
V. Genest, L. Vinet, A. Zhedanov,
The Bannai-Ito algebra and a superintegrable systemwith reflections on the two-sphere,
J. Phys.\ A 47 (2014) 205202, 13 pp.


\bibitem{God1}
A. Godjali,
Hessenberg pairs of linear transformations,
Linear Algebra Appl.\ 431 (2009) 1579--1586.

\bibitem{God2}
A. Godjali,
Thin Hessenberg pairs,
Linear Algebra Appl.\ 432 (2010)  3231--3249.

\bibitem{God3}
A. Godjali,
Thin Hessenberg pairs and double Vandermonde matrices,
Linear Algebra Appl.\ 436 (2012)  3018--3060.

\bibitem{Han1}
E. Hanson,
A characterization of Leonard pairs using the notion of a tail,
Linear Algebra Appl.\ 435 (2011) 2961--2970.

\bibitem{Han2}
E. Hanson,
A characterization of Leonard pairs using the parameters $\{a_i\}_{i=0}^d$,
Linear Algebra Appl. 438 (2013) 2289--2305.

\bibitem{Han3}
E. Hanson,
A characterization of bipartite Leonard pairs using the notion of a tail,
Linear Algebra Appl. 452 (2014) 46--67.

\bibitem{H}
B.~Hartwig,
Three mutually adjacent Leonard pairs,
Linear Algebra Appl. 408 (2005) 19--39.

\bibitem{H:tetra}
B.~Hartwig,
The Tetrahedron algebra and its finite-dimensional irreducible modules,
Linear Algebra Appl.\ 422 (2007) 219--235.

\bibitem{HT:tetra}
B.~Hartwig, P.~Terwilliger,
The tetrahedron algebra, the Onsager algebra, 
and the $\mathfrak{sl}_2$ loop algebra,
J. Algebra 308 (2007) 840--863.

\bibitem{havlicek}
M. Havli\v cek, A..U. Klimyk, S. Po\v sta,
Representations of the cyclically
symmetric $q$-deformed algebra ${\rm so}\sb q(3)$.
J. Math.\ Phys.\ 40  (1999) 2135--2161.

\bibitem{havlicek2}
M. Havli\v cek,  S. Po\v sta,
On the classification of irreducible finite-dimensional representations of $U'_q(\mathfrak{so}_3)$ algebra,
J. Math.\  Phys.\  42 (2001) 472--500.

\bibitem{HouGao}
B. Hou, S. Gao,
Leonard pairs and Leonard triples of $q$-Racah type from the
quantum algebra $U_q(\mathfrak{sl}_2)$,
Comm.\ Algebra 41 (2013) 3762--3774.

\bibitem{HouLiuGao}
B. Hou, Y. Liu, S. Gao,
The Leonard triples having classical type,
Linear Algebra Appl.\ 467 (2015) 202--225.

\bibitem{HouWangGao3}
B. Hou, M. Wang, S. Gao,
The classification of finite-dimensional irreducible modules of Bannai/Ito algebra,
Comm.\ Algebra 44 (2016) 919--943.

\bibitem{HouWangGao}
B. Hou, L. Wang, S. Gao,
The classification of Leonard triples that have
Bannai/Ito type and odd diameter,
Linear Algebra Appl.\ 439 (2013) 2667--2691.

\bibitem{HouWangGao2}
B. Hou, J. Wang, S. Gao,
Totally bipartite Leonard pairs and totally bipartite Leonard triples
of $q$-Racah type,
Linear Algebra Appl.\ 448 (2014) 168--204.

\bibitem{HouXuGao}
B. Hou, Y. Xu, S. Gao,
The classification of Leonard triples of Bannai/Ito type
with even diameter,
Comm.\ Algebra 43 (2015) 3776--3798.

\bibitem{HouZhangGao}
B. Hou, L.W. Zhang, S. Gao,
The Leonard triples extended from given Leonard pairs of Bannai/Ito type,
Linear Multilinear Algebra 62 (2014) 1091--1104.

\bibitem{Huang}
H. Huang,
The classification of Leonard triples of QRacah type,
Linear Algebra Appl. 436 (2012) 1442--1472.

\bibitem{Huang2}
H. Huang,
Finite-dimensional irreducible modules of
the universal Askey-Wilson algebra,
Comm.\ Math.\ Phys.\ 340 (2015) 959--984.

\bibitem{Huang3}
H. Huang,
Center of the universal Askey-Wilson algebra at roots of unity,
Nuclear Phys.\ B 909 (2016) 260--296.

\bibitem{Huang4}
H. Huang,
An embedding of the universal Askey-Wilson algebra into 
$U_q(\mathfrak{sl}_2) \otimes U_q(\mathfrak{sl}_2) \otimes U_q(\mathfrak{sl}_2)$,
Nuclear Phys.\  B 922 (2017) 401--434.



\bibitem{ITT}
T. Ito, K. Tanabe, P. Terwilliger,
Some algebra related to ${P}$- and ${Q}$-polynomial association schemes,  
in: Codes and Association Schemes (Piscataway NJ, 1999), Amer. Math. Soc., 
Providence RI, 2001, pp. 167--192.

\bibitem{INT}
T. Ito, K. Nomura, P. Terwilliger,
A classification of sharp tridiagonal pairs,
Linear Algebra Appl. 435 (2011) 1857--1884.

\bibitem{IRT}
T. Ito, H. Rosengren, P. Terwilliger,
Evaluation modules for the $q$-tetrahedron algebra,
Linear Algebra Appl.\ 451 (2014) 107--168


\bibitem{IT:shape}
T. Ito, P. Terwilliger, 
The shape of a tridiagonal pair, 
J. Pure Appl.\ Algebra 188 (2004) 145--160.

\bibitem{IT:uqsl2hat}
T.~Ito, P.~Terwilliger,
Tridiagonal pairs and the quantum affine algebra
$U_q(\widehat{\mathfrak{sl}}_2)$,
Ramanujan J. 13 (2007) 39--62.

\bibitem{IT:non-nilpotent}
T.~Ito, P.~Terwilliger,
Two non-nilpotent linear transformations that satisfy the cubic $q$-Serre relations,
J. Algebra Appl. 6 (2007) 477--503.

\bibitem{IT:tetra}
T.~Ito, P.~Terwilliger,
The $q$-tetrahedron algebra and its finite dimensional irreducible modules,
Comm.\ Algebra 35 (2007) 3415--3439.

\bibitem{IT:inverting}
T.~Ito, P.~Terwilliger,
$q$-inverting pairs of linear transformations and the $q$-tetrahedron algebra,
Linear Algebra Appl. 426 (2007) 516--532.

\bibitem{IT:loop}
T. Ito, P. Terwilliger,
Finite-dimensional irreducible modules for the three-point $\mathfrak{sl}_2$ loop algebra,
Comm.\ Algebra 36 (2008) 4557--4598.

\bibitem{IT:drg}
T.~Ito, P.~Terwilliger,
Distance-regular graphs and the $q$-tetrahedron algebra,
European J. Combin.\ 30 (2009) 682--697.

\bibitem{IT:Krawt}
T.~Ito, P.~Terwilliger,
Tridiagonal pairs of Krawtchouk type,
Linear Algebra Appl. 427 (2007) 218--233.

\bibitem{IT:Drin}
T. Ito, P. Terwilliger,
The Drinfel'd polynomial of a tridiagonal pair;
{\tt arXiv:0805.1465}.


\bibitem{IT:qRacah}
T. Ito, P. Terwilliger,
Tridiagonal pairs of $q$-Racah type,
J. Algebra 322 (2009) 68--93.

\bibitem{IT:aug}
T.~Ito, P.~Terwilliger,
The augmented tridiagonal algebra,
Kyushu J. Math.\ 64 (2010) 81--144.

\bibitem{IT:sharpen}
T. Ito, P. Terwilliger,
How to sharpen a tridiagonal pair,
J. Algebra Appl.\ 
9 (2010) 543--552.

\bibitem{IT:DAHA}
T. Ito, P. Terwilliger, 
Double affine Hecke algebras of rank 1 and the $\mathbb{Z}_3$-symmetric Askey-Wilson relations, 
SIGMA 6 (2010) 065, 9 pages.

\bibitem{IT:Mock}
T Ito, P. Terwilliger,
Mock tridiagonal systems,
Linear Algebra Appl.\  435 (2011) 1997--2006.

\bibitem{ITW:equitable}
T.~Ito, P.~Terwilliger, C.~Weng,
The quantum algebra $U_q(\mathfrak{sl}_2)$ and its equitable presentation,
J. Algebra 298 (2006) 284--301.

\bibitem{KangHouGao}
N. Kang, B. Hou, S. Gao,
Leonard triples of Krawtchouk type,
Linear Multilinear Algebra 63 (2015) 2341--2358.




\bibitem{Koorn1}
T.H. Koornwinder,
The relationship between Zhedanov's algebra $AW(3)$ and the double 
affine Hecke algebra in the rank one case,
SIGMA 3 (2007) 063, 15 pages.

\bibitem{Koorn2}
T.H. Koornwinder, 
Zhedanov's algebra $AW(3)$ and the double affine Hecke algebra in the rank one case.
II, The spherical subalgebra, 
SIGMA 4 (2008) 052, 17 pages.

\bibitem{KorZ}
A. Korovnichenko, A. Zhedanov,
Classical Leonard triples,
in Elliptic Integrable Systems (Kyoto, 2004),
Editors M. Noumi and K. Takasaki, Rokko Lectures in Mathematics, no. 18, Kobe University, 2005, 71--84.

\bibitem{Leonard}
D. Leonard,
Orthogonal polynomials, duality, and association schemes,
SIAM J. Math. Anal. 13 (1982) 656--663.

\bibitem{LiuHouGao}
H. Liu, B. Hou, S. Gao,
Leonard triples, the Racah algebra, 
and some distance-regular graphs of Racah type,
Linear Algebra Appl.\ 484 (2015) 435--456.

\bibitem{Mik}
S. Miklavic,
Leonard triples and hypercubes,
J. Algebraic Combin.\ 28 (2008) 397--424.





\bibitem{MP}
J.V.S. Morales, A.A. Pascasio,
An action of the tetrahedron algebra on the standard module
for the Hamming graphs and Doob graphs,
Graphs Combin. 30 (2014) 1513--1527.


\bibitem{N:refine}
K.~Nomura,
A refinement of the split decomposition of a tridiagonal pair,
Linear Algebra Appl.\ 403 (2005) 1--23.

\bibitem{N:height1}
K.~Nomura,
Tridiagonal pairs of height one,
Linear Algebra Appl.\ 403 (2005) 118--142.

\bibitem{N:LBTD}
K. Nomura,
Leonard pairs having LB-TD form,
Linear Algebra Appl.\ 455 (2014) 1--21.

\bibitem{N:endparam}
K. Nomura,
The end-parameters of a Leonard pair,
Linear Algebra Appl.\  462 (2014) 88--109.

\bibitem{N:endentries}
K. Nomura,
Leonard pairs having specified end-entries,
Linear Algebra Appl.\ 465 (2015) 43--64.

\bibitem{N:TDTD}
K. Nomura,
Leonard pairs having zero-diagonal TD-TD form,
Linear Algebra Appl.\ 478 (2015) 1--52.

\bibitem{N:LRtriple}
K. Nomura,
Linear transformations that are tridiagonal with respect to 
the three decompositions for an LR triple,
Linear Algebra Appl.\  486 (2015) 173--203.

\bibitem{N:LRpair}
K. Nomura,
An LR pair that can be extended to an LR triple,
Linear Algebra Appl.\ 493 (2016) 336--357 .

\bibitem{NT:balanced}
K.~Nomura, P.~Terwilliger,
Balanced Leonard pairs,
Linear Algebra Appl. 420 (2007) 51--69.

\bibitem{NT:formula}
K.~Nomura, P.~Terwilliger,
Some trace formulae involving the split sequences of a Leonard pair,
Linear Algebra Appl.\ 413 (2006) 189--201.

\bibitem{NT:det}
K.~Nomura, P.~Terwilliger,
The determinant of $AA^*-A^*A$ for a Leonard pair $A,A^*$,
Linear Algebra Appl.\ 416 (2006) 880--889.

\bibitem{NT:unit}
K.~Nomura, P.~Terwilliger,
Matrix units associated with the split basis of a Leonard pair,
Linear Algebra Appl.\ 418 (2006) 775--787.

\bibitem{NT:span}
K.~Nomura, P.~Terwilliger,
Linear transformations that are tridiagonal with respect to
both eigenbases of a Leonard pair,
Linear Algebra Appl. 420 (2007) 198--207.

\bibitem{NT:switch}
K.~Nomura, P.~Terwilliger,
The switching element for a Leonard pair,
Linear Algebra Appl.\  428 (2008) 1083--1108.

\bibitem{NT:affine}
K.~Nomura, P.~Terwilliger,
Affine transformations of a Leonard pair,
Electron.\ J.\ Linear Algebra (2007) 389--418.

\bibitem{NT:tde}
K.~Nomura, P.~Terwilliger,
The split decomposition of a tridiagonal pair,
Linear Algebra Appl. 424 (2007) 339--345.

\bibitem{NT:maps}
K.~Nomura, P.~Terwilliger,
Transition maps between the 24 bases for a Leonard pair,
Linear Algebra Appl.\ 431 (2009) 571--593.

\bibitem{NT:sharp}
K.~Nomura, P.~Terwilliger,
Sharp tridiagonal pairs,
Linear Algebra Appl.\ 429 (2008) 79--99.

\bibitem{NT:towards} 
K.~Nomura, P.~Terwilliger,
Towards a classification of the tridiagonal pairs,
Linear Algebra Appl.\ 429 (2008) 503--518.

\bibitem{NT:structure} 
K.~Nomura, P.~Terwilliger,
The structure of a tridiagonal pair,
Linear Algebra Appl.\  429 (2008) 1647--1662.

\bibitem{NT:mu}
K.~Nomura, P.~Terwilliger,
Tridiagonal pairs and the $\mu$-conjecture,
Linear Algebra Appl.\ 430 (2009) 455--482.

\bibitem{NT:shape}
K.~Nomura, P.~Terwilliger,
On the shape of a tridiagonal pair,
Linear Algebra Appl.\  432 (2010) 615--636.

\bibitem{NT:qRacahmu}
K. Nomura, P. Terwilliger,
Tridiagonal pairs of $q$-Racah type and the $\mu$-conjecture,
Linear Algebra Appl.\ 432 (2010) 3201--3209.


\bibitem{NT:Krawt}
K. Nomura, P. Terwilliger,
Krawtchouk polynomials, the Lie algebra $\mathfrak{sl}_2$, and Leonard pairs,
Linear Algebra Appl.\ 437 (2012) 345--375.

\bibitem{NT:DAHA}
K. Nomura, P. Terwilliger,
The universal DAHA of type $(C_1^\vee, C_1)$ and
Leonard pairs of $q$-Racah type,
Linear Algebra Appl.\ 533 (2017) 14--83.

\bibitem{NT:selfdual}
K. Nomura, P. Terwilliger,
Self-dual Leonard pairs,
in preparation.

\bibitem{odesski}
A.V. Odesskii,
An analog of the Sklyanin algebra,
Funct.\ Anal.\ Appl.\ 20 (1986) 152--154.

\bibitem{Pauli}
W. Pauli,
Zur Quantenmechanik des magnetischen Elektrons,
Zeitschrift f\"{u}r Physik, 43 (1927) 601--623.

\bibitem{Post}
S. Post,
Racah polynomials and recoupling schemes of $\mathfrak{su}(1, 1)$,
SIGMA  11 (2015)  Paper 057, 17 pages.


\bibitem{Rot}
J. Rotman,
Advanced Modern Algebra,
2nd edition,
AMS, Providence, RI, 2010.

\bibitem{Sahi}
S. Sahi,
Nonsymmetric Koornwinder polynomials and duality, 
Ann. of Math.\ (2) 150 (1999) 267--282.

\bibitem{San}
R.M. Santilli,
Imbedding of Lie algebras in nonassociative structures, 
Nouvo Cim.\ 51 (1967) 570--576.

\bibitem{Tanabe}
K. Tanabe,
The irreducible modules of the Terwilliger algebras of Doob schemes,
J. Algebraic Combin.\  6 (1997) 173--195.

\bibitem{Tanaka}
H. Tanaka,
A bilinear form relating two Leonard systems,
Linear Algebra Appl.\ 431 (2009) 1726--1739.

\bibitem{Tanaka2}
H. Tanaka,
The Erd\H{o}s--Ko--Rado basis for a Leonard system,
Contrib.\ Discrete Math.\ 8 (2013) 41--59.


\bibitem{T:subconst1}
P. Terwilliger,
The subconstituent algebra of an association scheme I,
J. Algebraic Combin.\ 1 (1992) 363--388.

\bibitem{T:subconst2}
P. Terwilliger,
The subconstituent algebra of an association scheme II., 
J. Algebraic Combin.\ 2 (1993), 73--103

\bibitem{T:subconst3}
P. Terwilliger,
The subconstituent algebra of an association scheme III,
J. Algebraic Combin.\ 2 (1993) 177--210.

\bibitem{T:Leonard}
P. Terwilliger,
Two linear transformations each tridiagonal with respect to an
eigenbasis of the other,
Linear Algebra Appl.\ 330 (2001) 149--203.

\bibitem{T:Tworelations}
P. Terwilliger,
Two relations that generalize the $q$-Serre relations and the
Dolan-Grady relations,
in Physics and Combinatorics 1999 (Nagoya), 377--398,
World Scientific Publishing, River Edge, NJ, 2001.

\bibitem{T:24points}
P. Terwilliger, 
Leonard pairs from 24 points of view, 
Rocky Mountain J. Math.\  (2002) 827--887.


\bibitem{T:intro}
P. Terwilliger,
Introduction to Leonard pairs,
OPSFA Rome 2001,
J. Comput. Appl. Math. 153 (2003) 463--475.
 
\bibitem{T:qRacah}
P. Terwilliger,
Leonard pairs and the $q$-Racah polynomials,
Linear Algebra Appl.\ 387 (2004) 235--276.

\bibitem{T:split}
P. Terwilliger,
Two linear transformations each tridiagonal with respect to
an eigenbasis of the other;
comments on the split decomposition,
J. Comput.\ Appl.\ Math.\ 178 (2005) 437--452.

\bibitem{T:parray}
P. Terwilliger,
Two linear transformations each tridiagonal with respect to
an eigenbasis of the other;
comments on the parameter array,
Des.\ Codes Cryptogr.\ 34 (2005) 307--332.

\bibitem{T:LBUB}
P. Terwilliger,
Two linear transformations each tridiagonal with respect to an
eigenbasis of the other;
the $TD$-$D$ canonical form and the $LB$-$UB$ canonical form,
J. Algebra 291 (2005) 1--45.

\bibitem{T:survey}
P. Terwilliger,
An algebraic approach to the Askey scheme of orthogonal polynomials,
Orthogonal polynomials and special functions, 
Lecture Notes in Math., 1883, 
Springer, Berlin, 2006, pp. 255--330;
{\tt arXiv:math/0408390}.


\bibitem{T:AWalgebra}
P. Terwilliger,
The universal Askey-Wilson algebra,
SIGMA 7 (2011) 069, 24 pages.

\bibitem{T:AWalgebra2}
P. Terwilliger,
The universal Askey-Wilson algebra and the equitable presentation of $U_q(\mathfrak{sl}_2)$, 
SIGMA 7 (2011) 099, 26 pages.

\bibitem{T:AWalgebraDAHA}
P. Terwilliger,
The universal Askey-Wilson algebra and DAHA of type $(C_1^\vee, C_1)$,
SIGMA 9 (2013) 047, 40 pages.

\bibitem{T:uqsl2equitable}
P. Terwilliger,
Finite-dimensional irreducible $U_q(\mathfrak{sl}_2)$-modules from the equitable point of view, 
Linear Algebra Appl.\ 439 (2013) 358--400

\bibitem{T:Billiard}
P. Terwilliger,
Billiard arrays and finite-dimensional irreducible $U_q(\mathfrak{sl}_2)$-modules,
Linear Algebra Appl.\  461 (2014) 21--270.

\bibitem{T:LRtriple}
P. Terwilliger,
Lowering-raising triples and $U_q(\mathfrak{sl}_2)$,
Linear Algebra Appl.\ 486 (2015) 1--172.


\bibitem{T:Ltriple}
P. Terwilliger,
Leonard triples of $q$-Racah type and their pseudo intertwiners,
Linear Algebra Appl.\ 515 (2017) 145--174.

\bibitem{T:positivepart}
P. Terwilliger,
The $q$-Onsager algebra and the positive part of $U_q(\widehat{\mathfrak{sl}}_2)$,
Linear Algebra Appl.\ 521 (2017) 19--56.

\bibitem{T:Bockting}
P. Terwilliger,
Tridiagonal pairs of $q$-Racah type, the Bockting operator $\psi$, and
$L$-operators for $U_q(L(\mathfrak{sl}_2))$,
ARS Math.\ Contemp.\ 14 (2018) 55--65.

\bibitem{T:Lusztig}
P. Terwilliger,
The Lusztig automorphism of the $q$-Onsager algebra;
{\tt arXiv:1706.05546}.

\bibitem{TV}
P. Terwilliger, R.~Vidunas,
Leonard pairs and the Askey-Wilson relations,
J. Algebra Appl. 3 (2004) 411--426.


\bibitem{TZ}
P. Terwilliger, A. $\check{\rm Z}$itnik,
Distance-regular graphs of $q$-Racah type and the universal Askey-Wilson algebra,
J. Combin. Theory Ser.\ A 125 (2014) 98--112.

\bibitem{TsuVZ}
S. Tsujimoto, L. Vinet, A. Zhedanov,
Dunkl shift operators and Bannai-Ito polynomials,
Adv.\ Math.\ 229 (2012) 2123--2158.


\bibitem{Vidar}
M. Vidar,
Tridiagonal pairs of shape $(1,2,1)$,
Linear Algebra Appl.\ 429 (2008) 403--428.

\bibitem{Vid1}
R. Vidunas,
Normalized Leonard pairs and Askey-Wilson relations,
Linear Algebra Appl.\ 422 (2007) 39--57.

\bibitem{Vid2}
R. Vidunas,
Askey-Wilson relations and Leonard pairs,
Discrete Math.\ 308 (2008) 479--495.


\bibitem{WangHouGao}
Y. Wang, R. Hou, S. Gao,
Totally almost bipartite Leonard pairs and Leonard triples of $q$-Racah type,
Linear Multilinear Alg.\ 65 (2017) 235--255.

\bibitem{WangHouGao4}
Y. Wang, B. Hou, S. Gao,
Leonard triples extended from a given totally almost bipartite Leonard pair of Bannai/Ito type,
Linear Algebra Appl.\  519 (2017) 111-135.

\bibitem{Wor}
C. Worawannotai,
Dual polar graphs, the quantum algebra $U_q(\mathfrak{sl}_2)$, 
and Leonard systems of dual $q$-Krawtchouk type,
Linear Algebra Appl.\ 438 (2013) 443--497.

\bibitem{Y}
Y. Yang,
Upper triangular matrices and Billiard Arrays,
Linear Algebra Appl.\ 493 (2016) 508--536.

\bibitem{Y2}
Y. Yang, 
Some $q$-exponential formulas for finite-dimensional
$\square_q$-modules;
{\tt arXiv:1612.02864}.

\bibitem{Y3}
Y. Yang, 
Finite-dimensional irreducible $\square_q$-modules and their
Drinfel'd polynomials;
{\tt arXiv:1706.00518}.




\bibitem{Z}
A.S. Zhedanov,
``Hidden symmetry'' of Askey-Wilson polynomials,
Teoret.\ Mat.\ Fiz.\ 89 (1991) 190--204.



\end{thebibliography}
\end{document}